\documentclass[MSc]{iitcsthesis}

\PassOptionsToPackage{unicode=true,pdfencoding=auto,pageanchor=false}{hyperref}

\usepackage{amsmath, amssymb, amsthm, amsfonts, cite, dsfont, enumerate, epsfig, float, geometry, doi,
	infwarerr, mathrsfs, mathtools, mathrsfs, mathtools, relsize, stmaryrd, tabularx, booktabs, txfonts, nicefrac, subfig}
\usepackage[normalem]{ulem} 
\usepackage{xcolor}
\graphicspath{{figures/}}

\newtheorem{theorem}{Theorem}[section]
\newtheorem{lemma}[theorem]{Lemma}
\newtheorem{proposition}[theorem]{Proposition}
\newtheorem{claim}[theorem]{Claim}
\newtheorem{conjecture}[theorem]{Conjecture}
\newtheorem{corollary}[theorem]{Corollary}
\newtheorem{question}[theorem]{Question}

\theoremstyle{definition}
\newtheorem{definition}[theorem]{Definition}
\newtheorem{notation}[theorem]{Notation}

\newtheorem{example}[theorem]{Example}
\newtheorem{remark}[theorem]{Remark}

\numberwithin{equation}{section}

\newcommand{\doilink}[1]{\href{https://doi.org/#1}{#1}}

\newcommand{\naturals}{\ensuremath{\mathbb{N}}}

\newcommand{\Reals}{\ensuremath{\mathbb{R}}}
\newcommand{\Tr}{\mathrm{Tr}}
\newcommand{\set}{\ensuremath{\mathcal}}
\newcommand{\cset}[1]{\mathcal{#1}^{\textnormal{c}}} 
\newcommand{\Field}{\ensuremath{\mathbb{F}}}

\newcommand{\OneTo}[1]{[#1]}
\newcommand{\card}[1]{|#1|}
\newcommand{\bigcard}[1]{\bigl|#1\bigr|}

\DeclareMathOperator{\Vertex}{\mathsf{V}}
\DeclareMathOperator{\Edge}{\mathsf{E}}
\DeclareMathOperator{\Adjacency}{\mathbf{A}}
\DeclareMathOperator{\Laplacian}{\mathbf{L}}
\DeclareMathOperator{\SignlessLaplacian}{\mathbf{Q}}
\DeclareMathOperator{\NormalizedLaplacian}{\mathcal{L}}
\DeclareMathOperator{\AllOne}{\mathbf{J}}
\DeclareMathOperator{\Identity}{\mathbf{I}}

\DeclareMathOperator{\Complete}{\mathsf{K}}
\DeclareMathOperator{\Friendship}{\mathsf{F}}

\DeclareMathOperator{\Path}{\mathsf{P}}
\DeclareMathOperator{\Cycle}{\mathsf{C}}
\DeclareMathOperator{\GOP}{\mathsf{T}}  				
\DeclareMathOperator{\SRG}{\mathsf{srg}}

\DeclareMathOperator{\Star}{\mathsf{S}}
\DeclareMathOperator{\Clique}{\omega}
\DeclareMathOperator{\Chromatic}{\chi}
\DeclareMathOperator{\Zero}{\mathbf{0}}

\newcommand{\Gr}[1]{\mathsf{#1}}                          
\newcommand{\CGr}[1]{\overline{\mathsf{#1}}}              
\newcommand{\V}[1]{\Vertex(#1)}                           
\newcommand{\E}[1]{\Edge(#1)}                             
\newcommand{\A}{\Adjacency}                               
\newcommand{\LM}{\Laplacian}                              
\newcommand{\Q}{\SignlessLaplacian}                       
\newcommand{\D}{\mathbf{D}}                               
\newcommand{\J}[1]{\AllOne_{#1}}                          
\newcommand{\I}[1]{\Identity_{#1}}                        

\newcommand{\indnum}[1]{\alpha(#1)}                       

\newcommand{\clnum}[1]{\Clique(#1)}                       

\newcommand{\chrnum}[1]{\Chromatic(#1)}                   

\newcommand{\fchrnum}[1]{\Chromatic_{\mathrm{f}}(#1)}     

\newcommand{\Eigval}[2]{\lambda_{#1}(#2)}  
\newcommand{\CoG}[1]{\Complete_{#1}}       
\newcommand{\CoBG}[2]{\Complete_{#1,#2}}   
\newcommand{\FG}[1]{\Friendship_{#1}}      
\newcommand{\GFG}[2]{\Friendship_{#1,#2}}  
\newcommand{\PathG}[1]{\Path_{#1}}         
\newcommand{\CG}[1]{\Cycle_{#1}}           
\newcommand{\SG}[1]{\Star_{#1}}            
\newcommand{\srg}[4]{\SRG(#1,#2,#3,#4)}    

\newcommand{\Cayley}[2]{\mathrm{Cay}\left({#1},{#2}\right)} 
\DeclareMathOperator{\Aut}{\mathrm{Aut}} 
\newcommand{\Sym}{\mathrm{Sym}} 

\newcommand{\Gilbert}[1]{\mathcal{G}_{#1}} 
\newcommand{\wt}[1]{w_H(\mathbf{#1})} 
\newcommand{\inner}[2]{\left\langle #1 \, , \, #2 \right\rangle} 
\newcommand{\boseMesner}{\mathcal{B}}
\newcommand{\Ei}{\mathbf{E}} 							  
\newcommand{\parity}{\mathrm{par}}
\newcommand{\Dh}{\mathrm{d}_H} 
\newcommand{\DG}{\mathrm{d}_{\Gr{G}}}

\newcommand{\DU}{\hspace{0.1em} \dot{\cup} \hspace{0.1em}}  
\newcommand{\NS}{\, \underline{\vee} \,}   
\newcommand{\NNS}{\, \uuline{\vee} \,}     
\newcommand{\DuplicationGraph}[1]{\mathrm{Du}(#1)}                                                                        
\newcommand{\Corona}[2]{{#1} \circ {#2}}                                                                                  
\newcommand{\EdgeCorona}[2]{{#1} \diamondsuit \hspace*{0.03cm} {#2}}                                                      
\newcommand{\DuplicationCorona}[2]{{#1} \boxminus {#2}}                                                                   
\newcommand{\DuplicationEdgeCorona}[2]{{#1} \boxplus {#2}}                                                                
\newcommand{\ClosedNeighborhoodCorona}[2]{{#1} \, \underline{\boxtimes} \, {#2}}                                          
\newcommand{\SubdivisionGraph}[1]{\mathrm{S}(#1)}                                                                         
\newcommand{\BipartiteIncidenceGraph}[1]{\mathrm{B}(#1)}                                                                  
\newcommand{\SVBVJ}[2]{\SubdivisionGraph{#1} \, \ddot{\vee} \, \BipartiteIncidenceGraph{#2}}                              
\newcommand{\SEBEJ}[2]{\SubdivisionGraph{#1} \, \overset{\raisebox{-0.1cm}{$=$}}{\vee} \, \BipartiteIncidenceGraph{#2}}   

\newcommand{\SEBVJ}[2]{\SubdivisionGraph{#1} \, {\overset{\raisebox{-0.25cm}{$\stackrel{\rule{0.25cm}{0.05mm}}{\cdot}$}}{\vee}} \hspace*{0.1cm} \BipartiteIncidenceGraph{#2}}

\newcommand{\SVBEJ}[2]{\SubdivisionGraph{#1} \, {\overset{\raisebox{0.0cm}{$\stackrel{\cdot}{\rule{0.25cm}{0.05mm}}$}}{\vee}} \hspace*{0.1cm} \BipartiteIncidenceGraph{#2}}

\DeclareMathOperator{\Neighbors}{\set{N}} 
\DeclareMathOperator{\Degree}{\text{d}} 
\newcommand{\trace}[1]{\text{Tr}{(#1)}}
\newcommand{\Gmats}{\{\A,\LM,\Q, {\bf{\mathcal{L}}}, \overline{\A}, \overline{\LM}, \overline{\Q}, \overline{{\bf{\mathcal{L}}}}\} }
\newcommand{\AM}[1]{\text{AM}{(#1)}}
\newcommand{\GM}[1]{\text{GM}{(#1)}}
\newcommand{\diag}[1]{\operatorname{diag}\bigl(#1\bigr)}
\newcommand{\rank}{\operatorname{rank}}

\begin{document}

\bibliographystyle{plain}

\authorEnglish{Noam Krupnik} 

\titleEnglish{Contributions in Algebraic \\Graph Theory}

\supervisorEnglish{The research thesis was done under the supervision  
of Professor Igal Sason and Professor Emeritus Abraham Berman in the Computer Science Department.}

\ethicsDeclaration{ The author of this thesis states that the research, including the collection,
processing and presentation of data, addressing and comparing to previous
research, etc., was done entirely in an honest way, as expected from scientific
research that is conducted according to the ethical standards of the academic
world. Also, reporting the research and its results in this thesis was done in  
an honest and complete manner, according to the same standards.}

\publicationsEnglish{
\textbf{List of Publications}
\begin{list}{}{\setlength{\leftmargin}{1.8em}\setlength{\itemindent}{-1.8em}\setlength{\itemsep}{0.35em}}
\item[\textbf{[1]}] N. Krupnik and A. Berman, ``The graphs of pyramids are determined by their spectrum,'' {\em Linear Algebra and its Applications}, 2024.
\item[\textbf{[2]}] I. Sason, N. Krupnik, S. Hamud, and A. Berman, ``On spectral graph determination,'' {\em Mathematics}, 2025, {\bf 13}, 549.
\item[\textbf{[3]}] N. Krupnik, I. Sason, and A. Berman, ``On the transitivity of generalized-Hamming graphs and their complements,'' submitted April 2026, Available at: \doilink{https://doi.org/10.48550/arXiv.2603.21627}.
\end{list}
}

\GregorianDateEnglish{July 2026} 
\JewishDateEnglish{Av 5786} 

\personalThanksEnglish{I would like to express my sincere gratitude to my supervisors, Professor Igal Sason and Professor Emeritus Abraham Berman, for their invaluable guidance throughout my research journey. Their expertise, encouragement, and unwavering support were instrumental in the completion of this thesis. I am deeply grateful for the opportunity to work under their supervision and to learn from them. Additionally, I would like to thank Professor Ronny Roth and Professor Raphael Yuster for carefully reading my thesis and serving on my examination committee.

I am also grateful to Professor Eitan Yaakobi and Ms. Melinda Margolis for their continued support throughout my undergraduate and graduate studies, and for believing in me even when I doubted myself.

This thesis is dedicated to my grandmother, Dr. Dafna Shasha, and to the memory of my grandfather, Professor Shaul Shasha, whose influence on my life and academic path has been profound. My grandfather was a true mentor to me. We could talk about anything, exchanging ideas, stories, insights, and jokes. He was the wisest person I have ever known. The mathematical discussions that my grandmother and I have shared from my childhood to this day have always been a source of inspiration and fascination.

I would also like to thank my siblings, Carmel and Inbal. Carmel, thank you for the countless discussions about functions, the Dirac delta distribution, and the philosophy of mathematics. Inbal, thank you for listening when I shared my work with you, even though mathematics is not your favorite subject.

Finally, I extend my deepest gratitude to my parents, Haya and Hagai, who made this journey possible. You have always given me unconditional love and support through both the good times and the difficult ones. I consider myself fortunate to be your son.}

\financialThanksEnglish{The generous financial support of the Technion is gratefully acknowledged.} 

\maketitleEnglish

\abstractEnglish

This thesis investigates two central directions in algebraic graph theory, with an emphasis on spectral methods: spectral determination of graphs and transitivity properties of generalized-Hamming graphs and their complements.

The first part focuses on graphs that are determined by the spectra of associated matrices. We study spectral determination with respect to the adjacency, Laplacian, signless Laplacian, and normalized Laplacian matrices, with particular emphasis on the adjacency spectrum.
We survey existing results on graphs determined by their spectrum and develop new proof techniques for establishing spectral uniqueness. In particular, we present new proofs for the spectral characterization of complete bipartite graphs and Tur\'{a}n graphs, as well as some new results related to the spectral characterization of the important family of strongly regular graphs. In addition, we introduce a new family of graphs, called \emph{the graphs of pyramids}, and prove that they are determined by their adjacency spectrum using tools from matrix analysis, such as Cauchy's interlacing theorem and Schur complements.

The second part of the thesis studies generalized-Hamming graphs, a family of Cayley graphs that generalize the sub-family of Hamming graphs, and their complements. We classify the parameters for which these graphs are edge-transitive or even distance-transitive. Our analysis combines spectral methods, group-theoretic arguments, and techniques from the theory of association schemes. As an application, we derive closed-form expressions for the Lov\'{a}sz $\vartheta$-function of generalized-Hamming graphs and their complements whenever either the graph or its complement is edge-transitive.

Overall, the results demonstrate how spectral methods provide powerful tools for understanding the structure and symmetry of graphs, and they suggest several directions for further research in spectral graph theory.

\abbreviationsAndNotationsEnglish

\begin{tabular}{lcl}
$\naturals$ & --- & The set of natural numbers\\  
$\Reals$ & --- & The set of real numbers\\
$\Reals^{n}$ & --- & The set of all $n$-dimensional column vectors with real entries\\
$\Reals^{n\times m}$ & --- & The set of all $n \times m$ matrices with real entries\\
$\Sym_n$ & --- & The set of all $n \times n$ symmetric matrices with real entries\\
$[n]$ & --- & The set $\{1,2,\ldots,n\}$\\
$\I{n}$ & --- & The $n \times n$ identity matrix\\
$\mathbf{0}_{k,m}$ & --- & The $k \times m$ all-zero matrix\\
$\J{k,m}$ & --- & The $k \times m$ all-ones matrix\\ 
$\mathbf{1}_n$ & --- & The $n$-dimensional column vector of ones\\
$\Gr{G}$ & --- & A simple finite graph\\
$\CGr{G}$ & --- & The complement graph of $\Gr{G}$\\ 
$\Gr{G}_1 \DU \Gr{G}_2$ & --- & The disjoint union of graphs $\Gr{G}_1$ and $\Gr{G}_2$\\
$\Gr{G}_1 \lor \Gr{G}_2$ & --- & The join of graphs $\Gr{G}_1$ and $\Gr{G}_2$\\
$\V{\Gr{G}}$ & --- & The vertex set of a graph $\Gr{G}$\\
$\E{\Gr{G}}$ & --- & The edge set of a graph $\Gr{G}$\\
$\A(\Gr{G})$ & --- & The adjacency matrix of a graph $\Gr{G}$\\
$\Laplacian(\Gr{G})$ & --- & The Laplacian matrix of a graph $\Gr{G}$\\
$\SignlessLaplacian(\Gr{G})$ & --- & The signless Laplacian matrix of a graph $\Gr{G}$\\
$\NormalizedLaplacian(\Gr{G})$ & --- & The normalized Laplacian matrix of a graph $\Gr{G}$\\
$\D(\Gr{G})$ & --- & The degrees matrix of a graph $\Gr{G}$\\
$\Degree(v)$ & --- & The degree of a vertex $v$ \\
$\sigma(\Gr{G})$ & --- & The adjacency spectrum of a graph $\Gr{G}$ \\
$\sigma_X(\Gr{G})$ & --- & The spectrum of $X(\Gr{G})$ \\
$\ell(\Gr{G})$ & --- & The line graph of a graph $\Gr{G}$ \\
$\nicefrac{\mathbf{M}}{\mathbf{D}}$ & --- & The Schur complement of a matrix $\mathbf{M}$ with respect to a submatrix $\mathbf{D}$ \\ 
$X$-DS & --- & Determined by its $X$ spectrum \\
$X$-NICS & --- & Non-isomorphic cospectral with respect to its $X$ spectrum \\
\end{tabular}
   
\clearpage

\begin{tabular}{lcl}
$\Complete_n$ & --- & The complete graph on $n$ vertices  \\
$\Complete_{k,m}$ & --- & The complete bipartite graph on parts with sizes $k$ and $m$ \\
$\Complete_{n_1,\dots,n_k}$ & --- & The complete $k$-partite graph on parts with sizes $n_1,\dots,n_k$ \\
$\Cycle_n$ & --- & The cycle graph on $n$ vertices \\
$\Path_n$ & --- & The path graph on $n$ vertices \\
$\Star_n$ & --- & The star graph on $n$ vertices \\
$\Friendship_q$ & --- & The friendship graph on $2q+1$ vertices \\
$\GOP_{n,k}$ & --- & The graph of pyramids \\
$T(n,k)$ & --- & The Tur\'{a}n graph \\
$\srg{n}{d}{\lambda}{\mu}$ & --- & The strongly regular graph with parameters $n$, $d$, $\lambda$, $\mu$ \\
$\Clique(\Gr{G})$ & --- & The clique number of a graph $\Gr{G}$ \\
$\indnum{\Gr{G}}$ & --- & The independence number of a graph $\Gr{G}$  \\
$\Chromatic(\Gr{G})$ & --- & The chromatic number of a graph $\Gr{G}$ \\
$\fchrnum{\Gr{G}}$ & --- & The fractional chromatic number of a graph $\Gr{G}$ \\
$\Theta(\Gr{G})$ & --- & The Shannon capacity of a graph $\Gr{G}$ \\
$\vartheta(\Gr{G})$ & --- & The Lov\'{a}sz theta function of a graph $\Gr{G}$ \\
$\AM{a,b}$ & --- & The arithmetic mean of $a$ and $b$ \\
$\GM{a,b}$ & --- & The geometric mean of $a$ and $b$ \\
$\DuplicationGraph{G}$ & --- & The duplication graph of a graph $G$ \\
$\Corona{\Gr{G}_1}{\Gr{G}_2}$ & --- & The corona of graphs $\Gr{G}_1$ and $\Gr{G}_2$ \\
$\EdgeCorona{\Gr{G}_1}{\Gr{G}_2}$ & --- & The edge corona of graphs $\Gr{G}_1$ and $\Gr{G}_2$ \\
$\DuplicationCorona{\Gr{G}_1}{\Gr{G}_2}$ & --- & The duplication corona of graphs $\Gr{G}_1$ and $\Gr{G}_2$ \\
$\Gr{G}_1 \boxbar \Gr{G}_2$ & --- & The duplication neighborhood corona of graphs $\Gr{G}_1$ and $\Gr{G}_2$ \\
$\DuplicationEdgeCorona{\Gr{G}_1}{\Gr{G}_2}$ & --- & The duplication edge corona of graphs $\Gr{G}_1$ and $\Gr{G}_2$ \\
$\ClosedNeighborhoodCorona{\Gr{G}_1}{\Gr{G}_2}$ & --- & The closed neighborhood corona of graphs $\Gr{G}_1$ and $\Gr{G}_2$ \\
$\SubdivisionGraph{\Gr{G}}$ & --- & The subdivision graph of a graph $\Gr{G}$ \\
$\BipartiteIncidenceGraph{\Gr{G}}$ & --- & The bipartite incidence graph of a graph $\Gr{G}$ \\
$\SVBVJ{\Gr{G}_1}{\Gr{G}_2}$ & --- & The subdivision-vertex-bipartite-vertex join of graphs $\Gr{G}_1$ and $\Gr{G}_2$ \\
$\SEBEJ{\Gr{G}_1}{\Gr{G}_2}$ & --- & The subdivision-edge-bipartite-edge join of graphs $\Gr{G}_1$ and $\Gr{G}_2$ \\
$\SEBVJ{\Gr{G}_1}{\Gr{G}_2}$ & --- & The subdivision-edge-bipartite-vertex join of graphs $\Gr{G}_1$ and $\Gr{G}_2$ \\
$\SEBVJ{\Gr{G}_1}{\Gr{G}_2}$ & --- & The subdivision-vertex-bipartite-edge join of graphs $\Gr{G}_1$ and $\Gr{G}_2$ \\
$\Gr{G_1} \NS \Gr{G_2}$ & --- & The neighbors splitting join of graphs $\Gr{G_1}$ and $\Gr{G_2}$ \\
$\Gr{G_1} \NNS \Gr{G_2}$ & --- & The nonneighbors splitting join of graphs $\Gr{G_1}$ and $\Gr{G_2}$ \\
$I(n)$ & --- & The numbers of distinct graphs on $n$ vertices \\
$\alpha_{\mathcal{X}}(n)$ & --- & The number of $\mathcal{X}$-DS graphs on $n$ vertices \\ 
\end{tabular}

\clearpage

\begin{tabular}{lcl}

$\Field_q$ & --- & The field with $q$ elements \\
$\Field_q^n$ & --- & The $n$-dimensional vector space over $\Field_q$ \\ 
$\Gilbert{q,n,d}$ & --- & The generalized-Hamming graph with parameters $q$, $n$, and $d$ \\
$\wt{\mathbf{x}}$ & --- & The weight of a vector $\mathbf{x}$ \\
$\Dh(\mathbf{x},\mathbf{y})$ & --- & The Hamming distance between vectors $\mathbf{x}$ and $\mathbf{y}$ \\
$\DG(u,v)$ & --- & The graph distance between two vertices $u$ and $v$ \\
$\Neighbors(v)$ & --- & The neighborhood of a vertex $v$ \\
$\Cayley{G}{\mathcal{S}}$ & --- & The Cayley graph of a group $G$ with respect to a generating set $\mathcal{S}$ \\
$\Aut(\Gr{G})$ & --- & The automorphism group of a graph $\Gr{G}$ \\
$\boseMesner(\mathcal{R})$ & --- & The Bose-Mesner algebra of an association scheme $\mathcal{R}$ \\
$K_\ell(i;n,q)$ & --- & The $q$-ary Krawtchouk polynomial \\
$\Ei_i$ & --- & The $i$-th element in the dual basis of a Bose-Mesner algebra \\
$\chi_{\mathbf{y}}$ & --- & The additive character of an abelian group associated with a vector $\mathbf{y}$ \\
$\inner{\mathbf{x}}{\mathbf{y}}$ & --- & The inner product of vectors $\mathbf{x}$ and $\mathbf{y}$ \\
$\trace{a}$ & --- & The trace of an element $a$ in an extension field \\
$\mathrm{mc}(\Gr{G})$ & --- & The maximum cut of a graph $\Gr{G}$ \\
$\mathrm{sp}(\Gr{G})$ & --- & The surplus of a graph $\Gr{G}$ \\
\end{tabular}

 

\chapter{Introduction}
\label{chap:introduction}
 
Algebraic graph theory studies graphs through algebraic tools such as matrices, polynomials, and groups, enabling structural insights that often go beyond purely combinatorial methods.
Every graph can be associated with various algebraic objects, such as its adjacency matrix, automorphism group, Lov\'{a}sz theta function, and many others.
These connections allow one to translate combinatorial problems into algebraic ones, where powerful analytical techniques become available.
 
Spectral graph theory is a subfield of algebraic graph theory that focuses on the study of the eigenvalues and eigenvectors of graph-related matrices, such as the adjacency matrix $\Adjacency$, the Laplacian $\Laplacian$, the signless Laplacian $\SignlessLaplacian$, and the normalized Laplacian $\NormalizedLaplacian$.
Unlike many graph invariants, such as the chromatic number or independence number, spectral invariants can be computed efficiently (e.g., in polynomial time), making them particularly attractive in algorithmic contexts.
 
Algebraic and spectral graph theory play an important role in several areas of computer science, including network analysis, machine learning, and coding theory.
Applications range from constrained systems and error-correcting \cite{MarcusRS2001, SipserS2002} codes to clustering algorithms \cite{NgJW2001, ShiM2000}, graph-based deep learning \cite{KipfW2016}, and ranking methods \cite{PageBMW1998}.
Furthermore, many known combinatorial results have been proved via spectral and algebraic methods, such as the friendship theorem \cite{Erdos1963}, the Graham--Pollak theorem \cite{GrahamP1971}, the Erd\H{o}s--Ko--Rado theorem \cite{FranklG1989}, and Kirchhoff's matrix-tree theorem \cite{Kirchhoff1958}.
 
The spectra of graph-associated matrices encode rich structural information.
For example, the largest adjacency eigenvalue of a graph $\Gr{G}$ lies between the average degree and the maximum degree of the graph.
The second-smallest Laplacian eigenvalue (the algebraic connectivity) provides both lower and upper bounds on the Cheeger constant of a graph \cite{AlonM85}.
With respect to both the Laplacian and normalized Laplacian spectra, the multiplicity of $0$ equals the number of connected components in $\Gr{G}$.
The multiplicity of the zero eigenvalue of the signless Laplacian equals the number of bipartite connected components in $\Gr{G}$.
The largest eigenvalue of the normalized Laplacian is at most $2$, and it equals $2$ if and only if $\Gr{G}$ has a nontrivial bipartite connected component.
These examples illustrate the extent to which spectral information reflects structural properties of graphs.
 
In this thesis, we review both classical and recent results regarding the relationship between the spectra of each of these four matrices and the structural properties of the corresponding graphs, with an emphasis on the adjacency matrix.
We also prove several new results in this direction, including a spectral characterization of strong regularity.
 
Each of the matrices $\{\Adjacency, \Laplacian, \SignlessLaplacian, \NormalizedLaplacian\}$ uniquely encodes the structure of a graph, in contrast to their spectra, which may be identical for several non-isomorphic graphs.
Since Kac's famous question, ``Can one hear the shape of a drum?'' in \cite{Kac66}, there has been growing interest in studying cases in which the spectrum of a matrix associated with a graph uniquely determines the structure of the graph (up to isomorphism).
Two graphs are said to be cospectral if they share the same multiset of eigenvalues, and a graph $\Gr{G}$ is said to be determined by its spectrum (DS) if any graph cospectral with $\Gr{G}$ is isomorphic to it.
 
The thesis explores the concepts of cospectral and DS graphs.
In particular, we show that certain families of graphs, such as Tur\'{a}n graphs, the Petersen graph, and graphs of pyramids, are determined by their spectra, and we also characterize the cospectral mates of complete bipartite graphs.
It also surveys a variety of unary and binary graph operations that preserve the spectrum, enabling the construction of cospectral graphs.
 
The algebraic study of graphs is not limited to their matrices, but extends to other algebraic structures.
The automorphism group of a graph is another well-studied object that captures its symmetries.
Vertex-, edge-, and distance-transitivity are properties that can be characterized via the structure of the automorphism group.
These properties have implications for many graph invariants, including the Lov\'{a}sz $\vartheta$ function, the Schrijver $\vartheta$ function, and the Shannon capacity.
 
The thesis studies the transitivity properties of generalized-Hamming graphs $\Gilbert{q,n,d}$ and their complements, which are Cayley graphs that arise in coding theory and information theory.
It characterizes the edge- and distance-transitivity of these graphs as functions of their parameters.
The proofs rely on techniques from representation theory and the theory of association schemes, yielding a detailed structural and spectral description of these graphs, including a spectral decomposition and a closed-form expression for their eigenvalues.
The results are then applied to the computation of the Lov\'{a}sz $\vartheta$ function for a subset of these graphs.
 
The rest of the thesis is structured as follows:
\begin{itemize}
\item Chapter \ref{chap:on_spectral_graph_determination} surveys classical and recent results on spectral graph determination, with an emphasis on the adjacency matrix. It reviews spectral properties of graphs, the notion of graphs determined by their spectrum, and constructions of cospectral graphs via standard graph operations. This chapter also presents new proofs that highlight spectral implications of strong regularity and characterizes cospectral mates of several graph families, including Tur\'{a}n and complete bipartite graphs. In addition, it discusses open problems in the field, with particular emphasis on Haemers’ conjecture. Section \ref{section: preliminaries} provides background material used in Chapters \ref{chap:graphs_of_pyramids} and \ref{chap:on_the_transitivity_of_Gilbert_graphs_and_their_complements}. This chapter reproduces, with minor modifications, the results of \cite{SasonKHB2025}.
 
\item Chapter \ref{chap:graphs_of_pyramids} introduces a new family of graphs, called graphs of pyramids, and studies their spectral properties. These graphs generalize the graphs $\GOP_n$, which are related to completely positive matrices. A closed-form expression for their eigenvalues is derived using Schur complements, and it is shown that these graphs are determined by their adjacency spectrum. This chapter reproduces, with minor modifications, the results of \cite{KrupnikB2024}.
 
\item Chapter \ref{chap:on_the_transitivity_of_Gilbert_graphs_and_their_complements} investigates the transitivity properties of generalized-Hamming graphs $\Gilbert{q,n,d}$ and their complements, which arise naturally in coding theory. It characterizes edge- and distance-transitivity in terms of the parameters $q,n,d$. The analysis combines combinatorial arguments with algebraic techniques from representation theory and association schemes, and further explores implications for computing the Lov\'{a}sz $\vartheta$ function. This chapter reproduces, with minor modifications, the results of \cite{KrupnikSB2026}.
 
\item Chapter \ref{chap:summary_and_outlook} summarizes the main results of the thesis and outlines directions for future research.
\end{itemize}

\chapter{On Spectral Graph Determination}
\label{chap:on_spectral_graph_determination}

\begin{center}
\textbf{Chapter Overview}
\end{center}
\noindent
The study of spectral graph determination is a fascinating area of research in spectral graph
theory and algebraic combinatorics. This field focuses on examining the spectral characterization
of various classes of graphs, developing methods to construct or distinguish cospectral nonisomorphic
graphs, and analyzing the conditions under which a graph's spectrum uniquely determines its structure.
This chapter presents an overview of both classical and recent advancements in these topics, along
with newly obtained proofs of some existing results, which offer additional insights.
\par\medskip

\section{Introduction}
\label{section: Introduction}

Spectral graph theory lies at the intersection of combinatorics and matrix theory, exploring the structural
and combinatorial properties of graphs through the analysis of the eigenvalues and eigenvectors of matrices
associated with these graphs \cite{BrouwerH2011,Chung1997,CvetkovicDS1995,CvetkovicRS2010,GodsilR2001}.
Spectral properties of graphs offer powerful insights into a variety of useful graph characteristics, enabling
the determination or estimation of features such as the independence number, clique number, chromatic number,
and the Shannon capacity of graphs, which are NP-hard to compute.

A particularly intriguing topic in spectral graph theory is the study of cospectral graphs, i.e., graphs that
share identical multisets of eigenvalues with respect to one or more matrix representations. While isomorphic
graphs are always cospectral, non-isomorphic graphs may also share spectra, leading to the study of non-isomorphic
cospectral (NICS) graphs. This phenomenon raises profound questions about the extent to which a graph’s spectrum
encodes its structural properties. Conversely, graphs determined by their spectrum (DS graphs) are uniquely
identifiable, up to isomorphism, by their eigenvalues. In other words, a graph is DS if and only if no other
non-isomorphic graph shares the same spectrum.

The problem of spectral graph determination and the characterization of DS graphs dates back to the pioneering
1956 paper by G\"{u}nthard and Primas \cite{GunthardP56}, which explored the interplay between graph theory and
chemistry. This paper posed the question of whether graphs can be uniquely determined by their spectra with respect
to their adjacency matrix $\A$.

While every graph can be determined by its adjacency matrix, which enables the determination
of every graph by its eigenvalues and a basis of corresponding eigenvectors, the characterization of graphs
for which eigenvalues alone suffice for identification forms a fertile area of research in spectral graph theory.
This research holds both theoretical interest and practical implications.

Subsequent studies have broadened the scope of this question to include determination by the spectra of other
significant matrices, such as the Laplacian matrix ($\LM$), signless Laplacian matrix ($\Q$), and normalized
Laplacian matrix (${\bf{\mathcal{L}}}$), among many other matrices associated with graphs.
The study of cospectral and DS graphs with respect to these matrices has become a cornerstone of spectral graph
theory. This line of research has far-reaching applications in diverse fields, including chemistry and molecular
structure analysis, physics and quantum computing, network communication theory, machine learning, and data science.

One of the most prominent conjectures in this area is Haemers' conjecture \cite{Haemers2016,Haemers2024}, which
posits that most graphs are determined by the spectrum of their adjacency matrices ($\A$-DS). Despite many efforts
in proving this open conjecture, some theoretical and experimental progress on the theme of this conjecture has been
recently presented in \cite{KovalK2024,WangW2024}, while also graphs or graph families that are not DS continue to be
discovered. Haemers' conjecture has spurred significant interest in classifying DS graphs and understanding the factors
that influence spectral determination, particularly among special families of graphs such as regular graphs, strongly
regular graphs, trees, graphs of pyramids, as well as the construction of NICS graphs by a variety of graph operations.
Studies in these directions of research have been covered in the seminal works by Schwenk \cite{Schwenk1973}, and by van
Dam and Haemers \cite{vanDamH03,vanDamH09}, as well as in more recent studies
such as
\cite{AbdianBTKO21,AbiadH2012,AbiadBBCGV2022,Butler2010,ButlerJ2011,BermanCCLZ2018,Butler2016,ButlerH2016,BuZ2012,BuZ2012b,
CamaraH14,DasP2013,DuttaA20,GodsilM1982,HamidzadeK2010,HamudB24,JinZ2014,KannanPW22,KoolenHI2016,KoolenHI2016b,KovalK2024,KrupnikB2024,
LinLX2019,LiuZG2008,MaRen2010,OboudiAAB2021,OmidiT2007,OmidiV2010,Sason2024,YeLS2025,ZhangLY09,ZhangLZY09,ZhouBu2012}, and references therein.
Specific contributions of these papers to the problem of the spectral determination of graphs are addressed in the continuation of this chapter.

This chapter surveys both classical and recent results on spectral graph determination, also presenting newly obtained
proofs of some existing results, which offer additional insights.

The chapter emphasizes the significance of adjacency spectra ($\A$-spectra), and it provides conditions for $\A$-cospectrality, $\A$-NICS, and
$\A$-DS graphs, offering examples that support or refute Haemers’ conjecture.
We furthermore address the cospectrality of graphs with respect to the Laplacian, signless Laplacian, and normalized Laplacian matrices. For regular
graphs, cospectrality with respect to any one of these matrices (or the adjacency matrix) implies cospectrality with respect to all the others,
enabling a unified framework for studying DS and NICS graphs across different matrix representations. However, for irregular graphs, cospectrality
with respect to one matrix does not necessarily imply cospectrality with respect to another. This distinction underscores the complexity of
analyzing spectral properties in irregular graphs, where the interplay among different matrix representations becomes more intricate and often
necessitates distinct techniques for characterization and comparison.

The structure of the chapter is as follows: Section~\ref{section: preliminaries} provides preliminary material in matrix theory, graph theory,
and graph-associated matrices. Section~\ref{section: DS graphs} focuses on graphs determined by their spectra (with respect to one or
multiple matrices). Section~\ref{section: special families of graphs} examines special families of graphs and their determination by
adjacency spectra. Section~\ref{section: graph operations} analyzes unitary and binary graph operations, emphasizing their impact on
spectral determination and construction of NICS graphs. Finally, Section~\ref{section: summary and outlook} concludes the chapter with open questions and an outlook on spectral graph determination, highlighting areas for further research.

\section{Preliminaries}
\label{section: preliminaries}
The present section provides preliminary material and notation in matrix theory, graph theory,
and graph-associated matrices, which serves for the presentation of this chapter.

\subsection{Matrix Theory Preliminaries}
\label{subsection: Matrix Theory Preliminaries}

The following standard notation in matrix theory is used in this chapter:
\begin{itemize}
	\item $\Reals^{n\times m}$ denotes the set of all $n \times m$ matrices with real entries,
	\item $\Reals^{n} \triangleq \Reals^{n\times 1}$ denotes the set of all $n$-dimensional column vectors with real entries,
	\item $\I{n}\in\Reals^{n\times n}$ denotes the $n \times n$ identity matrix,
    \item $\mathbf{0}_{k,m} \in\Reals^{k\times m}$ denotes the $k \times m$ all-zero matrix,
    \item $\J{k,m}\in\Reals^{k\times m}$ denotes the $k \times m$ all-ones matrix,
	\item $\mathbf{1}_n \triangleq \J{n,1} \in \Reals^n$ denotes the $n$-dimensional column vector of ones.
\end{itemize}

Throughout this chapter, we deal with real matrices.

The concepts of \emph{Schur complement} and \emph{interlacing of eigenvalues} are useful in papers on
spectral graph determination and cospectral graphs, and are also addressed in this chapter.

\begin{definition}
\label{definition: Schur complement}
Let $\mathbf{M}$ be a block matrix
\begin{align}
\mathbf{M}=
\begin{pmatrix}
\mathbf{A} & \mathbf{B}\\
\mathbf{C} & \mathbf{D}
\end{pmatrix},
\end{align}
where the block $\mathbf{D}$ is invertible. The \emph{Schur complement of $D$ in $M$} is
\begin{align}
\label{definition: eq - Schur complement}
\mathbf{M/D}= \mathbf{A}-\mathbf{BD}^{-1}\mathbf{C}.
\end{align}
\end{definition}

Schur proved the following remarkable theorem:
\begin{theorem}[Theorem on the Schur complement \cite{Schur1917}]
\label{theorem: Schur complement}
If $D$ is invertible, then
\begin{align}
\label{eq: Schur's formula}
\det{\mathbf{M}} & =\det(\mathbf{M/D}) \, \det{\mathbf{D}}.
\end{align}
\end{theorem}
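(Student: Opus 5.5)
The plan is to factor $\mathbf{M}$ into a product of block-triangular matrices whose determinants are easy to read off. Throughout, $\mathbf{D}$ is square and invertible, so $\mathbf{A}$ is square as well. Taking determinants of the factorization and using multiplicativity of the determinant will then give \eqref{eq: Schur's formula}.

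The first step is the block identity
\begin{align*}
\begin{pmatrix}
\mathbf{A} & \mathbf{B}\\
\mathbf{C} & \mathbf{D}
\end{pmatrix}
=
\begin{pmatrix}
\I{k} & \mathbf{B}\mathbf{D}^{-1}\\
\mathbf{0} & \I{m}
\end{pmatrix}
\begin{pmatrix}
\mathbf{A}-\mathbf{B}\mathbf{D}^{-1}\mathbf{C} & \mathbf{0}\\
\mathbf{0} & \mathbf{D}
\end{pmatrix}
\begin{pmatrix}
\I{k} & \mathbf{0}\\
\mathbf{D}^{-1}\mathbf{C} & \I{m}
\end{pmatrix},
\end{align*}
where $k$ and $m$ are the sizes of $\mathbf{A}$ and $\mathbf{D}$. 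I verify it by multiplying out the blocks:
\begin{itemize}
\item the middle and right factors give $\begin{pmatrix}\mathbf{M/D} & \mathbf{0}\\ \mathbf{C} & \mathbf{D}\end{pmatrix}$;
\item left-multiplying this by the first factor gives top-left block $\mathbf{M/D}+\mathbf{B}\mathbf{D}^{-1}\mathbf{C}=\mathbf{A}$ and top-right block $\mathbf{B}\mathbf{D}^{-1}\mathbf{D}=\mathbf{B}$;
\item the bottom row is unchanged.
\end{itemize}

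The second step evaluates the three determinants. The outer factors are block unitriangular, so their determinants equal $1$. One way to see this is Laplace expansion along the rows or columns containing the identity blocks. Another is to note that they are triangular with unit diagonal after viewing them entrywise.

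For the block-diagonal middle factor, the determinant is $\det(\mathbf{M/D})\det\mathbf{D}$. This follows from the Leibniz formula: any permutation mixing the two index blocks picks up a zero entry.

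Multiplicativity of $\det$ then yields $\det\mathbf{M}=\det(\mathbf{M/D})\det\mathbf{D}$.

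There is no real obstacle here. The only point needing care is justifying the determinants of block-triangular and block-diagonal matrices. I would handle this with the permutation-expansion argument rather than citing a further result. The argument is that a nonzero term of the Leibniz expansion must map the last $m$ indices to themselves, so the sum factorizes.
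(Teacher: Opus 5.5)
Your proof is correct. The three-factor block factorization multiplies out exactly as you claim. The outer factors are entrywise upper and lower triangular with unit diagonal, so their determinants are $1$. Your Leibniz argument for the block-diagonal middle factor is sound.

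The paper gives no proof of this statement; it only quotes Schur's theorem. So there is no alternative route to compare against, and your block $LDU$ argument is the standard one.

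If you want to save a step, one-sided elimination is enough:
\[
\mathbf{M}\begin{pmatrix}\I{k} & \mathbf{0}\\ -\mathbf{D}^{-1}\mathbf{C} & \I{m}\end{pmatrix}
=\begin{pmatrix}\mathbf{M/D} & \mathbf{B}\\ \mathbf{0} & \mathbf{D}\end{pmatrix}.
\]
Your own observation then finishes it: any nonzero Leibniz term must map the last $m$ indices to themselves. That already gives the determinant of a block-triangular matrix, not just a block-diagonal one.
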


\begin{theorem}[Cauchy Interlacing Theorem \cite{ParlettB1998}]
\label{thm:interlacing}
Let $\lambda_{1} \ge \ldots \ge \lambda_{n}$ be the eigenvalues of a symmetric matrix $\mathbf{M}$
and let $\mu_{1}\ge\ldots\ge\mu_{m}$ be the eigenvalues of a \emph{principal $m \times m$ submatrix
of $\mathbf{M}$} (i.e., a submatrix that is obtained by deleting the same set of rows and columns
from $M$) then $\lambda_{i}\ge\mu_{i}\ge\lambda_{n-m+i}$ for $i=1,\ldots,m$.
\end{theorem}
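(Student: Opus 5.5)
The statement to be proved is the Cauchy Interlacing Theorem (Theorem~\ref{thm:interlacing}), and the plan is to derive it from the Courant--Fischer min--max characterization of eigenvalues of a real symmetric matrix. For $\mathbf{M} \in \Reals^{n \times n}$ symmetric with eigenvalues $\lambda_1 \ge \dots \ge \lambda_n$, this characterization reads
\begin{align*}
\lambda_k = \max_{\dim \mathcal{U} = k} \; \min_{\mathbf{x} \in \mathcal{U}\setminus\{\mathbf{0}\}} \frac{\mathbf{x}^{\mathrm{T}}\mathbf{M}\mathbf{x}}{\mathbf{x}^{\mathrm{T}}\mathbf{x}}
= \min_{\dim \mathcal{W} = n-k+1} \; \max_{\mathbf{x} \in \mathcal{W}\setminus\{\mathbf{0}\}} \frac{\mathbf{x}^{\mathrm{T}}\mathbf{M}\mathbf{x}}{\mathbf{x}^{\mathrm{T}}\mathbf{x}}.
\end{align*}
It is proved in the usual way. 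Take an orthonormal eigenbasis $\mathbf{v}_1,\dots,\mathbf{v}_n$. Any $k$-dimensional subspace $\mathcal{U}$ meets $\mathrm{span}\{\mathbf{v}_k,\dots,\mathbf{v}_n\}$ nontrivially, by dimension counting, and on that span the Rayleigh quotient is at most $\lambda_k$. The choice $\mathcal{U}=\mathrm{span}\{\mathbf{v}_1,\dots,\mathbf{v}_k\}$ attains $\lambda_k$. The second formula follows symmetrically, or by applying the first to $-\mathbf{M}$.

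Next we reduce to a convenient form. After a simultaneous permutation of rows and columns, which does not change the spectrum, we may assume the principal submatrix $\mathbf{B}$ occupies the first $m$ rows and columns, so $\mathbf{B} = \mathbf{P}^{\mathrm{T}}\mathbf{M}\mathbf{P}$ with $\mathbf{P} = \begin{pmatrix} \I{m} \\ \mathbf{0}_{n-m,m}\end{pmatrix}$. The map $\mathbf{y} \mapsto \mathbf{P}\mathbf{y}$ is an isometric embedding of $\Reals^m$ into $\Reals^n$, and it preserves Rayleigh quotients: $\mathbf{y}^{\mathrm{T}}\mathbf{B}\mathbf{y}/\mathbf{y}^{\mathrm{T}}\mathbf{y} = (\mathbf{P}\mathbf{y})^{\mathrm{T}}\mathbf{M}(\mathbf{P}\mathbf{y})/(\mathbf{P}\mathbf{y})^{\mathrm{T}}(\mathbf{P}\mathbf{y})$.

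\emph{Upper bound $\mu_i \le \lambda_i$.} Apply the max--min formula to $\mathbf{B}$: $\mu_i$ is attained on some $i$-dimensional $\mathcal{U} \subseteq \Reals^m$. Its image $\mathbf{P}\mathcal{U}$ is an $i$-dimensional subspace of $\Reals^n$ on which the minimum Rayleigh quotient of $\mathbf{M}$ equals $\mu_i$. The max--min formula for $\mathbf{M}$ then gives $\lambda_i \ge \mu_i$.

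\emph{Lower bound $\mu_i \ge \lambda_{n-m+i}$.} Use the min--max formula for $\mathbf{B}$: $\mu_i$ is attained on some $(m-i+1)$-dimensional $\mathcal{W} \subseteq \Reals^m$, with maximum Rayleigh quotient $\mu_i$. The image $\mathbf{P}\mathcal{W}$ has dimension $m-i+1 = n-(n-m+i)+1$, so the min--max formula for $\mathbf{M}$ with $k = n-m+i$ yields $\lambda_{n-m+i} \le \mu_i$. Alternatively, this bound follows from the upper bound applied to $-\mathbf{M}$ and $-\mathbf{B}$, whose ordered eigenvalues are reversed.

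The only nontrivial ingredient is the Courant--Fischer theorem itself, and in particular the dimension-counting intersection argument in its proof. Once it is available, both inequalities are immediate, and the only care needed is matching indices correctly between the two formulas.
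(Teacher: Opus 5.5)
Your proof is correct. The Courant--Fischer characterization is established properly: the key step is that any $k$-dimensional subspace meets $\mathrm{span}\{\mathbf{v}_k,\dots,\mathbf{v}_n\}$ nontrivially, and the min--max form follows by passing to $-\mathbf{M}$. The compression $\mathbf{B}=\mathbf{P}^{\mathrm{T}}\mathbf{M}\mathbf{P}$ preserves Rayleigh quotients, and you match indices correctly in both directions: $k=i$ for $\mu_i\le\lambda_i$, and $k=n-m+i$ with $\dim \mathbf{P}\mathcal{W}=m-i+1=n-(n-m+i)+1$ for $\mu_i\ge\lambda_{n-m+i}$. Your alternative derivation via $-\mathbf{M}$ and $-\mathbf{B}$ also checks out.

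The paper gives no proof of this theorem; it simply quotes it from \cite{ParlettB1998} as a standard tool. So there is no internal argument to compare against. Your min--max route is the standard textbook proof, and it handles an arbitrary principal $m\times m$ submatrix in a single step.

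The other common route deletes one row and column at a time. It reads off interlacing from the bordered characteristic polynomial
$\det(\lambda\I{n}-\mathbf{M})=\det(\lambda\I{n-1}-\mathbf{B})\,\bigl(\lambda-c-\mathbf{b}^{\mathrm{T}}(\lambda\I{n-1}-\mathbf{B})^{-1}\mathbf{b}\bigr)$,
which is the same Schur-complement identity the paper later uses to compute the spectra of $\GOP_{n,k}$. That route is more computational and needs an induction on $n-m$. It also needs extra care with repeated eigenvalues of $\mathbf{B}$ and with vanishing components of $\mathbf{b}$ in the eigenbasis of $\mathbf{B}$. Your variational argument avoids all of that.
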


\begin{definition}[Completely Positive Matrices]
\label{definition: completely positive matrix}
A matrix $\A \in \Reals^{n \times n}$ is called {\em completely positive} if there exists a matrix
${\mathbf{B}} \in \Reals^{n \times m}$ whose all entries are nonnegative such that $\A = {\mathbf{B}} {\mathbf{B}}^\mathrm{T}$.
\end{definition}
A completely positive matrix is therefore symmetric and all its entries are nonnegative. The interested reader
is referred to the textbook \cite{ShakedBbook19} on completely positive matrices, also addressing their connections
to graph theory.

\begin{definition}[Positive Semidefinite Matrices]
\label{definition: positive semidefinite matrix}
A matrix $\A \in \Reals^{n \times n}$ is called {\em positive semidefinite} if $\A$ is symmetric, and
the inequality $\underline{x}^{\mathrm{T}} \A \underline{x} \geq 0$ holds for every column vector $\underline{x} \in \Reals^n$.
\end{definition}
\begin{proposition}
\label{proposition: positive semidefinite matrix}
A symmetric matrix is positive semidefinite if and only if either one of the following conditions hold:
\begin{enumerate}
\item All its eigenvalues are nonnegative (real) numbers.
\item There exists a matrix ${\mathbf{B}} \in \Reals^{n \times m}$ such that $\A = {\mathbf{B}} {\mathbf{B}}^\mathrm{T}$.
\end{enumerate}
\end{proposition}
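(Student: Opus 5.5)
The plan is to prove the three-way equivalence among positive semidefiniteness, nonnegativity of the spectrum, and a factorization $\A = \mathbf{B}\mathbf{B}^{\mathrm{T}}$. The main tool is the spectral theorem for real symmetric matrices: $\A = \mathbf{U}\boldsymbol{\Lambda}\mathbf{U}^{\mathrm{T}}$, where $\mathbf{U}$ is orthogonal with columns $\underline{u}_1,\dots,\underline{u}_n$ (orthonormal eigenvectors) and $\boldsymbol{\Lambda} = \mathrm{diag}(\lambda_1,\dots,\lambda_n)$ is real diagonal. I will prove the cycle of implications
\[
\text{PSD} \Rightarrow \text{(1)} \Rightarrow \text{(2)} \Rightarrow \text{PSD}.
\]

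\textbf{PSD $\Rightarrow$ (1).} Let $\underline{x}$ be a unit eigenvector with $\A\underline{x} = \lambda \underline{x}$. Then $0 \le \underline{x}^{\mathrm{T}}\A\underline{x} = \lambda\, \underline{x}^{\mathrm{T}}\underline{x} = \lambda$. The eigenvalues are real because $\A$ is symmetric.

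\textbf{(1) $\Rightarrow$ (2).} Since every $\lambda_i \ge 0$, the real diagonal matrix $\boldsymbol{\Lambda}^{1/2} = \mathrm{diag}(\sqrt{\lambda_1},\dots,\sqrt{\lambda_n})$ is well defined. Set $\mathbf{B} = \mathbf{U}\boldsymbol{\Lambda}^{1/2}$, an $n \times n$ matrix (so $m = n$). Then
\[
\mathbf{B}\mathbf{B}^{\mathrm{T}} = \mathbf{U}\boldsymbol{\Lambda}^{1/2}\boldsymbol{\Lambda}^{1/2}\mathbf{U}^{\mathrm{T}} = \A .
\]

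\textbf{(2) $\Rightarrow$ PSD.} If $\A = \mathbf{B}\mathbf{B}^{\mathrm{T}}$, then $\A$ is symmetric. For every $\underline{x}$,
\[
\underline{x}^{\mathrm{T}}\A\underline{x} = (\mathbf{B}^{\mathrm{T}}\underline{x})^{\mathrm{T}}(\mathbf{B}^{\mathrm{T}}\underline{x}) = \|\mathbf{B}^{\mathrm{T}}\underline{x}\|^2 \ge 0 .
\]

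The only nonelementary ingredient is the spectral theorem. Once it is invoked, the step I expect to need the most care is (1) $\Rightarrow$ (2), since the construction of $\mathbf{B}$ must produce a real matrix. This works precisely because nonnegativity of the eigenvalues makes their square roots real. No result from earlier in the paper beyond basic linear algebra is needed.
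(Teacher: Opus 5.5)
Your proof is correct. The cycle $\text{PSD} \Rightarrow (1) \Rightarrow (2) \Rightarrow \text{PSD}$, built on the spectral theorem with $\mathbf{B} = \mathbf{U}\boldsymbol{\Lambda}^{1/2}$, is the standard textbook argument. The paper gives no proof of this proposition: it quotes it as background and only uses it to conclude that completely positive matrices are positive semidefinite. So there is no competing route to compare against.

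One small detail is worth stating explicitly. In the first step, take the eigenvector $\underline{x}$ to be real, for instance one of the orthonormal columns $\underline{u}_i$ of $\mathbf{U}$. This matters because the paper's definition of positive semidefiniteness quantifies only over $\underline{x} \in \Reals^n$.
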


The next result readily follows.
\begin{corollary}
\label{corollary: c.p. yields p.s.}
A completely positive matrix is positive semidefinite.
\end{corollary}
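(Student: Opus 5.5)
The statement is Corollary~\ref{corollary: c.p. yields p.s.}: a completely positive matrix is positive semidefinite. The plan is to reduce it directly to the second characterization in Proposition~\ref{proposition: positive semidefinite matrix}.

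Let $\A \in \Reals^{n \times n}$ be completely positive. By Definition~\ref{definition: completely positive matrix}, there is a matrix $\mathbf{B} \in \Reals^{n \times m}$ with nonnegative entries such that $\A = \mathbf{B}\mathbf{B}^{\mathrm{T}}$.

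First, $\A$ is symmetric, since $(\mathbf{B}\mathbf{B}^{\mathrm{T}})^{\mathrm{T}} = \mathbf{B}\mathbf{B}^{\mathrm{T}}$. This is needed because Definition~\ref{definition: positive semidefinite matrix} and Proposition~\ref{proposition: positive semidefinite matrix} only concern symmetric matrices.

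Next, we simply forget the nonnegativity of $\mathbf{B}$. The factorization $\A = \mathbf{B}\mathbf{B}^{\mathrm{T}}$ with $\mathbf{B}$ real is exactly condition~2 of Proposition~\ref{proposition: positive semidefinite matrix}, so $\A$ is positive semidefinite.

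If one prefers not to rely on the proposition, the definition can be checked directly. For every $\underline{x} \in \Reals^n$,
\begin{align}
\underline{x}^{\mathrm{T}} \A \underline{x} = (\mathbf{B}^{\mathrm{T}}\underline{x})^{\mathrm{T}}(\mathbf{B}^{\mathrm{T}}\underline{x}) = \|\mathbf{B}^{\mathrm{T}}\underline{x}\|^2 \geq 0.
\end{align}

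There is no real obstacle here. The only point worth making explicit is the symmetry of $\A$, which is implicit in both the definition and the proposition.
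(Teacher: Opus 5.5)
Your proof is correct and follows the paper, which states that the corollary ``readily follows'' from condition~2 of Proposition~\ref{proposition: positive semidefinite matrix} once the nonnegativity of $\mathbf{B}$ is dropped. Your explicit check that $\mathbf{B}\mathbf{B}^{\mathrm{T}}$ is symmetric, and your direct computation $\underline{x}^{\mathrm{T}}\A\underline{x} = \|\mathbf{B}^{\mathrm{T}}\underline{x}\|^2 \geq 0$, spell out details that the paper leaves implicit.
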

\begin{remark}
\label{remark: matrix of order 5}
Regarding Corollary~\ref{corollary: c.p. yields p.s.}, it is natural to ask whether, under certain conditions, a positive
semidefinite matrix whose entries are all nonnegative is also completely positive. By \cite[Theorem~3.35]{ShakedBbook19}, this
holds for all square matrices of order $n \leq 4$. Moreover, \cite[Example~3.45]{ShakedBbook19} also presents an explicit example
of a matrix of order~5 that is positive semidefinite with all nonnegative entries but is not completely positive.
\end{remark}

\subsection{Graph Theory Preliminaries}
\label{subsection: Graph Theory Preliminaries}

A graph $\Gr{G} = (\V{\Gr{G}}, \E{\Gr{G}})$ forms a pair
where $\V{\Gr{G}}$ is a set of vertices and $\E{\Gr{G}}\subseteq \V{\Gr{G}} \times \V{\Gr{G}}$
is a set of edges.

In this chapter all the graphs are assumed to be
\begin{itemize}
	\item {\em finite} - $\bigcard{\V{\Gr{G}}}<\infty$,
	\item {\em simple} - $\Gr{G}$ has no parallel edges and no self loops,
	\item {\em undirected} - the edges in $\Gr{G}$ are undirected.
\end{itemize}

We use the following terminology:
\begin{itemize}
\item The {\em degree}, $d(v)$, of a vertex $v\in \V{\Gr{G}}$ is the number of vertices in $\Gr{G}$ that are adjacent to $v$.
\item A {\em walk} in a graph $\Gr{G}$ is a sequence of vertices in $\Gr{G}$, where every two consecutive vertices in the sequence
are adjacent in $\Gr{G}$.
\item A {\em path} in a graph is a walk with no repeated vertices.
\item A {\em cycle} $\Cycle$ is a closed walk, obtained by adding an edge to a path in $\Gr{G}$.
\item The {\em length of a path or a cycle} is equal to its number of edges. A {\em triangle} is a cycle of length~3.
\item A {\em connected graph} is a graph in which every pair of distinct vertices is connected by a path.
\item The {\em distance} between two vertices in a connected graph is the length of a shortest path that connects them.
\item The {\em diameter} of a connected graph is the maximum distance between any two vertices in the graph, and the diameter
of a disconnected graph is set to be infinity.
\item The {\em connected component} of a vertex $v \in \V{\Gr{G}}$ is the subgraph whose vertex set $\set{U} \subseteq \V{\Gr{G}}$
consists of all the vertices that are connected to $v$ by any path (including the vertex $v$ itself), and its edge set consists of
all the edges in $\E{\Gr{G}}$ whose two endpoints are contained in the vertex set $\set{U}$.
\item A {\em tree} is a connected graph that has no cycles (i.e., it is a connected and {\em acyclic} graph).
\item A {\em spanning tree} of a connected graph $\Gr{G}$ is a tree with the vertex set $\V{\Gr{G}}$ and some of the edges of~$\Gr{G}$.
\item A graph is {\em regular} if all its vertices have the same degree.
\item A {\em $d$-regular} graph is a regular graph whose all vertices have degree $d$.
\item A {\em bipartite graph} is a graph $\Gr{G}$ whose vertex set is a disjoint union of two subsets such that no two vertices in
the same subset are adjacent.
\item A {\em complete bipartite graph} is a bipartite graph where every vertex in each of the two partite sets is adjacent to all
the vertices in the other partite set.
\item A {\em complete graph} is a graph in which every pair of distinct vertices is connected by an edge. An {\em empty graph} is a graph with no edges.
\end{itemize}

\begin{definition}[Complement of a graph]
The \emph{complement} of a graph $\Gr{G}$, denoted by $\CGr{G}$, is a graph whose vertex set is $\V{\Gr{G}}$, and its edge set
is the complement set $\CGr{\E{\Gr{G}}}$.
Every vertex in $\V{\Gr{G}}$ is nonadjacent to itself in $\Gr{G}$ and $\CGr{G}$, so $\{i,j\} \in \E{\CGr{G}}$ if and only if
$\{i, j\} \notin \E{\Gr{G}}$ with $i \neq j$.
\end{definition}

\begin{definition}[Disjoint union of graphs]
\label{def:disjoint_union_graphs}
Let $\Gr{G}_1, \ldots, \Gr{G}_k$ be graphs.
If the vertex sets in these graphs are not pairwise disjoint,
let $\Gr{G}'_2, \ldots, \Gr{G}'_k$ be isomorphic copies of
$\Gr{G}_2, \ldots, \Gr{G}_k$, respectively, such that none
of the graphs $\Gr{G}_1, \Gr{G}'_2, \ldots \Gr{G}'_k$ have
a vertex in common. The disjoint union of these graphs,
denoted by $\Gr{G} = \Gr{G}_1 \DU \ldots \DU \Gr{G}_k$,
is a graph whose vertex and edge sets are equal to the
disjoint unions of the vertex and edge sets of
$\Gr{G}_1, \Gr{G}'_2, \ldots, \Gr{G}'_k$
($\Gr{G}$ is defined up to an isomorphism).
\end{definition}

\begin{definition}
Let $k\in \naturals$ and let $\Gr{G}$ be a graph. Define
$k \Gr{G} = \Gr{G} \DU \Gr{G} \DU \ldots \DU \Gr{G}$
to be the disjoint union of $k$ copies of $\Gr{G}$.
\end{definition}

\begin{definition}[Join of graphs]
\label{definition: join of graphs}
Let $\Gr{G}$ and $\Gr{H}$ be two graphs with disjoint vertex sets.
The join of $\Gr{G}$ and $\Gr{H}$ is defined to be their disjoint union,
together with all the edges that connect the vertices in $\Gr{G}$ with
the vertices in $\Gr{H}$. It is denoted by $\Gr{G} \vee \Gr{H}$.
\end{definition}

\begin{definition}[Induced subgraphs]
\label{definition: Induced subgraphs}
Let $\Gr{G}=(\Vertex,\Edge)$ be a graph, and let $\set{U} \subseteq \Vertex$. The \emph{subgraph of $\Gr{G}$ induced
by $\set{U}$} is the graph obtained by the vertices in $\set{U}$ and the edges in $\Gr{G}$ that has both ends on $\set{U}$.
We say that $\Gr{H}$ is an \emph{induced subgraph of $\Gr{G}$}, if it is induced by some $\set{U} \subseteq \Vertex$.
\end{definition}

\begin{definition}[Strongly regular graphs]
\label{definition: strongly regular graphs}
A regular graph $\Gr{G}$ that is neither complete nor empty is called a
{\em strongly regular} graph with parameters $(n,d,\lambda,\mu)$,
where $\lambda$ and $\mu$ are nonnegative integers, if the following conditions hold:
\begin{enumerate}[(1)]
\item \label{Item 1 - definition of SRG}
$\Gr{G}$ is a $d$-regular graph on $n$ vertices.
\item \label{Item 2 - definition of SRG}
Every two adjacent vertices in $\Gr{G}$ have exactly $\lambda$ common neighbors.
\item \label{Item 3 - definition of SRG}
Every two distinct and nonadjacent vertices in $\Gr{G}$ have exactly $\mu$ common neighbors.
\end{enumerate}
The family of strongly regular graphs with these four specified parameters is denoted by $\srg{n}{d}{\lambda}{\mu}$.
It is important to note that a family of the form $\srg{n}{d}{\lambda}{\mu}$ may contain multiple nonisomorphic strongly
regular graphs. Throughout this work, we refer to a strongly regular graph as $\srg{n}{d}{\lambda}{\mu}$ if it belongs to
this family.
\end{definition}

\begin{proposition}[Feasible parameter vectors of strongly regular graphs]
\label{proposition: necessary condition for the parameter vector of SRGs}
The four parameters of a strongly regular graph $\srg{n}{d}{\lambda}{\mu}$ satisfy the equality
\begin{align}
\label{eq: necessary condition for the parameter vector of SRGs}
(n-d-1)\mu = d(d-\lambda-1).
\end{align}
\end{proposition}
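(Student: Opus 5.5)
The plan is to double count the edges that run between the neighborhood of a fixed vertex and its set of non-neighbors. Fix an arbitrary vertex $v \in \V{\Gr{G}}$ of a strongly regular graph $\Gr{G}$ in $\srg{n}{d}{\lambda}{\mu}$. Let $\Neighbors(v)$ be its neighborhood, so $\card{\Neighbors(v)} = d$ by Item~\ref{Item 1 - definition of SRG} of Definition~\ref{definition: strongly regular graphs}. Let $\set{M}$ be the set of vertices distinct from $v$ and nonadjacent to it, so $\card{\set{M}} = n-d-1$. Let $N$ be the number of edges $\{u,w\} \in \E{\Gr{G}}$ with $u \in \Neighbors(v)$ and $w \in \set{M}$. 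We compute $N$ in two ways and equate the results.

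\emph{Counting from $\Neighbors(v)$.} Take $u \in \Neighbors(v)$. It has $d$ neighbors, and these split into three disjoint groups: the vertex $v$ itself; the common neighbors of $u$ and $v$, of which there are exactly $\lambda$ by Item~\ref{Item 2 - definition of SRG}, all lying in $\Neighbors(v)$; and the rest, which must lie in $\set{M}$. So $u$ has exactly $d-1-\lambda$ neighbors in $\set{M}$. Summing over the $d$ choices of $u$ gives $N = d(d-\lambda-1)$.

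\emph{Counting from $\set{M}$.} Take $w \in \set{M}$. Then $w$ and $v$ are distinct and nonadjacent, so by Item~\ref{Item 3 - definition of SRG} they have exactly $\mu$ common neighbors. These are precisely the neighbors of $w$ in $\Neighbors(v)$. Summing over $w \in \set{M}$ gives $N = (n-d-1)\mu$. Equating the two counts yields \eqref{eq: necessary condition for the parameter vector of SRGs}.

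Nothing here is a genuine obstacle. The one point that needs care is checking that the three-way partition of the neighborhood of $u$ is exhaustive and disjoint, in particular that no neighbor of $u$ other than $v$ can fall outside $\Neighbors(v) \cup \set{M}$. This holds because every vertex other than $v$ is either adjacent to $v$ or not. Since $\Gr{G}$ is assumed neither complete nor empty, both $\Neighbors(v)$ and $\set{M}$ are nonempty, although the identity would hold trivially even without this.

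An alternative is the algebraic route via the identity $\A^2 = d\I{n} + \lambda \A + \mu(\J{n,n}-\I{n}-\A)$. Multiplying both sides by $\mathbf{1}_n$ and using $\A\mathbf{1}_n = d\mathbf{1}_n$ gives $d^2 = d + \lambda d + \mu(n-1-d)$, which is the same equation. I would present the combinatorial double count as the main proof and mention this as a remark.
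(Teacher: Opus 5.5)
Your double count is correct and complete. For each $u \in \Neighbors(v)$, its $d$ neighbors split into $v$, the $\lambda$ common neighbors of $u$ and $v$ (all in $\Neighbors(v)$), and exactly $d-\lambda-1$ vertices of $\set{M}$. Each $w \in \set{M}$ has exactly $\mu$ neighbors in $\Neighbors(v)$. Counting the edges between $\Neighbors(v)$ and $\set{M}$ from both sides therefore gives $(n-d-1)\mu = d(d-\lambda-1)$.

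The paper states this proposition without proof, treating it as a standard fact, so there is no argument of its own to compare against. Your argument is the standard textbook proof.

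Your algebraic remark is also correct: apply $\A^2 = d\I{n} + \lambda\A + \mu(\J{n,n}-\I{n}-\A)$ to $\mathbf{1}_n$. It is the same count in matrix form, since by Theorem~\ref{thm: number of walks of a given length} the entry $(\A^2)_{u,w}$ counts the common neighbors of $u$ and $w$.
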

\begin{remark}
\label{remark: necessary condition for the parameter vector of SRGs}
Equality~\eqref{eq: necessary condition for the parameter vector of SRGs} provides a necessary, but
not sufficient, condition for the existence of a strongly regular graph $\srg{n}{d}{\lambda}{\mu}$.
For example, as shown in \cite{Haemers93}, no $(76,21,2,7)$ strongly regular graph exists, even though
the condition $(n-d-1)\mu = 378 = d(d-\lambda-1)$ is satisfied in this case.
\end{remark}

\begin{notation}[Classes of graphs]
\noindent

\begin{itemize}
\item $\CoG{n}$ is the complete graph on $n$ vertices.
\item $\PathG{n}$ is the path graph on $n$ vertices.
\item $\CoBG{\ell}{r}$ is the complete bipartite graph whose degrees of partite sets are $\ell$ and $r$ (with possible equality between $\ell$ and $r$).
\item $\SG{n}$ is the star graph on $n$ vertices $\SG{n} = \CoBG{1}{n-1}$.
\end{itemize}
\end{notation}

\begin{definition}[Integer-valued functions of a graph]
\noindent
\begin{itemize}
\item Let $k \in \naturals $. A \emph{proper} $k$-\emph{coloring} of a graph $\Gr{G}$
is a function $c \colon \V{\Gr{G}} \to \{1,2,...,k\}$, where $c(v) \ne c(u)$ for every
$\{u,v\}\in \E{\Gr{G}}$. The \emph{chromatic number} of $\Gr{G}$, denoted by $\chrnum{\Gr{G}}$,
is the smallest $k$ for which there exists a proper $k$-coloring of $\Gr{G}$.
\item A \emph{clique} in a graph $\Gr{G}$ is a subset of vertices $U\subseteq \V{\Gr{G}}$
where the subgraph induced by $U$ is a complete graph. The \emph{clique number} of $\Gr{G}$,
denoted by $\omega(\Gr{G})$, is the largest size of a clique in $\Gr{G}$; i.e., it is the
largest order of an induced complete subgraph in $\Gr{G}$.
\item An \emph{independent set} in a graph $\Gr{G}$ is a subset of vertices $U\subseteq \V{\Gr{G}}$,
where $\{u,v\} \notin \E{\Gr{G}}$ for every $u,v \in U$. The \emph{independence number} of $\Gr{G}$,
denoted by $\indnum{\Gr{G}}$, is the largest size of an independent set in $\Gr{G}$.
\end{itemize}
\end{definition}

\begin{definition}[Orthogonal and orthonormal representations of a graph]
\label{def: orthogonal representation}
Let $\Gr{G}$ be a finite, simple, and undirected graph, and let $d \in \naturals$.
\begin{itemize}
\item
An {\em orthogonal representation} of the graph $\Gr{G}$ in the $d$-dimensional
Euclidean space $\Reals^d$ assigns to each vertex $i \in \V{\Gr{G}}$
a nonzero vector ${\bf{u}}_i \in \Reals^d$ such that
${\bf{u}}_i^{\mathrm{T}} {\bf{u}}_j = 0$ for every $\{i, j\} \notin \E{\Gr{G}}$
with $i \neq j$. In other words, for every two distinct and nonadjacent vertices in the graph,
their assigned nonzero vectors should be orthogonal in $\Reals^d$.
\item
An {\em orthonormal representation} of $\Gr{G}$ is additionally represented by unit vectors,
i.e., $\| {\bf{u}}_i \| = 1$ for all $i \in \V{\Gr{G}}$.
\item
In an orthogonal (orthonormal) representation of $\Gr{G}$, every two nonadjacent vertices in $\Gr{G}$
are mapped (by definition) into orthogonal (orthonormal) vectors, but adjacent vertices may not necessarily
be mapped into nonorthogonal vectors. If ${\bf{u}}_i^{\mathrm{T}} {\bf{u}}_j \neq 0$ for all
$\{i, j\} \in \E{\Gr{G}}$, then such a representation of $\Gr{G}$ is called {\em faithful}.
\end{itemize}
\end{definition}

\begin{definition}[Lov\'{a}sz $\vartheta$-function \cite{Lovasz79_IT}]
\label{definition: Lovasz theta function}
Let $\Gr{G}$ be a finite, simple, and undirected graph. Then, the {\em Lov\'{a}sz
$\vartheta$-function of $\Gr{G}$} is defined as
\begin{eqnarray}
\label{eq: Lovasz theta function}
\vartheta(\Gr{G}) \triangleq \min_{{\bf{c}}, \{{\bf{u}}_i\}} \, \max_{i \in \V{\Gr{G}}} \,
\frac1{\bigl( {\bf{c}}^{\mathrm{T}} {\bf{u}}_i \bigr)^2} \, ,
\end{eqnarray}
where the minimum on the right-hand side of \eqref{eq: Lovasz theta function} is taken over all
unit vectors ${\bf{c}}$ and all orthonormal representations $\{{\bf{u}}_i: i \in \V{\Gr{G}} \}$
of $\Gr{G}$. In \eqref{eq: Lovasz theta function}, it suffices to consider orthonormal representations
in a space of dimension at most $n = \card{\V{\Gr{G}}}$.
\end{definition}

The Lov\'{a}sz $\vartheta$-function of a graph $\Gr{G}$ can be calculated by solving (numerically) a convex optimization problem.
Let ${\bf{A}} = (A_{i,j})$ be the $n \times n$ adjacency matrix of $\Gr{G}$ with
$n \triangleq \card{\V{\Gr{G}}}$. The Lov\'{a}sz
$\vartheta$-function $\vartheta(\Gr{G})$ can be expressed as the solution of the following semidefinite programming (SDP) problem:
\vspace*{0.2cm}
\begin{eqnarray}
\label{eq: SDP problem - Lovasz theta-function}
\mbox{\fbox{$
\begin{array}{l}
\text{maximize} \; \; \mathrm{Tr}({\bf{B}} \J{n})  \\
\text{subject to} \\
\begin{cases}
{\bf{B}} \succeq 0, \\
\mathrm{Tr}({\bf{B}}) = 1, \\
A_{i,j} = 1  \; \Rightarrow \;  B_{i,j} = 0, \quad i,j \in \OneTo{n}.
\end{cases}
\end{array}$}}
\end{eqnarray}

\vspace*{0.1cm}
There exist efficient convex optimization algorithms (e.g., interior-point methods) to compute
$\vartheta(\Gr{G})$, for every graph $\Gr{G}$, with a precision of $r$ decimal digits, and a
computational complexity that is polynomial in $n$ and $r$. The reader is referred to
Section~2.5 of \cite{Sason2024} for an account of the various interesting properties of
the Lov\'{a}sz $\vartheta$-function. Among these properties, the sandwich theorem states that
for every graph $\Gr{G}$, the following inequalities hold:
\begin{align}
\label{eq1: sandwich}
\indnum{\Gr{G}} \leq \vartheta(\Gr{G}) \leq \chrnum{\CGr{G}}, \\
\label{eq2: sandwich}
\clnum{\Gr{G}} \leq \vartheta(\CGr{G}) \leq \chrnum{\Gr{G}}.
\end{align}
The usefulness of \eqref{eq1: sandwich} and \eqref{eq2: sandwich} lies in the fact that while the
independence, clique, and chromatic numbers of a graph are NP-hard to compute, the Lov\'{a}sz
$\vartheta$-function can be efficiently computed as a bound in these inequalities by solving the
convex optimization problem in \eqref{eq: SDP problem - Lovasz theta-function}.

\bigskip
\subsection{Matrices associated with a graph}
\label{subsection: Matrices associated with a graph}

\subsubsection{Four matrices associated with a graph}

\noindent

\vspace*{0.1cm}
Let $\Gr{G}=(\Vertex,\Edge)$ be a graph with vertices $\left\{ v_{1},...,v_{n}\right\} $.
There are several matrices associated with $\Gr{G}$. In this survey, we consider four of them,
all are symmetric matrices in $\mathbb{R}^{n\times n}$: the \emph{adjacency matrix} ($\A$),
\emph{Laplacian matrix} ($LM$), \emph{signless Laplacian matrix} ($\Q$), and the
\emph{normialized Laplacian matrix} (${\bf{\mathcal{L}}}$).

\begin{enumerate}
\item The adjacency matrix of a graph $\Gr{G}$, denoted by $\A = \A(\Gr{G})$, has the binary-valued entries
\begin{align}
\label{eq: adjacency matrix}
(\A(\Gr{G}))_{i,j}=
\begin{cases}
1 & \mbox{if} \, \{v_i,v_j\} \in \E{\Gr{G}}, \\
0 & \mbox{if} \, \{v_i,v_j\} \notin \E{\Gr{G}}.
\end{cases}
\end{align}
\item The Laplacian matrix of a graph $\Gr{G}$, denoted by $\LM = \LM(\Gr{G})$, is given by
\begin{align}
\LM(\Gr{G}) = \D(\Gr{G})-\A(\Gr{G}),
\end{align}
where
\begin{align}
\D(\Gr{G}) = \diag{d(v_1), d(v_2), \ldots ,d(v_n)}
\end{align}
is the {\em diagonal matrix} whose entries in the principal diagonal are the degrees of the $n$ vertices of $\Gr{G}$.
\item The signless Laplacian martix of a graph $\Gr{G}$, denoted by $\Q = \Q(\Gr{G})$, is given by
\begin{align}
\label{eq: signless Laplacian martix}
\Q(\Gr{G}) = \D(\Gr{G})+\A(\Gr{G}).
\end{align}
\item The normalized Laplacian matrix of a graph $\Gr{G}$, denoted by $\mathcal{L}(\Gr{G})$, is given by
\begin{align}
\label{eq: normalized Laplacian matrix}
\mathcal{L}(\Gr{G}) = \D^{-\frac12}(\Gr{G}) \, \LM(\Gr{G}) \,  \D^{-\frac12}(\Gr{G}),
\end{align}
where
\begin{align}
\D^{-\frac12}(\Gr{G}) = \diag{d^{-\frac12}(v_1), d^{-\frac12}(v_2), \ldots, d^{-\frac12}(v_n)},
\end{align}
with the convention that if $v \in \V{\Gr{G}}$ is an isolated vertex in $\Gr{G}$ (i.e., $d(v)=0$), then
$d^{-\frac12}(v) = 0$.	
The entries of ${\bf{\mathcal{L}}} = (\mathcal{L}_{i,j})$ are given by
\begin{align}
\mathcal{L}_{i,j} =
\begin{dcases}
\begin{array}{cl}
1, \quad & \mbox{if $i=j$ and $d(v_i) \neq 0$,} \\[0.2cm]
-\dfrac{1}{\sqrt{d(v_i) \, d(v_j)}}, \quad & \mbox{if $i \neq j$ and $\{v_i,v_j\} \in \E{\Gr{G}}$}, \\[0.5cm]
0, \quad & \mbox{otherwise}.
\end{array}
\end{dcases}
\end{align}	
\end{enumerate}

In the continuation of this section, we also occasionally refer to two other matrices that are associated with undirected graphs.
\begin{definition}
\label{definition: incidence matrix}
Let $\Gr{G}$ be a graph with $n$ vertices and $m$ edges. The {\em incidence matrix} of $\Gr{G}$, denoted by ${\mathbf{B}} = {\mathbf{B}}(\Gr{G})$
is an $n \times m$  matrix with binary entries, defined as follows:
\begin{align}
B_{i,j} =
\begin{cases}
1 & \text{if vertex \(v_i \in \V{\Gr{G}}\) is incident to edge \(e_j \in \E{\Gr{G}}\)}, \\
0 & \text{if vertex \(v_i \in \V{\Gr{G}}\) is not incident to edge \(e_j \in \E{\Gr{G}}\)}.
\end{cases}
\end{align}
For an undirected graph, each edge $e_j$ connects two vertices $v_i$ and $v_k$, and the corresponding column in
$\mathbf{B}$ has exactly two $1$'s, one for each vertex.
\end{definition}

\begin{definition}
\label{definition: oriented incidence matrix}
Let $\Gr{G}$ be a graph with $n$ vertices and $m$ edges. An {\em oriented incidence matrix} of $\Gr{G}$, denoted by ${\mathbf{N}} = {\mathbf{N}}(\Gr{G})$
is an $n \times m$  matrix with ternary entries from $\{-1, 0, 1\}$, defined as follows. One first selects an arbitrary orientation to each edge in $\Gr{G}$,
and then define
\begin{align}
N_{i,j} =
\begin{cases}
-1 & \text{if vertex \(v_i \in \V{\Gr{G}}\) is the tail (starting vertex) of edge \(e_j \in \E{\Gr{G}}\)}, \\
+1 & \text{if vertex \(v_i \in \V{\Gr{G}}\) is the head (ending vertex) of edge \(e_j \in \E{\Gr{G}}\)}, \\
\hspace*{0.2cm} 0 & \text{if vertex \(v_i \in \V{\Gr{G}}\) is not incident to edge \(e_j \in \E{\Gr{G}}\)}.
\end{cases}
\end{align}
Consequently, each column of $\mathbf{N}$ contains exactly one entry equal to 1 and one entry equal to $-1$, representing the head and tail of the corresponding
oriented edge in the graph, respectively, with all other entries in the column being zeros.
\end{definition}

For $X\in \left\{ A,L,Q,\mathcal{L} \right\}$, the \emph{$X$-spectrum} of a graph $\Gr{G}$, $\sigma_X(G)$, is the multiset of the eigenvalues
of $X(G)$. We denote the elements of the multiset of eigenvalues of $\{\A, \LM, \Q, \mathcal{L}\}$, respectively, by
\begin{align}
\label{eq2:26.09.23}
& \Eigval{1}{\Gr{G}} \geq \Eigval{2}{\Gr{G}} \geq \ldots \geq \Eigval{n}{\Gr{G}}, \\
\label{eq3:26.09.23}
& \mu_1(\Gr{G}) \leq \mu_2(\Gr{G}) \leq \ldots \leq \mu_n(\Gr{G}), \\
\label{eq4:26.09.23}
& \nu_1(\Gr{G}) \geq \nu_2(\Gr{G}) \geq \ldots \geq \nu_n(\Gr{G}), \\
\label{eq5:26.09.23}
& \delta_1(\Gr{G}) \leq \delta_2(\Gr{G}) \leq \ldots \leq \delta_n(\Gr{G}).
\end{align}

\begin{example}
Consider the complete bipartite graph $\Gr{G} = \CoBG{2}{3}$ with the adjacency matrix
$$\A(\Gr{G})=
\begin{pmatrix}
       {\bf{0}}_{2,2} & \J{2,3} \\
       \J{3,2} & {\bf{0}}_{3,3}
\end{pmatrix}.$$
The spectra of $\Gr{G}$ can be verified to be given as follows:
\begin{enumerate}
\item
The $\A$-spectrum of $\Gr{G}$ is
\begin{align}
\sigma_{\A}(\Gr{G})=\Bigl\{ -\sqrt{6}, [0]^{3}, \sqrt{6}\Bigr\},
\end{align}
with the notation that $[\lambda]^m$ means that $\lambda$ is an eigenvalue with multiplicity $m$.

\item The $\LM$-spectrum of $\Gr{G}$ is
\begin{align}
\sigma_{\LM}(\Gr{G})=\Bigl\{ 0, [2]^{2}, 3, 5\Bigr\} .
\end{align}
		
\item The $\Q$-spectrum of $\Gr{G}$ is
\begin{align}
\sigma_{\Q}(\Gr{G})=\Bigl\{ 0, [2]^{2}, 3, 5\Bigr\} .
\end{align}

\item The ${\bf{\mathcal{L}}}$-spectrum of $\Gr{G}$ is
\begin{align}
\sigma_{{\bf{\mathcal{L}}}}(\Gr{G})=\Bigl\{ 0, [1]^{3}, 2 \Bigr\} .
\end{align}
\end{enumerate}
\end{example}

\begin{remark}
If $\Gr{H}$ is an induced subgraph of a graph $\Gr{G}$, then $\A(\Gr{H})$ is a principal submatrix of $A(\Gr{G})$.
However, since the degrees of the remaining vertices are affected by the removal of vertices when forming the induced
subgraph $\Gr{H}$ from the graph $\Gr{G}$, this property does not hold for the other three associated matrices discussed
in this chapter (namely, the Laplacian, signless Laplacian, and normalized Laplacian matrices).
\end{remark}

\begin{definition}
Let $\Gr{G}$ be a graph, and let $\CGr{G}$ be the complement graph of $\Gr{G}$. Define the following matrices:
\begin{enumerate}
\item $\overline{\A}(\Gr{G}) = \A(\overline{\Gr{G}})$.
\item $\overline{\LM}(\Gr{G}) = \LM(\overline{\Gr{G}})$.
\item $\overline{\Q}(\Gr{G}) = \Q(\overline{\Gr{G}})$.
\item $\overline{{\bf{\mathcal{L}}}}(\Gr{G}) = {\bf{\mathcal{L}}}(\overline{\Gr{G}})$.
\end{enumerate}
\end{definition}

\begin{definition}
Let $\mathcal{X} \subseteq \Gmats$. The $\mathcal{X}$-spectrum of a graph $\Gr{G}$ is a list with $\sigma_X(\Gr{G})$
for every $X\in \mathcal{X}$.
\end{definition}

Observe that if $\mathcal{X} = \{ X \}$ is a singleton, then the $\mathcal{X}$ spectrum is equal to the $X$-spectrum.

We now describe some important applications of the four matrices.

\subsubsection{Properties of the adjacency matrix}
\begin{theorem}[Number of walks of a given length between two fixed vertices]
\label{thm: number of walks of a given length}
Let $\Gr{G} = (\Vertex, \Edge)$ be a graph with a vertex set $\Vertex = \V{\Gr{G}} = \{ v_1, \ldots, v_n\}$, and
let $\A = \A(\Gr{G})$ be the adjacency matrix of $\Gr{G}$.
Then, the number of walks of length $\ell$, with the fixed endpoints $v_i$ and $v_j$, is equal to $(\A^\ell)_{i,j}$.
\end{theorem}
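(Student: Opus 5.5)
The plan is to prove the statement by induction on the walk length $\ell$, using only the definition of matrix multiplication and the binary entries of $\A$ given in \eqref{eq: adjacency matrix}. A walk of length $\ell$ from $v_i$ to $v_j$ is a sequence $(u_0,u_1,\ldots,u_\ell)$ of vertices with $u_0=v_i$, $u_\ell=v_j$, and $\{u_{t-1},u_t\}\in\E{\Gr{G}}$ for every $t\in\OneTo{\ell}$.

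For the base case, take $\ell=0$, with $\A^0=\I{n}$. The only walk of length $0$ is the trivial walk $(v_i)$, and it ends at $v_j$ exactly when $i=j$, which matches $(\I{n})_{i,j}$. If one prefers to start at $\ell=1$, a walk of length $1$ from $v_i$ to $v_j$ exists, and is unique, exactly when $\{v_i,v_j\}\in\E{\Gr{G}}$. Its count is therefore $A_{i,j}$. This covers $i=j$ as well, since $\Gr{G}$ has no loops and the diagonal of $\A$ is zero.

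For the inductive step, suppose the claim holds for $\ell$. Partition the walks of length $\ell+1$ from $v_i$ to $v_j$ according to their penultimate vertex $u_\ell=v_k$, with $k\in\OneTo{n}$. A walk with $u_\ell=v_k$ is exactly a walk of length $\ell$ from $v_i$ to $v_k$ followed by a final step from $v_k$ to $v_j$, and that final step is possible only if $\{v_k,v_j\}\in\E{\Gr{G}}$. This correspondence is a bijection onto pairs consisting of such a shorter walk and an admissible last edge. The class with penultimate vertex $v_k$ therefore has size $(\A^\ell)_{i,k}\,A_{k,j}$, where the inductive hypothesis gives the first factor and the $0/1$ entry $A_{k,j}$ gives the second. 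Summing over $k$ yields $\sum_{k=1}^n(\A^\ell)_{i,k}A_{k,j}=(\A^{\ell+1})_{i,j}$, which completes the induction.

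No step is a real obstacle. The only point needing care is that the decomposition by penultimate vertex is a genuine bijection. Walks may repeat vertices and edges, so no path or distinctness condition is imposed, and the classes for different $k$ are disjoint and together exhaust all walks of length $\ell+1$.
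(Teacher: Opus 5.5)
Your induction on $\ell$ is correct and complete. You split the walks of length $\ell+1$ by their penultimate vertex, which turns the count into $\sum_{k}(\mathbf{A}^{\ell})_{i,k}A_{k,j}=(\mathbf{A}^{\ell+1})_{i,j}$. The paper states this theorem as a standard preliminary fact without proof, so there is no argument to compare against; yours is the standard proof.
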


\begin{corollary}[Number of closed walks of a given length]
\label{corollary: Number of Closed Walks of a Given Length}
Let $\Gr{G} = (\Vertex, \Edge)$ be a simple undirected graph on $n$ vertices with an adjacency matrix
$\A = \A(\Gr{G})$, and let its spectrum (with respect to $\A$) be given by $\{\lambda_j\}_{j=1}^n$.
Then, for all $\ell \in \naturals$, the number of closed walks of length $\ell$ in $\Gr{G}$ is equal
to $\sum_{j=1}^n \lambda_j^{\ell}$.
\end{corollary}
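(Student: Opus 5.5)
The corollary follows by taking traces in Theorem~\ref{thm: number of walks of a given length}. A closed walk of length $\ell$ is a walk of length $\ell$ whose two endpoints coincide. So I will count closed walks by their starting vertex $v_i$. For each $i \in \OneTo{n}$, the number of closed walks of length $\ell$ that start and end at $v_i$ is $(\A^\ell)_{i,i}$, by Theorem~\ref{thm: number of walks of a given length} with $j=i$. Summing over all $i$, the total number of closed walks of length $\ell$ in $\Gr{G}$ equals $\sum_{i=1}^n (\A^\ell)_{i,i} = \Tr(\A^\ell)$.

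Next I will express $\Tr(\A^\ell)$ through the spectrum. Since $\A$ is real and symmetric, the spectral theorem gives an orthogonal matrix $\mathbf{U}$ with
\begin{align*}
\A = \mathbf{U} \, \diag{\lambda_1, \ldots, \lambda_n} \, \mathbf{U}^{\mathrm{T}}.
\end{align*}
Then
\begin{align*}
\A^\ell = \mathbf{U} \, \diag{\lambda_1^\ell, \ldots, \lambda_n^\ell} \, \mathbf{U}^{\mathrm{T}}.
\end{align*}
By the cyclic invariance of the trace,
\begin{align*}
\Tr(\A^\ell) = \Tr\bigl(\diag{\lambda_1^\ell, \ldots, \lambda_n^\ell}\bigr) = \sum_{j=1}^n \lambda_j^\ell .
\end{align*}
Alternatively, one could use the Schur triangularization, which holds for any square matrix. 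Combining the two displays gives the claim.

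No step here is a genuine obstacle. The one point that needs care is the counting convention: a closed walk is counted as an ordered vertex sequence together with its starting vertex. That is, rotations and reversals are counted as distinct walks, which is exactly what the entries $(\A^\ell)_{i,i}$ count. I will state this convention explicitly so that, for example, the count for $\ell=3$ is read as $6$ times the number of triangles rather than the number of triangles itself.
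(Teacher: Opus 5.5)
Your proposal is correct and is the argument the paper intends. The paper gives no separate proof and presents the statement as an immediate corollary of Theorem~\ref{thm: number of walks of a given length}: summing $(\A^\ell)_{i,i}$ over $i$ gives $\Tr(\A^\ell)$, and this equals $\sum_{j} \lambda_j^\ell$ after orthogonally diagonalizing the symmetric matrix $\A$. Your counting convention (ordered walks with a distinguished start) is also the one the paper uses, as its identity $\mathrm{tr}(\A^3) = 6t$ shows.
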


\begin{corollary}[Number of edges and triangles in a graph]
\label{corollary: number of edges and triangles in a graph}
Let $\Gr{G}$ be a simple undirected graph with $n = \card{\V{\Gr{G}}}$ vertices,
$e = \card{\E{\Gr{G}}}$ edges, and $t$ triangles. Let $\A = \A(\Gr{G})$ be the
adjacency matrix of $\Gr{G}$, and let $\{\lambda_j\}_{j=1}^n$ be its adjacency
spectrum. Then,
\begin{align}
& \sum_{j=1}^n \lambda_j = \mathrm{tr}(\A) = 0, \label{eq: trace of A is zero} \\
& \sum_{j=1}^n \lambda_j^2 = \mathrm{tr}(\A^2) = 2 e, \label{eq: number of edges from A} \\
& \sum_{j=1}^n \lambda_j^3 = \mathrm{tr}(\A^3) = 6 t. \label{eq: number of triangles from A}
\end{align}
\end{corollary}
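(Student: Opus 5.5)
The three identities follow from Corollary~\ref{corollary: Number of Closed Walks of a Given Length} with $\ell=1,2,3$, together with the basic fact that $\mathrm{tr}(\A^\ell)=\sum_{j=1}^n \lambda_j^\ell$ for the real symmetric matrix $\A$. That fact holds because $\A$ is orthogonally diagonalizable, $\A=\mathbf{U}\boldsymbol{\Lambda}\mathbf{U}^{\mathrm{T}}$, so $\A^\ell=\mathbf{U}\boldsymbol{\Lambda}^\ell\mathbf{U}^{\mathrm{T}}$, and the trace is invariant under similarity. Thus the middle equalities in \eqref{eq: trace of A is zero}--\eqref{eq: number of triangles from A} are immediate. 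The remaining work is to count closed walks of lengths $1$, $2$, and $3$ combinatorially, using Theorem~\ref{thm: number of walks of a given length}: $\mathrm{tr}(\A^\ell)=\sum_i (\A^\ell)_{i,i}$ is the number of closed walks of length $\ell$, where a walk is counted with its starting vertex and direction.

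For $\ell=1$, we have $(\A)_{i,i}=0$ for all $i$, since $\Gr{G}$ is simple and has no loops. Hence $\mathrm{tr}(\A)=0$.

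For $\ell=2$, a closed walk of length $2$ from $v_i$ has the form $v_i\to v_j\to v_i$ with $\{v_i,v_j\}\in\E{\Gr{G}}$. So $(\A^2)_{i,i}=d(v_i)$, and summing over $i$ gives $\sum_i d(v_i)=2e$ by the handshake lemma, since each edge contributes to the degrees of both its endpoints.

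For $\ell=3$, a closed walk $v_i\to v_j\to v_k\to v_i$ must have pairwise distinct vertices: with no loops, consecutive vertices differ, and the closing step forces $v_k\neq v_i$. The three vertices therefore span a triangle. Conversely, each triangle yields exactly $6$ closed walks of length $3$, namely $3$ choices of starting vertex times $2$ directions. Hence $\mathrm{tr}(\A^3)=6t$.

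The only step requiring any care is this last bijection. One must check that no degenerate closed walk of length $3$ exists, and that each triangle is counted exactly six times rather than three. Everything else is a direct appeal to the earlier theorem and corollary.
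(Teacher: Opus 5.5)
Your proof is correct and matches the paper's route. The paper gives no separate proof and presents this as an immediate consequence of Corollary~\ref{corollary: Number of Closed Walks of a Given Length}, namely $\mathrm{tr}(\A^\ell)=\sum_j\lambda_j^\ell$ counts closed walks of length $\ell$; your counts for $\ell=1,2,3$ (no loops, the handshake lemma, and exactly six closed walks per triangle) fill in the details it leaves implicit.
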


For a $d$-regular graph, the largest eigenvalue of its adjacency matrix is equal to~$d$.
Consequently, by Eq.~\eqref{eq: number of edges from A}, for $d$-regular graphs,
$\sum_j \lambda_j^2 = 2e = nd = n \lambda_1$. Interestingly, this turns to be
a necessary and sufficient condition for the regularity of a graph, which means
that the adjacency spectrum enables to identify whether a graph is regular.
\begin{theorem}  \cite[Corollary~3.2.2]{CvetkovicRS2010}
\label{theorem: graph regularity from A-spectrum}
A graph $\Gr{G}$ on $n$ vertices is regular if and only if
\begin{align}
\sum_{i=1}^n \lambda_i^2 = n \lambda_1,
\end{align}
where $\lambda_1$ is the largest eigenvalue of the adjacency matrix of $\Gr{G}$.
\end{theorem}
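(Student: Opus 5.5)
The proof compares the largest adjacency eigenvalue $\lambda_1$ with the average degree via the Rayleigh quotient. Let $\A=\A(\Gr{G})$, let $d_1,\ldots,d_n$ be the degrees, and let $\bar d = \frac{1}{n}\sum_{i} d_i = \frac{2e}{n}$. By Corollary~\ref{corollary: number of edges and triangles in a graph}, $\sum_{i=1}^n \lambda_i^2 = 2e = n\bar d$. The statement therefore reduces to showing that $\lambda_1 \geq \bar d$ always holds, with equality if and only if $\Gr{G}$ is regular.

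For the \emph{only if} direction of the equality case, and for the inequality itself, I will use the variational characterization of the largest eigenvalue of a real symmetric matrix:
\begin{align}
\lambda_1 = \max_{\mathbf{x}\neq \mathbf{0}} \frac{\mathbf{x}^{\mathrm{T}}\A\mathbf{x}}{\mathbf{x}^{\mathrm{T}}\mathbf{x}}.
\end{align}
Plugging in $\mathbf{x}=\mathbf{1}_n$ gives $\mathbf{1}_n^{\mathrm{T}}\A\mathbf{1}_n / n = \frac{1}{n}\sum_i d_i = \bar d$, so $\lambda_1 \ge \bar d$. If equality holds, then $\mathbf{1}_n$ attains the maximum of the Rayleigh quotient. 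A maximizer of the Rayleigh quotient of a symmetric matrix is an eigenvector for $\lambda_1$; this follows from the spectral decomposition, since any component of $\mathbf{x}$ outside the $\lambda_1$-eigenspace strictly lowers the quotient. Hence $\A\mathbf{1}_n = \lambda_1\mathbf{1}_n$. The $i$-th entry of $\A\mathbf{1}_n$ is $d_i$, so every $d_i$ equals $\lambda_1$ and $\Gr{G}$ is regular.

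For the \emph{if} direction, suppose $\Gr{G}$ is $d$-regular. Then $\A\mathbf{1}_n = d\mathbf{1}_n$, so $d$ is an eigenvalue. Every eigenvalue satisfies $|\lambda|\le d$: take an eigenvector $\mathbf{x}$ and a coordinate $i$ of maximum $|x_i|$; then $|\lambda||x_i| = \bigl|\sum_{j\sim i}x_j\bigr| \le d|x_i|$. Hence $\lambda_1=d$, and $\sum_i\lambda_i^2 = 2e = nd = n\lambda_1$, as already noted before the statement.

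The only step needing care is the claim that equality in the Rayleigh quotient forces $\mathbf{1}_n$ to be an eigenvector. I would justify it by expanding $\mathbf{1}_n=\sum_k c_k\mathbf{u}_k$ in an orthonormal eigenbasis $\{\mathbf{u}_k\}$ of $\A$. This gives $\sum_k c_k^2(\lambda_1-\lambda_k)=0$, so $c_k=0$ whenever $\lambda_k<\lambda_1$, and therefore $\mathbf{1}_n$ lies in the $\lambda_1$-eigenspace.
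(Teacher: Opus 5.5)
Your proof is correct and is essentially the standard argument. The paper itself only argues the ``if'' direction, via $\lambda_1=d$ and $\sum_j\lambda_j^2=2e=nd$. It defers the converse to \cite[Corollary~3.2.2]{CvetkovicRS2010}, which rests on exactly your Rayleigh-quotient observation: $\bar d=\mathbf{1}_n^{\mathrm{T}}\A\mathbf{1}_n/n\le\lambda_1$, with equality if and only if $\mathbf{1}_n$ is a $\lambda_1$-eigenvector, i.e., if and only if $\Gr{G}$ is regular. Your max-coordinate argument also supplies the justification, only asserted in the paper, that $\lambda_1=d$ for a $d$-regular graph.
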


\begin{theorem}[The eigenvalues of strongly regular graphs]
\label{theorem: eigenvalues of srg}
The following spectral properties are satisfied by the family of strongly regular graphs:
\begin{enumerate}[(1)]
\item \label{Item 1: eigenvalues of srg}
A strongly regular graph has at most three distinct eigenvalues.
\item \label{Item 2: eigenvalues of srg}
Let $\Gr{G}$ be a connected strongly regular graph, and let its parameters be $\SRG(n,d,\lambda,\mu)$.
Then, the largest eigenvalue of its adjacency matrix is $\Eigval{1}{\Gr{G}} = d$ with multiplicity~1,
and the other two distinct eigenvalues of its adjacency matrix are given by
\begin{align}
\label{eigs-SRG}
p_{1,2} = \tfrac12 \, \Biggl( \lambda - \mu \pm \sqrt{ (\lambda-\mu)^2 + 4(d-\mu) } \, \Biggr),
\end{align}
with the respective multiplicities
\begin{align}
\label{eig-multiplicities-SRG}
m_{1,2} = \tfrac12 \, \Biggl( n-1 \mp \frac{2d+(n-1)(\lambda-\mu)}{\sqrt{(\lambda-\mu)^2+4(d-\mu)}} \, \Biggr).
\end{align}
\item \label{Item 3: eigenvalues of srg}
A connected regular graph with exactly three distinct eigenvalues is strongly regular.
\item \label{Item 4: eigenvalues of srg}
Strongly regular graphs for which $2d+(n-1)(\lambda-\mu) \neq 0$ have integral eigenvalues and the multiplicities
of $p_{1,2}$ are distinct.
\item \label{Item 5: eigenvalues of srg}
A connected regular graph is strongly regular if and only if it has three distinct eigenvalues, where the largest
eigenvalue is of multiplicity~1.
\item \label{Item 6: eigenvalues of srg}
A disconnected strongly regular graph is a disjoint union of $m$ identical complete graphs $\CoG{r}$, where $m \geq 2$ and $r \in \naturals$.
It belongs to the family $\srg{mr}{r-1}{r-2}{0}$, and its adjacency spectrum is $\{ (r-1)^{[m]}, (-1)^{[m(r-1)]} \}$, where superscripts
indicate the multiplicities of the eigenvalues, thus having two distinct eigenvalues.
\end{enumerate}

Since the Laplacian spectrum, signless Laplacian spectrum, and normalized Laplacian spectrum of a regular graph are all given by affine transformations of its adjacency spectrum, we can also derive conditions for the strong regularity of a graph from the spectra of each of those three matrices.
\end{theorem}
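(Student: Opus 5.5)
The plan is to derive everything from the matrix identity that encodes strong regularity. For a graph $\Gr{G}$ in $\srg{n}{d}{\lambda}{\mu}$ with $\A=\A(\Gr{G})$, I count walks of length two using Theorem~\ref{thm: number of walks of a given length}. The diagonal entries of $\A^2$ equal $d$, the entries at adjacent pairs equal $\lambda$, and the entries at distinct nonadjacent pairs equal $\mu$. This gives
\begin{align}
\A^2 = d\,\I{n} + \lambda \A + \mu(\J{n,n} - \I{n} - \A).
\end{align}
Regularity gives $\A\mathbf{1}_n = d\mathbf{1}_n$. Since $\A$ is symmetric, I can choose an orthogonal eigenbasis that contains $\mathbf{1}_n$. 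Every other eigenvector $x$ in this basis is orthogonal to $\mathbf{1}_n$, so $\J{n,n}x=0$. Its eigenvalue $\theta$ therefore satisfies $\theta^2 + (\mu-\lambda)\theta - (d-\mu)=0$. Hence the spectrum lies in $\{d,p_1,p_2\}$, which proves Item~\ref{Item 1: eigenvalues of srg}, and solving the quadratic gives \eqref{eigs-SRG}.

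For Item~\ref{Item 2: eigenvalues of srg}, connectivity and Perron--Frobenius give that $d$ is simple. I would argue this directly: if $\A x = dx$, then at a vertex where $|x_i|$ is maximal we get $x_j = x_i$ for all neighbours $j$, and propagating along paths shows $x$ is constant. The multiplicities then follow from the linear system
\begin{align}
1+m_1+m_2 = n, \qquad d + m_1 p_1 + m_2 p_2 = \mathrm{tr}(\A) = 0,
\end{align}
using \eqref{eq: trace of A is zero}. Substituting $p_1+p_2=\lambda-\mu$ and $p_1-p_2=\sqrt{(\lambda-\mu)^2+4(d-\mu)}$ yields \eqref{eig-multiplicities-SRG}. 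Here one should note that $d>\mu$ in the connected, noncomplete case, so the discriminant is positive and $p_1\neq p_2$. I expect this nondegeneracy point to be the main subtlety: $\mu\ge1$ because the graph is connected and not complete, and if $d=\mu$ then every two nonadjacent vertices would have identical neighbourhoods, which leads to the complete multipartite case where $d>\mu$ can be checked directly.

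Item~\ref{Item 4: eigenvalues of srg} is next. The multiplicities $m_1,m_2$ are integers, so if $2d+(n-1)(\lambda-\mu)\neq 0$ then the square root in \eqref{eig-multiplicities-SRG} must be rational. A rational square root of an integer is an integer, so $p_{1,2}$ are rational roots of a monic integer quadratic and hence integers. Moreover $m_1\neq m_2$, because their difference is that nonzero fraction.

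For Items~\ref{Item 3: eigenvalues of srg} and~\ref{Item 5: eigenvalues of srg}, suppose $\Gr{G}$ is connected and $d$-regular with exactly three distinct eigenvalues $d,\theta_1,\theta_2$. Then $d$ is simple with eigenvector $\mathbf{1}_n$. Consequently $(\A-\theta_1\I{n})(\A-\theta_2\I{n})$ vanishes on $\mathbf{1}_n^\perp$ and equals $(d-\theta_1)(d-\theta_2)\tfrac1n\J{n,n}$. Expanding, $\A^2$ is a linear combination of $\I{n}$, $\A$ and $\J{n,n}$, so $(\A^2)_{ij}$ is constant on adjacent pairs and constant on nonadjacent distinct pairs. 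With three eigenvalues the graph is neither complete nor empty, so it is strongly regular; this proves Item~\ref{Item 3: eigenvalues of srg}. For Item~\ref{Item 5: eigenvalues of srg}, the converse direction is Items~\ref{Item 1: eigenvalues of srg}--\ref{Item 2: eigenvalues of srg} together with $p_1\neq p_2$.

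For Item~\ref{Item 6: eigenvalues of srg}, a disconnected strongly regular graph must have $\mu=0$, since vertices in different components are nonadjacent with no common neighbours. Then adjacency becomes transitive: if $u\sim v\sim w$ with $u\ne w$, the pair $u,w$ shares the neighbour $v$, so $u\sim w$. Hence each component is a clique, and regularity makes all of them $\CoG{d+1}$. The spectrum is that of $m$ copies of $\CoG{r}$, namely $r-1$ with multiplicity $m$ and $-1$ with multiplicity $m(r-1)$.
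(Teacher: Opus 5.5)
The paper quotes this theorem as background and gives no proof, so your argument has to stand on its own. Most of it is sound: the identity $\A^2=d\I{n}+\lambda\A+\mu(\J{n,n}-\I{n}-\A)$ restricted to $\mathbf{1}_n^{\perp}$, the two trace equations for the multiplicities, the rationality argument for Item~4, the converse via $(\A-\theta_1\I{n})(\A-\theta_2\I{n})=\tfrac{(d-\theta_1)(d-\theta_2)}{n}\J{n,n}$, and the $\mu=0$ argument for Item~6. The gap is in the step you yourself called the main subtlety.

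\textbf{The claim $d>\mu$ is false.} It is not true that every connected noncomplete strongly regular graph has $d>\mu$. The cycle $\CG{4}$ is $\srg{4}{2}{0}{2}$. More generally, every regular complete multipartite graph $\CoG{s}^{t}$ with $t\ge 2$ parts of size $s\ge 2$ (e.g.\ $\CoBG{s}{s}$) is connected with $\mu=d=(t-1)s$. Your own reduction lands exactly in this family: if $d=\mu$, non-adjacency is an equivalence relation, so $\Gr{G}$ is complete multipartite. The ``direct check'' there then returns $\mu=d$, not $d>\mu$.

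\textbf{The needed conclusion still holds, for a different reason.} What you actually need is that the discriminant $(\lambda-\mu)^2+4(d-\mu)$ is positive, so that $p_1\neq p_2$. First, $\mu\le d$, because the common neighbours of a nonadjacent pair $u,w$ are a subset of the neighbourhood of $u$. Second, $\lambda\le d-1$, because for adjacent $u,v$ the neighbour $v$ of $u$ is not a common neighbour of $u$ and $v$. Hence the discriminant vanishes only if $\lambda=\mu=d$, which is impossible. As a check, in $\CoG{s}^{t}$ one has $\lambda-\mu=-s\neq 0$, matching the spectrum $\{(t-1)s,[0]^{t(s-1)},[-s]^{t-1}\}$.

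\textbf{A smaller omission.} For the forward direction of Item~5, and for the phrase ``the other two distinct eigenvalues'' in Item~2, knowing $p_1\neq p_2$ does not yet give \emph{exactly} three distinct eigenvalues. You must also show that both values occur, i.e.\ $m_1,m_2\ge 1$. The projection trick you use for Item~3 does this. If only one value $\theta$ occurred on $\mathbf{1}_n^{\perp}$, then $\A=\theta\I{n}+\tfrac{d-\theta}{n}\J{n,n}$. All off-diagonal entries of $\A$ would then be equal, so $\Gr{G}$ would be complete or empty, which the definition of a strongly regular graph excludes. With these two repairs your proof covers all six items.
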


The following result follows readily from Theorem~\ref{theorem: eigenvalues of srg}.
\begin{corollary}
\label{corollary: cospectral SRGs}
Strongly regular graphs with identical parameters $(n,d,\lambda,\mu)$ are cospectral.
\end{corollary}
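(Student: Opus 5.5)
The plan is to show that the adjacency spectrum of a strongly regular graph is a function of $(n,d,\lambda,\mu)$ alone. Then any two graphs in $\srg{n}{d}{\lambda}{\mu}$ have the same multiset of eigenvalues. I would split into two cases, connected and disconnected, and treat the disconnected case by Item~\ref{Item 6: eigenvalues of srg} of Theorem~\ref{theorem: eigenvalues of srg}.

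\emph{Disconnected case.} By Item~\ref{Item 6: eigenvalues of srg}, a disconnected $\srg{n}{d}{\lambda}{\mu}$ is $m\CoG{r}$ with $\mu=0$, $r=d+1$ and $m=n/(d+1)$. Its spectrum $\{(r-1)^{[m]},(-1)^{[m(r-1)]}\}$ is therefore determined by $n$ and $d$.

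\emph{Connected case.} Item~\ref{Item 2: eigenvalues of srg} gives the eigenvalue $d$ with multiplicity~$1$. It also gives the other two eigenvalues $p_{1,2}$ explicitly in \eqref{eigs-SRG} and their multiplicities $m_{1,2}$ in \eqref{eig-multiplicities-SRG}. All of these depend only on the parameters.

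\emph{Compatibility of the two cases.} Connectivity itself must be decided by the parameters, so that two graphs with the same parameters never fall into different cases. For a strongly regular graph, $\mu\ge 1$ forces diameter at most~$2$ and hence connectedness. If $\mu=0$, every pair of nonadjacent vertices has no common neighbor, so every component is complete and the graph is disconnected, since it is not complete by definition. Thus the graph is connected exactly when $\mu>0$.

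\emph{Expected obstacle.} The one point needing care is the degenerate situation in \eqref{eig-multiplicities-SRG} where the square-root term vanishes, i.e.\ $(\lambda-\mu)^2+4(d-\mu)=0$. This would require $d=\mu$ and $\lambda=\mu$. Combined with \eqref{eq: necessary condition for the parameter vector of SRGs}, it would give $(n-d-1)d=-d$, which is impossible for $d>0$ and $n>d$. So the formulas are well defined, and in every case the spectrum is determined by the parameters.
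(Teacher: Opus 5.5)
Your proposal is correct and takes essentially the same route as the paper, which simply notes that the corollary follows from Theorem~\ref{theorem: eigenvalues of srg}, since the eigenvalues and multiplicities there depend only on $(n,d,\lambda,\mu)$. Your extra checks fill in details the paper leaves implicit: that connectivity is decided by whether $\mu>0$ (cf.\ Remark~\ref{remark: connected SRG}), so the disconnected case of Item~\ref{Item 6: eigenvalues of srg} never mixes with the connected case, and that the square root in \eqref{eigs-SRG} never vanishes.
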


\begin{remark}
\label{remark: NICS SRGs}
Strongly regular graphs having identical parameters $(n, d, \lambda, \mu)$ are
cospectral but may not be isomorphic. For instance, Chang graphs form a set
of three nonisomorphic strongly regular graphs with identical parameters
$\srg{28}{12}{6}{4}$ \cite[Section~10.11]{BrouwerM22}. Consequently, the three
Chang graphs are strongly regular NICS graphs.
\end{remark}

An important class of strongly regular graphs, for which $2d+(n-1)(\lambda-\mu)=0$, is given by the family of conference graphs.
\begin{definition}[Conference graphs]
\label{definition: conference graphs}
A conference graph on $n$ vertices is a strongly regular graph with the parameters
$\srg{n}{\tfrac12(n-1)}{\tfrac14(n-5)}{\tfrac14(n-1)}$, where $n$ must satisfy $n=4k+1$
with $k \in \naturals$.
\end{definition}
If $\Gr{G}$ is a conference graph on $n$ vertices, then so is its complement $\CGr{G}$; it is, however,
not necessarily self-complementary. By Theorem~\ref{theorem: eigenvalues of srg}, the distinct eigenvalues of
the adjacency matrix of $\Gr{G}$ are given by $\tfrac12 (n-1)$, $\tfrac12 (\hspace*{-0.1cm}
\sqrt{n}-1)$, and $-\tfrac12 (\hspace*{-0.1cm} \sqrt{n}+1)$ with multiplicities
$1, \tfrac12 (n-1)$, and $\tfrac12 (n-1)$, respectively. In contrast to Item~\ref{Item 4: eigenvalues of srg}
of Theorem~\ref{theorem: eigenvalues of srg}, the eigenvalues $\pm \tfrac12 (\hspace*{-0.1cm} \sqrt{n}+1)$ are
not necessarily integers. For instance, the cycle graph $\CG{5}$, which is a conference graph, has an adjacency
spectrum $\bigl\{2, \bigl[\tfrac12 (\hspace*{-0.1cm} \sqrt{5}-1) \bigr]^{(2)},
\bigl[-\tfrac12 (\hspace*{-0.1cm} \sqrt{5}+1) \bigr]^{(2)} \}$. Thus, apart from the largest eigenvalue, the
other eigenvalues are irrational numbers.

\subsubsection{Properties of the Laplacian matrix}
\begin{theorem}
\label{theorem: On the Laplacian matrix of a graph}
Let $\Gr{G}$ be a finite, simple, and undirected graph, and let $\LM$ be the Laplacian matrix of $\Gr{G}$. Then,
\begin{enumerate}
\item \label{Item 1: Laplacian matrix of a graph}
The Laplacian matrix $\LM = {\mathbf{N}} {\mathbf{N}}^{\mathrm{T}}$ is positive semidefinite,
where ${\mathbf{N}}$ is an oriented incidence matrix of $\Gr{G}$ (see Definition~\ref{definition: oriented incidence matrix} and \cite[p.~185]{CvetkovicRS2010}).
\item \label{Item 2: Laplacian matrix of a graph}
The smallest eigenvalue of $\, \LM$ is zero, with a multiplicity equal to the number of components in $\Gr{G}$ (see \cite[Theorem~7.1.2]{CvetkovicRS2010}).
\item \label{Item 3: Laplacian matrix of a graph}
The size of the graph, $\bigcard{\E{\Gr{G}}}$, equals one-half of the sum of the eigenvalues of $\, \LM$, counted with multiplicities (see
\cite[Eq.~(7.4)]{CvetkovicRS2010}).
\end{enumerate}
\end{theorem}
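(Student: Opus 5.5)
The plan is to prove the three items of Theorem~\ref{theorem: On the Laplacian matrix of a graph} in order, using an oriented incidence matrix $\mathbf{N}$ as in Definition~\ref{definition: oriented incidence matrix}. Items~2 and~3 will both follow from Item~1.

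For Item~1, I would compute $\mathbf{N}\mathbf{N}^{\mathrm{T}}$ entrywise. The diagonal entry $(\mathbf{N}\mathbf{N}^{\mathrm{T}})_{i,i}=\sum_j N_{i,j}^2$ counts the edges incident to $v_i$, which is $d(v_i)$. For $i\neq k$, the entry $(\mathbf{N}\mathbf{N}^{\mathrm{T}})_{i,k}=\sum_j N_{i,j}N_{k,j}$ is nonzero only for an edge $e_j$ joining $v_i$ and $v_k$. Such an edge has one head and one tail, so the product is $-1$. Since the graph is simple, there is at most one such edge, and hence $\mathbf{N}\mathbf{N}^{\mathrm{T}}=\D-\A=\LM$. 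Positive semidefiniteness then follows from Proposition~\ref{proposition: positive semidefinite matrix} (Item~2 there). Equivalently, one has the quadratic form
\[
\mathbf{x}^{\mathrm{T}}\LM\mathbf{x}=\|\mathbf{N}^{\mathrm{T}}\mathbf{x}\|^2=\sum_{\{v_i,v_k\}\in\E{\Gr{G}}}(x_i-x_k)^2,
\]
which I will reuse below.

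For Item~2, Item~1 shows all eigenvalues are nonnegative, and $\LM\mathbf{1}_n=\mathbf{0}$ because each row sums to $d(v_i)-d(v_i)$. So $0$ is the smallest eigenvalue. The multiplicity of $0$ equals $\dim\ker\LM$. Because $\LM$ is positive semidefinite, $\LM\mathbf{x}=\mathbf{0}$ if and only if $\mathbf{x}^{\mathrm{T}}\LM\mathbf{x}=0$: one direction is trivial, and for the other write $\LM=\mathbf{N}\mathbf{N}^{\mathrm{T}}$, so $\|\mathbf{N}^{\mathrm{T}}\mathbf{x}\|=0$ gives $\LM\mathbf{x}=\mathbf{0}$. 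By the quadratic form, $\mathbf{x}^{\mathrm{T}}\LM\mathbf{x}=0$ forces $x_i=x_k$ for every edge. By connectivity within each component, $\mathbf{x}$ is then constant on each connected component. Conversely, the indicator vectors of the components lie in the kernel. They are linearly independent because their supports are disjoint, and they span the kernel. So the kernel dimension equals the number of components. This kernel characterization is the only step needing care, and it is handled by the factorization $\LM=\mathbf{N}\mathbf{N}^{\mathrm{T}}$ rather than by a general spectral argument.

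For Item~3, the sum of the eigenvalues counted with multiplicity is $\mathrm{tr}(\LM)=\sum_i d(v_i)=2\bigcard{\E{\Gr{G}}}$ by the handshake lemma. Halving gives the edge count.
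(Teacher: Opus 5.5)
Your proposal is correct, and it is the standard argument behind the results that the paper simply cites from \cite{CvetkovicRS2010}; the paper gives no proof of its own. You factor $\LM=\mathbf{N}\mathbf{N}^{\mathrm{T}}$, use $\mathbf{x}^{\mathrm{T}}\LM\mathbf{x}=\sum_{\{v_i,v_k\}\in\E{\Gr{G}}}(x_i-x_k)^2$ to identify $\ker\LM$ with the vectors that are constant on each component, and read off the edge count from $\mathrm{tr}(\LM)=\sum_i d(v_i)$ (the one implicit step, that the multiplicity of $0$ equals $\dim\ker\LM$, holds because $\LM$ is real symmetric and hence diagonalizable).
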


The following celebrated theorem provides an operational meaning of the $\LM$-spectrum of graphs in counting their number of
spanning subgraphs.
\begin{theorem}[Kirchhoff's Matrix-Tree Theorem \cite{Kirchhoff1958}]
\label{theorem: number of spanning trees}
The number of spanning trees in a connected and simple graph $\Gr{G}$ on $n$ vertices is determined by the $n-1$ nonzero
eigenvalues of the Laplacian matrix, and it is equal to $\frac{1}{n} \overset{n}{\underset{\ell=2}{\prod}} \, \mu_\ell(\Gr{G})$.
\end{theorem}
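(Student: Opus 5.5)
The plan is the classical route through the Cauchy--Binet formula applied to a reduced Laplacian. Let $\mathbf{N}$ be an oriented incidence matrix of $\Gr{G}$ (Definition~\ref{definition: oriented incidence matrix}), so that $\LM = \mathbf{N}\mathbf{N}^{\mathrm{T}}$ by Item~\ref{Item 1: Laplacian matrix of a graph} of Theorem~\ref{theorem: On the Laplacian matrix of a graph}. Delete the row of $\mathbf{N}$ corresponding to a fixed vertex $v_n$, obtaining an $(n-1)\times m$ matrix $\mathbf{N}_0$. Then $\mathbf{N}_0\mathbf{N}_0^{\mathrm{T}}$ is the principal submatrix $\LM_0$ of $\LM$ obtained by deleting row and column $n$. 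By Cauchy--Binet,
\begin{align}
\det \LM_0 = \sum_{S} \bigl(\det \mathbf{N}_0[S]\bigr)^2,
\end{align}
where the sum runs over all $(n-1)$-subsets $S$ of the edges, and $\mathbf{N}_0[S]$ is the square submatrix with columns indexed by $S$.

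The combinatorial heart of the argument is the lemma that $\det \mathbf{N}_0[S] = \pm 1$ if the edges in $S$ form a spanning tree, and $0$ otherwise. To prove it:
\begin{itemize}
\item If $S$ does not form a spanning tree, then $n-1$ edges on $n$ vertices must contain a cycle. Equivalently, some component of the edge set misses $v_n$. In that case, summing the rows of the vertices of that component (with suitable signs in the cycle case) gives a linear dependency among the columns or rows, so the determinant vanishes.
\item If $S$ is a spanning tree, I would argue by induction. A tree on at least two vertices has at least two leaves, so there is a leaf $u \neq v_n$. Its row in $\mathbf{N}_0[S]$ has a single nonzero entry $\pm1$. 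Expanding along that row reduces to the tree with $u$ removed, and induction gives $\pm 1$.
\end{itemize}
This lemma is the main obstacle, particularly handling the dependency carefully when $S$ contains a cycle. I would do this via connected components: the rows of a component not containing $v_n$ sum to zero, since each column has one $+1$ and one $-1$ within the component.

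Hence $\det\LM_0$ equals the number $\tau(\Gr{G})$ of spanning trees. To connect this with the spectrum, compare coefficients of the characteristic polynomial
\begin{align}
\det(x\I{n}-\LM)=\prod_{\ell=1}^n (x-\mu_\ell).
\end{align}
The coefficient of $x$, up to the sign $(-1)^{n-1}$, is on one hand $\sum_{i}\det \LM^{(i)}$, the sum of the principal $(n-1)$-minors obtained by deleting row and column $i$. On the other hand, it equals the $(n-1)$-st elementary symmetric function of the eigenvalues.

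Since $\Gr{G}$ is connected, Item~\ref{Item 2: Laplacian matrix of a graph} gives $\mu_1 = 0$ with multiplicity one, so the elementary symmetric function collapses to $\prod_{\ell=2}^n \mu_\ell$. The reduced-Laplacian argument above was independent of which vertex was deleted, so each of the $n$ principal minors equals $\tau(\Gr{G})$. Therefore $n\,\tau(\Gr{G})=\prod_{\ell=2}^n\mu_\ell(\Gr{G})$, which is the claim.
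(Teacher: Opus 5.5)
The paper does not prove this theorem. It is quoted as a classical result from \cite{Kirchhoff1958} and used only to derive Cayley's formula, so there is no in-paper argument to compare yours with.

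Your proposal is the standard Cauchy--Binet proof, and it is correct and complete. The factorization $\LM_0=\mathbf{N}_0\mathbf{N}_0^{\mathrm{T}}$ is right. So is the lemma that $\det\mathbf{N}_0[S]=\pm 1$ exactly when $S$ is a spanning tree, with the leaf-expansion induction. The independence of $\det\LM_0$ from the deleted vertex holds. Finally, the coefficient of $x$ in $\det(x\I{n}-\LM)$ does equal both $(-1)^{n-1}$ times the sum of the principal $(n-1)$-minors and $(-1)^{n-1}$ times the $(n-1)$-st elementary symmetric function of the $\mu_\ell$.

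Two minor remarks.

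\textbf{The cycle is unnecessary in the non-tree case.} A set $S$ of $n-1$ edges that is not a spanning tree leaves the spanning subgraph $(\V{\Gr{G}},S)$ disconnected. So some component avoids $v_n$, and the rows of $\mathbf{N}_0[S]$ indexed by that component sum to zero; if the component is an isolated vertex, its row is already zero. The ``suitable signs'' you mention belong to a different argument, a column dependency in which the signed sum of the columns of a cycle vanishes. Either argument alone suffices, so mixing them only blurs the write-up.

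\textbf{Connectivity is only needed to identify the nonzero eigenvalues.} It tells you that $\mu_2,\ldots,\mu_n$ are the nonzero eigenvalues. The identity $n\,\tau(\Gr{G})=\prod_{\ell=2}^{n}\mu_\ell(\Gr{G})$ itself only uses $\mu_1=0$. That holds for every graph, since $\LM\mathbf{1}_n=\mathbf{0}$, and when $\Gr{G}$ is disconnected both sides of the identity vanish.
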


\begin{corollary}[Cayley's Formula \cite{Cayley1889}]
\label{corollary: number of spanning trees}
The number of spanning trees of $\CoG{n}$ is $n^{n-2}$.
\end{corollary}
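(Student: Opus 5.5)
Cayley's formula follows from Kirchhoff's Matrix-Tree Theorem (Theorem~\ref{theorem: number of spanning trees}) once we have the Laplacian spectrum of $\CoG{n}$. For $n=1$ the statement is trivial: there is one spanning tree and $1^{-1}=1$. So assume $n \geq 2$, in which case $\CoG{n}$ is connected and the theorem applies. The plan is to compute $\sigma_{\LM}(\CoG{n})$ explicitly and substitute it into the product formula.

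Since $\CoG{n}$ is $(n-1)$-regular, its adjacency matrix is $\A = \J{n,n} - \I{n}$. Hence
\begin{align*}
\LM(\CoG{n}) = (n-1)\I{n} - (\J{n,n} - \I{n}) = n\I{n} - \J{n,n}.
\end{align*}
The matrix $\J{n,n}$ has rank one, with eigenvalue $n$ on $\mathbf{1}_n$ and eigenvalue $0$ on the $(n-1)$-dimensional subspace $\mathbf{1}_n^{\perp}$. Therefore $\LM(\CoG{n})$ has eigenvalue $0$ on $\mathbf{1}_n$, consistent with Item~\ref{Item 2: Laplacian matrix of a graph} of Theorem~\ref{theorem: On the Laplacian matrix of a graph} for a connected graph, and eigenvalue $n$ with multiplicity $n-1$ on $\mathbf{1}_n^{\perp}$. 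In the ordering \eqref{eq3:26.09.23}, this gives $\mu_1 = 0$ and $\mu_2 = \cdots = \mu_n = n$.

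Substituting into Theorem~\ref{theorem: number of spanning trees} yields
\begin{align*}
\frac{1}{n}\prod_{\ell=2}^{n} \mu_\ell(\CoG{n}) = \frac{1}{n}\, n^{n-1} = n^{n-2}.
\end{align*}
There is no real obstacle here. The only step that needs care is the spectrum of $\J{n,n}$, and the orthogonal decomposition $\Reals^n = \mathrm{span}\{\mathbf{1}_n\} \oplus \mathbf{1}_n^{\perp}$ handles it directly.
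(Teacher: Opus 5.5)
Your proposal is correct and follows the paper's proof. The paper simply asserts $\sigma_{\LM}(\CoG{n}) = \{0, [n]^{n-1}\}$ and invokes Kirchhoff's Matrix-Tree Theorem; you take the same route and additionally justify that spectrum through the identity $\LM(\CoG{n}) = n\I{n} - \J{n,n}$.
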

\begin{proof}
The $\LM$-spectrum of $\CoG{n}$ is given by $\{0, [n]^{n-1}\}$, and the result readily follows from Theorem~\ref{theorem: number of spanning trees}.
\end{proof}
	
\subsubsection{Properties of the signless Laplacian matrix}
\begin{theorem}
\label{theorem: On the signless Laplacian matrix of a graph}
Let $\Gr{G}$ be a finite, simple, and undirected graph, and let $\Q$ be
the signless Laplacian matrix of $\Gr{G}$. Then,
\begin{enumerate}
\item \label{Item 1: signless Laplacian matrix of a graph}
The matrix $\Q$ is positive semidefinite. Moreover, it is a completely positive matrix, expressed as $\Q = {\mathbf{B}} {\mathbf{B}}^{\mathrm{T}}$, where
${\mathbf{B}}$ is the incidence matrix of $\Gr{G}$ (see Definition~\ref{definition: incidence matrix} and \cite[Section~2.4]{CvetkovicRS2010}).
\item \label{Item 2: signless Laplacian matrix of a graph}
If $\Gr{G}$ is a connected graph, then it is bipartite if and only if the least eigenvalue of $\Q$ is equal to zero. In this case,
$0$ is a simple $\Q$-eigenvalue (see \cite[Theorem~7.8.1]{CvetkovicRS2010}).
\item \label{Item 3: signless Laplacian matrix of a graph}
The multiplicity of 0 as an eigenvalue of $\Q$ is equal to the number of bipartite components in $\Gr{G}$ (see \cite[Corollary~7.8.2]{CvetkovicRS2010}).
\item \label{Item 4: signless Laplacian matrix of a graph}
The size of the graph $\bigl| E(\Gr{G}) \bigr| $ is equal to one-half the sum of the eigenvalues of~$\Q$, counted with multiplicities
(see \cite[Corollary~7.8.9]{CvetkovicRS2010}).
\end{enumerate}
\end{theorem}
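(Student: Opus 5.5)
The plan is to derive all four items from the factorization $\Q = \mathbf{B}\mathbf{B}^{\mathrm{T}}$, where $\mathbf{B}$ is the (unoriented) incidence matrix of Definition~\ref{definition: incidence matrix}.

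\textbf{Item~1.} Compute $(\mathbf{B}\mathbf{B}^{\mathrm{T}})_{i,j} = \sum_{e} B_{i,e}B_{j,e}$. For $i=j$ this counts the edges incident to $v_i$, namely $d(v_i)$. For $i\neq j$ it counts the edges containing both $v_i$ and $v_j$, which is $1$ or $0$ since $\Gr{G}$ is simple. Hence $\mathbf{B}\mathbf{B}^{\mathrm{T}} = \D + \A = \Q$. Because $\mathbf{B}$ has nonnegative entries, $\Q$ is completely positive, and positive semidefiniteness follows from Corollary~\ref{corollary: c.p. yields p.s.}.

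\textbf{Items~2 and~3.} The key identity is
\begin{align}
\mathbf{x}^{\mathrm{T}}\Q\mathbf{x} = \|\mathbf{B}^{\mathrm{T}}\mathbf{x}\|^2 = \sum_{\{v_i,v_j\}\in\E{\Gr{G}}} (x_i+x_j)^2 .
\end{align}
Since $\Q$ is positive semidefinite, the kernel of $\Q$ equals the kernel of $\mathbf{B}^{\mathrm{T}}$. This kernel consists of the vectors with $x_i = -x_j$ for every edge.

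Restrict first to a connected component $\Gr{C}$, since $\Q$ is block diagonal over components. Fix a vertex $u$ in $\Gr{C}$. The condition propagates along walks, so $x_v = (-1)^{\ell}x_u$ for every walk of length $\ell$ from $u$ to $v$. A nonzero solution exists exactly when all walks between any two vertices have lengths of the same parity. This is equivalent to $\Gr{C}$ having no odd closed walk, i.e.\ no odd cycle, i.e.\ $\Gr{C}$ being bipartite.

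When $\Gr{C}$ is bipartite with parts $X$ and $Y$, the solution is unique up to a scalar: $\pm 1$ according to the side. So the kernel restricted to $\Gr{C}$ is one-dimensional. This gives Item~2 for connected $\Gr{G}$, including the simplicity of the eigenvalue $0$, with $0$ being the least eigenvalue by positive semidefiniteness. Summing kernel dimensions over the components gives Item~3.

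One degenerate case needs care. An isolated vertex forms a component with no edges, so its kernel is the free one-dimensional space; it counts as a (trivially) bipartite component, which is consistent.

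\textbf{Item~4.} We have $\sum_i \nu_i = \Tr(\Q) = \Tr(\D) + \Tr(\A) = \sum_i d(v_i) = 2\bigcard{\E{\Gr{G}}}$, using $\Tr(\A)=0$ and the handshake lemma.

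The only nonroutine step is the parity argument showing that the sign-alternating kernel vector is well defined exactly when the component is bipartite. It is short once it is phrased via walks.
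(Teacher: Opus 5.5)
Your proof is correct. The paper does not prove this theorem itself; it attributes each item to \cite{CvetkovicRS2010}. Your argument (the factorization $\Q=\mathbf{B}\mathbf{B}^{\mathrm{T}}$, the quadratic form $\sum_{\{v_i,v_j\}\in\E{\Gr{G}}}(x_i+x_j)^2$, the sign-alternating kernel vector on each component, and the trace count) is essentially the standard argument behind those cited results. The one implicit step, that the multiplicity of $0$ equals $\dim\ker\Q$, holds because $\Q$ is symmetric.
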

The interested reader is referred to \cite{OliveiraLAK2010} for bounds on the $\Q$-spread (i.e., the difference between the largest and smallest
eigenvalues of the signless Laplacian matrix), expressed as a function of the number of vertices in the graph. In regard to
Item~\ref{Item 2: signless Laplacian matrix of a graph} of Theorem~\ref{theorem: On the signless Laplacian matrix of a graph}, the interested
reader is referred to \cite{Cardoso2008} for a lower bound on the least eigenvalue of signless Laplacian matrix for connected non-bipartite graphs,
and to \cite{ChenH2018} for a lower bound on the least eigenvalue of signless Laplacian matrix for a general simple graph with a fixed number of
vertices and edges.

\subsubsection{Properties of the normalized Laplacian matrix}
The normalized Laplacian matrix of a graph, defined in \eqref{eq: normalized Laplacian matrix}, exhibits several
interesting spectral properties, which are introduced below.
\begin{theorem} \cite{CvetkovicRS2010,CvetkovicRS2007}
\label{theorem: On the normalized Laplacian matrix of a graph}
Let $\Gr{G}$ be a finite, simple, and undirected graph, and let ${\bf{\mathcal{L}}}$ be
the normalized Laplacian matrix of $\Gr{G}$. Then,
\begin{enumerate}
\item \label{Item 1: normalized Laplacian matrix of a graph}
The eigenvalues of ${\bf{\mathcal{L}}}$ lie in the interval $[0,2]$ (see \cite[Section~7.7]{CvetkovicRS2010}).
\item \label{Item 2: normalized Laplacian matrix of a graph}
The number of components in $\Gr{G}$ is equal to the multiplicity of~0 as an eigenvalue of ${\bf{\mathcal{L}}}$ (see \cite[Theorem~7.7.3]{CvetkovicRS2010}).
\item \label{Item 3: normalized Laplacian matrix of a graph}
The largest eigenvalue of ${\bf{\mathcal{L}}}$ is equal to~2 if and only if the graph has a bipartite component (see \cite[Theorem~7.7.2(v)]{CvetkovicRS2010}).
Furthermore, the number of the bipartite components of $\Gr{G}$ is equal to the multiplicity of~2 as an eigenvalue of~${\bf{\mathcal{L}}}$.
\item \label{Item 4: normalized Laplacian matrix of a graph}
The sum of its eigenvalues (including multiplicities) is less than or equal to the graph order $(n)$, with equality if and only
if the graph has no isolated vertices (see \cite[Theorem~7.7.2(i)]{CvetkovicRS2010}).
\end{enumerate}
\end{theorem}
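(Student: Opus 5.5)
We prove the four items of Theorem~\ref{theorem: On the normalized Laplacian matrix of a graph} through the Rayleigh quotient of $\mathcal{L}$. Let $\set{I}$ be the set of isolated vertices. By the convention $d^{-1/2}(v)=0$ for $v\in\set{I}$, every row and column of $\mathcal{L}$ indexed by $\set{I}$ is zero. Hence $\mathcal{L}$ is, up to a permutation, the direct sum of $\mathbf{0}_{|\set{I}|,|\set{I}|}$ and the normalized Laplacians of the nontrivial components. Item~4 then follows at once. The trace of $\mathcal{L}$ equals the number of non-isolated vertices, so $\sum_i\delta_i=n-|\set{I}|\le n$, with equality if and only if $\set{I}=\emptyset$.

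For the remaining items we may assume $\Gr{G}$ has no isolated vertices. For a nonzero $\mathbf{y}$, substitute $\mathbf{x}=\D^{-1/2}\mathbf{y}$ to get
\begin{align}
\frac{\mathbf{y}^{\mathrm{T}}\mathcal{L}\mathbf{y}}{\mathbf{y}^{\mathrm{T}}\mathbf{y}}=\frac{\sum_{\{u,v\}\in\E{\Gr{G}}}(x_u-x_v)^2}{\sum_{v} d(v)\,x_v^2}.
\end{align}
The numerator is $\mathbf{x}^{\mathrm{T}}\LM\mathbf{x}$, by Theorem~\ref{theorem: On the Laplacian matrix of a graph}. Since $(x_u-x_v)^2\ge 0$ and $(x_u-x_v)^2\le 2x_u^2+2x_v^2$, summing over edges gives $0\le \mathbf{x}^{\mathrm{T}}\LM\mathbf{x}\le 2\sum_v d(v)x_v^2$. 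Therefore every eigenvalue lies in $[0,2]$, which is Item~1. Isolated vertices contribute only the eigenvalue $0$, so this also holds in general.

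For Item~2, the quotient vanishes if and only if $\mathbf{x}$ is constant on each component. Hence the kernel of $\mathcal{L}$ is spanned by the vectors $\D^{1/2}\mathbf{1}_C$, one for each component $C$. Each isolated vertex contributes one further zero eigenvalue through its zero row, so the multiplicity of $0$ equals the number of components.

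For Item~3, the quotient equals $2$ exactly when $(x_u-x_v)^2=2x_u^2+2x_v^2$ on every edge, i.e.\ when $(x_u+x_v)^2=0$, so $x_u=-x_v$ on every edge. The main subtlety is to show that this eigenspace has dimension equal to the number of bipartite components. We argue as follows. Because $\mathcal{L}$ is a direct sum over components, it suffices to treat one connected component $C$. There, $x_u=-x_v$ along edges forces $|x|$ to be constant on $C$, with the sign alternating along every walk. A nonzero solution therefore exists if and only if $C$ has no odd closed walk, i.e.\ $C$ is bipartite. In that case the solution is unique up to scaling: $x=\pm1$ according to the two parts.

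Here the trivial component consisting of an isolated vertex must be handled separately. It is bipartite but contributes the eigenvalue $0$, not $2$. The statement of Item~3 should therefore be read as counting nontrivial bipartite components, which is what the cited Theorem~7.7.2(v) intends. Summing over the components yields the multiplicity of $2$, and in particular $2$ is an eigenvalue if and only if such a component exists.
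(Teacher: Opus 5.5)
Your proof is correct. The paper gives no argument for this theorem: it imports all four items from \cite{CvetkovicRS2010} (Section~7.7 there). Your write-up is therefore a self-contained replacement for a citation rather than a variant of an existing proof.

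Your route is the standard one. You split off the zero rows and columns belonging to isolated vertices. You then use $\mathbf{x}=\D^{-1/2}\mathbf{y}$ to turn the Rayleigh quotient of $\mathcal{L}$ into $\sum_{\{u,v\}\in\E{\Gr{G}}}(x_u-x_v)^2\big/\sum_v d(v)x_v^2$. After that, Items~1--3 reduce to the elementary inequalities $0\le (x_u-x_v)^2\le 2x_u^2+2x_v^2$ and their equality cases. The only step you leave implicit is that, because $0$ and $2$ bound the quotient, the eigenspaces of $0$ and $2$ are exactly the sets of vectors at which these values are attained. For $0$ this is positive semidefiniteness; for $2$ it is the variational characterization of the largest eigenvalue. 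Say this once and Items~2 and~3 are complete.

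What your approach buys over the citation is that it makes the role of isolated vertices visible. Your caveat on Item~3 is a genuine correction to the statement as printed. An isolated vertex is a bipartite component, but it contributes the eigenvalue $0$. So $\CoG{1}$ has a bipartite component yet largest $\mathcal{L}$-eigenvalue $0$. Likewise $\CoG{3}\DU\CoG{1}$, with $\mathcal{L}$-spectrum $\{[0]^2,[\tfrac32]^2\}$, has a bipartite component yet largest eigenvalue $\tfrac32$. Both the equivalence and the multiplicity count hold only for bipartite components with at least one edge, exactly as you say.

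The same correction carries over to the criterion, stated shortly after this theorem, that a graph is bipartite if and only if $0$ and $2$ have equal $\mathcal{L}$-multiplicities. For example, $\CoG{2}\DU\CoG{1}$ is bipartite, yet these multiplicities are $2$ and $1$. By your analysis, the multiplicities agree if and only if $\Gr{G}$ is bipartite and has no isolated vertices.
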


\subsubsection{More on the spectral properties of the four associated matrices}

\noindent

The following theorem considers equivalent spectral properties of bipartite graphs.
\begin{theorem}
\label{theorem: equivalences for bipartite graphs}
Let $\Gr{G}$ be a graph. The following are equivalent:
\begin{enumerate}
\item \label{Item 1: TFAE bipartite graphs}
$\Gr{G}$ is a bipartite graph.
\item \label{Item 2: TFAE bipartite graphs}
$\Gr{G}$ does not have cycles of odd length.
\item \label{Item 3: TFAE bipartite graphs}
The $\A$-spectrum of $\Gr{G}$ is symmetric around zero, and for every eigenvalue $\lambda$ of $\A(G)$,
the eigenvalue $-\lambda$ is of the same multiplicity \cite[Theorem~3.2.3]{CvetkovicRS2010}.
\item \label{Item 4: TFAE bipartite graphs}
The $\LM$-spectrum and $\Q$-spectrum are identical (see \cite[Proposition~7.8.4]{CvetkovicRS2010}).
\item \label{Item 5: TFAE bipartite graphs}
The ${\bf{\mathcal{L}}}$-spectrum has the same multiplicity of $0$'s and $2$'s as eigenvalues (see
\cite[Corollary~7.7.4]{CvetkovicRS2010}).
\end{enumerate}
\end{theorem}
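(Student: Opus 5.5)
The natural approach is to take Item~\ref{Item 1: TFAE bipartite graphs} as the hub and prove that each of the other items is equivalent to it. The equivalence of Items~\ref{Item 1: TFAE bipartite graphs} and~\ref{Item 2: TFAE bipartite graphs} is König's classical characterization. If $\Gr{G}$ is bipartite, every closed walk alternates between the two parts, so it has even length. Conversely, apply a BFS in each connected component and color each vertex by the parity of its distance from the root. An edge joining two vertices of the same color would, together with the two BFS paths back to their lowest common ancestor, form an odd cycle, which is excluded by assumption.

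For Item~\ref{Item 3: TFAE bipartite graphs}, order the vertices by parts so that $\A=\bigl(\begin{smallmatrix}\mathbf{0}&\mathbf{C}\\ \mathbf{C}^{\mathrm{T}}&\mathbf{0}\end{smallmatrix}\bigr)$. The signature matrix $\mathbf{S}=\diag{\I{}, -\I{}}$ satisfies $\mathbf{S}\A\mathbf{S}=-\A$. Hence $\A$ and $-\A$ are similar, which gives a symmetric spectrum with matching multiplicities. For the converse, symmetry of the spectrum makes every odd power sum $\sum_j\lambda_j^{2k+1}$ vanish. By Corollary~\ref{corollary: Number of Closed Walks of a Given Length}, $\Gr{G}$ then has no closed walks of odd length. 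An odd cycle would be such a walk, so Item~\ref{Item 2: TFAE bipartite graphs} holds.

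For Item~\ref{Item 4: TFAE bipartite graphs}, the same $\mathbf{S}$ gives $\mathbf{S}\Q\mathbf{S}=\D-\A=\LM$, so $\Q$ and $\LM$ are similar. The converse is the step I expect to be the main obstacle. My plan is to use Theorem~\ref{theorem: On the Laplacian matrix of a graph} and Theorem~\ref{theorem: On the signless Laplacian matrix of a graph}. Equal spectra force the multiplicity of $0$ to agree. For $\LM$ this multiplicity is the number of components, and for $\Q$ it is the number of bipartite components. So every component is bipartite, and therefore $\Gr{G}$ is bipartite. Isolated vertices count as bipartite components on both sides, so they cause no trouble.

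For Item~\ref{Item 5: TFAE bipartite graphs}, if $\Gr{G}$ is bipartite then $\mathbf{S}\NormalizedLaplacian\mathbf{S}=2\I{}-\NormalizedLaplacian$ on the vertices of positive degree. This gives a spectrum symmetric about $1$ on that block, together with eigenvalue $0$ for each isolated vertex. The delicate point is isolated vertices, which contribute $0$ to the spectrum of $\NormalizedLaplacian$ but do not contribute a $2$. I would handle this by appealing directly to Items~\ref{Item 2: normalized Laplacian matrix of a graph}--\ref{Item 3: normalized Laplacian matrix of a graph} of Theorem~\ref{theorem: On the normalized Laplacian matrix of a graph}: the multiplicity of $0$ is the number of components, and the multiplicity of $2$ is the number of bipartite components. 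These are equal if and only if every component is bipartite, under the convention used in \cite[Corollary~7.7.4]{CvetkovicRS2010} for isolated vertices. That convention is what I would check most carefully, and if necessary I would state the equivalence for graphs without isolated vertices.
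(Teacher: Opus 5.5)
Your arguments for Items~1--4 are correct. The paper gives no proof of this theorem; it only points to \cite{CvetkovicRS2010}, so your argument is a genuine self-contained substitute. It combines the signature-matrix conjugations $\mathbf{S}\A\mathbf{S}=-\A$ and $\mathbf{S}\Q\mathbf{S}=\LM$, the vanishing of odd closed-walk counts, and the comparison of the multiplicity of $0$ in $\LM$ and in $\Q$. In that comparison an isolated vertex legitimately counts as a bipartite component, since $\Q(\CoG{1})=[0]$.

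For Item~5, the question you defer is not a convention to be checked later. It has a definite answer, and that answer contradicts the statement as written. The paper fixes the convention explicitly: $d^{-1/2}(v)=0$ for an isolated vertex $v$, so $\mathcal{L}_{v,v}=0$. Each isolated vertex therefore contributes an eigenvalue $0$ but no eigenvalue $2$. Hence Item~1 $\Rightarrow$ Item~5 fails for every graph with an isolated vertex. For example, $\CoG{1}$ has $\mathcal{L}$-spectrum $\{0\}$ and $\CoG{2}\DU\CoG{1}$ has $\mathcal{L}$-spectrum $\{0,0,2\}$, although both graphs satisfy Items~1--4.

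For the same reason, Item~\ref{Item 3: normalized Laplacian matrix of a graph} of Theorem~\ref{theorem: On the normalized Laplacian matrix of a graph} is true only if ``bipartite components'' means \emph{nontrivial} bipartite components, as in the paper's introduction. If you apply it literally, counting $\CoG{1}$ as bipartite the way you correctly do for $\Q$, you would ``prove'' a false implication.

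The correct conclusion has two parts:
\begin{enumerate}
\item Item~5 $\Rightarrow$ Item~1 holds for all graphs. The multiplicity of $0$ equals the number of components, which is at least the number of nontrivial bipartite components, i.e., the multiplicity of $2$. Equality forces every component to be nontrivial and bipartite.
\item For a bipartite graph, Item~5 holds if and only if $\Gr{G}$ has no isolated vertices. In that case your identity $\mathbf{S}\mathcal{L}\mathbf{S}=2\mathbf{I}-\mathcal{L}$ settles it.
\end{enumerate}
Under the other common convention $\mathcal{L}=\mathbf{I}-\D^{-1/2}\A\D^{-1/2}$, an isolated vertex contributes the eigenvalue $1$, and the equivalence holds unconditionally. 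However, the multiplicity of $0$ then counts only nontrivial components, so that is not the setting of this paper. You should therefore state ``no isolated vertices'' as an explicit hypothesis for Item~5, not as a fallback.
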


\begin{remark}
\label{remark: on connected bipartite graphs}
Item~\ref{Item 3: TFAE bipartite graphs} of Theorem~\ref{theorem: equivalences for bipartite graphs} can be
strengthened if $\Gr{G}$ is a connected graph. In that case, $\Gr{G}$ is bipartite if and only if $\lambda_1 = -\lambda_n$
(see \cite[Theorem~3.2.4]{CvetkovicRS2010}).
\end{remark}

\begin{table}[hbt]
\centering
\begin{tabular}{|c|c|c|c|c|c|}
\hline
\textbf{Matrix} & \textbf{\# edges} & \textbf{bipartite} & \textbf{\# components} & \textbf{\# bipartite components} & \textbf{\# of closed walks} \\
\hline
$\A$ & Yes & Yes & No & No & Yes \\
\hline
$\LM$ & Yes & No & Yes & No & No \\
\hline
$\Q$ & Yes & No & No & Yes & No \\
\hline
${\bf{\mathcal{L}}}$ & No & Yes & Yes & Yes & No \\
\hline
\end{tabular}
\caption{Some properties of a finite, simple, and undirected graph that one can or cannot determine
by the $X$-spectrum for $X\in \{\A,\LM,\Q, {\bf{\mathcal{L}}} \}$} \label{table:properties_determined by the spectrum}
\end{table}
Table~\ref{table:properties_determined by the spectrum}, borrowed from \cite{Butler2014}, lists properties of a graph that
can or cannot be determined by the $X$-spectrum for $X\in \{\A, \LM, \Q, \bf{\mathcal{L}}\}$. From the $\A$-spectrum of a
graph $\Gr{G}$, one can determine the number of edges and the number of triangles in $\Gr{G}$
(by Eqs.~\eqref{eq: number of edges from A} and \eqref{eq: number of triangles from A}, respectively), and whether the graph
is bipartite or not (by Item~\ref{Item 3: TFAE bipartite graphs} of Theorem~\ref{theorem: equivalences for bipartite graphs}).
However, the $\A$ spectrum does not indicate the number of components (see Example~\ref{example: ANICS graphs with 5 vertices}).
From the $\LM$-spectrum of a graph $\Gr{G}$, one can determine the number of edges
(by Item~\ref{Item 3: Laplacian matrix of a graph} of Theorem~\ref{theorem: On the Laplacian matrix of a graph}),
the number of spanning trees (by Theorem~\ref{theorem: number of spanning trees}),
the number of components of $\Gr{G}$ (by Item~\ref{Item 2: Laplacian matrix of a graph} of
Theorem~\ref{theorem: On the Laplacian matrix of a graph}), but not the number of its triangles, and
whether the graph $\Gr{G}$ is bipartite. From the $\Q$-spectrum, one can determine whether the graph is bipartite, the number
of bipartite components, and the number of edges (respectively, by Items~\ref{Item 3: signless Laplacian matrix of a graph}
and~\ref{Item 4: signless Laplacian matrix of a graph} of Theorem~\ref{theorem: On the signless Laplacian matrix of a graph}),
but not the number of components of the graph, and whether the graph is bipartite (see Remark~\ref{remark: bipartiteness}).
From the ${\bf{\mathcal{L}}}$-spectrum, one can determine the number of components and the
number of bipartite components in $\Gr{G}$ (by Theorem~\ref{theorem: On the normalized Laplacian matrix of a graph}),
and whether the graph is bipartite (by Items~\ref{Item 1: TFAE bipartite graphs} and~\ref{Item 5: TFAE bipartite graphs} of
Theorem~\ref{theorem: equivalences for bipartite graphs}). The number of closed walks in $\Gr{G}$ is determined by
the $\A$-spectrum (by Corollary~\ref{corollary: Number of Closed Walks of a Given Length}), but not by the spectra with
respect to the other three matrices.

\begin{remark}
\label{remark: bipartiteness}
By Item~\ref{Item 2: signless Laplacian matrix of a graph} of Theorem~\ref{theorem: On the signless Laplacian matrix of a graph},
a connected graph is bipartite if and only if the least eigenvalue of its signless Laplacian matrix is equal to zero.
If the graph is disconnected and it has a bipartite component and a non-bipartite component, then the least eigenvalue of its
signless Laplacian matrix is equal to zero, although the graph is not bipartite.
According to Table~\ref{table:properties_determined by the spectrum}, the $\Q$-spectrum alone does not determine whether a
graph is bipartite. This is due to the fact that the $\Q$-spectrum does not provide information about the number of components in the
graph or whether the graph is connected.
It is worth noting that while neither the $\LM$-spectrum nor the $\Q$-spectrum independently determines whether a graph is bipartite,
the combination of these spectra does. Specifically, by Item~\ref{Item 4: TFAE bipartite graphs} of
Theorem~\ref{theorem: equivalences for bipartite graphs}, the combined knowledge of both spectra enables to establish this property.
\end{remark}

\section{Graphs determined by their spectra}
\label{section: DS graphs}

The spectral determination of graphs has long been a central topic in spectral graph theory. A major open question in this area is:
``Which graphs are determined by their spectrum (DS)?'' This section begins our survey of both classical and recent results on spectral
graph determination. We explore the spectral characterization of various graph classes, methods for constructing or distinguishing
cospectral nonisomorphic graphs, and conditions under which a graph’s spectrum uniquely determines its structure. Additionally, we
present newly obtained proofs of existing results, offering further insights into this field.

\begin{definition}
Let $\Gr{G},\Gr{H}$ be two graphs. A mapping $\phi \colon \V{\Gr{G}} \rightarrow \V{\Gr{H}}$ is a
\emph{graph isomorphism} if
\begin{align}
\{u,v\} \in \E{\Gr{G}} \iff \bigl\{ \phi(u),\phi(v) \bigr\} \in \E{\Gr{H}}.
\end{align}
If there is an isomorphism between $\Gr{G}$ and $\Gr{H}$, we say that these graphs are \emph{isomorphic}.
\end{definition}

\begin{definition}
A \emph{permutation matrix} is a $\{0,1\}$--matrix in which each row and each column contains exactly one entry equal to $1$.
\end{definition}

\begin{remark}
In terms of the adjacency matrix of a graph, $\Gr{G}$ and $\Gr{H}$ are cospectral graphs if $\A(\Gr{G})$ and $\A(\Gr{H})$
are similar matrices, and $\Gr{G}$ and $\Gr{H}$ are isomorphic if the similarity of their adjacency matrices is through
a permutation matrix ${\bf{P}}$, i.e.
\begin{align}
A(\Gr{G}) = {\bf{P}} \, \A(\Gr{H}) \, {\bf{P}}^{-1}.
\end{align}
\end{remark}

\subsection{Graphs determined by their adjacency spectrum (DS graphs)}
\label{subsection: Graphs determined by their adjacency spectrum}

\begin{theorem} \cite{vanDamH03}
\label{theorem: van Dam and Haemers, 2003 - thm1}
All of the graphs with less than five vertices are DS.
\end{theorem}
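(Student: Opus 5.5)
The proof will be a finite case analysis driven by spectral invariants that the adjacency spectrum is already known to determine. Two cospectral graphs have the same number of vertices, since the spectrum is a multiset of size $n$. By Corollary~\ref{corollary: number of edges and triangles in a graph} they also have the same number of edges $e$ and the same number of triangles $t$. More generally, by Corollary~\ref{corollary: Number of Closed Walks of a Given Length}, they have the same number of closed walks of every length. The plan is therefore as follows. For each $n\le 4$, list all graphs on $n$ vertices up to isomorphism and group them by the pair $(e,t)$. If a group contains only one graph, that graph is DS, because any graph cospectral with it lies in the same group. Only groups with two or more members need a further invariant.

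For $n\le 3$ the check is immediate. For $n=1$ and $n=2$ the value of $e$ already separates the graphs. For $n=3$ there are exactly four graphs, with $e=0,1,2,3$, so $e$ alone suffices.

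For $n=4$ there are eleven graphs, which I would tabulate by $e$:
\begin{itemize}
\item $e=0$: the empty graph;
\item $e=1$: $\CoG{2}\DU 2\CoG{1}$;
\item $e=2$: $2\CoG{2}$ and $\PathG{3}\DU\CoG{1}$;
\item $e=3$: $\SG{4}$, $\PathG{4}$ and $\CoG{3}\DU\CoG{1}$;
\item $e=4$: $\CG{4}$ and the paw (a triangle with a pendant vertex);
\item $e=5$: $\CoG{4}$ minus an edge;
\item $e=6$: $\CoG{4}$.
\end{itemize}
The triangle count separates $\CoG{3}\DU\CoG{1}$ (with $t=1$) from the two trees, and the paw (with $t=1$) from $\CG{4}$. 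Two ambiguous groups remain: $\{2\CoG{2},\ \PathG{3}\DU\CoG{1}\}$ and $\{\SG{4},\ \PathG{4}\}$.

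I would settle each remaining group with the fourth spectral moment, i.e.\ the number of closed walks of length $4$, which equals
\begin{align*}
\mathrm{tr}(\A^4)=2e+4\sum_{v}\binom{d(v)}{2}+8c_4,
\end{align*}
where $c_4$ is the number of $4$-cycles. Since $c_4=0$ for all four graphs in question, this reduces to $2e+2\sum_v d(v)(d(v)-1)$.
\begin{itemize}
\item For $2\CoG{2}$ (all degrees $1$) the value is $4$, while $\PathG{3}\DU\CoG{1}$ (a vertex of degree $2$) gives $8$.
\item For $\SG{4}$ (a vertex of degree $3$) the value is $18$, while $\PathG{4}$ (two vertices of degree $2$) gives $14$.
\end{itemize}
One could instead simply compute the spectra: $\pm\sqrt3$ together with $[0]^2$ for $\SG{4}$, versus $\pm\tfrac{1\pm\sqrt5}{2}$ for $\PathG{4}$. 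Either way, all eleven graphs on four vertices have pairwise distinct spectra, so every graph with fewer than five vertices is DS.

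The only step that needs real care is making sure the list of eleven graphs on four vertices is complete, so that no cospectral partner is missed. I would check completeness by counting graphs for each $e$ and using complementation symmetry ($e\leftrightarrow 6-e$), which makes the counts $1,1,2,3,2,1,1$ consistent with one another.
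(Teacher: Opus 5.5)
Your proposal is correct. The list of eleven graphs on four vertices is complete, and the invariants you use are all determined by the adjacency spectrum: the order, $e$, $t$, and $\mathrm{tr}(\A^4)=2e+4\sum_v\binom{d(v)}{2}+8c_4$. Your values $4$ versus $8$ and $18$ versus $14$ agree with the actual spectra $\{[1]^2,[-1]^2\}$, $\{\pm\sqrt2,[0]^2\}$, $\{\pm\sqrt3,[0]^2\}$ and $\{\pm\tfrac{1+\sqrt5}{2},\pm\tfrac{\sqrt5-1}{2}\}$.

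The paper gives no proof of its own; it only cites van Dam and Haemers, where the fact rests on an exhaustive check of the spectra of small graphs. Your argument is essentially that same finite enumeration, made self-contained and checkable by hand. Certifying it with a few spectral moments instead of full characteristic polynomials is economical. This approach necessarily stops at five vertices, where $\SG{5}$ and $\CG{4}\DU\CoG{1}$ agree in every moment because they are genuinely cospectral (Example~\ref{example: ANICS graphs with 5 vertices}).
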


\begin{example}
\label{example: ANICS graphs with 5 vertices}
The star graph $\SG{5}$ and a graph formed by the disjoint union of a length-4 cycle and an isolated vertex,
$\CG{4} \DU \CoG{1}$, have the same $\A$-spectrum $\{-2 , [0]^3 , 2\}$. They are, however, not isomorphic
since $\SG{5}$ is connected and $\CG{4} \DU \CoG{1}$ is disconnected (see Figure~\ref{fig:graphs with 5 vertices}).
\vspace*{-0.1cm}
\begin{figure}[hbt]
\centering
\includegraphics[width=8cm]{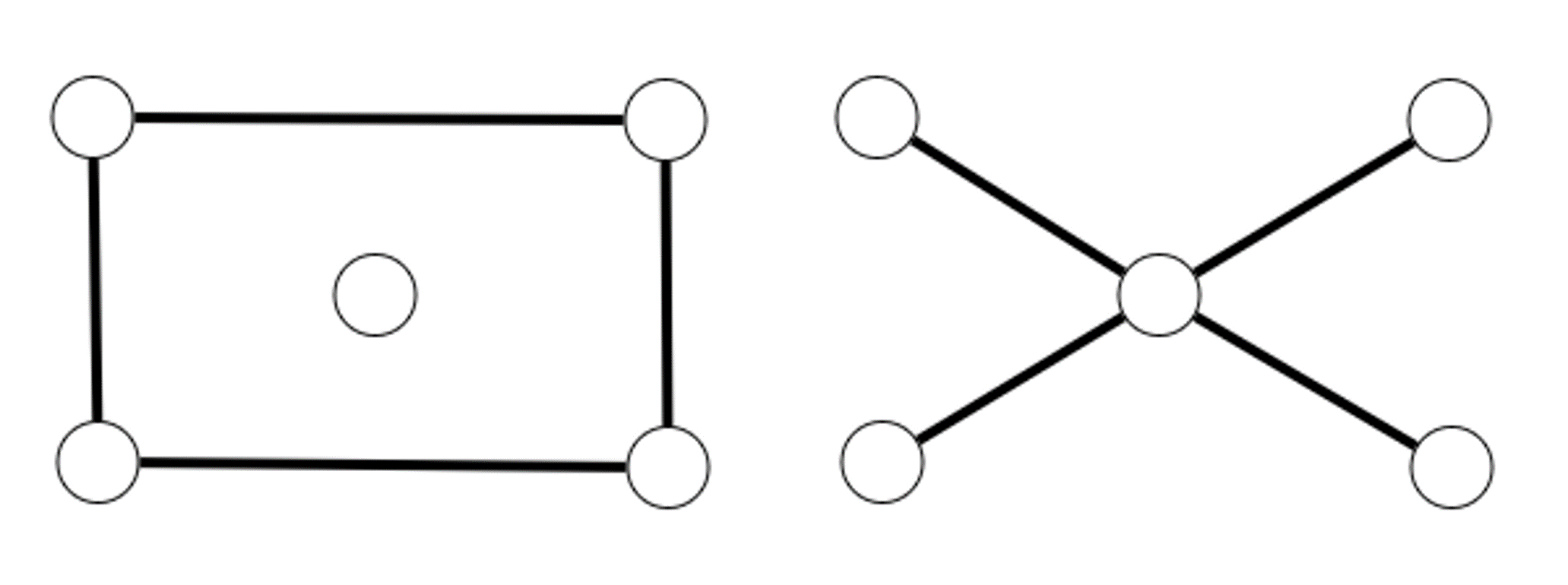}
\caption{The graphs $\SG{4} = \CoBG{1}{4}$ and $\CG{4} \DU \CoG{1}$ (i.e., a union of a 4-length cycle
and an isolated vertex) are cospectral and nonisomorphic graphs ($\A$-NICS graphs) on five vertices.
These two graphs therefore cannot be determined by their adjacency matrix.}
\label{fig:graphs with 5 vertices}
\end{figure}
It can be verified computationally that all the connected nonisomorphic graphs on five vertices can be
distinguished by their $\A$-spectrum (see \cite[Appendix~A1]{CvetkovicRS2010}).
\end{example}

\begin{theorem} \cite{vanDamH03}
\label{theorem: van Dam and Haemers, 2003 - thm2}
All the regular graphs with less than ten vertices are DS (and, as will be clarified later, also
$\mathcal{X}$-DS for every $\mathcal{X} \subseteq \{\A, \LM, \Q\}$).
\end{theorem}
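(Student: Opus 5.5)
The plan is to reduce the statement to a finite check, organized by regularity and degree, and then rule out a cospectral mate in each case. Suppose $\Gr{G}$ is $d$-regular on $n \le 9$ vertices and $\Gr{H}$ is $\A$-cospectral with $\Gr{G}$.

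\emph{Step 1 (the mate is regular of the same degree).} By Theorem~\ref{theorem: graph regularity from A-spectrum}, regularity is read off the $\A$-spectrum: $\sum_i \lambda_i^2 = n\lambda_1$ holds for $\Gr{H}$ as well. Hence $\Gr{H}$ is regular with degree $\lambda_1 = d$.

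\emph{Step 2 (passing to the complement).} Since $\Gr{G}$ and $\Gr{H}$ are regular of the same degree, the all-ones vector is a common eigenvector. All other eigenvectors are orthogonal to $\mathbf{1}_n$, and on that subspace $\A(\CGr{G}) = \J{n,n} - \I{n} - \A(\Gr{G})$ acts as $-1-\A$. Therefore $\CGr{G}$ and $\CGr{H}$ are cospectral.

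This lets us assume $d \le (n-1)/2$, so $d \le 4$. The number of connected components is the multiplicity of the eigenvalue $d$, so it is also spectrally determined.

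\emph{Step 3 (the small-degree cases).}
\begin{itemize}
\item $d=0$ and $d=1$ are trivial: the graph is empty or a perfect matching.
\item $d=2$: the graph is a disjoint union of cycles. The spectrum of $\CG{m}$ is $\{2\cos(2\pi j/m)\}$, and the closed-walk counts of Corollary~\ref{corollary: Number of Closed Walks of a Given Length} determine the number of cycles of each length $\ell$. This works inductively, because a closed walk of length $\ell$ in a union of cycles is non-backtracking-essential only when it winds around an $\ell$-cycle. So the multiset of cycle lengths, and with it $\Gr{G}$, is recovered. For $n \le 9$ one can alternatively just compare the finitely many partitions of $n$ into parts $\ge 3$ directly.
\item $d=3$: $n$ must be even, so $n \in \{4,6,8\}$.
\item $d=4$: $n=9$. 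Its complement is $4$-regular on $9$ vertices, so this case is self-complementary in degree.
\end{itemize}

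\emph{Step 4 (cubic graphs on $\le 8$ vertices and quartic graphs on $9$ vertices).} I would enumerate the cubic graphs on $\le 8$ vertices (there are $1+2+5+$ a few disconnected ones) and the $4$-regular graphs on $9$ vertices (sixteen). Each is then separated by an invariant computable from the spectrum:
\begin{itemize}
\item the number of triangles, from $\mathrm{tr}(\A^3)$;
\item the number of $4$-cycles, which for a $d$-regular graph is determined by $\mathrm{tr}(\A^4)$;
\item the number of components, when needed;
\item the full characteristic polynomial as a last resort.
\end{itemize}

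The main obstacle is this last finite verification, especially the $4$-regular graphs on $9$ vertices, equivalently their complements. Several of these may share triangle and quadrangle counts, so one must either compute characteristic polynomials explicitly or rely on the computer enumeration of \cite{vanDamH03}. I would try $\mathrm{tr}(\A^5)$ (pentagon counts) first and fall back on explicit characteristic polynomials. The $\mathcal{X}$-DS extension follows because, for regular graphs, the $\LM$ and $\Q$ spectra are affine images of the $\A$-spectrum.
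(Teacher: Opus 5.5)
Your adjacency-side reduction is sound. Step~1 (via Theorem~\ref{theorem: graph regularity from A-spectrum}) makes every $\A$-cospectral mate $d$-regular, and complementation brings you down to $d\le 4$. After that, only the cubic graphs on $8$ vertices (five connected ones and $2\CoG{4}$) and the sixteen quartic graphs on $9$ vertices need to be separated; the cases $n=4,6$ are already covered by complementation. Your walk-counting sentence for $d=2$ is loosely worded, but the partition check you give as a fallback settles that case. You leave the remaining verification undone. So on this side your argument ultimately rests, like the paper's bare citation, on the enumeration behind \cite{vanDamH03}. That is acceptable, but it should be said explicitly, or the $22$ characteristic polynomials should actually be computed.

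The genuine gap is your final sentence. The relations $\LM=d\I{n}-\A$ and $\Q=d\I{n}+\A$ transfer spectra only between graphs already known to be $d$-regular. A graph that is $\LM$- or $\Q$-cospectral with $\Gr{G}$ is a priori arbitrary, so you need an analogue of your Step~1 for these two matrices.

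This is not automatic. For a $d$-regular graph, $\mathcal{L}=\I{n}-\tfrac{1}{d}\A$ is also an affine image of $\A$. Yet $\CG{4}$ is DS but not $\mathcal{L}$-DS, because the irregular $\CoBG{1}{3}$ has the same $\mathcal{L}$-spectrum $\{0,1,1,2\}$ (Remark~\ref{remark: DS regular graphs are not necessarily DS w.r.t. normalized Laplacian}). Your one-line argument would equally ``prove'' this false case. The missing step is exactly the crux the paper attributes to \cite[Proposition~2.2]{vanDamH03} behind Theorem~\ref{theorem: regular DS graphs}.

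The fix is short:
\begin{enumerate}
\item For $X\in\{\LM,\Q\}$ one has $\mathrm{tr}\,X=\sum_i d_i=2m$ and $\mathrm{tr}\,X^2=\sum_i d_i^2+2m$. Hence $\sum_i d_i$ and $\sum_i d_i^2$ are $X$-spectral invariants.
\item A mate $\Gr{H}$ with degrees $d'_1,\ldots,d'_n$ therefore satisfies $\sum_i d'_i=nd$ and $\sum_i (d'_i)^2=nd^2$. The equality case of Cauchy--Schwarz forces $\Gr{H}$ to be $d$-regular.
\item Only now is $\A(\Gr{H})=d\I{n}-\LM(\Gr{H})$ (resp.\ $\Q(\Gr{H})-d\I{n}$) cospectral with $\A(\Gr{G})$, giving $\Gr{H}\cong\Gr{G}$.
\item For a general nonempty $\mathcal{X}\subseteq\{\A,\LM,\Q\}$, it then suffices that $\Gr{G}$ is $X$-DS for a single $X\in\mathcal{X}$ (Remark~\ref{remark: X,Y cospectrality}).
\end{enumerate}
No such trace identity is available for $\mathcal{L}$, whose diagonal does not record the degrees. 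That is why the statement stops at $\{\A,\LM,\Q\}$.
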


\begin{example}
\label{example: NICS regular graphs on 10 vertices}
\cite{vanDamH03} The following two regular graphs in Figure \ref{fig:graphs with 10 vertices} are
$\{\A, \LM, \Q, \bf{\mathcal{L}}\}$-NICS.
\begin{figure}[hbt]
\centering
\includegraphics[width=12cm]{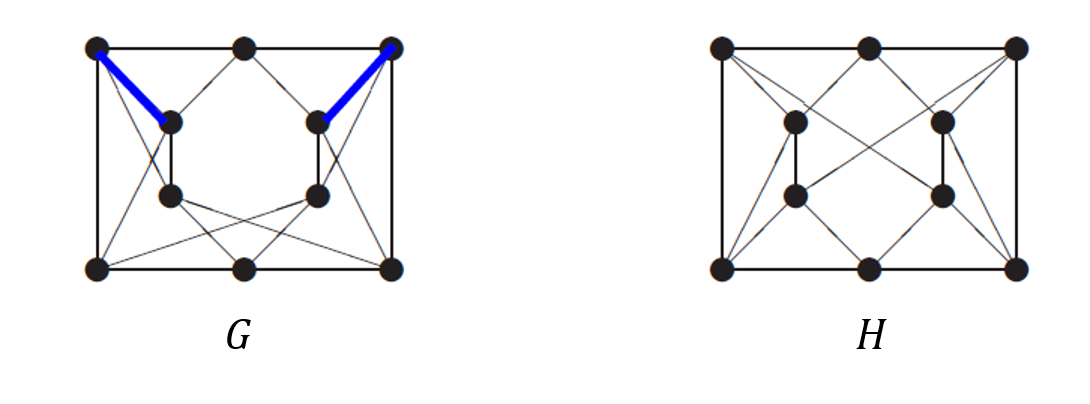}
\caption{$\{\A, \LM, \Q, \bf{\mathcal{L}}\}$-NICS regular graphs with $10$ vertices.
These cospectral graphs are nonisomorphic because each of the two blue edges in $\Gr{G}$ belongs
to three triangles, whereas no such an edge exists in $\Gr{H}$.}\label{fig:graphs with 10 vertices}
\end{figure}
The regular graphs $\Gr{G}$ and $\Gr{H}$ in Figure~\ref{fig:graphs with 10 vertices} can be verified to be cospectral
with the common characteristic polynomial $$P(x)= x^{10} - 20x^8 - 16x^7 + 110x^6 + 136x^5 - 180x^4 - 320x^3 + 9x^2 + 200x + 80.$$
These graphs are also nonisomorphic because each of the two blue edges in $\Gr{G}$ belongs
to three triangles, whereas no such an edge exists in $\Gr{H}$. Furthermore, it is shown in Example~4.18 of \cite{Sason2024}
that each pair of the regular NICS graphs on 10~vertices, denoted by $\{\Gr{G}, \Gr{H}\}$ and $\{\CGr{G}, \CGr{H}\}$,
exhibits distinct values of the Lov\'{a}sz $\vartheta$-functions, whereas the graphs $\Gr{G}$, $\CGr{G}$, $\Gr{H}$,
and $\CGr{H}$ share identical independence numbers~(3), clique numbers~(3), and chromatic numbers~(4).
Furthermore, based on these two pairs of graphs, it is constructively shown in Theorem~4.19 of \cite{Sason2024} that for
every even integer $n \geq 14$, there exist connected, irregular, cospectral, and nonisomorphic graphs on $n$ vertices,
being jointly cospectral with respect to their adjacency, Laplacian, signless Laplacian, and normalized Laplacian matrices,
while also sharing identical independence, clique, and chromatic numbers, but being distinguished by their Lov\'{a}sz
$\vartheta$-functions.
\end{example}

\begin{remark}
\label{remark: relations to Igal's paper 2023}
In continuation to Example~\ref{example: NICS regular graphs on 10 vertices}, it is worth noting that closed-form
expressions for the Lov\'{a}sz $\vartheta$-functions of regular graphs, which are edge-transitive or strongly regular,
were derived in \cite[Theorem~9]{Lovasz79_IT} and \cite[Proposition~1]{Sason23}, respectively. In particular,
it follows from \cite[Proposition~1]{Sason23} that strongly regular graphs with identical four parameters
$(n,d,\lambda,\mu)$ are cospectral and they have identical Lov\'{a}sz $\vartheta$-numbers, although they need not be
necessarily isomorphic. For such an explicit counterexample, the reader is referred to \cite[Remark~3]{Sason23}.
\end{remark}

We next introduce friendship graphs to address their possible determination by their spectra with respect to
several associated matrices.
\begin{definition}
\label{definition: friendship graph}
Let $p\in \naturals$. \emph{The friendship graph} $\FG{p}$, also known as the \emph{windmill graph}, is a graph
with $2p+1$ vertices, consisting of a single vertex (the central vertex) that is adjacent to all the other $2p$ vertices.
Furthermore, every pair of these $2p$ vertices shares exactly one common neighbor, namely the central vertex (see
Figure~\ref{fig:friendship graph F4}). This graph has $3p$ edges and $p$ triangles.
\end{definition}
\begin{figure}[H]
\centering
\includegraphics[width=3cm]{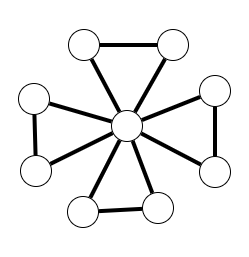}
\caption{The friendship (windmill) graph $\FG{4}$ has 9~vertices, 12 edges, and~4 triangles.}\label{fig:friendship graph F4}
\end{figure}
The term friendship graph in Definition~\ref{definition: friendship graph} originates from the
\emph{Friendship Theorem} \cite{Erdos1963}. This theorem states that if $\Gr{G}$ is a finite graph where
any two vertices share exactly one common neighbor, then there exists a vertex that is adjacent to all other
vertices. In this context, the adjacency of vertices in the graph can be interpreted socially as a representation
of friendship between the individuals represented by the vertices (assuming friendship is a mutual relationship).
For a nice exposition of the proof of the Friendship Theorem, the interested reader is referred to Chapter~44 of
\cite{AignerZ18}.

\begin{theorem}
\label{theorem: special classes of DS graphs}
The following graphs are DS:
\begin{enumerate}[1.]
\item \label{item 1: DS graphs}
All graphs with less than five vertices, and also all regular graphs with less than 10 vertices \cite{vanDamH03} (recall
Theorems~\ref{theorem: van Dam and Haemers, 2003 - thm1} and~\ref{theorem: van Dam and Haemers, 2003 - thm2}).
\item \label{item 2: DS graphs}
The graphs $\CoG{n}$, $\CG{n}$, $\PathG{n}$, $\CoBG{m}{m}$ and $\CGr{\CoG{n}}$ \cite{vanDamH03}.
\item \label{item 3: DS graphs}
The complement of the path graph $\CGr{\PathG{n}}$ \cite{DoobH02}.
\item \label{item 4: DS graphs}
The disjoint union of $k$ path graphs with no isolated vertices, the disjoint union of $k$ complete graphs with no isolated
vertices, and the disjoint union of $k$ cycles (i.e., every 2-regular graph) \cite{vanDamH03}.
\item \label{item 5: DS graphs}
The complement graph of a DS regular graph \cite{CvetkovicRS2010}.
\item \label{item 6: DS graphs}
Every $(n-3)$-regular graph on $n$ vertices \cite{CvetkovicRS2010}.
\item \label{item 7: DS graphs}
The friendship graph $\FG{p}$ for $p \ne 16$ \cite{CioabaHVW2015}.
\item \label{item 8: DS graphs}
Sandglass graphs, which are obtained by appending a triangle to each of the pendant (i.e., degree-1) vertices of a path \cite{LuLYY09}.
\item \label{item 9: DS graphs}
If $\Gr{H}$ is a subgraph of a graph $\Gr{G}$, and $\Gr{G} \setminus \Gr{H}$ denotes the graph obtained from $\Gr{G}$
by deleting the edges of $\Gr{H}$, then also the following graphs are DS \cite{CamaraH14}:
\begin{itemize}
\item $\CoG{n} \setminus (\ell \CoG{2})$ and $\CoG{n} \setminus \CoG{m}$, where $m \leq n-2$,
\item $\CoG{n} \setminus \CoBG{\ell}{m}$,
\item $\CoG{n} \setminus \Gr{H}$, where $\Gr{H}$ has at most four edges.
\end{itemize}
\end{enumerate}
\end{theorem}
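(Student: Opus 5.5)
For the entries that are directly attributed to the literature, I will simply cite the original arguments: Item~\ref{item 1: DS graphs} \cite{vanDamH03}, Item~\ref{item 3: DS graphs} \cite{DoobH02}, Item~\ref{item 7: DS graphs} \cite{CioabaHVW2015}, Item~\ref{item 8: DS graphs} \cite{LuLYY09} and Item~\ref{item 9: DS graphs} \cite{CamaraH14}. For the remaining items I will give short self-contained arguments, using only the spectral invariants collected in Section~\ref{section: preliminaries}: the number of edges and triangles (Corollary~\ref{corollary: number of edges and triangles in a graph}), regularity (Theorem~\ref{theorem: graph regularity from A-spectrum}), and bipartiteness (Theorem~\ref{theorem: equivalences for bipartite graphs}). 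I will also use the standard fact that, for a $d$-regular graph, the multiplicity of the eigenvalue $d$ equals the number of components.

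\textbf{Item~\ref{item 2: DS graphs}.} Let $\Gr{H}$ be cospectral with the graph in question.
\begin{itemize}
\item If $\Gr{H}$ is cospectral with $\CoG{n}$, then it has $n(n-1)/2$ edges, so $\Gr{H}\cong\CoG{n}$.
\item If $\Gr{H}$ is cospectral with $\CGr{\CoG{n}}$, then it has no edges, so $\Gr{H}\cong\CGr{\CoG{n}}$.
\item If $\Gr{H}$ is cospectral with $\CG{n}$, then $\Gr{H}$ is regular with $\lambda_1=2$, and $2$ is a simple eigenvalue. Hence $\Gr{H}$ is a connected $2$-regular graph on $n$ vertices, i.e., $\Gr{H}\cong\CG{n}$.
\item If $\Gr{H}$ is cospectral with $\CoBG{m}{m}$, then $\Gr{H}$ is $m$-regular, bipartite (its spectrum is symmetric), and connected (the eigenvalue $m$ is simple). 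A connected $m$-regular bipartite graph on $2m$ vertices has both parts of size $m$, and each vertex is adjacent to all $m$ vertices of the other part. Hence $\Gr{H}\cong\CoBG{m}{m}$.
\item For $\PathG{n}$, cospectrality gives $\lambda_1(\Gr{H})<2$, $n-1$ edges, and no triangles. By Smith's classification, every component of $\Gr{H}$ is a path or a Dynkin tree of type $D$ or $E$. I then compare the characteristic polynomials: $\PathG{n}$ has eigenvalues $2\cos(\pi j/(n+1))$, all simple. Moreover, a forest with $n-1$ edges on $n$ vertices is a single tree. Together these rule out every candidate other than $\PathG{n}$.
\end{itemize}

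\textbf{Item~\ref{item 4: DS graphs}.} For disjoint unions of cycles, a cospectral $\Gr{H}$ is $2$-regular by Theorem~\ref{theorem: graph regularity from A-spectrum}, hence itself a union of cycles. The multiset of cycle lengths is then recovered from the factorization of the characteristic polynomial, whose factors are the polynomials of the individual cycles, and one checks these are uniquely identifiable. For a disjoint union of complete graphs, the spectrum consists of $\{k_i-1\}$ together with $-1$'s. So $\lambda_{\min}=-1$, which forces every component of $\Gr{H}$ to be complete, since $\PathG{3}$ has least eigenvalue $-\sqrt{2}$ and interlacing (Theorem~\ref{thm:interlacing}) applies. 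The component sizes are then read off from the positive eigenvalues. Unions of paths are handled by the Smith-graph argument above.

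\textbf{Items~\ref{item 5: DS graphs} and~\ref{item 6: DS graphs}.} If $\Gr{G}$ is $d$-regular on $n$ vertices, then $\A(\CGr{G})=\J{n}-\I{n}-\A(\Gr{G})$, and $\J{n}$ commutes with $\A(\Gr{G})$. Hence $\sigma(\CGr{G})=\{n-1-d\}\cup\{-1-\lambda_i: i\ge 2\}$, which is determined by $\sigma(\Gr{G})$. Now let $\Gr{H}$ be cospectral with $\CGr{G}$. Then $\Gr{H}$ is regular by Theorem~\ref{theorem: graph regularity from A-spectrum}, so by the same formula $\CGr{H}$ is cospectral with $\Gr{G}$. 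Since $\Gr{G}$ is DS, $\CGr{H}\cong\Gr{G}$, and therefore $\Gr{H}\cong\CGr{G}$. Item~\ref{item 6: DS graphs} follows because the complement of an $(n-3)$-regular graph is $2$-regular, hence DS by Item~\ref{item 4: DS graphs}, and Item~\ref{item 5: DS graphs} then applies.

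\textbf{Expected obstacles.} Among the items I prove directly, the main difficulty is the path and union-of-paths case, which requires Smith's classification of graphs with $\lambda_1<2$ together with a polynomial comparison. The genuinely hard items are the friendship graphs (with the exceptional case $p=16$) and the Cámara--Haemers families. For these I would rely entirely on the cited papers rather than reproduce their lengthy case analyses.
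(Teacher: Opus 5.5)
Your sketch has a genuine gap in the union-of-paths case of Item~\ref{item 4: DS graphs}, which you dispatch as ``handled by the Smith-graph argument above.'' For $\PathG{n}$ that argument worked because $n-1$ edges forced the cospectral mate $\Gr{H}$ to be a single tree, which leaves only finitely many Dynkin shapes to compare.

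For $\PathG{n_1}\DU\cdots\DU\PathG{n_k}$ you only learn that $\Gr{H}$ is a forest of $k$ Dynkin trees of types $A$, $D$, $E$, in arbitrary combination. Such combinations really do mimic path unions. The spectrum of $D_n$ is $\{2\cos\frac{(2j-1)\pi}{2n-2}: 1\le j\le n-1\}\cup\{0\}$, so $D_n\DU\PathG{n-2}$ is cospectral with $\PathG{2n-3}\DU\CoG{1}$. For $n=4$ this is $\CoBG{1}{3}\DU\PathG{2}$ versus $\PathG{5}\DU\CoG{1}$. Hence the hypothesis ``no isolated vertices'' is essential. Nothing in your sketch uses it, so the sketch cannot tell the DS case apart from $\PathG{2n-3}\DU\CoG{1}$, which is not DS.

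One way to close the gap is to write $[\,\cdot\,]$ for spectra viewed as formal integer combinations. The Coxeter exponents then give
$[D_n]=[\PathG{2n-3}]-[\PathG{n-2}]+[\PathG{1}]$,
$[E_6]=[\PathG{11}]-[\PathG{5}]-[\PathG{3}]+[\PathG{2}]+[\PathG{1}]$,
$[E_7]=[\PathG{17}]-[\PathG{8}]-[\PathG{5}]+[\PathG{2}]+[\PathG{1}]$, and
$[E_8]=[\PathG{29}]-[\PathG{14}]-[\PathG{9}]-[\PathG{5}]+[\PathG{4}]+[\PathG{2}]+[\PathG{1}]$.
The $[\PathG{m}]$ are linearly independent, because the eigenvalue $2\cos\frac{\pi}{m+1}$ occurs in $\PathG{m'}$ only if $(m+1)\mid(m'+1)$. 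So the coefficient of $[\PathG{1}]$ in $[\Gr{H}]$ counts the isolated vertices together with the $D$- and $E$-components of $\Gr{H}$. That coefficient must vanish, and independence then forces the path lengths. Alternatively, cite \cite{vanDamH03} for this item, as the paper does.

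There are two smaller looseness points.
\begin{itemize}
\item For $\PathG{n}$ itself, simplicity of the spectrum does not exclude $D_n$ with $n$ odd, nor $E_6,E_7,E_8$, whose spectra are also simple. What separates them is, for example, the spectral radius: $2\cos\frac{\pi}{2n-2}>2\cos\frac{\pi}{n+1}$ for $n\ge4$, and $2\cos\frac{\pi}{h}$ with $h=12,18,30$ for $E_6,E_7,E_8$.
\item For unions of cycles, the cycle polynomials share irreducible factors, so ``uniquely identifiable'' needs an argument. Peeling works: the largest eigenvalue below $2$ is $2\cos\frac{2\pi}{L}$, where $L$ is the longest cycle length. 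Remove one $\CG{L}$ from both graphs and induct.
\end{itemize}

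Your arguments for $\CoG{n}$, $\CGr{\CoG{n}}$, $\CG{n}$, $\CoBG{m}{m}$, unions of complete graphs, and Items~\ref{item 5: DS graphs}--\ref{item 6: DS graphs} are correct. They supply proofs where the paper only cites.
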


\subsection{Graphs determined by their spectra with respect to various matrices (X-DS graphs)}
\label{subsection: Graphs determined by their X-DS spectrum}

\noindent

In this section, we consider graphs that are determined by the spectra of various associated matrices beyond the adjacency matrix spectrum.

\begin{definition}
Let $\Gr{G} , \Gr{H}$ be two graphs and let $\mathcal{X} \subseteq \Gmats$.
\begin{enumerate}
\item $\Gr{G}$ and $\Gr{H}$ are said to be \emph{$\mathcal{X}$-cospectral} if they have the same $\mathcal{X}$-spectrum, i.e.
$\sigma_X(\Gr{G}) = \sigma_X(\Gr{H})$.
\item
Nonisomorphic graphs $\Gr{G}$ and $\Gr{H}$ that are $\mathcal{X}$-cospectral are said to be \emph{$\mathcal{X}$-NICS},
where {\em NICS} is an abbreviation of {\em non-isomorphic and cospectral}.
\item A graph $\Gr{G}$ is said to be \emph{determined by its $\mathcal{X}$-spectrum
($\mathcal{X}$-DS)} if every graph that is $\mathcal{X}$-cospectral to $\Gr{G}$ is also isomorphic to $\Gr{G}$.
\end{enumerate}
\end{definition}

\begin{notation}
For a singleton $\mathcal{X} = \{ X \}$, we abbreviate $\{ X \} $-cospectral, $\{X\}$-DS and $\{X\}$-NICS by $X$-cospectral,
$X$-DS and $X$-NICS, respectively. For the adjacency matrix, we will abbreviate $\A$-DS by DS.
\end{notation}

\begin{remark}
\label{remark: X,Y cospectrality}
  Let $\mathcal{X} \subseteq \mathcal{Y} \subseteq \Gmats$. The following holds by definition:
\begin{itemize}
  \item If two graph $\Gr{G}, \Gr{H}$ are $\mathcal{Y}$-cospectral, then they are $\mathcal{X}$-cospectral.
  \item If a graph $\Gr{G}$ is $\mathcal{X}$-DS, then it is $\mathcal{Y}$-DS.
\end{itemize}
\end{remark}

\begin{definition}
\label{definition: generalized spectrum}
Let $\Gr{G}$ be a graph. The \emph{generalized spectrum} of $\Gr{G}$ is the $\{\A, \overline{\A}\}$-spectrum of $\Gr{G}$.
\end{definition}

The following result on the cospectrality of regular graphs can be readily verified.
\begin{proposition}
\label{proposition: regular graphs cospectrality}
Let $\Gr{G}$ and $\Gr{H}$ be regular graphs that are $\mathcal{X}$-cospectral for {\em some}
$\mathcal{X} \subseteq \{\A, \LM, \Q, \bf{\mathcal{L}}\}$. Then, $\Gr{G}$ and $\Gr{H}$ are
$\mathcal{Y}$-cospectral for {\em every}
$\mathcal{Y} \subseteq \{\A, \overline{\A}, \LM, \overline{\LM}, \Q, \overline{\Q}, {\bf{\mathcal{L}}}, \overline{{\bf{\mathcal{L}}}} \}$.
In particular, the cospectrality of regular graphs (and their complements) stays unaffected
by the chosen matrix among $\{\A, \LM, \Q, \bf{\mathcal{L}}\}$.
\end{proposition}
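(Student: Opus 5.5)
Since $\Gr{G}$ and $\Gr{H}$ are regular, my plan is to show that each of the eight spectra is an explicit function of the triple $(n,d,\sigma_{\A})$, and that this triple can be read off from any one of $\sigma_{\A},\sigma_{\LM},\sigma_{\Q},\sigma_{\mathcal{L}}$.

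\textbf{Step 1: the eight spectra of a $d$-regular graph.} Let $\Gr{G}$ be $d$-regular on $n$ vertices with adjacency eigenvalues $d=\lambda_1\ge\dots\ge\lambda_n$. Then $\D=d\I{n}$, so
\begin{align*}
\LM=d\I{n}-\A,\qquad \Q=d\I{n}+\A,\qquad \mathcal{L}=\I{n}-\tfrac1d\A \ (d\ge1).
\end{align*}
Hence these spectra are $\{d-\lambda_i\}$, $\{d+\lambda_i\}$ and $\{1-\lambda_i/d\}$. For $d=0$, $\A=\LM=\Q=\mathcal{L}=\mathbf{0}$ by the stated convention.

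For the complement, $\A(\CGr{G})=\J{n}-\I{n}-\A$. Because $\A$ commutes with $\J{n}$ and $\mathbf{1}_n$ is an eigenvector of $\A$ for $d$, we can choose an orthonormal eigenbasis with $\mathbf{1}_n/\sqrt n$ first and the rest orthogonal to $\mathbf{1}_n$. This gives $\sigma_{\A}(\CGr{G})=\{n-1-d\}\cup\{-1-\lambda_i: i\ge2\}$. The graph $\CGr{G}$ is $(n-1-d)$-regular, so applying the same affine maps gives $\overline{\LM},\overline{\Q},\overline{\mathcal{L}}$.

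\textbf{Step 2: recovering $(n,d,\sigma_{\A})$ from each spectrum.} In every case $n$ is the number of eigenvalues.
\begin{itemize}
\item From $\sigma_{\A}$: $d=\lambda_1$, since the largest adjacency eigenvalue of a $d$-regular graph is $d$.
\item From $\sigma_{\LM}$: $\sum\mu_i=2|E|=nd$ by Theorem~\ref{theorem: On the Laplacian matrix of a graph}, so $d$ is known, and then $\lambda_i=d-\mu_i$.
\item From $\sigma_{\Q}$: analogously, $\sum \nu_i=nd$ by Theorem~\ref{theorem: On the signless Laplacian matrix of a graph}.
\end{itemize}

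\textbf{Step 3: the normalized Laplacian, the main obstacle.} Here $\sigma_{\mathcal{L}}=\{\lambda_i/d\}$ up to the shift, so $d$ is not directly visible. Two facts settle it.
\begin{itemize}
\item \emph{Detecting $d=0$:} the spectrum is all zeros exactly when the graph is empty. For $d\ge1$ it contains an eigenvalue $1-\lambda_n/d\ge1>0$, because $\lambda_n\le0$ (the trace is zero).
\item \emph{Recovering $d$ when $d\ge1$:} we have $\sum_i(1-\delta_i)^2=\sum_i\lambda_i^2/d^2=nd/d^2=n/d$ by Corollary~\ref{corollary: number of edges and triangles in a graph}. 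This determines $d$, and then $\lambda_i=d(1-\delta_i)$.
\end{itemize}

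\textbf{Conclusion.} Suppose $\Gr{G},\Gr{H}$ are $\mathcal{X}$-cospectral for some nonempty $\mathcal{X}$. Take one $X\in\mathcal{X}$; by Steps 2 and 3 they share $n$, $d$ and $\sigma_{\A}$. By Step 1 they then share all eight spectra, so they are $\mathcal{Y}$-cospectral for every $\mathcal{Y}$ (cf.\ Remark~\ref{remark: X,Y cospectrality}).
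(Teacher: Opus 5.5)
Your proposal is correct, and it is the verification the paper has in mind: the paper gives no proof beyond saying the result ``can be readily verified,'' relying on the fact that for a $d$-regular graph the $\LM$-, $\Q$- and $\mathcal{L}$-spectra, and the spectra of the complement via $\A(\CGr{G})=\J{n}-\I{n}-\A$, are affine images of $\sigma_{\A}$ determined by $(n,d)$. You also fill in what the paper leaves implicit, namely recovering $d$ from a single spectrum (in particular $\sum_i(1-\delta_i)^2=n/d$ for $\mathcal{L}$, with $d=0$ handled separately) and the tacit requirement that $\mathcal{X}$ be nonempty.
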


\begin{definition}
\label{definition: DGS}
A graph $\Gr{G}$ is said to be \emph{determined by its generalized spectrum (DGS)} if it is uniquely determined by its
generalized spectrum. In other words, a graph $\Gr{G}$ is DGS if and only if every graph $\Gr{H}$ with the same
$\{\A, \overline{\A}\}$-spectrum as $\Gr{G}$ is necessarily isomorphic to $\Gr{G}$.
\end{definition}
If a graph is not DS, it may still be DGS, as additional spectral information is available. Conversely, every DS graph
is also DGS. For further insights into DGS graphs, including various characterizations, conjectures, and studies, we refer
the reader to \cite{WangXu06,Wang13,Wang17}.

\vspace*{0.2cm}
The continuation of this section characterizes graphs that are $X$-DS, where $X \in \{\LM, \Q, \mathcal{L}\}$, with pointers
to various studies. We first consider regular DS graphs.

\begin{theorem} \cite[Proposition~3]{vanDamH03}
\label{theorem: regular DS graphs}
For regular graphs, the properties of being DS, $\LM$-DS, and $\Q$-DS are equivalent.
\end{theorem}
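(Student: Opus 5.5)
The plan is to show that regularity, together with the degree, is encoded in each of the three spectra. Once that is established, any graph cospectral with a regular graph $\Gr{G}$ with respect to one of $\A$, $\LM$, $\Q$ is itself regular of the same degree. Cospectrality with respect to the other two matrices then follows from the affine relations $\LM = d\I{n} - \A$ and $\Q = d\I{n} + \A$. Concretely, let $\Gr{G}$ be $d$-regular on $n$ vertices. We show that $\Gr{G}$ is DS $\Rightarrow$ $\LM$-DS, and conversely, with the $\Q$ case handled identically.

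\emph{Step 1: the $\A$-spectrum detects regularity.} By Theorem~\ref{theorem: graph regularity from A-spectrum}, a graph $\Gr{H}$ is regular if and only if $\sum_i \lambda_i^2 = n\lambda_1$. Hence if $\Gr{H}$ is $\A$-cospectral with $\Gr{G}$, then $\Gr{H}$ is regular. Its degree is $\lambda_1 = d$, since for a regular graph the largest adjacency eigenvalue equals the degree.

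\emph{Step 2: the $\LM$- and $\Q$-spectra detect regularity and degree.} This is the main obstacle. Suppose $\Gr{H}$ is $\LM$-cospectral with $\Gr{G}$; we must show $\Gr{H}$ is $d$-regular. From the spectrum, $\mathrm{tr}\,\LM = \sum_v d(v) = nd$ by Theorem~\ref{theorem: On the Laplacian matrix of a graph}. We also have $\mathrm{tr}\,\LM^2 = \sum_v d(v)^2 + \sum_v d(v)$, since $(\LM^2)_{vv} = d(v)^2 + d(v)$. The same two identities hold for $\Q$, because $\Q^2$ has the same diagonal. So the spectrum determines both $\sum_v d(v)$ and $\sum_v d(v)^2$. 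For $\Gr{G}$ these equal $nd$ and $nd^2$. Cauchy--Schwarz gives $\sum_v d(v)^2 \ge \bigl(\sum_v d(v)\bigr)^2/n = nd^2$, with equality if and only if all degrees are equal. Therefore $\Gr{H}$ is $d$-regular.

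\emph{Step 3: transfer.} If two graphs are both $d$-regular, then $\A(\Gr{H}) = d\I{n} - \LM(\Gr{H}) = \Q(\Gr{H}) - d\I{n}$. Cospectrality under any one of the three matrices is therefore equivalent to cospectrality under the others (cf.\ Proposition~\ref{proposition: regular graphs cospectrality}). Now suppose $\Gr{G}$ is DS and $\Gr{H}$ is $\LM$-cospectral with it. By Step 2, $\Gr{H}$ is $d$-regular, hence $\A$-cospectral with $\Gr{G}$, hence isomorphic to it. Conversely, suppose $\Gr{G}$ is $\LM$-DS and $\Gr{H}$ is $\A$-cospectral with it. By Step 1, $\Gr{H}$ is $d$-regular, hence $\LM$-cospectral with $\Gr{G}$, hence isomorphic to it. The $\Q$ case is identical, and this gives the claimed equivalence.
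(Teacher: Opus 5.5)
Your proof is correct and takes essentially the same route as the argument the paper relies on. The paper only cites van Dam--Haemers, and the remark after the theorem identifies the crux as the fact that no $\A$-, $\LM$-, or $\Q$-spectrum is shared by a regular and an irregular graph; your Steps~1--2 establish exactly this, using $\mathrm{tr}\,\LM^2$ (resp.\ $\mathrm{tr}\,\Q^2$) with Cauchy--Schwarz for the Laplacians, and your Step~3 then transfers cospectrality through the shifts $\LM = d\I{n}-\A$ and $\Q = d\I{n}+\A$.
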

\begin{remark}
\label{remark: recurring approach}
To avoid any potential confusion, it is important to emphasize that in statements such as Theorem~\ref{theorem: regular DS graphs},
the only available information is the spectrum of the graph. There is no indication or prior knowledge that the spectrum corresponds
to a regular graph. In such cases, the regularity of the graph is not part of the revealed information and, therefore, cannot be used
to determine the graph. This recurring approach --- stating that $\Gr{G}$ is stated to be a graph satisfying certain properties (e.g., regularity,
strong regularity, etc.) and then examining whether the graph can be determined from its spectrum --- appears throughout this chapter. It
should be understood that the only available information is the spectrum of the graph, and no additional properties of the graph beyond
its spectrum are disclosed.
\end{remark}

\begin{remark}
\label{remark: DS regular graphs are not necessarily DS w.r.t. normalized Laplacian}
The crux of the proof of Theorem~\ref{theorem: regular DS graphs} is that there are no two NICS
graphs, with respect to either $\A$, $\LM$, or $\Q$, where one graph is regular and the other is
irregular (see \cite[Proposition~2.2]{vanDamH03}).
This, however, does not extend to NICS graphs with respect to the normalized Laplacian matrix $\mathcal{L}$,
and regular DS graphs are not necessarily $\mathcal{L}$-DS. For instance, the cycle $\CG{4}$ and the bipartite complete
graph $\CoBG{1}{3}$ (i.e., $\SG{3}$) share the same $\mathcal{L}$-spectrum, which is given by $\{0, 1^2, 2\}$,
but these graphs are nonisomorphic (as $\CG{4}$ is regular, in contrast to $\CoBG{1}{3}$). It therefore follows
that the 2-regular graph $\CG{4}$ is {\em not} $\mathcal{L}$-DS, although it is DS (see Item~\ref{item 2: DS graphs}
of Theorem~\ref{theorem: special classes of DS graphs}).
More generally, it is conjectured in \cite{Butler2016} that $\CG{n}$ is $\mathcal{L}$-DS if
and only if $n>4$ and $4 \hspace*{-0.1cm} \not| \, n$.
\end{remark}

\begin{theorem}
\label{theorem: L-DS graphs}
The following graphs are $\LM$-DS:
\begin{enumerate}[1.]
\item $\PathG{n},\CG{n},\CoG{n},\CoBG{m}{m}$ and their complements \cite{vanDamH03}.
\item The disjoint union of $k$ paths, $\PathG{n_1} \DU \PathG{n_2} \DU \ldots \DU \PathG{n_k}$ each
having at least one edge \cite{vanDamH03}.
\item The complete bipartite graph $\CoBG{m}{n}$ with $m,n\geq2$ and $\frac{5}{3}n<m$ \cite{Boulet2009}.
\item \label{stars: L-DS}
The star graphs $\SG{n}$ with $n \neq 3$ \cite{OmidiT2007,LiuZG2008}.
\item Trees with a single vertex having a degree greater than~2 (referred to as starlike trees) \cite{OmidiT2007,LiuZG2008}.
\item The friendship graph $\FG{p}$ \cite{LiuZG2008}.
\item The path-friendship graphs, where a friendship graph and a starlike tree are joined by merging their vertices of degree greater than~2 \cite{OboudiAAB2021}.
\item The wheel graph $\Gr{W}_{n+1} \triangleq \CoG{1} \vee \CG{n}$ for $n \neq 7$ (otherwise, if $n=7$, then it is not $\LM$-DS) \cite{ZhangLY09}.
\item The join of a clique and an independent set on $n$ vertices, $\CoG{n-m} \vee \, \CGr{\CoG{m}}$, where $m \in \OneTo{n-1}$ \cite{DasL2016}.
\item Sandglass graphs (see also Item~\ref{item 8: DS graphs} in Theorem~\ref{theorem: special classes of DS graphs}) \cite{LuLYY09}.
\item The join graph $\Gr{G} \vee \CoG{m}$, for every $m \in \naturals$, where $\Gr{G}$ is a disconnected graph \cite{ZhouBu2012}.
\item The join graph $\Gr{G} \vee \CoG{m}$, for every $m \in \naturals$, where $\Gr{G}$ is an $\LM$-DS connected graph on $n$ vertices and $m$ edges
with $m \leq 2n-6$, $\CGr{G}$ is a connected graph, and either one of the following conditions holds \cite{ZhouBu2012}:
\begin{itemize}
\item $\Gr{G} \vee \CoG{1}$ is $\LM$-DS;
\item the maximum degree of $\Gr{G}$ is smaller than $\tfrac12 (n-2)$.
\end{itemize}
\item Specifically, the join graph $\Gr{G} \vee \CoG{m}$, for every $m \in \naturals$, where $\Gr{G}$ is an $\LM$-DS tree on $n \geq 5$ vertices
(since, the equality $m=n-1$ holds for a tree on $n$ vertices and $m$ edges) \cite{ZhouBu2012}.
\end{enumerate}
\end{theorem}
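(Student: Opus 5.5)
Since the statement collects results from several sources, the plan is not a single uniform argument. Instead, I would build one toolkit of invariants that the $\LM$-spectrum determines and apply it item by item, relying on the cited papers only for the hardest families. The toolkit has three parts.
\begin{itemize}
\item[(a)] The number of components is the multiplicity of~$0$ (Item~\ref{Item 2: Laplacian matrix of a graph} of Theorem~\ref{theorem: On the Laplacian matrix of a graph}), and the number of edges is $\tfrac12\sum_i \mu_i$.
\item[(b)] The number of spanning trees follows from Theorem~\ref{theorem: number of spanning trees}, and $\mathrm{tr}(\LM^2)=\sum_i d(v_i)^2+\sum_i d(v_i)$ gives $\sum_i d(v_i)^2$.
\item[(c)] Complements behave well: $\LM(\CGr{G})=n\I{n}-\J{n,n}-\LM(\Gr{G})$, and $\LM(\Gr{G})$ commutes with $\J{n,n}$. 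Hence the $\LM$-spectrum of $\CGr{G}$ is $\{0\}\cup\{n-\mu_i(\Gr{G}):2\le i\le n\}$, and a graph is $\LM$-DS if and only if its complement is.
\end{itemize}
Part~(c) immediately halves the work in Item~1 and is also what drives the join items. This is because $\Gr{G}\vee\CoG{m}$ is the complement of $\CGr{G}\DU\CGr{\CoG{m}}$, i.e. of $\CGr{G}$ together with $m$ isolated vertices.

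For Item~1 I would argue as follows.
\begin{itemize}
\item $\CoG{n}$: its spectrum $\{0,[n]^{n-1}\}$ forces $\binom n2$ edges.
\item $\CoBG{m}{m}$: its complement $2\CoG{m}$ has spectrum $\{[0]^2,[m]^{2m-2}\}$. This forces two components whose nonzero Laplacian eigenvalues are all equal to $m$, which forces two complete graphs of the appropriate orders.
\item $\CG{n}$: from $\sum_i d(v_i)=2n$ and $\sum_i d(v_i)^2=4n$, the Cauchy--Schwarz inequality gives $2$-regularity. Connectivity and $n$ edges then force $\CG{n}$.
\item $\PathG{n}$: the spectrum gives connectivity and $n-1$ edges, so the graph is a tree, and $\sum_i d(v_i)^2=4n-6$. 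I would then use the spanning-tree count and further traces $\mathrm{tr}(\LM^k)$, or follow \cite{vanDamH03} via the line graph: $\Q$ of a bipartite graph relates to $\A$ of its line graph plus $2\I{}$, and the line graph of a path is a path, which is already DS by Theorem~\ref{theorem: special classes of DS graphs}.
\end{itemize}
Item~2 is handled the same way. The line-graph correspondence reduces the problem to the adjacency statement in Item~\ref{item 4: DS graphs} of Theorem~\ref{theorem: special classes of DS graphs}, after bipartiteness is obtained through the forest structure, which in turn follows from the component count and the edge count.

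For the star-type items (4, 5, 7), I would use the tree/forest determination from~(a) and then the degree information in~(b). For stars, the value of $\sum_i d(v_i)^2$ among trees on $n$ vertices is uniquely maximized by $\SG{n}$, and the excluded case is forced by the small cospectral example in Remark~\ref{remark: DS regular graphs are not necessarily DS w.r.t. normalized Laplacian}. Starlike trees and path-friendship graphs need the largest Laplacian eigenvalue bounds $\mu_n\ge \Delta+1$ together with counting closed walks in $\LM$. Here I would defer to \cite{OmidiT2007,LiuZG2008,OboudiAAB2021}.

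For the join items (9, 11, 12, 13), I would apply~(c). A cospectral mate $\Gr{H}$ of $\Gr{G}\vee\CoG{m}$ has $\CGr{H}$ with at least $m+1$ components. The task is then to show that $m$ of them are isolated vertices and the remaining one is $\CGr{G}$, and afterwards to invoke the $\LM$-DS hypothesis on $\Gr{G}$. The hypotheses on the edge count $\le 2n-6$ and on the maximum degree are exactly what excludes other splittings of the spectrum among components.

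I expect the main obstacle to be the items with genuinely delicate arguments: $\CoBG{m}{n}$ with $m>\tfrac53 n$, the wheels with the exceptional value $n=7$, and friendship and sandglass graphs. For these the elementary invariants do not pin down the graph, and one needs case analyses of the degree sequence through the Grone--Merris majorization and interlacing. For these I would reproduce the cited arguments \cite{Boulet2009,ZhangLY09,LiuZG2008,LuLYY09} rather than seek a uniform proof. For the wheel, my first attempt would be: (c) gives $\CGr{\Gr{W}_{n+1}}=\CoG{1}\DU\CGr{\CG{n}}$, and then I would check when $\CGr{\CG{n}}$ has a non-isomorphic $\LM$-cospectral mate that can absorb the isolated vertex.
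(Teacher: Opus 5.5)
The paper gives no argument for this theorem beyond the citations, so your toolkit (a)--(c) is a more self-contained route, and the toolkit itself is correct. The arguments built on it for $\CoG{n}$, $\CG{n}$, $\PathG{n}$, their complements, and the stars go through. For $\CoBG{m}{m}$ you omit one line, and Item~9 needs the same line, since its complement is $\CoG{m}\DU(n-m)\CoG{1}$. The line is: a connected graph on $N$ vertices whose nonzero Laplacian eigenvalues all equal $m$ has $\LM=m(\I{N}-\tfrac{1}{N}\J{N,N})$, which forces $N=m$ and completeness.

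The reduction you propose for Item~2 does not work as stated. If some $n_i=2$, then $\ell(\Gr{G})$ has isolated vertices, so Item~\ref{item 4: DS graphs} of Theorem~\ref{theorem: special classes of DS graphs} does not apply. The extension you would need is false: for $\Gr{G}=\PathG{2}\DU\PathG{6}$ one gets $\ell(\Gr{G})=\PathG{5}\DU\CoG{1}$, which is $\A$-cospectral with $\CoBG{1}{3}\DU\PathG{2}$. The missing step is a degree bound on the mate. You showed $\Gr{H}$ is a forest, and its largest Laplacian eigenvalue equals $\max_i\bigl(2+2\cos(\pi/n_i)\bigr)<4$. The bound (largest Laplacian eigenvalue) $\ge\Delta+1$, which you quote later, then gives $\Delta(\Gr{H})\le 2$. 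So $\Gr{H}$ is a disjoint union of paths, and you recover the path orders by repeatedly deleting the Laplacian spectrum of the path carrying the current largest eigenvalue. The same observation also settles $\PathG{n}$ in one line.

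Item~11 is printed without a needed hypothesis, and your plan silently supplies it. You say you would ``invoke the $\LM$-DS hypothesis on $\Gr{G}$'', but Item~11 assumes only that $\Gr{G}$ is disconnected, and in that form it is false. Add an isolated vertex to the Shrikhande graph and to $\ell(\CoBG{4}{4})$. This gives disconnected $\LM$-NICS graphs $\Gr{G}_1,\Gr{G}_2$, and by your part~(c) the joins $\Gr{G}_1\vee\CoG{m}$ and $\Gr{G}_2\vee\CoG{m}$ are $\LM$-NICS.

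Even with the hypothesis that $\Gr{G}$ is $\LM$-DS (as in Item~12), you leave one step open: that $m$ components of $\CGr{H}$ are isolated vertices. This does not follow from the edge-count or degree conditions, which belong to Item~12. It follows from disconnectedness. Since $\mu_2(\Gr{G})=0$, $n$ is a Laplacian eigenvalue of $\CGr{G}$. The component of $\CGr{H}$ carrying it has at least $n$ vertices, which leaves only $m$ vertices for the remaining $m$ components.

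The restriction $n\neq 3$ for stars is not ``forced'' by the example in Remark~\ref{remark: DS regular graphs are not necessarily DS w.r.t. normalized Laplacian}. $\CG{4}$ and $\CoBG{1}{3}$ are cospectral only for the normalized Laplacian; their $\LM$-spectra are $\{0,2,2,4\}$ and $\{0,1,1,4\}$. Your own $\sum_i d(v_i)^2$ argument shows every star is $\LM$-DS, so the statement is merely not sharp there. The exclusion presumably comes from $\Q$, for which $\CoBG{1}{3}$ and $\CoG{3}\DU\CoG{1}$ are cospectral.

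Finally, your first attempt at the wheel is aimed at the wrong object. $\CGr{\CG{n}}$ has no $\LM$-cospectral mate at all, by Item~1 and (c). So the $n=7$ exception must come from a mate of $\CoG{1}\DU\CGr{\CG{7}}$ with two nontrivial components.
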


\begin{remark}
In general, a disjoint union of complete graphs is not determined by its Laplacian spectrum.
\end{remark}

\begin{theorem}
\label{theorem: Q-DS graphs}
The following graphs are $\Q$-DS:
\begin{enumerate}[1.]
\item The disjoint union of $k$ paths, $\PathG{n_1} \DU \PathG{n_2} \DU \ldots \DU \PathG{n_k}$ each
having at least one edge \cite{vanDamH03}.
\item The star graphs $\SG{n}$ with $n \geq 3$ \cite{BuZ2012b,OmidiV2010}.
\item Trees with a single vertex having a degree greater than~2 \cite{BuZ2012b,OmidiV2010}.
\item The friendship graph $\FG{k}$ \cite{WangBHB2010}.
\item The lollipop graphs, where a lollipop graph, denoted by $\mathrm{H}_{n,p}$ where $n,p \in \naturals$ and $p<n$,
is obtained by appending a cycle $\CG{p}$ to a pendant vertex of a path $\PathG{n-p}$ \cite{HamidzadeK2010,ZhangLZY09}.
\item $\Gr{G} \vee \CoG{1}$ where $\Gr{G}$ is a either a $1$-regular graph, an $(n-2)$-regular graph of order $n$
or a $2$-regular graph with at least $11$ vertices \cite{BuZ2012}.
\item If $n \geq 21$ and $0 \leq q \leq n-1$, then $\CoG{1} \vee (\PathG{q} \DU \, (n-q-1) \CoG{1})$ \cite{YeLS2025}.
\item If $n \geq 21$ and $3 \leq q \leq n-1$, then $\CoG{1} \vee (\CG{q} \DU \, (n-q-1) \CoG{1})$ is $\Q$-DS if and
only if $q \neq 3$ \cite{YeLS2025}.
\item The join of a clique and an independent set on $n$ vertices, $\CoG{n-m} \vee \, \CGr{\CoG{m}}$, where
$m \in \OneTo{n-1}$ and $m \neq 3$ \cite{DasL2016}.
\end{enumerate}
\end{theorem}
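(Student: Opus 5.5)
Theorem~\ref{theorem: Q-DS graphs} collects results from the cited literature. The natural proof is therefore by reference: each item is established in the indicated paper, and Remark~\ref{remark: recurring approach} fixes what ``$\Q$-DS'' means. I would not reprove every item. Instead I would give a uniform toolkit that underlies most of these proofs and carry it out in detail for a representative item, the star graphs. I would defer the remaining items to their sources, checking only that the hypotheses stated there match those listed here (for example, the exceptional values $n \geq 21$ and $q \neq 3$, or $m \neq 3$).

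\textbf{Toolkit.} The main instrument is Item~\ref{Item 1: signless Laplacian matrix of a graph} of Theorem~\ref{theorem: On the signless Laplacian matrix of a graph}, namely $\Q = \mathbf{B}\mathbf{B}^{\mathrm{T}}$. Since also $\mathbf{B}^{\mathrm{T}}\mathbf{B} = 2\I{m} + \A(\ell(\Gr{G}))$, the two Gram products share their nonzero eigenvalues. Hence the $\Q$-spectrum of $\Gr{G}$, shifted by $-2$, gives the adjacency spectrum of the line graph $\ell(\Gr{G})$, up to eigenvalues $-2$ whose multiplicity is controlled by the number of bipartite components (Item~\ref{Item 3: signless Laplacian matrix of a graph}). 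Two further facts come directly from traces. The spectrum gives the number of edges (Item~\ref{Item 4: signless Laplacian matrix of a graph}). The power sums $\mathrm{tr}(\Q^2)=\sum_i d_i^2+2m$ and $\mathrm{tr}(\Q^3)=\sum_i d_i^3+3\sum_i d_i^2+6t$ then give the degree moments and the number of triangles $t$.

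\textbf{Star item.} I would proceed in four steps.
\begin{enumerate}[1.]
\item Compute $\sigma_{\Q}(\SG{n}) = \{0,[1]^{n-2},n\}$.
\item Take a graph $\Gr{H}$ on $n$ vertices that is $\Q$-cospectral with $\SG{n}$. Then $\Gr{H}$ has $n-1$ edges and exactly one bipartite component.
\item By the toolkit, $\ell(\Gr{H})$ has adjacency spectrum $\{n-2,[-1]^{n-2}\}$. This is the spectrum of $\CoG{n-1}$, and disjoint unions of complete graphs are DS (Item~\ref{item 4: DS graphs} of Theorem~\ref{theorem: special classes of DS graphs}), so $\ell(\Gr{H}) \cong \CoG{n-1}$.
\item By Whitney's line-graph theorem, the edge set of $\Gr{H}$ induces either $\SG{n}$ or, when $n-1=3$, a triangle. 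Counting vertices then forces $\Gr{H}\cong\SG{n}$ in the first case, and $\Gr{H} \cong \CoG{3}\DU\CoG{1}$ in the second.
\end{enumerate}

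\textbf{Main obstacle.} The hard part is this exceptional case and, more generally, the $-2$ multiplicities in the line-graph correspondence. For $n=4$ the computation gives $\sigma_{\Q}(\CoG{3}\DU\CoG{1})=\{0,1,1,4\}=\sigma_{\Q}(\SG{4})$, so the star item needs $n\neq 4$. I would check this against the statement's hypothesis $n\geq 3$ and the cited sources \cite{BuZ2012b,OmidiV2010}, which is precisely the kind of verification of exceptional parameters the plan calls for. For the other items (lollipops, friendship graphs, cone graphs $\CoG{1}\vee\Gr{G}$), the same scheme applies with case analysis on degree sequences obtained from the trace identities. That case analysis is where the cited works spend their effort, and I would cite it rather than reproduce it.
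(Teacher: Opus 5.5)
Proving the theorem by reference is the paper's own route, since the paper gives nothing beyond the citations. Your star computation, however, shows that this route cannot work for the statement as written, so you should conclude that rather than leave it as something to check.

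\textbf{Item~2 is false at $n=4$.} Your line-graph argument is correct. A graph $\Gr{H}$ that is $\Q$-cospectral with $\SG{n}$ has $n-1$ edges and exactly one bipartite component. So $\ell(\Gr{H})$ has no extra eigenvalues $-2$, and its spectrum is $\{n-2,[-1]^{n-2}\}$. Hence $\ell(\Gr{H})\cong\CoG{n-1}$, so the edges of $\Gr{H}$ are pairwise incident and form a star or a triangle. For $n=4$ the triangle actually occurs: $\CoBG{1}{3}$ and $\CoG{3}\DU\CoG{1}$ are non-isomorphic with common $\Q$-spectrum $\{0,1,1,4\}$. No citation can rescue the hypothesis $n\geq 3$. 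Your argument is in fact a self-contained proof of the corrected item: $\SG{n}$ is $\Q$-DS for $n\geq 3$, $n\neq 4$. This agrees with Item~9 of the same theorem. There, $m=n-1$ gives $\CoG{1}\vee\CGr{\CoG{n-1}}=\SG{n}$, and the excluded value $m=3$ is exactly $n=4$.

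\textbf{The same pair refutes Item~3.} You missed this: $\CoBG{1}{3}$ is a tree with exactly one vertex of degree greater than~2, so it falls under Item~3 as written. As far as I recall, the works cited there, \cite{OmidiV2010,BuZ2012b}, treat starlike trees of maximum degree $4$ and greater than $4$, respectively. So Item~3 must at least exclude $\CoBG{1}{3}$.

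\textbf{Your toolkit overclaims.} The $\Q$-spectrum does not determine the number of triangles. From $\mathrm{tr}(\Q^2)$ you do recover $\sum_i d_i^2$. But $\mathrm{tr}(\Q^3)$ gives only the combination $\sum_i d_i^3+6t$, not $\sum_i d_i^3$ and $t$ separately. Your own pair shows this: $\sum_i d_i^3+6t=30$ for both graphs, yet $t=0$ for $\CoBG{1}{3}$ and $t=1$ for $\CoG{3}\DU\CoG{1}$. The only power-sum invariants available are $n$, $m$, $\sum_i d_i^2$ and $\sum_i d_i^3+6t$, together with the closed-walk counts of $\ell(\Gr{G})$ obtained through the line-graph correspondence. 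Any degree-sequence case analysis for the other items that treats $t$ as a spectral invariant would be unsound.
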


Since the regular graphs $\CoG{n}$, $\CGr{\CoG{n}}$, $\CoBG{m}{m}$ and $\CG{n}$ are DS, they are also $\mathcal{X}$-DS
for every $\mathcal{X} \subseteq \{\A, \LM, \Q \}$ (see Theorem~\ref{theorem: regular DS graphs}). This, however, does
not apply to regular ${\bf{\mathcal{L}}}$-DS graphs (see Remark~\ref{remark: DS regular graphs are not necessarily DS w.r.t. normalized Laplacian}),
which are next addressed.

\begin{theorem}
\label{theorem: X-DS friendship graphs}
The following graphs are ${\bf{\mathcal{L}}}$-DS:
\begin{itemize}
\item $\CoG{n}$, for every $n \in \naturals$ \cite{ButlerH2016}.
\item The friendship graph $\FG{k}$, for $k \geq 2$ \cite[Corollary~1]{BermanCCLZ2018}.
\item More generally, $\mathrm{F}_{p,q} = \CoG{1} \vee (p \CoG{q})$ if $q \geq 2$, or $q=1$ and $p \geq 2$ \cite[Theorem~1]{BermanCCLZ2018}.
\end{itemize}
\end{theorem}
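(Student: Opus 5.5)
The plan is to work throughout with the matrix $\mathbf{M} \triangleq \I{n} - \mathcal{L}(\Gr{G}) = \D^{-\frac12}\A\D^{-\frac12}$, whose spectrum is obtained from the $\mathcal{L}$-spectrum by $\delta \mapsto 1-\delta$. First I extract the basic invariants from Theorem~\ref{theorem: On the normalized Laplacian matrix of a graph}. Let $\Gr{H}$ be $\mathcal{L}$-cospectral with the given graph. Since $0$ is a simple eigenvalue, $\Gr{H}$ is connected. Since the eigenvalue sum equals the order $n$, $\Gr{H}$ has no isolated vertices. These two facts give $\mathbf{M}(\Gr{H})$ zero diagonal, a positive Perron eigenvector $\mathbf{v} = \D^{\frac12}\mathbf{1}_n/\|\D^{\frac12}\mathbf{1}_n\|$ with eigenvalue~$1$, and off-diagonal entries $a_{ij}/\sqrt{d_id_j}$. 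Each entry is therefore nonzero exactly on the edges of $\Gr{H}$.

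\emph{Complete graphs.} The $\mathcal{L}$-spectrum of $\CoG{n}$ is $\{0, [\tfrac{n}{n-1}]^{n-1}\}$, so $\mathbf{M}(\Gr{H})$ has eigenvalue $1$ once and $-\tfrac1{n-1}$ with multiplicity $n-1$. Hence $\mathbf{M} + \tfrac{1}{n-1}\I{n} = \tfrac{n}{n-1}\,\mathbf{v}\mathbf{v}^{\mathrm{T}}$ has rank one. Every off-diagonal entry equals $\tfrac{n}{n-1}v_iv_j>0$, so every pair of vertices is adjacent and $\Gr{H} \cong \CoG{n}$. This case is immediate.

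\emph{The graphs $\mathrm{F}_{p,q}$.} I first compute the spectrum of $\mathbf{M}(\mathrm{F}_{p,q})$, with $n = pq+1$, by a direct eigenvector count:
\begin{itemize}
\item the eigenvalue $1$ with Perron vector;
\item the eigenvalue $-\tfrac1q$ with multiplicity $p(q-1)$, from vectors supported on one clique and summing to zero there;
\item the eigenvalue $\tfrac{q-1}{q}$ with multiplicity $p-1$, from vectors constant on each clique, summing to zero, and vanishing at the centre;
\item one remaining eigenvalue, which the trace condition $\mathrm{Tr}\,\mathbf{M}=0$ forces to be $-\tfrac1q$.
\end{itemize}
Consequently $\mathbf{N} \triangleq \mathbf{M}(\Gr{H}) + \tfrac1q\I{n}$ is positive semidefinite of rank $p$, with eigenvalues $1+\tfrac1q$ once and $1$ with multiplicity $p-1$. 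Equivalently, $\mathbf{N} = \boldsymbol{\Pi} + \tfrac1q\mathbf{v}\mathbf{v}^{\mathrm{T}}$, where $\boldsymbol{\Pi}$ is the orthogonal projection onto a $p$-dimensional subspace containing $\mathbf{v}$. Thus $\mathbf{N}$ is the Gram matrix of $n$ vectors $\mathbf{x}_i \in \Reals^p$ with $\|\mathbf{x}_i\|^2 = \tfrac1q$. Their pairwise inner products are nonnegative, and they are positive exactly on the edges of $\Gr{H}$.

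The plan is then to exploit the minimal polynomial $\mathbf{N}(\mathbf{N}-\I{n})\bigl(\mathbf{N}-(1+\tfrac1q)\I{n}\bigr)=\mathbf{0}$, rewritten in terms of $\D$ and $\A$. Reading it entrywise gives local identities linking degrees, common neighbours and $2$-walks. From these identities I would aim to show three things. First, the vertex maximizing $v_i$ (maximum degree) is adjacent to all others. Second, after deleting it, the Gram vectors of the remaining vertices split into mutually orthogonal groups. Third, within each group the vectors are parallel, which forces each group to be a clique of size $q$, and there are $p$ such groups because of the rank.

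I expect this structural step to be the main obstacle: turning the spectral/Gram description into the conclusion that there is a dominating vertex whose removal leaves $p\CoG{q}$. The first thing I would try is comparing the diagonal of $\mathbf{N}^2$, namely $\sum_j a_{ij}/(d_id_j)$ plus a constant, with the diagonal of $\boldsymbol{\Pi}$. I would combine this with the Perron vector identity $\sum_i d_i = 2|\E{\Gr{H}}|$ to bound degrees and pin down the degree sequence $\{pq, [q]^{pq}\}$.

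The case $q=1$, where $\mathrm{F}_{p,1}=\SG{p+1}$ is bipartite, needs separate treatment through the multiplicity of the eigenvalue $2$. There $\mathbf{N}$ has rank $1$ with a bipartite sign pattern. The case must also be checked against Remark~\ref{remark: DS regular graphs are not necessarily DS w.r.t. normalized Laplacian}: $\CG{4}$ and $\CoBG{1}{3}=\mathrm{F}_{3,1}$ are $\mathcal{L}$-cospectral, so the value $p=3$ is a genuine exception to the statement as written. The correct hypothesis for $q=1$ should therefore be taken from \cite{BermanCCLZ2018}. For $q=1$ I would prove the claim only for $p\neq 3$, where the rank-one bipartite structure should force a complete bipartite graph $\CoBG{a}{b}$ with $ab$ determined, and $a+b=p+1$.
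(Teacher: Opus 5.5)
The paper does not prove this statement; all three items are quoted from \cite{ButlerH2016} and \cite{BermanCCLZ2018}, so your proposal has to stand on its own.

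Your $\CoG{n}$ argument is complete and correct: $\mathbf{M}+\tfrac{1}{n-1}\I{n}=\tfrac{n}{n-1}\mathbf{v}\mathbf{v}^{\mathrm{T}}$ with $\mathbf{v}$ entrywise positive forces every pair of vertices to be adjacent. Your spectrum $\{1,[\tfrac{q-1}{q}]^{p-1},[-\tfrac1q]^{p(q-1)+1}\}$ of $\mathbf{M}(\mathrm{F}_{p,q})$ is also correct, and so is the Gram description of $\mathbf{N}$.

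\textbf{The gap: the case $q\ge2$.} This is the whole content of \cite[Theorem~1]{BermanCCLZ2018}, and it is missing. You only list what you would aim to show (a dominating vertex, orthogonal groups, parallel vectors within groups), with no argument for any of it. The route you suggest does not close by itself:
\begin{enumerate}
\item Entrywise, the minimal polynomial reads $\mathbf{N}^2-\mathbf{N}=\tfrac{q+1}{q^2}\mathbf{v}\mathbf{v}^{\mathrm{T}}$ with $v_i^2=d_i/S$ and $S=2|\E{\Gr{H}}|$. But $S$ is not an $\mathcal{L}$-spectral invariant (Table~\ref{table:properties_determined by the spectrum}).
\item So each diagonal identity $\sum_{j\sim i}\frac{1}{d_id_j}=\frac{q-1}{q^2}+\frac{(q+1)d_i}{q^2S}$ contains an unknown. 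Summing them gives only the single relation $q^2n=(q-1)S+(q+1)\sum_i d_i^2/S$.
\item Combined with $d_id_j\ge q^2$ on edges (from $N_{ij}^2\le N_{ii}N_{jj}$), this yields $d_i\ge q$, but not the degree sequence.
\item The interlacing arguments of Chapter~\ref{chap:graphs_of_pyramids} do not transfer. A principal submatrix of $\mathbf{N}$ depends on the degrees in $\Gr{H}$, not only on the induced subgraph. When those degrees are large, its eigenvalues stay close to $\tfrac1q$. So a small induced subgraph such as $\PathG{4}$, which $\mathrm{F}_{p,q}$ does not contain, cannot be excluded by interlacing alone.
\end{enumerate}
What you need is an argument forcing one vertex of degree $n-1$ and degree $q$ at every other vertex. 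Once you have that, your setup finishes the proof. Adjacent vertices of degree $q$ satisfy $N_{ij}=\tfrac1q=\sqrt{N_{ii}N_{jj}}$, so their Gram vectors coincide and they are closed twins. Deleting the dominating vertex then leaves a $(q-1)$-regular disjoint union of cliques, i.e.\ $p\CoG{q}$.

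\textbf{The case $q=1$.} You are right that $p=3$ is a counterexample to the statement as written, but your proposed repair is also wrong. Every $\CoBG{a}{b}$ with $a+b=p+1$ has $\mathcal{L}$-spectrum $\{0,[1]^{p-1},2\}$, so $ab$ is \emph{not} determined by the spectrum. Hence $\SG{p+1}=\CoBG{1}{p}$ is $\mathcal{L}$-cospectral with the non-isomorphic $\CoBG{2}{p-1}$ for every $p\ge3$, not just $p=3$. The $q=1$ case holds only for $p\le2$, consistent with Theorem~\ref{theorem: Berman's generalized friendship graph}. Also, for $q=1$ the matrix $\mathbf{N}=\mathbf{M}+\I{n}$ has rank $p$, not $1$; it is $\mathbf{M}$ that has rank $2$.
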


\noindent

\section{Special families of graphs}
\label{section: special families of graphs}
This section introduces special families of structured graphs and it states conditions for their unique
determination by their spectra.

\subsection{Stars and graphs of pyramids}
\label{subsection: Stars and graphs of pyramids}

\noindent

\begin{definition}
\label{definition: graphs of pyramids}
For every $k,n \in \naturals$ with $k<n$, define the graph  $T_{n,k}=\CoG{k} \vee \, \overline{\CoG{n-k}}$.
For $k=1$, the graph $T_{n,k}$ represents the \emph{star graph} $\SG{n}$. For $k=2$, it represents
a graph comprising $n-2$ triangles sharing a common edge, referred to as a \emph{crown}. For $n,k$ satisfying
$1<k<n$, the graphs $T_{n,k}$ are referred to as \emph{graphs of pyramids} \cite{KrupnikB2024}.
\end{definition}

\begin{theorem} \cite{KrupnikB2024}
\label{thm: KrupnikB2024 - pyramids are DS}
The graphs of pyramids are DS for every $1<k<n$.
\end{theorem}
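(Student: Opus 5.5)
The plan is to compute the spectrum of $T_{n,k}$, read off its sign pattern, and use the fact that it has exactly one positive eigenvalue to force any cospectral mate into a very restricted family. Inside that family, the multiplicity of $-1$ and the number of edges then pin the graph down.

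\textbf{Step 1: the spectrum of $T_{n,k}$.} The partition into the clique $\CoG{k}$ and the independent set $\CGr{\CoG{n-k}}$ is equitable, with quotient matrix $\begin{pmatrix} k-1 & n-k\\ k & 0\end{pmatrix}$.
\begin{itemize}
\item Vectors supported on the clique and summing to zero give the eigenvalue $-1$ with multiplicity $k-1$.
\item Vectors supported on the independent set and summing to zero give the eigenvalue $0$ with multiplicity $n-k-1$.
\item The remaining two eigenvalues are the roots of $x^2-(k-1)x-k(n-k)=0$.
\end{itemize}
The product of these two roots is $-k(n-k)<0$, so one root is positive and one is negative. Writing $f(x)=x^2-(k-1)x-k(n-k)$, we get $f(-1)=k\bigl(1-(n-k)\bigr)$.
\begin{itemize}
\item If $n-k=1$, then $T_{n,k}=\CoG{n}$, which is DS by Theorem~\ref{theorem: special classes of DS graphs}.
\item If $n-k\ge 2$, then $f(-1)<0$, so the negative root lies strictly below $-1$.
\end{itemize}
Hence, for $n-k\ge 2$, $T_{n,k}$ has exactly one positive eigenvalue, exactly $k$ negative eigenvalues ($k-1$ copies of $-1$ and one eigenvalue $<-1$), and $0$ with multiplicity $n-k-1$.

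\textbf{Step 2: structure of a cospectral mate.} Let $\Gr{G}$ be cospectral with $T_{n,k}$. I will invoke Smith's classical theorem: a graph has exactly one positive adjacency eigenvalue if and only if its non-isolated vertices induce a complete multipartite graph. The proof is via Cauchy interlacing (Theorem~\ref{thm:interlacing}), since $2\CoG{2}$, $\PathG{4}$ and the paw each have two positive eigenvalues and so are excluded as induced subgraphs. Thus
\[
\Gr{G}=\Complete_{n_1,\dots,n_p}\DU r\CoG{1}, \qquad n_1\le\dots\le n_p .
\]

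\textbf{Step 3: spectrum of a complete multipartite graph.} The nonzero eigenvalues of $\Complete_{n_1,\dots,n_p}$ are as follows.
\begin{itemize}
\item For each part size $s$ occurring $m_s$ times, $-s$ is an eigenvalue with multiplicity $m_s-1$.
\item The other eigenvalues are the roots of $\sum_i n_i/(x+n_i)=1$.
\item Of these secular roots, one is positive. Each of the others lies strictly between two consecutive distinct values $-n_i$, or in $(-n_1,0)$.
\end{itemize}
So $\Gr{G}$ has exactly $p-1$ negative eigenvalues, and matching with Step 1 gives $p=k+1$.

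Now look at the eigenvalue $-1$. A secular root equals $-1$ only if it lies in an open interval with an endpoint at most $-1$; the intervals are of the form $(-n_{a},-n_b)$ with $n_b\ge 1$, or $(-n_1,0)$. Such a root is never exactly $-1$: in $(-n_a,-n_b)$ it is $<-1$, and in $(-n_1,0)$ it is $>-1$ when $n_1=1$ and $<-1$ otherwise. Therefore the multiplicity of $-1$ is $m_1-1$, where $m_1$ is the number of singleton parts, and $m_1-1=k-1$ gives $m_1=k$.

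So $\Gr{G}=\bigl(\CoG{k}\vee\CGr{\CoG{s}}\bigr)\DU r\CoG{1}$ with $s+r=n-k$. Comparing numbers of edges via Corollary~\ref{corollary: number of edges and triangles in a graph} gives $\binom{k}{2}+ks=\binom{k}{2}+k(n-k)$. Hence $s=n-k$, $r=0$, and $\Gr{G}\cong T_{n,k}$.

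\textbf{Main obstacle.} The hard step is Step 2, the reduction to complete multipartite plus isolated vertices. If Smith's theorem is not available to quote, I would prove it directly: use interlacing to forbid induced $2\CoG{2}$, $\PathG{4}$ and the paw (each has two positive eigenvalues), then show that a graph without these induced subgraphs, after removing isolated vertices, has non-adjacency as an equivalence relation.

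A secondary delicate point is the interlacing claim in Step 3 that no secular root equals $-1$. This is what allows the multiplicity of $-1$ to count the singleton parts exactly.
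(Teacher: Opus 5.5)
Your proof is correct, but it follows a different route from the paper's. The paper never invokes Smith's theorem for the pyramids. After computing the spectrum by a Schur complement (you use an equitable partition instead), it applies interlacing only to small induced subgraphs: it excludes $K_{k+2}$ and $P_4$, and shows that any two disjoint nonempty induced subgraphs are joined by an edge. It then fixes a maximum clique and shows by case analysis that the remaining vertices are independent, ruling out the last case with two explicit $6$-vertex graphs $H_3,H_4$ that each have two eigenvalues below $-1$. An edge count finishes.

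You instead get the global structure at once from Smith's theorem, which the paper itself states as Theorem~\ref{theorem:Smith_theorem_1} and uses for Tur\'{a}n graphs. You then read off the parameters from the exact spectrum of a complete multipartite graph. The number of negative eigenvalues gives $p=k+1$ parts, the multiplicity of $-1$ gives exactly $k$ singleton parts, and the edge count eliminates the isolated vertices.

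Your route is shorter and avoids the ad hoc case analysis and the numerically computed spectra of $H_3,H_4$. It also uses the spectral data more fully: the paper only uses that exactly one eigenvalue is below $-1$, while you use the exact multiplicity of $-1$. And it makes clear why $k=1$ fails: there the multiplicity of $-1$ carries essentially no information, so only $ab=n-1$ remains, which is exactly the AM-minimizer criterion. The cost is that you need the \emph{strict} location of the secular roots. The weak interlacing $n_1\le-\lambda\le n_2\le\cdots$ of Theorem~\ref{theorem: spectrum of multipartite graphs} would not rule out a secular root equal to $-1$.

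One small correction to Step~3. Your case of a negative root in $(-n_1,0)$ never occurs, and your claim that such a root would be $<-1$ when $n_1\ge2$ is unjustified as stated. The function $g(x)=\sum_i n_i/(x+n_i)$ is strictly decreasing on $(-n_1,\infty)$ with $g(0)=p>1$, so the unique root there is the positive one. Hence every negative secular root lies strictly between two consecutive distinct values $-s_{\ell+1}<-s_\ell$, and is therefore strictly below $-s_\ell\le-1$.
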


\begin{theorem} \cite{KrupnikB2024}
\label{thm: KrupnikB2024 - DS star graphs}
The star graph $\SG{n}$ is DS if and only if $n-1$ is prime.
\end{theorem}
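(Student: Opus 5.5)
The spectrum of the star $\SG{n}=\CoBG{1}{n-1}$ is $\{-\sqrt{n-1},[0]^{n-2},\sqrt{n-1}\}$. So the task is to classify all graphs $\Gr{H}$ on $n$ vertices with exactly this $\A$-spectrum, and to show that a graph other than $\SG{n}$ occurs if and only if $n-1$ is composite. I would prove the two directions separately.

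\emph{If $n-1$ is composite.} Write $n-1=ab$ with $2\le a\le b$. Then $\CoBG{a}{b}$ has spectrum $\{\pm\sqrt{ab},[0]^{ab-2}\}$. Adding $n-a-b$ isolated vertices gives $\CoBG{a}{b}\DU(n-a-b)\CoG{1}$, which is cospectral with $\SG{n}$. This graph is disconnected, so it is not isomorphic to $\SG{n}$. The number of isolated vertices is $n-a-b=ab+1-a-b=(a-1)(b-1)\ge1$, so it is indeed a positive integer.

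\emph{If $n-1=p$ is prime.} Let $\Gr{H}$ be cospectral with $\SG{n}$.

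\begin{enumerate}
\item By Theorem~\ref{theorem: equivalences for bipartite graphs}, the symmetric spectrum shows that $\Gr{H}$ is bipartite.
\item By Corollary~\ref{corollary: number of edges and triangles in a graph}, $\Gr{H}$ has $p$ edges.
\item $\Gr{H}$ has exactly one nonzero positive eigenvalue. Each component that contains an edge has a positive Perron eigenvalue, so $\Gr{H}$ has exactly one nontrivial component $\Gr{C}$, and the remaining vertices are isolated.
\item $\Gr{C}$ is a connected bipartite graph with adjacency rank~$2$.
\end{enumerate}

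It remains to identify $\Gr{C}$. Connected graphs with exactly one positive eigenvalue are complete multipartite graphs (Smith's theorem). I would avoid invoking that theorem and argue directly with interlacing (Theorem~\ref{thm:interlacing}). The graph $\Gr{C}$ cannot contain an induced $\PathG{4}$, because $\PathG{4}$ has two positive eigenvalues, and by interlacing $\lambda_2(\Gr{C})\ge\lambda_2(\PathG{4})>0$ would follow, which is impossible. A connected bipartite graph with no induced $\PathG{4}$ is complete bipartite: if $u\in X$ and $w\in Y$ are nonadjacent, a shortest path between them has odd length at least $3$, and its first four vertices form an induced $\PathG{4}$. Hence $\Gr{C}=\CoBG{a}{b}$ with $ab=p$ edges. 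Since $p$ is prime, $\{a,b\}=\{1,p\}$. Then $\Gr{C}=\SG{p+1}$ already uses all $n$ vertices, so $\Gr{H}\cong\SG{n}$.

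The main obstacle is the structural step that rank~$2$ (one positive eigenvalue) together with connectivity and bipartiteness forces a complete bipartite graph. The induced-$\PathG{4}$ interlacing argument above handles it. The step also needs care with the convention on $n$: the paper's $\SG{n}$ has $n$ vertices and center degree $n-1$, so the condition is that the edge count $n-1$ is prime.
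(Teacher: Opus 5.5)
Your proof is correct and follows the paper's route. The paper obtains the statement as the case $\{p,q\}=\{1,n-1\}$ of its characterization of DS complete bipartite graphs, whose proof uses the same ingredients you use: bipartiteness, the edge count, a single nontrivial component by counting positive eigenvalues under interlacing, and the cospectral mate $\CoBG{a}{b}\DU(n-a-b)\CoG{1}$. Your induced-$\PathG{4}$ step together with the shortest-path argument correctly supplies the step ``the nontrivial component is complete bipartite''. The paper's text instead excludes $\PathG{3}$ via a rank claim, but $\PathG{3}$ has rank $2$ and is an induced subgraph of $\SG{n}$ itself.

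Two small points. The zero multiplicity for $\CoBG{a}{b}$ is $a+b-2$, not $ab-2$. Also, your composite/prime split, like the statement itself, tacitly requires $n\ge 3$: $\SG{2}=\CoG{2}$ is DS although $1$ is not prime.
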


To prove these theorems, a closed-form expression for the spectrum of $T_{n,k}$ is derived in \cite{KrupnikB2024}, which
also presents a generalized result. Subsequently,
using Theorem~\ref{thm: number of walks of a given length}, the number of edges and triangles in any graph cospectral with
$T_{n,k}$ are calculated. Finally, Schur's theorem (Theorem~\ref{theorem: Schur complement}) and Cauchy's interlacing theorem
(Theorem~\ref{thm:interlacing}) are applied in \cite{KrupnikB2024} to prove Theorems~\ref{thm: KrupnikB2024 - pyramids are DS}
and~\ref{thm: KrupnikB2024 - DS star graphs}.

\subsection{Complete bipartite graphs}
\label{subsection: Complete bipartite graphs}
By Theorem~\ref{thm: KrupnikB2024 - DS star graphs}, the star graph $\SG{n}=\CoBG{1}{n-1}$ is DS if and only if $n-1$ is prime.
By Theorem~\ref{theorem: special classes of DS graphs}, the regular complete bipartite graph $\CoBG{m}{m}$ is DS for every $m \in \naturals$.
Here, we generalize these results and provide a characterization for the DS property of $\CoBG{p}{q}$ for every $p,q\in \naturals$.

\begin{theorem} \cite{vanDamH03}
\label{thm:spectrum of CoBG}
The spectrum of the complete bipartite graph $\CoBG{p}{q}$ is $\bigl\{-\sqrt{pq}, [0]^{p+q-2}, \sqrt{pq} \bigr\}$.
\end{theorem}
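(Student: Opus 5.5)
The plan is to compute the spectrum of $\A=\A(\CoBG{p}{q})$ directly from its block form
\begin{align*}
\A=\begin{pmatrix} \mathbf{0}_{p,p} & \J{p,q} \\ \J{q,p} & \mathbf{0}_{q,q} \end{pmatrix}.
\end{align*}
The key observation is a rank count. The first $p$ rows are all equal to $(\mathbf{0}^{\mathrm{T}}_p,\mathbf{1}^{\mathrm{T}}_q)$, and the last $q$ rows are all equal to $(\mathbf{1}^{\mathrm{T}}_p,\mathbf{0}^{\mathrm{T}}_q)$. These two row types are linearly independent, so $\rank \A = 2$. Since $\A$ is symmetric, it is diagonalizable, and therefore $0$ is an eigenvalue of multiplicity exactly $p+q-2$. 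If one wants explicit eigenvectors, take any vector supported on the first $p$ coordinates whose entries sum to zero ($p-1$ independent choices), and similarly on the last $q$ coordinates ($q-1$ choices). Each of these is annihilated by $\A$.

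It then remains to find the two nonzero eigenvalues. The natural ansatz is $\mathbf{x}=(a\mathbf{1}_p^{\mathrm{T}}, b\mathbf{1}_q^{\mathrm{T}})^{\mathrm{T}}$, which lies in the orthogonal complement of the zero eigenspace. Applying $\A$ gives $\A\mathbf{x}=(qb\,\mathbf{1}_p^{\mathrm{T}}, pa\,\mathbf{1}_q^{\mathrm{T}})^{\mathrm{T}}$. The eigen-equation therefore reduces to the $2\times 2$ system $qb=\lambda a$ and $pa=\lambda b$. Its nontrivial solutions require $\lambda^2=pq$, so $\lambda=\pm\sqrt{pq}$, with $(a,b)=(\sqrt{q},\pm\sqrt{p})$.

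As a consistency check, Corollary~\ref{corollary: number of edges and triangles in a graph} can be used. The trace is zero, and the sum of squares is $2pq=2\card{\E{\CoBG{p}{q}}}$, which matches $\lambda^2=pq$ taken twice. The spectrum is also symmetric around zero, as Theorem~\ref{theorem: equivalences for bipartite graphs} requires for a bipartite graph.

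There is no genuine obstacle here. The only point needing care is confirming that the multiplicity of $0$ is exactly $p+q-2$ and not more, and the rank-$2$ argument settles this immediately.
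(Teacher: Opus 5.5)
Your proof is correct. Its first half matches the paper's: both obtain the multiplicity $p+q-2$ of the eigenvalue $0$ from $\rank \A = 2$. You differ in how you pin down the two nonzero eigenvalues.

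The paper never exhibits an eigenvector. It uses only the spectral moments of Corollary~\ref{corollary: number of edges and triangles in a graph}. First, $\mathrm{tr}(\A)=0$ forces the two nonzero eigenvalues to be $\pm\lambda$. Second, $\mathrm{tr}(\A^2)=2\card{\E{\CoBG{p}{q}}}=2pq$ forces $\lambda^2=pq$. This is exactly what you use only as a consistency check.

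You instead restrict $\A$ to its two-dimensional column space, spanned by the indicator vectors of the two parts. On that space $\A$ acts as the quotient matrix $\bigl(\begin{smallmatrix} 0 & q \\ p & 0 \end{smallmatrix}\bigr)$.

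The moment argument is shorter and fits the chapter's theme that the number of edges is a spectral invariant. However, it is this clean only because exactly two eigenvalues remain unknown. Your equitable-partition argument also produces explicit eigenvectors. It carries over directly to graphs such as $\GOP_{n,k}$, with quotient matrix $\bigl(\begin{smallmatrix} k-1 & n-k \\ k & 0 \end{smallmatrix}\bigr)$, whose spectrum the paper instead derives via Schur complements.
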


This theorem can be proved by Theorem~\ref{theorem: Schur complement}. An alternative simple proof is next presented.
\begin{proof}
The adjacency matrix of $\CoBG{p}{q}$ is given by
\begin{align}
\A(\CoBG{p}{q}) =
\begin{pmatrix}
\mathbf{0}_{p,p} & \J{p,q}\\
\J{q,p} & \mathbf{0}_{q,q}
\end{pmatrix} \in \Reals^{(p+q) \times (p+q)}
\end{align}
The rank of $\A(\CoBG{p}{q})$ is equal to 2, so the multiplicity of $0$ as an eigenvalue is $p+q-2$.
By Corollary~\ref{corollary: number of edges and triangles in a graph}, the two remaining eigenvalues
are given by $\pm \lambda$ for some $\lambda \in \Reals$, since the eigenvalues sum to zero. Furthermore,
\begin{align}
2\lambda^2 = \sum_{i=1}^{p+q} \lambda_i^2 = 2 \, \card{\E{\CoBG{p}{q}}} = 2pq,
\end{align}
so $\lambda = \sqrt{pq}$.
\end{proof}

For $p,q \in \mathbb{N}$, the arithmetic and geometric means of $p,q$ are, respectively, given by
$\AM{p,q}=\tfrac12 (p+q)$ and $\GM{p,q}= \sqrt{ pq}$. The AM-GM inequality states that for
every $p,q \in \naturals$, we have $\GM{p,q} \le \AM{p,q}$ with equality if and only if $p=q$.

\begin{definition}
\label{definition: AM minimizer}
Let $p,q \in \naturals$. The two-elements multiset $\{p,q\} $ is said to be an \emph{AM-minimizer} if
it attains the minimum arithmetic mean for their given geometric mean, i.e.,
\begin{align}
\label{eq: AM minimizer}
\AM{p,q} &= \min \Bigl\{\AM{a,b}: \; a,b \in \mathbb{N}, \, \GM{a,b}=\GM{p,q} \Bigr\} \\
\label{eq2: AM minimizer}
&= \min \Bigl\{\tfrac12 (a+b): \; a,b \in \mathbb{N}, \, ab=pq \Bigr\}.
\end{align}
\end{definition}

\begin{example}
\label{example: AM minimizer}
The following are AM-minimizers:
\begin{itemize}
\item $\{k,k\}$ for every $k\in \naturals $. By the AM-GM inequality, it is the only case where
$\GM{p,q} = \AM{p,q}$.
\item $\{p,q\}$ where $p,q$ are prime numbers. In this case, the following family of multisets
\begin{align}
\Bigl\{ \{a,b\} : \,  a,b \in \mathbb{N}, \; \GM{a,b}=\GM{p,q} \Bigr\}
\end{align}
only contains the two multisets $\{p,q\},\{pq,1\}$, and $p+q \leq pq < pq+1$ since $p,q \geq 2$.
\item $\{1,q\}$ where $q$ is a prime number.
\end{itemize}
\end{example}

\begin{theorem}
\label{thm:when CoBG is DS?}
The following holds for every $p,q \in \naturals$:
\begin{enumerate}
\item \label{thm:when CoBG is DS? - part1}
Let $\Gr{G}$ be a graph that is cospectral with $\CoBG{p}{q}$. Then, up to isomorphism, $G = \CoBG{a}{b} \cup \Gr{H}$
(i.e., $\Gr{G}$ is a disjoint union of the two graphs $\CoBG{a}{b}$ and $\Gr{H}$), where $\Gr{H}$ is an empty graph and
$a,b \in \naturals$ satisfy $\GM{a,b} = \GM{p,q}$.
\item \label{thm:when CoBG is DS? - part2}
The complete bipartite graph $\CoBG{p}{q}$ is DS if and only if $\{p,q\}$ is an AM-minimizer.
\end{enumerate}
\end{theorem}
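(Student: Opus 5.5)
Let $\Gr{G}$ be cospectral with $\CoBG{p}{q}$. By Theorem~\ref{thm:spectrum of CoBG}, $\sigma(\Gr{G})=\bigl\{-\sqrt{pq},[0]^{p+q-2},\sqrt{pq}\bigr\}$, so $\A(\Gr{G})$ has rank~$2$. We read structure off this spectrum as follows.

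First, the spectrum is symmetric around zero with matching multiplicities, so $\Gr{G}$ is bipartite by Theorem~\ref{theorem: equivalences for bipartite graphs}. By Corollary~\ref{corollary: number of edges and triangles in a graph}, $\Gr{G}$ has $pq$ edges. Next we show that the non-isolated part of $\Gr{G}$ is a single complete bipartite graph. Each connected component with at least one edge contributes a positive largest eigenvalue, and there is only one positive eigenvalue, so exactly one component $\Gr{C}$ has edges and all other vertices are isolated. Since $\Gr{C}$ contains no induced $\PathG{4}$, it is complete bipartite: the adjacency matrix of $\PathG{4}$ is nonsingular, so it has rank~$4$, and by Cauchy interlacing (Theorem~\ref{thm:interlacing}) an induced subgraph cannot have rank exceeding $\rank \A(\Gr{G})=2$. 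A connected bipartite graph with no induced $\PathG{4}$ is complete bipartite, because if $u\in X$ and $w\in Y$ are nonadjacent, a shortest $u$–$w$ path has length at least $3$ and its first four vertices induce $\PathG{4}$. Hence $\Gr{C}=\CoBG{a}{b}$. Alternatively, the rows of the biadjacency matrix span a $1$-dimensional space, so all nonzero rows are equal and the same conclusion follows. Its spectrum gives $\sqrt{ab}=\sqrt{pq}$, i.e.\ $ab=pq$, and the remaining $p+q-a-b\ge 0$ vertices are isolated. This proves Item~\ref{thm:when CoBG is DS? - part1}.

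For Item~\ref{thm:when CoBG is DS? - part2}, note that every pair $a,b\in\naturals$ with $ab=pq$ and $a+b\le p+q$ gives a graph $\CoBG{a}{b}\DU (p+q-a-b)\CoG{1}$ that is cospectral with $\CoBG{p}{q}$, since the spectra are computed directly. If $\{p,q\}$ is not an AM-minimizer, there exists such a pair with $a+b<p+q$. The resulting graph has isolated vertices, so it is nonisomorphic to $\CoBG{p}{q}$, and $\CoBG{p}{q}$ is not DS. Conversely, if $\{p,q\}$ is an AM-minimizer, then any cospectral $\Gr{G}=\CoBG{a}{b}\DU\Gr{H}$ satisfies $a+b\le p+q$ (by vertex count) and $a+b\ge p+q$ (by minimality), so $a+b=p+q$ and $\Gr{H}$ is empty on zero vertices. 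Together with $ab=pq$, this gives $\{a,b\}=\{p,q\}$ as the roots of the same quadratic, so $\Gr{G}\cong\CoBG{p}{q}$.

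The main obstacle is the structural step showing that rank~$2$ plus bipartiteness plus a single positive eigenvalue forces one complete bipartite component. This needs a careful justification that each nontrivial component contributes a positive eigenvalue (by Perron–Frobenius, or simply because its trace of $\A^2$ is positive while its trace is zero). It also needs the observation that the rank of an induced subgraph's adjacency matrix is at most the rank of the whole matrix.
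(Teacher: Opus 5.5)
Your proof is correct and follows essentially the same route as the paper: bipartiteness and the edge count are read off the spectrum, the single positive eigenvalue forces exactly one nontrivial component, a rank/interlacing argument excludes an induced path, and the AM-minimizer comparison uses $\CoBG{a}{b}$ plus $p+q-a-b$ isolated vertices as the cospectral mate. Your write-up also repairs two slips in the paper's text: you exclude an induced $\PathG{4}$ and justify why this forces a complete bipartite component, whereas the paper cites $\PathG{3}$, whose adjacency matrix has rank~$2$; and your cospectral mate is a disjoint union, whereas the paper writes a join.
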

\begin{remark}
\label{remark: complete bipartite graphs}
Item~\ref{thm:when CoBG is DS? - part2} of Theorem~\ref{thm:when CoBG is DS?} is equivalent to Corollary~3.1 of
\cite{MaRen2010}, for which an alternative proof is presented here.
\end{remark}

\begin{proof}
(Proof of Theorem~\ref{thm:when CoBG is DS?}):
\begin{enumerate}
\item Let $\Gr{G}$ be a graph cospectral with $\CoBG{p}{q}$. The number of edges in $\Gr{G}$ equals the number of edges
in $\CoBG{p}{q}$, which is $pq$. As $\CoBG{p}{q}$ is bipartite, so is $\Gr{G}$. Since $\A(\Gr{G})$ is of rank $2$,
and $\A(\PathG{3})$ has rank $3$, it follows from the Cauchy's Interlacing Theorem (Theorem~\ref{thm:interlacing})
that $\PathG{3}$ is not an induced subgraph of $\Gr{G}$. \newline
It is claimed that $\Gr{G}$ has a single nonempty connected component. Suppose to the contrary that $\Gr{G}$ has (at least)
two nonempty connected components $\Gr{H}_1,\Gr{H}_2$. For $i\in \{1,2\}$, since $\Gr{H}_i$ is a non-empty graph, $\A(\Gr{H}_i)$
has at least one eigenvalue $\lambda \ne 0$. Since $\Gr{G}$ is a simple graph, the sum of the eigenvalues of $\A(\Gr{H}_i)$
is $\trace{\A(\Gr{H}_i)}=0$, so $\Gr{H}_i$ has at least one positive eigenvalue. Thus, the induced subgraph $\Gr{H}_1 \cup \Gr{H}_2$
has at least two positive eigenvalues, while $\Gr{G}$ has only one positive eigenvalue, contradicting Cauchy's Interlacing Theorem. \\
Hence, $\Gr{G}$ can be decomposed as $\Gr{G} = \CoBG{a}{b} \cup \, \Gr{H}$ where $\Gr{H}$ is an empty graph. Since $\Gr{G}$
and $\CoBG{p}{q}$ have the same number of edges, $pq=ab$, so $\GM{p,q}=\GM{a,b}$.
\item First, we will show that if $\{p,q\}$ is not an AM-minimizer, then the graph $\CoBG{p}{q}$ is not $\A$-DS. This is done
by finding a nonisomorphic graph to $\CoBG{p}{q}$ that is $\A$-cospectral with it. By assumption, since $\{p,q\}$ is not an
AM-minimizer, there exist $a, b \in \naturals$ satisfying $\GM{a,b} = \GM{p,q}$ and $a + b < p+q$.
Define the graph $\Gr{G}=\CoBG{a}{b} \vee \, \overline{\CoG{r}}$ where $r=p+q-a-b$. Observe that $r \in \naturals$.
The $\A$-spectrum of both of these graphs is given by
\begin{align}
\sigma_{\A}(\Gr{G}) = \sigma_{\A}(\CoBG{p}{q}) = \bigl\{-\sqrt{pq},[0]^{pq-2},\sqrt{pq} \bigr\},
\end{align}
so these two graphs are nonisomorphic and cospectral, which means that $\Gr{G}$ is not $\A$-DS. \newline
We next prove that if $\{p,q\}$ is an AM-minimizer, then $\CoBG{p}{q}$ is $\A$-DS. Let $\Gr{G}$ be a
graph that is cospectral with $\CoBG{p}{q}$. From the first part of this theorem,
$\Gr{G}=\CoBG{a}{b} \cup \, \Gr{H}$ where $\GM{a,b} = \GM{p,q}$ and $\Gr{H}$ is an empty graph. Consequently, it follows that
$\AM{a,b}=\tfrac12(a+b) \leq \tfrac12(p+q) = \AM{p,q}$.
Since $\{p,q\}$ is assumed to be an AM-minimizer, it follows that $\AM{a,b} \ge \AM{p,q}$, and thus equality holds.
Both equalities $\GM{a,b} = \GM{p,q}$ and $\AM{a,b} = \AM{p,q}$ can be satisfied simultaneously if and only if
$\{ a , b \} = \{ p , q \}$, so $r=p+q-a-b=0$ and $\Gr{G}=\CoBG{p}{q}$.
\end{enumerate}
\end{proof}

\begin{corollary}
\label{cor: bipartite not DS}
Almost all of the complete bipartite graphs are not DS. More specifically, for every $n \in \naturals$, there exists
a single complete bipartite graph on $n$ vertices that is DS.
\end{corollary}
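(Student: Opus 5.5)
The plan is to reduce everything to Item~\ref{thm:when CoBG is DS? - part2} of Theorem~\ref{thm:when CoBG is DS?}: $\CoBG{p}{q}$ is DS if and only if $\{p,q\}$ is an AM-minimizer. The corollary then becomes a purely arithmetic statement about factorizations of $pq$. It has two parts. The first is that for each $n$ the balanced graph $\CoBG{\lfloor n/2\rfloor}{\lceil n/2\rceil}$ is the DS complete bipartite graph on $n$ vertices singled out by the statement. The second is that, among all complete bipartite graphs counted with $p+q\le N$, the proportion of DS ones tends to $0$ as $N\to\infty$ (``almost all are not DS'').

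\emph{Step 1 (the balanced graph is DS).} Write $m=\lfloor n/2\rfloor\lceil n/2\rceil$ and let $a,b\in\naturals$ with $ab=m$. By AM--GM, $a+b\ge 2\sqrt{m}$. If $n=2k$, then $m=k^2$ and $a+b\ge 2k=n$. If $n=2k+1$, then $m=k(k+1)$ and $a+b\ge 2\sqrt{k^2+k}>2k$; since $a+b$ is an integer, $a+b\ge 2k+1=n$. Hence $\{\lfloor n/2\rfloor,\lceil n/2\rceil\}$ is an AM-minimizer, and Theorem~\ref{thm:when CoBG is DS?} shows that $\CoBG{\lfloor n/2\rfloor}{\lceil n/2\rceil}$ is DS. This is the single DS complete bipartite graph on $n$ vertices that the statement refers to.

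\emph{Step 2 (a sufficient criterion for failing to be DS).} Let $p\le q$, and let $r$ be a divisor of $q$ with $r>1$ and $rp<q$. Set $a=rp$ and $b=q/r$. Then $ab=pq$, and
\[
a+b-(p+q)=(r-1)\bigl(p-q/r\bigr)<0 .
\]
So $\{p,q\}$ is not an AM-minimizer, and by Theorem~\ref{thm:when CoBG is DS?} the graph $\CoBG{p}{q}$ is not DS; an explicit cospectral mate is $\CoBG{a}{b}\DU\CGr{\CoG{p+q-a-b}}$. In particular, $\CoBG{p}{q}$ fails to be DS whenever the smallest prime factor $s(q)$ of $q$ satisfies $s(q)<q/p$.

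\emph{Step 3 (density estimate).} Fix $T\ge 2$ and count the pairs $p\le q$ with $p+q\le N$. The pairs with $p\ge q/T$ make up a proportion at most $O(1/T)$ of all such pairs. For the remaining pairs we have $q/p>T$. By Step~2, such a pair can be DS only if $s(q)>T$, that is, only if $q$ has no prime factor $\le T$. The density of such $q$ is $\prod_{\ell\le T,\ \ell\text{ prime}}(1-1/\ell)$, which tends to $0$ as $T\to\infty$ by Mertens' theorem. Summing over $q$ with weight $\approx q/T$ (the number of admissible $p$) keeps this density bound, up to $o(1)$ as $N\to\infty$. Hence the limsup of the DS proportion is $O(1/T)+\prod_{\ell\le T}(1-1/\ell)$. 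Letting $T\to\infty$ gives density $0$, which proves the ``almost all'' assertion.

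The main obstacle is Step~3: the sieve-density bound has to be made uniform when it is weighted by the number of admissible $p$. I would handle this by splitting $q$ into dyadic ranges and applying the density of $T$-rough integers on each range.
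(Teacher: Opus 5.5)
Your Steps~1 and~2 are correct. Step~3 fails, and the reason is structural rather than a uniformity issue: the proportion in its first sentence is reversed. Among pairs $p\le q$ with $p+q\le N$, those with $q/p\le T$ make up a fraction $\frac{T-1}{T+1}+o(1)$, while the pairs with $q/p>T$ that your sieve handles make up only $\frac{2}{T+1}+o(1)$. So your argument bounds the limsup of the DS proportion only by roughly $\frac{T-1}{T+1}$, which tends to $1$.

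Step~2 cannot reach this bulk at all. It only moves a factor of $q$ over to $p$, so it needs $rp<q$ with $r\ge 2$, and it never applies when $q<2p$, which covers a third of all pairs. Such pairs are often not DS: $\CoBG{8}{15}$ is cospectral with $\CoBG{10}{12}\DU\CoG{1}$, yet no divisor $r>1$ of $15$ satisfies $8r<15$. The correct criterion uses divisors of both parts. For $p<q$, the multiset $\{p,q\}$ fails to be an AM-minimizer if and only if there are $s\mid p$ and $r\mid q$ with $1<r/s<q/p$. Indeed, $a=pr/s$ and $b=qs/r$ give $a+b-(p+q)=(r/s-1)(p-qs/r)$; conversely, every divisor of $pq$ is a divisor of $p$ times a divisor of $q$. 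Showing that almost all bounded-ratio pairs admit such $r,s$ is a genuine question about the distribution of divisors, and Mertens' theorem does not address it.

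One way to close the gap is as follows. Since $x\mapsto x+m/x$ is strictly decreasing on $(0,\sqrt{m}\,]$, each integer $m$ has exactly one AM-minimizing factorization $\{a,b\}$. Moreover, if $m=pq$ with $p\le q$, then $p\le a\le b\le q$. Hence the DS graphs $\CoBG{p}{q}$ with $p,q\le N$ are in bijection with the distinct entries of the $N\times N$ multiplication table. Their number is $o(N^2)$ by Erd\H{o}s's multiplication-table theorem. That theorem can be proved from the Hardy--Ramanujan--Tur\'an concentration of $\Omega(m)$: typical products $ab$ with $a,b\le N$ have about $2\log\log N$ prime factors, whereas typical integers up to $N^2$ have only about $\log\log N$. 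This gives your density-zero claim, including over the triangle $p+q\le N$.

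For comparison, the paper's proof formalizes the statement differently. It fixes the product $pq$ (the number of edges) and uses exactly this uniqueness of the AM-minimizer to conclude that only one complete bipartite graph with that product is DS. It then argues informally from the number of factorizations. Your existence reading of the second sentence, via the balanced graph, is correct. It is in fact the only true reading once the number of vertices is fixed, because uniqueness then fails: $\CoBG{1}{5}$, $\CoBG{2}{4}$ and $\CoBG{3}{3}$ are all DS.
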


\begin{proof}
Let $n \in \naturals$. By the \emph{fundamental theorem of arithmetic}, there is a unique decomposition
$n = \prod_{i=1}^{k} p_i$ where $k\in \naturals$ and $\{p_i\}$ are prime numbers for every $1 \le i \le k$.
Consider the family of multisets
\begin{align}
\set{D} = \Bigl\{ \{a,b\} :  a,b \in \mathbb{N} , \GM{a,b}=\sqrt{n} \Bigr\}.
\end{align}
This family has $2^k$ members, since every prime factor $p_i$ of $n$ should be in the prime decomposition of $a$ or $b$.
Since the minimization of $\AM{a,b}$ under the equality constraint $\GM{a,b}=\sqrt{n}$ forms a convex optimization problem,
only one of the multisets in the family $\set{D}$ is an AM-minimizer.
Thus, if $n = \prod_{i=1}^{k} p_i$, then the number of complete bipartite graphs of $n$ vertices is $O(2^k)$, and
(by Item~\ref{thm:when CoBG is DS? - part2} of Theorem~\ref{thm:when CoBG is DS?}) only one of them is DS.
\end{proof}

\subsection{Tur\'{a}n graphs}
\label{subsection: Turan graphs}
The Tur\'{a}n graphs are a significant and well-studied class of graphs in extremal graph theory, forming an important family
of multipartite complete graphs. Tur\'{a}n graphs are particularly known for their role in Tur\'{a}n's theorem, which provides a
solution to the problem of finding the maximum number of edges in a graph that does not contain a complete subgraph of a given
order \cite{Turan1941}. Before delving into formal definitions, it is noted that the distinction of the Tur\'{a}n graphs as
multipartite complete graphs is that they are as balanced as possible, ensuring their vertex sets are divided into parts of
nearly equal size.

\begin{definition}
Let $n_1, \ldots, n_k$ be natural numbers. Define the \emph{complete $k$-partite graph}
\begin{align}
\CoG{n_1, \ldots, n_k}= \bigvee_{i=1}^{k}\overline{\CoG{n_i}}.
\end{align}
A graph is multipartite if it is $k$-partite for some $k \geq 2$.
\end{definition}

\begin{definition}
\label{definition: Turan graph}
Let $2 \le k \le n$. The \emph{Tur\'{a}n graph} $T(n,k)$
(not to be confused with the graph of pyramids $T_{n,k}$) is
formed by partitioning a set of $n$ vertices into $k$ subsets,
with sizes as equal as possible, and then every two vertices
are adjacent in that graph if and only if they belong to different subsets.
It is therefore expressed as the complete $k$-partite graph
$K_{n_1,\dots,n_k}$, where $|n_i-n_j| \leq 1$ for all $i,j \in \OneTo{k}$
with $i \neq j$. Let $q$ and $s$ be the quotient and remainder, respectively,
of dividing $n$ by $k$ (i.e., $n = qk+s$, $s \in \{0,1, \ldots, k-1\}$),
and let $n_1 \leq \ldots \leq n_k$. Then,
\begin{align}
\label{eq: n_i in Turan's graph}
n_i=
\begin{cases}
q, & 1\leq i \leq k-s,\\
q+1, & k-s+1 \leq i \leq k.
\end{cases}
\end{align}
By construction, the graph $T(n,k)$ has a clique of order  $k$ (any subset of vertices with
a single representative from each of the $k$ subsets is a clique of order  $k$), but it cannot
have a clique of order  $k+1$ (since vertices from the same subset are nonadjacent).
Note also that, by \eqref{eq: n_i in Turan's graph}, the Tur\'{a}n graph $T(n,k)$ is an
$(n-q)$-regular graph if and only if $n$ is divisible by $k$, and then $q = \frac{n}{k}$.
\end{definition}

\begin{definition}
Let $q,k \in \naturals$. Define the \emph{regular complete multipartite graph},
$\mathrm{K}_{q}^{k}: = \overset{k}{\underset{i=1}{\bigvee}} \overline{\CoG{q}}$, to be the $k$-partite
graph with $q$ vertices in each part. Observe that $\mathrm{K}_{q}^{k} = T(kq,k)$.
\end{definition}

Let $\Gr{G}$ be a simple graph on $n$ vertices that does not contain a clique of order greater than
a fixed number $k \in \naturals$. Tur\'{a}n investigated a fundamental problem in extremal graph theory of
determining the maximum number of edges that $\Gr{G}$ can have \cite{Turan1941}.
\begin{theorem}[Tur\'{a}n's Graph Theorem]
\label{theorem: Turan's theorem}
Let $\Gr{G}$ be a graph on $n$ vertices with a clique of order at most $k$ for some $k \in \naturals$.
Then,
\begin{align}
\card{\E{\Gr{G}}} &\leq \card{\E{T(n,k)}} \\
&= \biggl(1-\frac1k\biggr) \, \frac{n^2-s^2}{2} + \binom{s}{2}, \quad s \triangleq n - k \bigg\lfloor \frac{n}{k} \bigg\rfloor.
\end{align}
\end{theorem}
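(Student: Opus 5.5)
Two things must be shown: that the Tur\'{a}n graph $T(n,k)$ has no clique of order $k+1$ and has the stated number of edges, and that every $\Gr{G}$ on $n$ vertices with $\clnum{\Gr{G}} \le k$ satisfies $\card{\E{\Gr{G}}} \le \card{\E{T(n,k)}}$. I plan the classical Zykov symmetrization argument, which reduces the extremal problem to complete $k$-partite graphs, followed by a balancing step.

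\emph{Reduction to complete multipartite graphs.} Among all graphs on $n$ vertices with clique number at most $k$, fix one, $\Gr{G}$, with the maximum number of edges. First I will show that nonadjacency is an equivalence relation on $\V{\Gr{G}}$; equivalently, $\CGr{G}$ is a disjoint union of cliques. Suppose not. Then there are vertices $u,v,w$ with $u\not\sim v$, $v\not\sim w$, but $u\sim w$.
\begin{itemize}
\item If $d(v) < d(u)$, replace $v$ by a copy of $u$, i.e., a new vertex nonadjacent to $u$ with neighborhood $\Neighbors(u)$. A clique contains at most one of two nonadjacent twins, so no clique of order $k+1$ is created. The edge count changes by $d(u)-d(v)>0$, contradicting maximality.
\item The case $d(v)<d(w)$ is symmetric.
\item If $d(v)\ge d(u)$ and $d(v)\ge d(w)$, replace both $u$ and $w$ by copies of $v$. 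Since $u\sim w$, the edge count changes by $2d(v)-(d(u)+d(w)-1) \ge 1$, again a contradiction.
\end{itemize}
Hence $\Gr{G}$ is complete multipartite, $\Gr{G} = \CoG{n_1,\ldots,n_m}$. Its clique number is $m$, so $m\le k$. If $m<k$, splitting a part of size at least~2 adds edges without exceeding clique number $k$ (when $n\ge k$). Therefore we may take $m=k$, allowing parts of size~0 if $n<k$.

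\emph{Balancing.} The graph $\CoG{n_1,\ldots,n_k}$ has $\binom{n}{2}-\sum_i \binom{n_i}{2}$ edges. Suppose $n_i \ge n_j+2$. Moving one vertex from part $i$ to part $j$ increases the edge count by $n_i-1-n_j \ge 1$. So at the maximum all parts differ in size by at most one, which is exactly $T(n,k)$ as described in~\eqref{eq: n_i in Turan's graph}. This gives the inequality.

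\emph{Edge count.} Write $n=qk+s$ with $s \in \{0,1,\ldots,k-1\}$. Then $T(n,k)$ has $k-s$ parts of size $q$ and $s$ parts of size $q+1$, so
\[
\card{\E{T(n,k)}} = \binom{n}{2} - (k-s)\binom{q}{2} - s\binom{q+1}{2}.
\]
Substituting $q=(n-s)/k$, I will expand and simplify this to $\bigl(1-\tfrac1k\bigr)\tfrac{n^2-s^2}{2}+\binom{s}{2}$.

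\emph{Main obstacle.} Conceptually, the hardest step is the symmetrization case analysis: one must check that cloning a vertex never creates a $(k+1)$-clique, and handle the third case, where the adjacency $u\sim w$ supplies the $+1$. Computationally, the final closed form is the main chore. As a sanity check I will verify it at $s=0$, where it gives $(1-\tfrac1k)\tfrac{n^2}{2}$.
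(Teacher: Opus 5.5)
Your proof is correct. The paper gives no proof of its own: it cites Tur\'{a}n and points to Chapter~41 of \cite{AignerZ18}, and your argument is essentially the symmetrization (Zykov) proof collected there. Your closed form also checks out, since $2\card{\E{T(n,k)}} = n^2-\sum_i n_i^2 = n^2-(kq^2+2qs+s)$.

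Note also that every step of your argument contradicts maximality by a strict increase in the edge count, so it actually shows that $T(n,k)$ is the \emph{unique} extremal graph. With that uniqueness, the proof of Theorem~\ref{theorem: Turan's graph is DS} could conclude right after Lemma~\ref{lemma:Tnk_DS_lemma_1}, using only ``no $(k+1)$-clique and the same number of edges''. Theorem~\ref{theorem:Smith_theorem_1} and Lemmata~\ref{lemma:Tnk_DS_lemma_2}--\ref{lemma:Tnk_DS_lemma_5} would then not be needed.
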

For a nice exposition of five different proofs of Tur\'{a}n's Graph Theorem, the interested reader is referred
to Chapter~41 of \cite{AignerZ18}.

\begin{corollary}
\label{corollary:turan} Let $k \in \naturals$, and let $\Gr{G}$ be a graph
on $n$ vertices where $\omega(\Gr{G})\le k$ and $\card{\E{\Gr{G}}}=\card{\E{T(n,k)}}$.
Let $\Gr{G}_{1}$ be a graph obtained by adding an arbitrary edge to
$\Gr{G}$. Then $\omega(\Gr{G}_1)>k$.
\end{corollary}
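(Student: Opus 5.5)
The plan is to argue by contradiction, combining the hypothesis $\card{\E{\Gr{G}}}=\card{\E{T(n,k)}}$ with Tur\'{a}n's Graph Theorem (Theorem~\ref{theorem: Turan's theorem}). Let $\Gr{G}_1$ be obtained from $\Gr{G}$ by adding an edge $\{u,v\}\notin\E{\Gr{G}}$ with $u\neq v$; such an edge exists, for otherwise there is nothing to prove. Then $\Gr{G}_1$ is a simple graph on the same $n$ vertices, and
\begin{align}
\card{\E{\Gr{G}_1}} = \card{\E{\Gr{G}}}+1 = \card{\E{T(n,k)}}+1 > \card{\E{T(n,k)}}.
\end{align}

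Suppose, contrary to the claim, that $\omega(\Gr{G}_1)\le k$. Then $\Gr{G}_1$ is a graph on $n$ vertices whose clique order is at most $k$, so Theorem~\ref{theorem: Turan's theorem} applies directly and gives $\card{\E{\Gr{G}_1}}\le\card{\E{T(n,k)}}$. This contradicts the displayed strict inequality, hence $\omega(\Gr{G}_1)>k$. Note that the hypothesis $\omega(\Gr{G})\le k$ is not actually needed for the contradiction; it only serves to make the statement meaningful, namely that $\Gr{G}$ is an edge-maximal $K_{k+1}$-free graph.

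There is no real obstacle here. The only points requiring care are that the added edge joins two distinct, previously nonadjacent vertices, so that $\Gr{G}_1$ is simple and its edge count increases by exactly one, and that Tur\'{a}n's bound depends only on $n$ and $k$, which are unchanged in passing from $\Gr{G}$ to $\Gr{G}_1$. It is also worth noting that $\omega(\Gr{G}_1)\le\omega(\Gr{G})+1$, since any clique of $\Gr{G}_1$ either avoids the new edge or contains both $u$ and $v$. So when $\omega(\Gr{G})=k$, the conclusion sharpens to $\omega(\Gr{G}_1)=k+1$.
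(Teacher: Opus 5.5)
Your proof is correct and is essentially the paper's argument: the paper gives no separate proof and presents the statement as an immediate consequence of Tur\'{a}n's Graph Theorem (Theorem~\ref{theorem: Turan's theorem}), since adding one edge pushes the edge count past $\card{\E{T(n,k)}}$ and so forces a clique of order at least $k+1$. One small sharpening: because $\omega(\Gr{G})\le k$ is part of the hypothesis, your bound $\omega(\Gr{G}_1)\le\omega(\Gr{G})+1$ yields $\omega(\Gr{G}_1)=k+1$ in every case, not only when $\omega(\Gr{G})=k$.
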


\subsubsection{The spectrum of the Tur\'{a}n graph}

\begin{theorem} \cite{EsserH1980}
\label{theorem: spectrum of multipartite graphs}
Let $k\in\naturals$, and let $n_1 \leq n_2 \leq \ldots \leq n_k$ be natural numbers.
Let $\Gr{G} = \CoG{n_1,n_2, \dots, n_k}$ be a complete multipartite graph on $n = n_1 + \ldots n_k$
vertices. Then,
\begin{itemize}
\item $\Gr{G}$ has one positive eigenvalue, i.e., $\lambda_1(\Gr{G}) > 0$ and $\lambda_2(\Gr{G})\le 0$.
\item $\Gr{G}$ has $0$ as an eigenvalue with multiplicity $n-k$.
\item $\Gr{G}$ has $k-1$ negative eigenvalues, and
\begin{align}
n_1 \leq -\lambda_{n-k+2}(\Gr{G}) \leq n_2 \leq -\lambda_{n-k+3}(\Gr{G}) \le n_3 \leq \ldots \leq n_{k-1} \leq -\lambda_{n}(\Gr{G}) \le n_{k}.
\end{align}
\end{itemize}
\end{theorem}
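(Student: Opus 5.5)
We plan to prove the three items of Theorem~\ref{theorem: spectrum of multipartite graphs} by writing the adjacency matrix in block form. Set
\begin{align}
\A(\Gr{G}) = \J{n,n} - \mathbf{B}, \quad \mathbf{B} = \mathrm{diag}\bigl(\J{n_1,n_1}, \ldots, \J{n_k,n_k}\bigr),
\end{align}
where the $i$-th diagonal block of $\mathbf{B}$ is the all-ones block on the $i$-th part.

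The first step handles the zero eigenvalue. Every vector that is supported on a single part and has zero sum in that part is annihilated by both $\J{n,n}$ and $\mathbf{B}$. These vectors span a space of dimension $\sum_i (n_i-1) = n-k$. Hence $0$ is an eigenvalue of multiplicity at least $n-k$. The orthogonal complement $W$ of this space consists of the vectors that are constant on each part, and $W$ is $\A$-invariant.

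The second step computes $\A$ on $W$. We use the basis $\mathbf{1}_{V_i}$, normalized to $\mathbf{1}_{V_i}/\sqrt{n_i}$. In this basis $\A$ restricted to $W$ becomes the symmetric $k\times k$ matrix
\begin{align}
\mathbf{M} = \mathbf{D}^{1/2}(\J{k,k}-\I{k})\mathbf{D}^{1/2}, \quad \mathbf{D} = \mathrm{diag}(n_1,\ldots,n_k).
\end{align}
Equivalently, $\mathbf{M} = \mathbf{u}\mathbf{u}^{\mathrm T} - \mathbf{D}$ with $\mathbf{u} = (\sqrt{n_1},\ldots,\sqrt{n_k})^{\mathrm T}$. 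The remaining $k$ eigenvalues of $\Gr{G}$ are exactly those of $\mathbf{M}$.

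The third step reads off the signs and the interlacing from this rank-one perturbation. The matrix $-\mathbf{D}$ has eigenvalues $-n_k \le \ldots \le -n_1$, all negative. Adding the positive semidefinite rank-one matrix $\mathbf{u}\mathbf{u}^{\mathrm T}$ raises eigenvalues only within the interlacing pattern. Thus, if $\mu_1 \ge \ldots \ge \mu_k$ are the eigenvalues of $\mathbf{M}$, we obtain
\begin{align}
\mu_1 \ge -n_1 \ge \mu_2 \ge -n_2 \ge \ldots \ge \mu_k \ge -n_k.
\end{align}
We would prove this interlacing inequality via Courant--Fischer, or directly via Cauchy's interlacing theorem (Theorem~\ref{thm:interlacing}) applied to a bordered matrix. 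For the latter, $\mathbf{M}$ and $-\mathbf{D}$ are both Schur-complement-type compressions of the $(k+1)\times(k+1)$ matrix
\begin{align}
\begin{pmatrix} -\mathbf{D} & \mathbf{u} \\ \mathbf{u}^{\mathrm T} & 1 \end{pmatrix},
\end{align}
whose principal submatrix is $-\mathbf{D}$; the cleaner route is the Weyl-type interlacing for rank-one updates. This yields $\mu_i \in [-n_i, -n_{i-1}]$ for $i \ge 2$, so $\mu_2,\ldots,\mu_k \le -n_1 < 0$, which gives exactly $k-1$ negative eigenvalues with the stated chain (after reindexing $\lambda_{n-k+i} = \mu_i$).

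For positivity of $\mu_1$ we note that $\mathrm{tr}\,\mathbf{M} = 0$ while the other $\mu_i$ are negative, so $\mu_1 > 0$ whenever $k \ge 2$. Alternatively, $\mathbf{M}$ is nonnegative and irreducible. Combining, $\Gr{G}$ has $0$ with multiplicity exactly $n-k$, since none of the $\mu_i$ vanish, together with one positive eigenvalue and $k-1$ negative ones.

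The main obstacle is justifying the rank-one interlacing $\mu_{i+1} \le -n_i \le \mu_i$ rigorously in the paper's framework. I would do it with Courant--Fischer. On the $(k-i+1)$-dimensional span of $\mathbf{e}_i,\ldots,\mathbf{e}_k$ intersected with $\mathbf{u}^\perp$, a space of dimension at least $k-i$, we have $\mathbf{x}^{\mathrm T}\mathbf{M}\mathbf{x} = -\mathbf{x}^{\mathrm T}\mathbf{D}\mathbf{x} \le -n_i\|\mathbf{x}\|^2$, giving $\mu_{i+1} \le -n_i$. On the span of $\mathbf{e}_1,\ldots,\mathbf{e}_i$ we have $\mathbf{x}^{\mathrm T}\mathbf{M}\mathbf{x} \ge -n_i\|\mathbf{x}\|^2$, giving $\mu_i \ge -n_i$.
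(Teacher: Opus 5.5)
Your proof is correct for $k\ge 2$; the $k=1$ caveat is discussed at the end. The thesis gives no proof of this theorem and simply cites \cite{EsserH1980}, so there is no in-paper argument to match.

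The standard route goes through the characteristic polynomial. A Schur-complement (matrix-determinant-lemma) computation, of the kind used for $\GOP_{n,k}$ in Chapter~\ref{chap:graphs_of_pyramids}, gives
\[
\det(\lambda\I{n}-\A) = \lambda^{n-k}\prod_{i=1}^{k}(\lambda+n_i)\Bigl(1-\sum_{i=1}^{k}\frac{n_i}{\lambda+n_i}\Bigr).
\]
One then locates the roots of $f(\lambda)=\sum_i n_i/(\lambda+n_i)=1$, using that $f$ is strictly decreasing between consecutive poles.

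Your route passes to the $k\times k$ quotient $\mathbf{M}=\mathbf{u}\mathbf{u}^{\mathrm T}-\mathbf{D}$ on the part-constant vectors and then applies rank-one interlacing, proved via Courant--Fischer. It yields the full chain $\mu_1\ge -n_1\ge\mu_2\ge\cdots\ge\mu_k\ge -n_k$ in one step. It also needs no separate treatment of repeated part sizes, where poles of $f$ coincide and some $\mu_i=-n_i$ exactly. The characteristic-polynomial route, in exchange, gives an explicit equation for the nonzero eigenvalues. That is the natural starting point for closed forms such as Theorem~\ref{theorem: A-spectrum of Turan graph}.

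Two small points.

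\begin{enumerate}
\item Drop the bordered-matrix aside. The Schur complement of the corner entry $1$ in $\begin{pmatrix}-\mathbf{D}&\mathbf{u}\\ \mathbf{u}^{\mathrm T}&1\end{pmatrix}$ is $-\mathbf{D}-\mathbf{u}\mathbf{u}^{\mathrm T}$, not $\mathbf{M}$; the corner would have to be $-1$. Your Courant--Fischer paragraph already proves $\mu_{i+1}\le -n_i\le\mu_i$ rigorously on its own.
\item The restriction $k\ge 2$ in your trace argument is genuinely necessary, not an artifact of your method. For $k=1$ the graph is edgeless and $\mathbf{M}=(0)$, so both the ``one positive eigenvalue'' claim and the ``multiplicity $n-k$'' claim fail. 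The statement should be read with $k\ge 2$.
\end{enumerate}
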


\begin{corollary}
\label{corollary:Kqk-spectrum}
The spectrum of the regular complete $k$-partite graph $\CoG{q, \ldots, q} \triangleq \CoG{q}^k$ is given by
\begin{align}
\sigma_{\A}(\CoG{q}^{k})=\Bigl\{ [-q]^{k-1}, [0]^{(q-1)k}, q(k-1) \Bigr\}.
\end{align}
\end{corollary}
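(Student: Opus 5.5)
We derive the spectrum of $\CoG{q}^{k}$ directly from its adjacency matrix, which can be written as
\begin{align}
\A(\CoG{q}^{k}) = \J{kq,kq} - \I{k} \otimes \J{q,q},
\end{align}
since two vertices are adjacent exactly when they lie in different parts. The two matrices $\J{kq,kq}$ and $\I{k}\otimes \J{q,q}$ commute, because both are polynomials in the block structure: $\J{kq,kq}(\I{k}\otimes\J{q,q}) = q\J{kq,kq} = (\I{k}\otimes\J{q,q})\J{kq,kq}$. They are therefore simultaneously diagonalizable, and the eigenvalues of the difference can be read off on a common orthogonal eigenbasis.

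We describe this common eigenbasis in three pieces.
\begin{itemize}
\item The all-ones vector $\mathbf{1}_{kq}$ is an eigenvector of $\J{kq,kq}$ with eigenvalue $kq$ and of $\I{k}\otimes\J{q,q}$ with eigenvalue $q$. It therefore contributes the eigenvalue $kq-q=q(k-1)$, which is also the degree of this $q(k-1)$-regular graph.
\item Take vectors that are constant on each part, with the part values summing to zero. These form a $(k-1)$-dimensional space orthogonal to $\mathbf{1}_{kq}$. Each such vector is annihilated by $\J{kq,kq}$ and is an eigenvector of $\I{k}\otimes \J{q,q}$ with eigenvalue $q$, so it contributes $-q$ with multiplicity $k-1$.
\item Take vectors whose entries sum to zero within every part. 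These form a space of dimension $k(q-1)$. Both matrices annihilate them, so they contribute $0$ with multiplicity $(q-1)k$.
\end{itemize}

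The dimensions add up to $1+(k-1)+k(q-1)=kq$, so the list of eigenvalues is complete. As sanity checks, the eigenvalues sum to $q(k-1)-q(k-1)=0$, and the sum of their squares is $q^2(k-1)^2+(k-1)q^2=q^2k(k-1)$, which equals $2\card{\E{\CoG{q}^k}}=2\binom{k}{2}q^2$, consistent with Corollary~\ref{corollary: number of edges and triangles in a graph}. The result is also consistent with Theorem~\ref{theorem: spectrum of multipartite graphs}: when all $n_i=q$, the interlacing chain there forces every negative eigenvalue to equal $-q$.

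No step is a genuine obstacle. The only points that need care are confirming the commutation of the two matrices and confirming that the eigenspace dimensions sum to $kq$.
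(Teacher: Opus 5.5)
Your proof is correct, but it takes a different route from the paper.

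The paper gets the corollary in one line by specializing Theorem~\ref{theorem: spectrum of multipartite graphs} (Esser--Harary). Setting $n_1=\cdots=n_k=q$ collapses the interlacing chain $n_1 \le -\lambda_{n-k+2} \le n_2 \le \cdots \le -\lambda_n \le n_k$, so all $k-1$ negative eigenvalues equal $-q$. The eigenvalue $0$ has multiplicity $n-k=(q-1)k$. The single positive eigenvalue is then forced to be $q(k-1)$ because the trace is zero, a step the paper leaves implicit.

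You instead write $\A(\CoG{q}^{k}) = \J{kq,kq} - \I{k}\otimes\J{q,q}$ and exhibit three explicit eigenspaces of dimensions $1$, $k-1$ and $k(q-1)$.

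Your route is self-contained and elementary, and it produces explicit eigenvectors. It also avoids relying on an external interlacing theorem that is considerably deeper than the statement being proved. The paper's route is shorter and sits naturally inside the general unequal-part setting. There, vectors constant on the parts are no longer eigenvectors, and an interlacing or quotient-matrix argument is genuinely needed.

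Two minor points:
\begin{enumerate}
\item The commutation of $\J{kq,kq}$ and $\I{k}\otimes\J{q,q}$ is not needed. You already verify directly that each proposed vector is an eigenvector of both matrices.
\item The final dimension count requires the three subspaces to be independent, which you only partly state. It is immediate: a vector summing to zero on every part is orthogonal to every vector that is constant on the parts, in particular to $\mathbf{1}_{kq}$.
\end{enumerate}
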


\begin{proof}
This readily follows from Theorem~\ref{theorem: spectrum of multipartite graphs} by setting $n_1 = \ldots = n_k = q$.
\end{proof}

\begin{lemma}
\label{lemma: Join-A-Spec} \cite{Butler2008} Let $\Gr{G}_{i}$
be $r_{i}$-regular graphs on $n_{i}$ vertices for $i\in \{1,2\}$, with the adjacency spectrum
$\sigma_{\A}(\Gr{G}_1)=(r_{1}=\mu_{1}\ge\mu_{2}\ge...\ge\mu_{n})$
and $\sigma_{A}(\Gr{G}_2) = (r_{2}=\nu_{1}\ge\nu_{2}\ge...\ge\nu_{n})$.
The $\A$-spectrum of $\Gr{G}_1\vee \Gr{G}_2$ is given by
\begin{align}
\sigma_{\A}(\Gr{G}_{1}\vee \Gr{G}_{2})=\{ \mu_{i} \}_{i=2}^{n_{1}} \cup \{ \nu_{i}\}_{i=2}^{n_{2}} \cup
\left\{ \frac{r_1+r_2 \pm\sqrt{(r_1-r_2)^{2}+4 n_1 n_2}}{2} \right\}.
\end{align}
\end{lemma}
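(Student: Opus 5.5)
The plan is to build eigenvectors of $\A(\Gr{G}_1\vee\Gr{G}_2)$ directly. Order the vertices so that
\begin{align}
\A(\Gr{G}_1\vee\Gr{G}_2)=\begin{pmatrix} \A_1 & \J{n_1,n_2}\\ \J{n_2,n_1} & \A_2\end{pmatrix},
\end{align}
where $\A_i=\A(\Gr{G}_i)$. Since $\Gr{G}_i$ is $r_i$-regular, $\mathbf{1}_{n_i}$ is an eigenvector of the symmetric matrix $\A_i$ with eigenvalue $r_i$. We may therefore take an orthonormal eigenbasis of $\A_i$ containing $\mathbf{1}_{n_i}/\sqrt{n_i}$. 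The remaining $n_i-1$ basis vectors $\mathbf{x}$ are orthogonal to $\mathbf{1}_{n_i}$, and their eigenvalues are exactly $\mu_2,\dots,\mu_{n_1}$ (respectively $\nu_2,\dots,\nu_{n_2}$).

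First, for such an $\mathbf{x}$ with $\A_1\mathbf{x}=\mu_j\mathbf{x}$ and $\mathbf{1}_{n_1}^{\mathrm{T}}\mathbf{x}=0$, pad with zeros to $(\mathbf{x}^{\mathrm{T}},\mathbf{0}^{\mathrm{T}})^{\mathrm{T}}$. The off-diagonal block gives $\J{n_2,n_1}\mathbf{x}=\mathbf{0}$, so this vector is an eigenvector of the join with eigenvalue $\mu_j$. The same argument applies to the vectors of $\Gr{G}_2$. This yields $n_1+n_2-2$ mutually orthogonal eigenvectors realizing the multisets $\{\mu_i\}_{i=2}^{n_1}$ and $\{\nu_i\}_{i=2}^{n_2}$.

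Second, the orthogonal complement of these vectors is the two-dimensional space spanned by $(\mathbf{1}_{n_1},\mathbf{0})$ and $(\mathbf{0},\mathbf{1}_{n_2})$. This space is invariant under the join's adjacency matrix. Seeking eigenvectors of the form $(\alpha\mathbf{1}_{n_1},\beta\mathbf{1}_{n_2})$, we get $r_1\alpha+n_2\beta=\lambda\alpha$ and $n_1\alpha+r_2\beta=\lambda\beta$. Hence the restriction is the quotient matrix $\begin{pmatrix} r_1 & n_2\\ n_1 & r_2\end{pmatrix}$, whose characteristic polynomial is $\lambda^2-(r_1+r_2)\lambda+r_1r_2-n_1n_2$. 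Its roots are $\tfrac12\bigl(r_1+r_2\pm\sqrt{(r_1-r_2)^2+4n_1n_2}\bigr)$, which are real and distinct since $n_1n_2>0$.

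Combining the two steps accounts for all $n_1+n_2$ eigenvalues, because the eigenvectors found span the whole space. The only point needing care is the completeness and orthogonality bookkeeping, in particular when some $\mu_j$ equals $r_1$ in a disconnected $\Gr{G}_1$. This is handled by choosing the eigenbasis of $\A_1$ to contain $\mathbf{1}_{n_1}$ and otherwise lie in $\mathbf{1}_{n_1}^{\perp}$, so that multiplicities are counted correctly. No other obstacle is expected.
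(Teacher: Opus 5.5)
Your proof is correct and complete. The bookkeeping point you flag is handled properly: $\mathbf{1}_{n_i}^{\perp}$ is $\A_i$-invariant, and the spectrum of $\A_i$ restricted to it is the full multiset with exactly one copy of $r_i=\mu_1$ (resp.\ $\nu_1$) removed, whether or not $\Gr{G}_i$ is connected.

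The paper gives no proof of this lemma. It is quoted from \cite{Butler2008} and used as a black box in Theorem~\ref{theorem: A-spectrum of Turan graph}, so there is no in-paper argument to match yours against. The natural comparison is with how the paper computes the spectra of $\GOP_n$ and $\GOP_{n,k}=\CoG{k}\vee\overline{\CoG{n-k}}$ in Theorems~\ref{thm:The-spectrum-of-Tn} and~\ref{claim:spectrum-of-Tnk}. These are exactly the special case $r_1=k-1$, $r_2=0$ of this lemma, and there the characteristic polynomial is obtained from the Schur complement of $\lambda\I{n}-\A$ with respect to a diagonal block.

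Run in general, that route works as follows:
\begin{itemize}
\item It uses $(\lambda\I{n_2}-\A_2)^{-1}\mathbf{1}_{n_2}=(\lambda-r_2)^{-1}\mathbf{1}_{n_2}$ to reduce the problem to $\det\bigl(\lambda\I{n_1}-\A_1-\tfrac{n_2}{\lambda-r_2}\J{n_1,n_1}\bigr)$.
\item That determinant is then evaluated in the same $\mathbf{1}$-adapted eigenbasis of $\A_1$ that you use.
\item It yields the characteristic polynomial in one stroke, but only for $\lambda$ outside the spectrum of $\A_2$, so it needs a polynomial-identity extension.
\end{itemize}
Your eigenvector/quotient-matrix construction avoids that genericity step and produces an explicit orthogonal eigenbasis. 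It also shows directly why the two join eigenvalues are real and distinct. Both routes rest on the same key fact: $\mathbf{1}$ is an eigenvector of each regular block.
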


\begin{theorem}
\label{theorem: A-spectrum of Turan graph}
Let $q,s\in \naturals$ such that $n=kq+s$ and $0 \le s \leq k-1.$ The following
holds with respect to the $\A$-spectrum of $T(n,k)$:
\begin{enumerate}
\item \label{item: irregular Turan graph}
If $1 \leq s \leq k-1$, then the $\A$-spectrum of the irregular Tur\'{a}n graph $T(n,k)$ is given by
\begin{align}
\sigma_{\A}(T(n,k))=& \biggl\{ [-q-1]^{s-1}, [-q]^{k-s-1}, [0]^{n-k} \biggr\} \nonumber \\
\label{eq: A-spectrum of irregular Turan graph}
& \cup \Biggl\{\tfrac12 \biggl[n-2q-1\pm \sqrt{\Bigl(n-2(q+1)s+1\Bigr)^2+4q(q+1)s(k-s)} \biggr] \Biggr\}.
\end{align}
\item \label{item: regular Turan graph}
If $s=0$, then $q = \frac{n}{k}$, and the $\A$-spectrum of the regular  Tur\'{a}n graph $T(n,k)$ is given by
\begin{align}
\label{eq: A-spectrum of regular Turan graph}
\sigma_{\A}(T(n,k))=& \Bigl\{ [-q]^{k-1}, [0]^{n-k}, (k-1)q \Bigr\}.
\end{align}
\end{enumerate}
\end{theorem}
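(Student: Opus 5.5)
Write $T(n,k)$ as the join of two regular complete multipartite graphs, apply Lemma~\ref{lemma: Join-A-Spec} to that join, and read off the spectra of the two pieces from Corollary~\ref{corollary:Kqk-spectrum}.

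\textbf{Regular case.} Item~\ref{item: regular Turan graph} ($s=0$) is immediate. Here $T(n,k)=\CoG{q}^{k}$ with $q=n/k$, and Corollary~\ref{corollary:Kqk-spectrum} gives exactly \eqref{eq: A-spectrum of regular Turan graph}, since $(q-1)k=n-k$.

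\textbf{Decomposition for $1\le s\le k-1$.} By \eqref{eq: n_i in Turan's graph}, $T(n,k)$ has $k-s$ parts of size $q$ and $s$ parts of size $q+1$. The join of complete multipartite graphs is again complete multipartite, with the union of the parts, so
\begin{align}
T(n,k)=\Gr{G}_1\vee\Gr{G}_2, \qquad \Gr{G}_1=\CoG{q}^{k-s},\quad \Gr{G}_2=\CoG{q+1}^{s}.
\end{align}
Both factors are nonempty because $k-s\ge1$ and $s\ge1$.

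The parameters of the two factors are as follows.
\begin{itemize}
\item $\Gr{G}_1$ is $r_1$-regular with $r_1=q(k-s-1)$ on $n_1=q(k-s)$ vertices. By Corollary~\ref{corollary:Kqk-spectrum}, its spectrum is $\{r_1,[-q]^{k-s-1},[0]^{(q-1)(k-s)}\}$.
\item $\Gr{G}_2$ is $r_2$-regular with $r_2=(q+1)(s-1)$ on $n_2=(q+1)s$ vertices. Its spectrum is $\{r_2,[-q-1]^{s-1},[0]^{qs}\}$.
\end{itemize}

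Lemma~\ref{lemma: Join-A-Spec} only needs the largest eigenvalue of each factor to be its degree, and the remaining eigenvalues to be listed after removing one copy of it. This holds here. It includes the degenerate case $s=1$: then $\Gr{G}_2=\CGr{\CoG{q+1}}$ is $0$-regular with spectrum $\{[0]^{q+1}\}$, consistent with the formula above.

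\textbf{Applying the join lemma.} The non-principal eigenvalues contribute $[-q]^{k-s-1}$ and $[-q-1]^{s-1}$. They also contribute zeros with total multiplicity
\begin{align}
(q-1)(k-s)+qs=qk-k+s=n-k .
\end{align}
The two remaining eigenvalues are $\tfrac12\bigl(r_1+r_2\pm\sqrt{(r_1-r_2)^2+4n_1n_2}\bigr)$, which simplify as follows, using $n=qk+s$:
\begin{align}
r_1+r_2&=qk+s-2q-1=n-2q-1,\\
r_1-r_2&=qk-2qs-s+1=n-2(q+1)s+1,\\
4n_1n_2&=4q(q+1)s(k-s).
\end{align}
Substituting these yields \eqref{eq: A-spectrum of irregular Turan graph}. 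As a sanity check, the total count is $(s-1)+(k-s-1)+(n-k)+2=n$.

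\textbf{Main obstacle.} The argument is short, so the only real care needed is bookkeeping. One point is justifying that the Turán graph splits as the join of the two balanced pieces. The other is handling the degenerate factor when $s=1$, where $\Gr{G}_2$ is edgeless and the lemma must still be applied with $r_2=0$. Beyond that, the algebraic simplification of $r_1\pm r_2$ is routine, and I expect no difficulty there.
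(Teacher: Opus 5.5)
Your proposal is correct and follows the paper's proof essentially step for step. The paper also writes $T(n,k)=\CoG{q}^{k-s}\vee\CoG{q+1}^{s}$, reads the spectra of the two factors off Corollary~\ref{corollary:Kqk-spectrum}, applies Lemma~\ref{lemma: Join-A-Spec}, and simplifies $r_1+r_2=n-2q-1$ and $r_1-r_2=n-2(q+1)s+1$ exactly as you do. Your explicit checks of the total zero multiplicity $n-k$ and of the edgeless-factor cases are bookkeeping the paper leaves implicit. Note that $k-s=1$, where $\Gr{G}_1$ is edgeless, needs the same check as your $s=1$ case.
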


\begin{proof}
Let $1 \leq s \leq k-1$, and we next derive the $\A$-spectrum of an irregular Tur\'{a}n graph $T(n,k)$ in
Item~\ref{item: irregular Turan graph} of this theorem (i.e., its spectrum if $n$ is not divisible by $k$ since $s \neq 0$).
By Corollary~\ref{corollary:Kqk-spectrum}, the spectra of the regular
graphs $\CoG{q}^{k-s}$ and $\CoG{q+1}^{s}$ is
\begin{align}
& \sigma_{\A}(\CoG{q}^{k-s})=\left\{ [-q]^{k-s-1}, [0]^{(q-1)(k-s)}, q(k-s-1) \right\},  \\
& \sigma_{\A}(\CoG{q+1}^{s})=\left\{ [-q-1]^{s-1}, [0]^{qs}, (q+1)(s-1) \right\}.
\end{align}
The $(k-s)$-partite graph $\CoG{q}^{k-s}$ is $r_1$-regular with $r_1=q(k-s-1)$, the
$s$-partite graph $\CoG{q+1}^{s}$ is $r_2$-regular with $r_2 = (q+1)(s-1)$, and
by Definition~\ref{definition: Turan graph}, we have $T(n,k) = \CoG{q}^{k-s} \vee \CoG{q+1}^{s}$.
Hence, by Lemma~\ref{lemma: Join-A-Spec}, the adjacency spectrum of $T(n,k)$ is given by
\begin{align}
\sigma_{\A}(T(n,k)) &= \sigma_{\A}(\CoG{q}^{k-s} \vee \CoG{q+1}^{s}) \nonumber \\
\label{eq0: 23.12.2024}
&=\set{S}_1 \cup \set{S}_2 \cup \set{S}_3,
\end{align}
where
\begin{align}
\label{eq1: 23.12.2024}
\set{S}_1 &= \Bigl\{ [-q]^{k-s-1}, [0]^{(q-1)(k-s)} \Bigr\}, \\
\label{eq2: 23.12.2024}
\set{S}_2 &= \Bigl\{ [-q-1]^{s-1}, [0]^{qs} \Bigr\}, \\
\set{S}_3 &= \biggl\{ \frac{r_1+r_2 \pm \sqrt{(r_1-r_2)^2 + 4 n_1 n_2}}{2} \biggr\} \nonumber \\
\label{eq3: 23.12.2024}
&= \Biggl\{\tfrac12 \biggl[n-2q-1\pm \sqrt{\Bigl(n-2(q+1)s+1\Bigr)^2+4q(q+1)s(k-s)} \biggr] \Biggr\},
\end{align}
where the last equality holds since, by the equality $n=kq+s$ and the above expressions of $r_1$
and $r_2$, it can be readily verified that $r_1+r_2 = n-2q-1$ and $r_1-r_2 = n-2(q+1)s+1$.
Finally, combining \eqref{eq0: 23.12.2024}--\eqref{eq3: 23.12.2024} gives the $\A$-spectrum
in \eqref{eq: A-spectrum of irregular Turan graph} of an irregular Tur\'{a}n graph $T(n,k)$.

We next prove Item~\ref{item: regular Turan graph} of this theorem, referring to a regular Tur\'{a}n graph $T(n,k)$
(i.e., $k|n$ or equivalently, $s=0$). In that case, we have $T(n,k)=\CoG{q}^{k}$ where $q = \frac{n}{k}$,
so the $\A$-spectrum in \eqref{eq: A-spectrum of regular Turan graph} holds by Corollary~\ref{corollary:Kqk-spectrum}.
\end{proof}

\begin{remark}
\label{remark: Turan - multiplicity of negative eigenvalues}
In light of Theorem~\ref{theorem: A-spectrum of Turan graph}, if $k \geq 2$, then the number of negative
eigenvalues (including multiplicities) of the adjacency matrix of the Tur\'{a}n graph $T(n,k)$ is $k-1$
if the graph is regular (i.e., if $k|n$), and it is $k-2$ otherwise (i.e., if the graph is irregular).
If $k=1$, which corresponds to an empty graph (having no edges), then all eigenvalues are zeros
(having no negative eigenvalues). Furthermore, the adjacency matrix of $T(n,k)$ always has a single positive
eigenvalue, which is of multiplicity~1 irrespectively of the values of $n$ and $k$. We rely on these properties
later in this chapter (see Section~\ref{subsubsection: Turan graph is DS}).
\end{remark}

\begin{example}
\label{example: A-spectrum of Turan graph}
By Theorem~\ref{theorem: A-spectrum of Turan graph}, let us calculate the $\A$-spectrum of the
Tur\'{a}n graph $T(17,7)$, and verify it numerically with the SageMath software \cite{SageMath}. Having $n=17$
and $k=7$ gives $q=2$ and $s=3$, which by Theorem~\ref{theorem: A-spectrum of Turan graph} implies that
\begin{align}
\label{eq4: 23.12.2024}
\sigma_{\A}\bigl(T(17,7)\bigr)= \Bigl\{ [-3]^2, [-2]^3, [0]^{10}, \, 6(1+\sqrt{2}), \, 6(1-\sqrt{2}) \Bigr\}.
\end{align}
That has been numerically verified by programming in the SageMath software.
\end{example}

\subsubsection{Tur\'{a}n graphs are DS}
\label{subsubsection: Turan graph is DS}
The main result of this subsection establishes that all Tur\'{a}n graphs are determined by their $\A$-spectrum.
This result is equivalent to Theorem~3.3 in \cite{MaRen2010}, while also presenting an alternative proof that
offers additional insights.
\begin{theorem}
\label{theorem: Turan's graph is DS}
The Tur\'{a}n graph $T(n,k)$ is $\A$-DS.
\end{theorem}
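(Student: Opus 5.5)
Let $\Gr{G}$ be a graph cospectral with $T(n,k)$. The plan is to show, in order, that $\clnum{\Gr{G}}\le k$, that $\Gr{G}$ has as many edges as $T(n,k)$, and then that $\Gr{G}$ is a complete multipartite graph with part sizes forced to be as balanced as possible. The last step is where the spectrum must be used most carefully.

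\textbf{Clique bound and extremality.} Let $\Gr{G}$ be cospectral with $T(n,k)$. By Corollary~\ref{corollary: number of edges and triangles in a graph}, $\card{\E{\Gr{G}}}=\card{\E{T(n,k)}}$. By Remark~\ref{remark: Turan - multiplicity of negative eigenvalues}, $\A(\Gr{G})$ has at most $k-1$ negative eigenvalues. Since $\A(\CoG{m})$ has $m-1$ negative eigenvalues ($-1$ with multiplicity $m-1$), Cauchy's interlacing theorem (Theorem~\ref{thm:interlacing}) forbids an induced $\CoG{k+1}$ in $\Gr{G}$. Indeed, its $k$ negative eigenvalues would force $\lambda_{n-k+1}(\Gr{G})\le -1<0$, giving at least $k$ negative eigenvalues of $\Gr{G}$. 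Hence $\clnum{\Gr{G}}\le k$. Together with the edge count, Tur\'{a}n's theorem (Theorem~\ref{theorem: Turan's theorem}) shows that $\Gr{G}$ is extremal.

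\textbf{Complete multipartite structure.} I would rather not invoke uniqueness of the extremal graph as a black box, though that is the classical route. A spectral argument is available instead. Since $\Gr{G}$ has exactly one positive eigenvalue (cospectral with $T(n,k)$), interlacing shows that $\Gr{G}$ has no induced subgraph with two positive eigenvalues. Two such subgraphs are relevant. The graph $\CoG{2}\DU\CoG{1}$, i.e.\ $\overline{\PathG{3}}$, has spectrum $\{1,0,-1\}$, which is fine. The graph $\CoG{2}\DU\CoG{2}$ has two positive eigenvalues, so it is excluded; the same holds for $\PathG{4}$, which has two positive eigenvalues.

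The stronger structural point concerns nonadjacency. Being a complete multipartite graph (up to isolated vertices) is equivalent to nonadjacency being an equivalence relation, i.e.\ $\overline{\PathG{3}}=\CoG{1}\DU\CoG{2}$ being not induced. Note, however, that $\CoG{1}\DU\CoG{2}$ has only one positive eigenvalue, so interlacing alone does not exclude it. Here I would use the Tur\'{a}n extremality, through Zykov symmetrization. Take an induced $\CoG{1}\DU\CoG{2}$ on vertices $u$ and $\{v,w\}$ with $uv,uw\notin\E{\Gr{G}}$. If $d(u)<d(v)$, replace $u$ by a clone of $v$; this does not increase the clique number and strictly increases the number of edges, contradicting Theorem~\ref{theorem: Turan's theorem}. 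So $d(u)\ge d(v)$, and likewise $d(u)\ge d(w)$. Then replace $v$ and $w$ by clones of $u$. The resulting edge change is $2d(u)-d(v)-d(w)+1>0$ (the $+1$ because the edge $vw$ was counted twice), again a contradiction. Hence $\Gr{G}$ is complete multipartite, $\Gr{G}=\CoG{m_1,\ldots,m_r}$ with $\sum m_i=n$ and $r\le k$, possibly padded by isolated vertices, which can be absorbed as parts.

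\textbf{Pinning down the parts (main obstacle).} The number of zero eigenvalues of $\Gr{G}$ is $n-r$ by Theorem~\ref{theorem: spectrum of multipartite graphs}. It must equal the multiplicity of $0$ in $T(n,k)$, namely $n-k$ (read off from Theorem~\ref{theorem: A-spectrum of Turan graph}; the two nonzero roots in the irregular case are nonzero since their product is $-q(q+1)s(k-s)\ne0$). Hence $r=k$. Finally, among complete $k$-partite graphs on $n$ vertices, the edge count $\binom n2-\sum\binom{m_i}{2}$ is uniquely maximized by the balanced partition, and it equals $\card{\E{T(n,k)}}$. Therefore $\{m_i\}$ is the Tur\'{a}n partition, and $\Gr{G}\cong T(n,k)$.

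The step I expect to be the main obstacle is showing the complete multipartite structure rigorously. I would follow the symmetrization argument above; if needed, I would fall back on the known uniqueness of the Tur\'{a}n extremal graph, using Corollary~\ref{corollary:turan}.
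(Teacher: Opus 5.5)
Your proof is correct. It shares its first step with the paper: interlacing against $\CoG{k+1}$, using that $\Gr{G}$ has at most $k-1$ negative eigenvalues, gives $\clnum{\Gr{G}}\le k$. Your final step, balancing the parts by moving vertices, is also essentially the paper's argument in Lemmata~\ref{lemma:Tnk_DS_lemma_4}--\ref{lemma:Tnk_DS_lemma_5}. The middle, however, is genuinely different.

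The paper gets ``complete multipartite plus isolated vertices'' from Smith's theorem (Theorem~\ref{theorem:Smith_theorem_1}), using that $\Gr{G}$ has exactly one positive eigenvalue. It then invokes Corollary~\ref{corollary:turan} twice: once to exclude isolated vertices, and once to force exactly $k$ parts.

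You instead use Zykov symmetrization to show that a $\CoG{k+1}$-free graph with $\card{\E{T(n,k)}}$ edges has no induced $\CoG{1}\DU\CoG{2}$. Both cloning steps are sound: a clone that is nonadjacent to its twin never enlarges a clique, and the edge gains are $d(v)-d(u)>0$ and $2d(u)-d(v)-d(w)+1>0$. You then fix the number of parts via the multiplicity of the eigenvalue $0$.

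Your route is self-contained and shows more clearly what the spectrum contributes. Only the edge count and the number of negative eigenvalues are needed; after that the statement is exactly the classical uniqueness of the Tur\'{a}n extremal graph. The positive-eigenvalue count plays no role. Even your rank step is optional, since a complete $r$-partite graph with $r<k\le n$ has fewer than $\card{\E{T(n,k)}}$ edges. The paper's route uses more spectral information and an imported theorem, in exchange for lighter combinatorics: it only ever perturbs a graph that is already complete multipartite.

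Two slips, neither of which affects validity:
\begin{enumerate}
\item In the irregular case, the product of the two nonzero roots is $q(q+1)\bigl[(k-s-1)(s-1)-s(k-s)\bigr]=-q(q+1)(k-1)$, not $-q(q+1)s(k-s)$. It is still nonzero. More simply, applying Theorem~\ref{theorem: spectrum of multipartite graphs} to $T(n,k)$ itself gives the zero multiplicity $n-k$ directly.
\item Isolated vertices cannot be ``absorbed as parts'', since a part is joined to every other vertex. But there are none: an isolated vertex together with any edge would induce $\CoG{1}\DU\CoG{2}$.
\end{enumerate}
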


In order to prove Theorem~\ref{theorem: Turan's graph is DS}, we first introduce an auxiliary result from
\cite{Smith1970}, followed by several other lemmata.
\begin{theorem} \cite[Theorem~1]{Smith1970}
\label{theorem:Smith_theorem_1}
Let $\Gr{G}$ be a graph. Then, the following statements are equivalent:
\begin{itemize}
\item $\Gr{G}$ has exactly one positive eigenvalue.
\item $\Gr{G}=\Gr{H} \cup \CGr{\CoG{m}}$ for some $m$, where $\Gr{H}$ is a nonempty
complete multipartite graph. In other words, the non-isolated vertices of $\Gr{G}$
form a complete multipartite graph.
\end{itemize}
\end{theorem}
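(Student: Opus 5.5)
The reverse implication is essentially already available, so the plan is to dispose of it first and put the work into the forward implication. For the forward implication I will use Cauchy's Interlacing Theorem (Theorem~\ref{thm:interlacing}) to exclude a short list of small induced subgraphs.

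\emph{($\Leftarrow$).} Suppose $\Gr{G}=\Gr{H}\cup\CGr{\CoG{m}}$ with $\Gr{H}$ a nonempty complete multipartite graph. The adjacency matrix of $\Gr{G}$ is block diagonal, so its spectrum is $\sigma_{\A}(\Gr{H})$ together with $m$ zeros. By Theorem~\ref{theorem: spectrum of multipartite graphs}, $\Gr{H}$ has exactly one positive eigenvalue, so $\Gr{G}$ does too.

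\emph{($\Rightarrow$).} Assume $\Gr{G}$ has exactly one positive eigenvalue.
\begin{enumerate}
\item Interlacing gives $\lambda_2(\Gr{H}')\le \lambda_2(\Gr{G})\le 0$ for every induced subgraph $\Gr{H}'$ of $\Gr{G}$. Hence no induced subgraph of $\Gr{G}$ has two positive eigenvalues.
\item I will check three graphs on four vertices, each of which has two positive eigenvalues and is therefore not an induced subgraph of $\Gr{G}$:
 \begin{itemize}
 \item $2\CoG{2}$, with spectrum $\{1,1,-1,-1\}$;
 \item $\PathG{4}$, with eigenvalues $\pm\frac{1\pm\sqrt5}{2}$;
 \item the paw, i.e.\ a triangle with one pendant vertex. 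Its characteristic polynomial is $x^4-4x^2-2x+1$. This equals $1$ at $x=0$ and $-4$ at $x=1$, and it tends to $+\infty$ as $x\to\infty$. So it has a root in $(0,1)$ and another root in $(1,\infty)$.
 \end{itemize}
\item Let $U$ be the set of non-isolated vertices. Since $\Gr{G}$ has a positive eigenvalue it has an edge, so $U\neq\emptyset$.
\item I will show that nonadjacency is transitive on $U$. Suppose, for contradiction, that $u,v,w\in U$ with $u\not\sim v$, $u\not\sim w$, but $v\sim w$. Since $u$ is not isolated, it has a neighbour $x$, and $x\notin\{v,w\}$. Consider the subgraph induced by $\{u,x,v,w\}$:
 \begin{itemize}
 \item if $x$ is adjacent to neither $v$ nor $w$, it is $2\CoG{2}$;
 \item if $x$ is adjacent to exactly one of them, it is $\PathG{4}$;
 \item if $x$ is adjacent to both, it is the paw.
 \end{itemize}
 Each case contradicts Step~2.
\item Therefore, on $U$, "equal or nonadjacent" is an equivalence relation. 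Its classes are independent sets, and any two vertices in different classes are adjacent. So the subgraph induced by $U$ is a complete multipartite graph. It is nonempty because it contains an edge.
\item All vertices outside $U$ are isolated, so $\Gr{G}$ is this complete multipartite graph together with $\CGr{\CoG{m}}$, where $m=\card{\V{\Gr{G}}}-\card{U}$.
\end{enumerate}

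The main obstacle is Step~2: finding the right finite list of forbidden induced subgraphs and confirming that each really has two positive eigenvalues. The paw is the least obvious member of the list, and I handle it with the sign-change argument on its characteristic polynomial. Once that list is fixed, the remaining steps are a routine case analysis.
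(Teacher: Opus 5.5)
Your proof is correct and complete. The paper itself gives no proof of this statement. It quotes the result from Smith and uses it as a black box in Lemma~\ref{lemma:Tnk_DS_lemma_2}, so there is no in-text argument to compare against. Your argument is essentially the classical proof of Smith's theorem: interlacing forbids $2\CoG{2}$, $\PathG{4}$ and the paw as induced subgraphs, and then non-adjacency is shown to be transitive on the non-isolated vertices. It is the same interlacing device the paper uses in Lemmas~\ref{lem:k+2-clique} and~\ref{lem:edge-disjoind-induced-cliques}, which forbid $\PathG{4}$ and disjoint nonempty induced subgraphs. One nice feature of your version is that the $2\CoG{2}$ case in Step~4 handles disconnectedness automatically, so you never need a separate argument that the non-isolated part is connected.

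Two minor remarks:
\begin{itemize}
\item You correctly read ``exactly one positive eigenvalue'' with multiplicity, via $\lambda_2(\Gr{G})\le 0$. This reading is essential, since $2\CoG{2}$ has only one distinct positive eigenvalue.
\item For ($\Leftarrow$) you rely on Theorem~\ref{theorem: spectrum of multipartite graphs}, which is itself only cited. It also needs at least two parts, which ``nonempty'' guarantees. To make the argument self-contained, write $\A(\Gr{H})=\AllOne-\mathbf{B}$, where $\mathbf{B}$ is the block-diagonal matrix with all-ones blocks on the parts. Since $\mathbf{B}$ is positive semidefinite, Weyl's inequality gives $\lambda_2(\Gr{H})\le\lambda_2(\AllOne)=0$. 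Also $\lambda_1(\Gr{H})>0$, because $\A(\Gr{H})\neq\mathbf{0}$ has zero trace.
\end{itemize}
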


\begin{proof}[Proof of Theorem~\ref{theorem: Turan's graph is DS}]
Let $\Gr{G}$ be a graph that is $\A$-cospectral with $T(n,k)$. Denote $n=qk+s$
for $s,q\in \naturals$ such that $0\le s<k$.
\begin{lemma}
\label{lemma:Tnk_DS_lemma_1}
The graph $\Gr{G}$ doesn't have a clique of order $k+1$.
\end{lemma}

\begin{proof}
Suppose to the contrary that the graph $\Gr{G}$ has a clique of order $k+1$, which means that
$\CoG{k+1}$ is an induced subgraph of $\Gr{G}$. The complete graph $\CoG{k+1}$ has
$k$ negative eigenvalues ($-1$ with a multiplicity of $k$). On the other hand, $\Gr{G}$ has
at most $k-1$ negative eigenvalues, $n-k$ zero eigenvalues, and exactly one positive eigenvalue;
indeed, this follows from Theorem~\ref{theorem: A-spectrum of Turan graph} (see
Remark~\ref{remark: Turan - multiplicity of negative eigenvalues}), and since $\Gr{G}$
and $T(n,k)$ are $\A$-cospectral graphs. Hence, by Cauchy's Interlacing Theorem,
every induced subgraph of $\Gr{G}$ on $k+1$ vertices has at most $k-1$ negative
eigenvalues (i.e., those eigenvalues interlaced between the negative and zero eigenvalues of $\Gr{G}$
that are placed at distance $k+1$ apart in a sorted list of the eigenvalues of $\Gr{G}$ in decreasing order).
This contradicts our assumption on the existence of a clique of $k+1$ vertices because of the $k$ negative
eigenvalues of $\CoG{k+1}$.
\end{proof}

\begin{lemma}
\label{lemma:Tnk_DS_lemma_2}
The graph $\Gr{G}$ is a complete multipartite graph.
\end{lemma}

\begin{proof}
Since $\Gr{G}$ has exactly one positive eigenvalue, which is of multiplicity one, we get
from Theorem~\ref{theorem:Smith_theorem_1} that $\Gr{G} = \Gr{H} \cup \CGr{\CoG{\ell}}$
for some $\ell \in \naturals$, where $\Gr{H}$ is a nonempty multipartite graph. We next
show that $\ell=0$. Suppose to the contrary that $\ell \geq 1$, and let $v$ be an isolated
vertex of $\CGr{\CoG{\ell}}$. Since $\Gr{H}$ is a nonempty graph, there exists a vertex
$u\in \V{\Gr{H}}$. Let $\Gr{G}_1$ be the graph obtained from $\Gr{G}$ by adding the
single edge $\{v,u\}$. By Lemma~\ref{lemma:Tnk_DS_lemma_1}, $\Gr{G}$ does not have a clique
of order $k+1$. Hence, $\Gr{G}_1$ does not have a clique of order $k+1$ either, contradicting
Corollary~\ref{corollary:turan}. Hence, $\Gr{G}=\Gr{H}$.
\end{proof}

\begin{lemma}
\label{lemma:Tnk_DS_lemma_3}
The graph $\Gr{G}$ is a complete $k$-partite graph.
\end{lemma}

\begin{proof}
By Lemma \ref{lemma:Tnk_DS_lemma_2}, $\Gr{G}$ is a complete multipartite graph. Let
$r$ be the number of partite subsets in the vertex set $\V{\Gr{G}}$. We show that $r=k$,
which then gives that $\Gr{G}$ is a complete $k$-partite graph. By
Lemma~\ref{lemma:Tnk_DS_lemma_1}, $\Gr{G}$ doesn't have a clique of order $k+1$.
Hence, $r\le k$. Suppose to the contrary that $r<k$. Since $\Gr{G}$ is a complete
$r$-partite graph, the largest order of a clique in $\Gr{G}$ is $r$.
Let $\Gr{G}_{1}$ be a graph obtained from $\Gr{G}$ by adding an edge between
two vertices within the same partite subset. The graph $\Gr{G}_1$ becomes an $(r+1)$-partite
graph. Consequently, the maximum order of a clique in $\Gr{G}_1$ is at most $r+1 \leq k$.
The graph $\Gr{G}_1$ has exactly one more edge than $\Gr{G}$. Since $\Gr{G}$ is
$\A$-cospectral to $T(n,k)$, it has the same number of edges as in $T(n,k)$.
Hence, $\Gr{G}_1$ contains more edges than $T(n,k)$, while also lacking a clique of order $k+1$.
This contradicts Corollary~\ref{corollary:turan}, so we conclude that $r=k$.
\end{proof}

Let $n_1, n_2, \dots , n_k \in \naturals$ be the number of vertices in each partite subset
of the complete $k$-partite graph $\Gr{G}$, i.e., $\Gr{G} = \CoG{n_1, n_2, \dots , n_k}$.
Then, the next two lemmata subsequently hold.
\begin{lemma}
\label{lemma:Tnk_DS_lemma_4}
For all $i \in \OneTo{k}$, $n_i \le q+1$ .
\end{lemma}

\begin{proof}
Suppose to the contrary that there exists a partite subset in the complete $k$-partite graph
$\Gr{G}$ with more than $q+1$ vertices. Let $\set{P}_1$ be such a partite subset, and suppose
without loss of generality that $n_1= \card{\set{P}_1} \geq q+1$. By the pigeonhole principle, there
exists a partite subset of $\Gr{G}$ with at most $q$ vertices (since $\sum_{i \in \OneTo{k}} n_i = n = kq+s$,
where $0\leq s \leq k-1$). Let $\set{P}_2$ be such a partite subset of $\Gr{G}$, and suppose
without loss of generality that $n_2 = \card{\set{P}_2} \leq q$. Let $\Gr{G}_{1}$
be the graph obtained from $\Gr{G}$ by removing a vertex $v \in \set{P}_1$, adding
a new vertex $u$ to $\set{P}_2$, and connecting $u$ to all the vertices outside its partite subset.
The new graph $\Gr{G}_1$ is also $k$-partite, so it does not contain a clique of order greater than $k$.
Furthermore, by construction, $\Gr{G}_{1}$ has more edges than $\Gr{G}$, so
\begin{align}
\label{eq1: 24.12.2024}
\bigcard{\E{\Gr{G}_1}} > \bigcard{\E{\Gr{G}}} = \bigcard{\E{T(n,k)}}.
\end{align}
Hence, $\Gr{G}_{1}$ is a graph with no clique of order greater than $k$,
and it has more edges than $T(n,k)$. That contradicts Theorem~\ref{theorem: Turan's theorem},
so $\{n_i\}_{i=1}^k$ cannot include any element that is larger than $q+1$.
\end{proof}

\begin{lemma}
\label{lemma:Tnk_DS_lemma_5}
For all $i \in \OneTo{k}$, $n_i \geq q$.
\end{lemma}

\begin{proof}
The proof of this lemma is analogous to the proof of Lemma~\ref{lemma:Tnk_DS_lemma_4}.
Suppose to the contrary that there exists a partite subset of $\Gr{G}$ with less than $q$
vertices. Let $\set{P}_1$ be such a partite subset, so $p_1 \triangleq \bigcard{\set{P}_1}<q$.
By the pigeonhole principle, there exists a partition with at least $q+1$ vertices.
Let $\set{P}_2$ be such a partite subset set, whose number of vertices is denoted by
$p_{2} \triangleq \card{\set{P}_2} \geq q+1$.
Let $\Gr{G}_{1}$ be the graph obtained by removing a vertex $v \in \set{P}_2$,
adding a new vertex $u$ to $\set{P}_1$, and connecting the vertex $u$ to all
the vertices outside its partite subset. $\Gr{G}_{1}$ is $k$-partite, so it does
not contain a clique of order greater than $k$, and $\Gr{G}_{1}$ has more edges
than $\Gr{G}$ so \eqref{eq1: 24.12.2024} holds.
Hence, $\Gr{G}_1$ is a graph with no clique of order greater than $k$,
and it has more edges than $T(n,k)$. That contradicts Theorem~\ref{theorem: Turan's theorem},
so $\{n_i\}_{i=1}^k$ cannot include any element that is smaller than $q$.
\end{proof}

By Lemmata~\ref{lemma:Tnk_DS_lemma_4} and~\ref{lemma:Tnk_DS_lemma_5}, we conclude that
$n_i \in \{q,q+1\} $ for every $1 \le i \le k-1$. Let $\alpha$ be the number of partite
subsets of $q$ vertices and $\beta$ be the number of partite subsets of $q+1$ vertices.
Since $\Gr{G}$ has $n$ vertices, where $\sum n_{i}=n$, it follows that $q\alpha + (q+1)\beta = n$.
Moreover, $\Gr{G}$ is $k$-partite, so it follows that $\alpha+\beta=k$. This gives the linear
system of equations
\begin{align}
\begin{pmatrix}q & q+1\\
1 & 1
\end{pmatrix}\begin{pmatrix}\alpha\\
\beta
\end{pmatrix}=\begin{pmatrix}n\\
k
\end{pmatrix},
\end{align}
which has the single solution
\begin{align}
\alpha = k-s, \quad \beta=n-qk=s.
\end{align}
Hence, $\Gr{G} = T(n,k)$, which completes the proof of Theorem~\ref{theorem: Turan's graph is DS}.
\end{proof}

\begin{remark}
\label{remark: on the alternative proof by Ma and Ren}
The proof of Theorem~\ref{theorem: Turan's graph is DS} is an alternative proof of Theorem~3.3 in \cite{MaRen2010}.
While both proofs rely on Theorem~\ref{theorem:Smith_theorem_1}, which is Theorem~1 of \cite{Smith1970}, our proof
relies on the adjacency spectral characterization in Theorem~\ref{theorem: A-spectrum of Turan graph}, noteworthy
in its own right, and further builds upon a sequence of results presented in Lemmata~\ref{lemma:Tnk_DS_lemma_1}--\ref{lemma:Tnk_DS_lemma_5}.
On the other hand, the proof of Theorem~3.3 in \cite{MaRen2010} relies on Theorem~\ref{theorem:Smith_theorem_1},
but then deviates substantially from our proof (see Lemmata~2.4 and~2.5 in \cite{MaRen2010} and Theorem~3.1 in \cite{MaRen2010},
serving to prove Theorem~\ref{theorem: Turan's graph is DS}).
\end{remark}

\subsection{Line graphs}
\label{subsection: Line graphs}

Among the various studied transformations on graphs, the line graphs of graphs are one of the most studied transformations
\cite{BeinekeB21}. We first introduce their definition, and then address the spectral graph determination properties.
\begin{definition}
\label{definition: line graph}
The {\em line graph} of a graph $\Gr{G}$, denoted by $\ell(\Gr{G})$, is a graph whose vertices
are the edges in $\Gr{G}$, and two vertices are adjacent in $\ell(\Gr{G})$ if the corresponding
edges are incident in $\Gr{G}$.
\end{definition}
A notable spectral property of line graphs is that all the eigenvalues of their adjacency matrix
are greater than or equal to~$-2$ (see, e.g., \cite[Theorem~4.6]{BeinekeB21}). For the determination
of all graphs whose spectrum is bounded from below by $-2$, the interested reader is referred to
\cite[Section~4.5]{BeinekeB21}.

The following theorem characterizes some families of line graphs that are DS.
\begin{theorem}
\label{theorem: DS line graphs}
The following line graphs are DS:
\begin{enumerate}[1]
\item \label{Item 1: DS line graphs}
The line graph of the complete graph $\CoG{k}$, where $k \geq 4$ and $k \neq 8$ (see
\cite[Theorem~4.1.7]{CvetkovicRS2010}),
\item \label{Item 2: DS line graphs}
The line graph of the complete bipartite graph $\CoBG{k}{k}$, where $k \geq 2$ and
$k \neq 4$ (see \cite[Theorem~4.1.8]{CvetkovicRS2010}),
\item \label{Item 3: DS line graphs}
The line graph $\ell(\CGr{\CG{6}})$ (see \cite[Proposition~4.1.5]{CvetkovicRS2010}),
\item \label{Item 4: DS line graphs}
The line graph of the complete bipartite graph $\CoBG{m}{n}$, where $m+n \geq 19$ and
$\{m,n\} \neq \{2s^2+s, 2s^2-s\}$ with $s \in \naturals$ (see
\cite[Proposition~4.1.18]{CvetkovicRS2010}).
\end{enumerate}
\end{theorem}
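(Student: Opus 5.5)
Since the statement gathers four separate results from \cite{CvetkovicRS2010}, I would prove the items one at a time, using a common strategy. First compute the adjacency spectrum of the line graph. Then use the spectral tools of Section~\ref{section: preliminaries} to show that any cospectral graph $\Gr{H}$ has the same strong structure. Finally, identify $\Gr{H}$ combinatorially.

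For Item~\ref{Item 1: DS line graphs}, I would first note that $\ell(\CoG{k})$ is the triangular graph, a strongly regular graph $\srg{\binom{k}{2}}{2(k-2)}{k-2}{4}$. Its distinct eigenvalues are $2(k-2)$, $k-4$ and $-2$, with multiplicities $1$, $k-1$ and $\binom{k}{2}-k$. Now let $\Gr{H}$ be cospectral with it.
\begin{enumerate}
\item By Theorem~\ref{theorem: graph regularity from A-spectrum}, $\Gr{H}$ is regular.
\item Since the largest eigenvalue is simple and $\Gr{H}$ is regular, $\Gr{H}$ is connected.
\item By Item~\ref{Item 5: eigenvalues of srg} of Theorem~\ref{theorem: eigenvalues of srg}, $\Gr{H}$ is strongly regular. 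Since \eqref{eigs-SRG} and the regularity $d=\lambda_1$ determine $\lambda,\mu$, it has the same parameters.
\end{enumerate}
It then remains to invoke the classical characterization of the triangular graphs by their parameters, due to Chang and Hoffman: for $k\neq 8$, any $\srg{\binom{k}{2}}{2(k-2)}{k-2}{4}$ is $\ell(\CoG{k})$. For $k=8$ the three Chang graphs of Remark~\ref{remark: NICS SRGs} are the exceptions, and the exclusion of $k\le 3$ is a degenerate small case.

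Item~\ref{Item 2: DS line graphs} follows the same route. Here $\ell(\CoBG{k}{k})$ is the lattice graph $\srg{k^2}{2(k-1)}{k-2}{2}$. Shrikhande's uniqueness theorem says this graph is determined by its parameters except for $k=4$, where the Shrikhande graph is a cospectral mate. Item~\ref{Item 3: DS line graphs} is a single finite graph on $15$ vertices, so I would settle it by the interlacing argument with least eigenvalue $-2$ described below, or simply by exhaustive computer verification.

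Item~\ref{Item 4: DS line graphs} is the main obstacle, since $\ell(\CoBG{m}{n})$ is not strongly regular when $m\neq n$.
\begin{enumerate}
\item Compute its spectrum. It equals $(m-1)\,\I{}+\ldots$, i.e., it is the Cartesian product $\CoG{m}\square\CoG{n}$, with eigenvalues $m+n-2$, $m-2$, $n-2$ and $-2$, with multiplicities $1$, $n-1$, $m-1$ and $(m-1)(n-1)$.
\item A cospectral $\Gr{H}$ is regular and connected, has least eigenvalue $-2$, and has four distinct eigenvalues.
\item Apply the Cameron--Goethals--Seidel--Shult classification: a connected graph with least eigenvalue $\ge -2$ is a generalized line graph or is exceptional (represented in $E_8$), and exceptional regular graphs have a bounded number of vertices. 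The hypothesis $m+n\ge 19$ is meant to exclude the exceptional case.
\item A regular generalized line graph with these parameters is a line graph $\ell(\Gr{K})$, where $\Gr{K}$ is regular or semiregular bipartite.
\item Use the multiplicity of $-2$, which equals $|\E{\Gr{K}}|-|\V{\Gr{K}}|+(\text{number of bipartite components})$, together with edge and triangle counts from Corollary~\ref{corollary: number of edges and triangles in a graph}, to force $\Gr{K}$ to be bipartite with parts of sizes $m,n$.
\item Finally, use the four-eigenvalue structure to show $\Gr{K}$ is complete bipartite, or else to land in the exceptional family $\{2s^2+s,2s^2-s\}$. That family comes from Hoffman-type cospectral mates built from symmetric designs.
\end{enumerate}
I expect this last identification step to be the hardest part.
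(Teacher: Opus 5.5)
The paper gives no argument of its own for this theorem: each item is simply cited from \cite{CvetkovicRS2010}, so your sketch is a reconstruction rather than an alternative route. Items~1 and~2 are correct as you describe them. The spectrum forces a regular, connected graph with three distinct eigenvalues, hence a strongly regular graph with the same parameters, and the Chang--Hoffman and Shrikhande uniqueness theorems finish the argument; this is the mechanism behind Corollary~\ref{corollary: SRG DS graphs} and Theorem~\ref{theorem: DS Lattice graphs}.

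Item~3, however, is misdescribed. $\CGr{\CG{6}}$ is the triangular prism, with $15-6=9$ edges. So $\ell(\CGr{\CG{6}})$ is a $4$-regular graph on $9$ vertices with spectrum $\{4,2,[1]^{2},[-1]^{2},[-2]^{3}\}$, not a graph on $15$ vertices. It is DS at once by Theorem~\ref{theorem: van Dam and Haemers, 2003 - thm2}. The least-eigenvalue-$-2$ route you offer as an alternative would not settle it. The Cameron--Goethals--Seidel--Shult conclusion only applies above $28$ vertices, so on $9$ vertices the exceptional case would remain open.

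In Item~4, steps~3--4 are sound. For $\min\{m,n\}\ge 2$ one has $mn\ge 2(m+n)-4\ge 34>28$; the case $\min\{m,n\}=1$ gives $\CoG{m+n-1}$. A cocktail party mate would need $mn=m+n$, i.e.\ $m=n=2$. Steps~5--6, however, put the hypothesis in the wrong place, and step~5 as you state it cannot be proved without that hypothesis.

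Write the mate as $\ell(\Gr{K})$ with $\Gr{K}$ connected. Then $\card{\E{\Gr{K}}}=mn$ and $d(u)+d(v)=m+n$ on every edge of $\Gr{K}$.
\begin{itemize}
\item If $\Gr{K}$ is bipartite, the multiplicity $(m-1)(n-1)$ of $-2$ gives $\card{\V{\Gr{K}}}=m+n$. Let the parts have sizes $a,b$ and degrees $r_1\le b$, $r_2\le a$. The equality $r_1+r_2=a+b$ forces $r_1=b$ and $r_2=a$, so $\Gr{K}=\CoBG{a}{b}$, and $ab=mn$, $a+b=m+n$ give $\{a,b\}=\{m,n\}$.
\item If $\Gr{K}$ is not bipartite, it is $\tfrac{m+n}{2}$-regular on $m+n-1$ vertices, since a constant $d(u)+d(v)$ on a connected non-bipartite graph forces regularity. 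Then $\card{\E{\Gr{K}}}=mn$ becomes $(m-n)^2=m+n$, i.e.\ $\{m,n\}=\{2s^2+s,2s^2-s\}$. In that case $\Gr{K}$ is an $\srg{4s^2-1}{2s^2}{s^2}{s^2}$. For $s=2$ one can take $\Gr{K}=\ell(\CoG{6})$, and $\ell(\ell(\CoG{6}))$ is cospectral with $\ell(\CoBG{10}{6})$.
\end{itemize}
So the excluded family is exactly the non-bipartite case. The hypothesis must be invoked in step~5 to rule that case out, and the family can never surface after bipartiteness is known. Indeed, a bipartite graph with parts of sizes $m,n$ and $mn$ edges is already $\CoBG{m}{n}$. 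Step~6, which you expected to be the hardest, is therefore trivial, and neither the triangle count nor the four-eigenvalue structure is needed.
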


\begin{remark}
\label{remark: triangular graphs}
In regard to Item~\ref{Item 1: DS line graphs} of Theorem~\ref{theorem: DS line graphs},
the line graphs of complete graphs are referred to as triangular graphs. These
are strongly regular graphs with the parameters $\srg{\tfrac12 k(k-1)}{2(k-2)}{k-2}{4}$,
where $k \geq 4$. For $k=8$, the corresponding triangular graph is cospectral and nonisomorphic
to the three Chang graphs (see Remark~\ref{remark: NICS SRGs}), which are strongly regular graphs
$\srg{28}{12}{6}{4}$.
\end{remark}

We next prove the following result in regard to the Petersen graph, which appears in \cite[Problem~4.3]{CvetkovicRS2010}
and \cite[Section~10.3]{BrouwerM22} (without a proof).
\begin{corollary}
\label{corollary: Petersen graph is DS}
The Petersen graph is DS.
\end{corollary}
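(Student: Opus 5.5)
The Petersen graph is $\CGr{\ell(\CoG{5})}$, the complement of the triangular graph $\ell(\CoG{5})$. The plan is to obtain its spectral determination from Item~\ref{Item 1: DS line graphs} of Theorem~\ref{theorem: DS line graphs} together with Item~\ref{item 5: DS graphs} of Theorem~\ref{theorem: special classes of DS graphs}, which states that the complement of a DS regular graph is DS.

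The steps are as follows.

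\emph{Identify the Petersen graph as a complement.} Its vertices are the $2$-subsets of a $5$-set, and two vertices are adjacent if and only if the subsets are disjoint. The line graph $\ell(\CoG{5})$ has the same vertex set, namely the edges of $\CoG{5}$, and two vertices are adjacent there if and only if the subsets intersect. Hence the Petersen graph equals $\CGr{\ell(\CoG{5})}$.

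\emph{Check that $\ell(\CoG{5})$ is DS.} Item~\ref{Item 1: DS line graphs} of Theorem~\ref{theorem: DS line graphs} applies, since $k=5 \geq 4$ and $k \neq 8$.

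\emph{Check regularity.} The graph $\ell(\CoG{5})$ is regular of degree $2(k-2)=6$, since it is the strongly regular graph $\srg{10}{6}{3}{4}$ (Remark~\ref{remark: triangular graphs}).

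\emph{Conclude.} By Item~\ref{item 5: DS graphs} of Theorem~\ref{theorem: special classes of DS graphs}, its complement, the Petersen graph, is DS.

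It is worth recalling why the complement step is valid, so the argument does not rest on a black box. Suppose $\Gr{H}$ is cospectral with a regular graph $\Gr{G}$. By Theorem~\ref{theorem: graph regularity from A-spectrum}, $\Gr{H}$ is regular of the same degree. For regular graphs the spectrum of the complement is determined: it consists of $n-1-d$ together with $-1-\lambda_i$ for $i \geq 2$. So $\CGr{H}$ and $\CGr{G}$ are cospectral, and the DS property transfers from one complement to the other.

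As an alternative, or as a self-contained check, one could argue directly with Theorem~\ref{theorem: eigenvalues of srg}. The Petersen spectrum $\{3,[1]^5,[-2]^4\}$ forces any cospectral graph $\Gr{H}$ to be $3$-regular by Theorem~\ref{theorem: graph regularity from A-spectrum}. Since the eigenvalue $3$ is simple, $\Gr{H}$ is connected. With exactly three distinct eigenvalues, $\Gr{H}$ is strongly regular by Item~\ref{Item 5: eigenvalues of srg}. Its parameters are then forced: from $p_1=1$ and $p_2=-2$ we get $\lambda-\mu=-1$ and $d-\mu=2$, so $\Gr{H}$ is $\srg{10}{3}{0}{1}$. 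It remains to show that $\srg{10}{3}{0}{1}$ is unique. This is the main obstacle on this route, because uniqueness is a combinatorial rather than spectral fact. It follows by fixing a vertex $v$, its $3$ neighbours, and the $6$ vertices at distance two, and showing that the adjacencies among these $6$ vertices are forced up to isomorphism. The first route avoids this obstacle by relying on the cited DS result for $\ell(\CoG{5})$, so it is the route I would follow.
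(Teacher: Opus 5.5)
Your proposal is correct and is essentially the paper's own proof. Both identify the Petersen graph with $\CGr{\ell(\CoG{5})}$, get $\ell(\CoG{5})$ DS from Item~\ref{Item 1: DS line graphs} of Theorem~\ref{theorem: DS line graphs} with $k=5$, note that it is $6$-regular, and conclude via Item~\ref{item 5: DS graphs} of Theorem~\ref{theorem: special classes of DS graphs}. Your justification of the complement step and your alternative strongly regular route are both correct but not needed.
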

\begin{proof}
The Petersen graph is known to be isomorphic to the complement of the line graph of the complete graph $\CoG{5}$ (i.e., it
is isomorphic to $\CGr{\ell(\CoG{5})}$. By Item~\ref{Item 1: DS line graphs} of Theorem~\ref{theorem: DS line graphs}, the line
graph $\ell(\CoG{5})$ is DS. It is also a 6-regular graph (as the line graph of a $d$-regular graph is $(2d-2)$-regular, and
$\CoG{5}$ is a 4-regular graph). Consequently, by Item~\ref{item 5: DS graphs} of Theorem~\ref{theorem: special classes of DS graphs},
the complement of $\ell(\CoG{5})$ is also DS.
\end{proof}

The following definition and theorem provide further elaboration on Item~\ref{Item 2: DS line graphs}
of Theorem~\ref{theorem: DS line graphs}.
\begin{definition} \cite[Section~1.1.8]{BrouwerM22}
\label{definition: Lattice graphs}
The {\em Hamming graph} $\mathrm{H}(2,q)$, where $q \geq 2$, has the vertex set $\OneTo{q} \times \OneTo{q}$, and
any two vertices are adjacent if and only if they differ in one coordinate (i.e., their Hamming distance
is equal to~1). These are also referred to {\em lattice graphs}, and denoted by $\mathrm{L}_2(q)$. The
Lattice graph $\mathrm{L}_2(q)$, where $q \geq 2$, is also the line graph of the complete bipartite
graph $\CoBG{q}{q}$, and it is a strongly regular graph with parameters $\srg{q^2}{2(q-1)}{q-2}{2}$.
\end{definition}

\begin{theorem} \cite{Shrikhande59}
\label{theorem: DS Lattice graphs}
The lattice graph $\mathrm{L}_2(q)$ is a strongly regular DS graph for all $q \neq 4$. For $q=4$, the graph $\mathrm{L}_2(4)$
is not DS since it is cospectral and nonisomorphic to the Shrikhande graph, which are nonisomorphic strongly regular graphs
with the common parameters $\srg{16}{6}{4}{2}$.
\end{theorem}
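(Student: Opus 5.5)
The plan has two stages. First, show that any graph cospectral with $\mathrm{L}_2(q)$ is a strongly regular graph with the same parameters. Second, show that for $q \neq 4$ such a graph is isomorphic to $\mathrm{L}_2(q)$; the case $q=4$ is then settled by the Shrikhande graph.

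\emph{Stage one: reduction to strong regularity.} Let $\Gr{H}$ be cospectral with $\mathrm{L}_2(q)$. By Definition~\ref{definition: Lattice graphs} and Theorem~\ref{theorem: eigenvalues of srg}, the common spectrum is
$$\bigl\{2(q-1),\,[q-2]^{2(q-1)},\,[-2]^{(q-1)^2}\bigr\}.$$
Since $\sum_i \lambda_i^2 = n\lambda_1$, Theorem~\ref{theorem: graph regularity from A-spectrum} gives that $\Gr{H}$ is $2(q-1)$-regular. For a regular graph, the multiplicity of the degree as an eigenvalue equals the number of components. Here that multiplicity is $1$, so $\Gr{H}$ is connected. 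Item~\ref{Item 5: eigenvalues of srg} of Theorem~\ref{theorem: eigenvalues of srg} then makes $\Gr{H}$ strongly regular. Its parameters are forced by the eigenvalues $p_1,p_2$ through $p_1+p_2=\lambda-\mu$ and $p_1p_2=\mu-d$. These give $\lambda-\mu=q-4$ and $\mu-d=-2(q-2)$, hence $\mu=2$ and $\lambda=q-2$. So $\Gr{H}$ is an $\srg{q^2}{2(q-1)}{q-2}{2}$.

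\emph{Stage two: identifying $\Gr{H}$ for $q\neq 4$.} It suffices to show that every vertex neighbourhood induces $2\CoG{q-1}$. Indeed, then each vertex lies in exactly two $q$-cliques that meet only in that vertex. Calling these cliques ``lines'', every edge lies in a unique line, and the lines split into two parallel classes, each partitioning $\V{\Gr{H}}$. Labelling a vertex by the pair of lines through it gives a bijection with $\OneTo{q}\times\OneTo{q}$ that realizes $\mathrm{L}_2(q)$.

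To get the local structure I would use the Gram-matrix view. The matrix $\A(\Gr{H})+2\I{q^2}$ is positive semidefinite of rank $1+2(q-1)=2q-1$. Hence the vertices correspond to vectors of squared norm $2$ with mutual inner products in $\{0,1\}$, forming a system of lines at $60^\circ$ and $90^\circ$. By the Cameron--Goethals--Seidel--Shult classification, a connected graph with least eigenvalue $\geq -2$ is either a generalized line graph or one of finitely many exceptional graphs on at most $36$ vertices. For large $q$, $\Gr{H}$ is a generalized line graph. Regularity and the equality $\mu=2$ should exclude the petals, so $\Gr{H}=\ell(\Gr{K})$ for a graph $\Gr{K}$ with $q^2$ edges. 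Moreover, $\Gr{K}$ must be triangle-free, since a triangle in $\Gr{K}$ spoils the value $\lambda=q-2$ on some edge. From $\lambda=q-2$ and $\mu=2$, adjacent edges of $\Gr{K}$ have endpoints of degrees summing to $2q$, and every two disjoint edges are joined by exactly two edges. This should force $\Gr{K}=\CoBG{q}{q}$.

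For the finitely many small $q$ where exceptional graphs are possible, I would instead argue combinatorially inside $N(v)$. That set has $2(q-1)$ vertices and induces a $(q-2)$-regular graph. Count the $(q-1)^2$ vertices at distance~$2$ from $v$, each with exactly $\mu=2$ neighbours in $N(v)$, against the $q-1$ outside neighbours of each $x\in N(v)$. The aim is to show that non-adjacency in $N(v)$ is an equivalence of index~$2$, except when $q=4$.

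\emph{Main obstacle.} The hard step is this local structure argument: excluding a neighbourhood other than $2\CoG{q-1}$. The only genuine exception should be $q=4$, where the neighbourhood can be the $6$-cycle, and this yields the Shrikhande graph. Finally, for $q=4$, the Shrikhande graph is also an $\srg{16}{6}{2}{2}$, so it is cospectral with $\mathrm{L}_2(4)$ by Corollary~\ref{corollary: cospectral SRGs}. It is nonisomorphic to $\mathrm{L}_2(4)$ because its vertex neighbourhoods are hexagons rather than $2\CoG{3}$. Hence $\mathrm{L}_2(4)$ is not DS.
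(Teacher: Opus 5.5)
Your stage one is correct and matches the paper: its spectral characterization of connected strongly regular graphs shows that any cospectral mate of $\mathrm{L}_2(q)$ is an $\srg{q^2}{2(q-1)}{q-2}{2}$. The paper then simply cites Shrikhande for the uniqueness of this parameter set, which is what your stage two tries to reprove. You also rightly use $(16,6,2,2)$ at $q=4$; the $(16,6,4,2)$ in the statement is a misprint, since $\lambda=q-2=2$.

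The gap is that the decisive claim of stage two, that $N(v)$ induces $2\CoG{q-1}$, is never proved. With the $36$-vertex bound you quote, the Cameron--Goethals--Seidel--Shult theorem only reaches $q\ge 7$. Even there you leave two steps at ``should''; in particular, removing the petals needs the theorem that a connected regular generalized line graph is a line graph or a cocktail party graph. For the remaining cases, notably $q=5,6$, the only argument offered is vacuous. Counting edges between $N(v)$ and the second layer gives $2(q-1)\cdot(q-1)$ from one side and $(q-1)^2\cdot 2$ from the other. This is just the feasibility identity $(n-d-1)\mu=d(d-\lambda-1)$, which holds in every graph with these parameters, the Shrikhande graph included. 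So it carries no information about the structure of $N(v)$.

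The missing idea is to use $\mu=2$ inside $N(v)$. Two nonadjacent vertices of $N(v)$ already have $v$ as a common neighbour, so they have at most one common neighbour in $N(v)$. Suppose $x,y,z\in N(v)$ are pairwise nonadjacent. Let $W=N(v)\setminus\{x,y,z\}$, so $|W|=2q-5$, and let $t_w$ be the number of neighbours of $w\in W$ among $x,y,z$.

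Each of $x,y,z$ has all its $\lambda=q-2$ neighbours in $N(v)$ inside $W$, so $\sum_w t_w=3(q-2)$. On the other hand, $\sum_w\binom{t_w}{2}\le 3$. Since $t\le 1+\binom{t}{2}$ for $t\in\{0,1,2,3\}$, this gives $3(q-2)\le(2q-5)+3$, i.e.\ $q\le 4$.

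Hence, for $q\ge 5$, the complement of the subgraph induced on $N(v)$ is triangle-free, $(q-1)$-regular on $2(q-1)$ vertices, with $(q-1)^2$ edges. By Mantel's theorem (the equality case of Tur\'an's theorem for $k=2$) it is $\CoBG{q-1}{q-1}$, so $N(v)$ induces $2\CoG{q-1}$. For $q\le 3$ this is forced directly by $(q-2)$-regularity on $2(q-1)$ vertices. This handles every $q\ne 4$ uniformly and makes the CGSS machinery unnecessary. At $q=4$ the inequality can be tight, which matches the hexagonal neighbourhoods of the Shrikhande graph.

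Your ``two parallel classes'' step also needs justification, and the quickest route reuses Mantel. Take the $q$-cliques as the vertices of a graph $\Gr{K}$, with each vertex of $\Gr{H}$ joining the two cliques through it. Then $\Gr{K}$ is $q$-regular on $2q$ vertices and $\Gr{H}=\ell(\Gr{K})$. A triangle in $\Gr{K}$ would produce an edge of $\Gr{H}$ with $q-1$ common neighbours, contradicting $\lambda=q-2$. So $\Gr{K}$ is triangle-free with $q^2$ edges, hence $\Gr{K}\cong\CoBG{q}{q}$ and $\Gr{H}\cong\mathrm{L}_2(q)$.
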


The following result provides an interesting connection between $\A$ and $\Q$-cospectralities of graphs.
\begin{theorem} \cite[Proposition~7.8.5]{CvetkovicRS2010}
If two graphs are $\Q$-cospectral, then their line graphs are $\A$-cospectral.
\end{theorem}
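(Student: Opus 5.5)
The plan is to relate the adjacency matrix of the line graph to the signless Laplacian through the (unoriented) incidence matrix. Let $\Gr{G}$ be a graph with $n$ vertices and $m$ edges, and let ${\mathbf{B}} = {\mathbf{B}}(\Gr{G}) \in \Reals^{n \times m}$ be its incidence matrix (Definition~\ref{definition: incidence matrix}). By Item~\ref{Item 1: signless Laplacian matrix of a graph} of Theorem~\ref{theorem: On the signless Laplacian matrix of a graph}, $\Q(\Gr{G}) = {\mathbf{B}}{\mathbf{B}}^{\mathrm{T}}$.

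The first step is to compute the other Gram matrix ${\mathbf{B}}^{\mathrm{T}}{\mathbf{B}} \in \Reals^{m \times m}$. Its $(i,j)$ entry is the number of vertices incident to both edges $e_i$ and $e_j$. For $i=j$ this count is~$2$. For $i \neq j$ it is~$1$ if the two edges share an endpoint and~$0$ otherwise, since the graph is simple. Hence
\begin{align}
{\mathbf{B}}^{\mathrm{T}}{\mathbf{B}} = 2\I{m} + \A\bigl(\ell(\Gr{G})\bigr).
\end{align}

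The second step uses the standard fact that ${\mathbf{B}}{\mathbf{B}}^{\mathrm{T}}$ and ${\mathbf{B}}^{\mathrm{T}}{\mathbf{B}}$ have the same nonzero eigenvalues with the same multiplicities. This can be justified by Sylvester's determinant identity, or by noting that ${\mathbf{B}}$ maps eigenvectors of ${\mathbf{B}}^{\mathrm{T}}{\mathbf{B}}$ for a nonzero eigenvalue injectively to eigenvectors of ${\mathbf{B}}{\mathbf{B}}^{\mathrm{T}}$, and conversely. It follows that
\begin{align}
x^{n}\det\bigl(x\I{m} - {\mathbf{B}}^{\mathrm{T}}{\mathbf{B}}\bigr) = x^{m}\det\bigl(x\I{n} - \Q(\Gr{G})\bigr).
\end{align}
Thus the spectrum of $\A(\ell(\Gr{G}))$ is obtained from the $\Q$-spectrum by shifting every eigenvalue by $-2$ and then adjusting the multiplicity of the eigenvalue $-2$ according to $m$ and $n$.

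The final step applies this to two graphs $\Gr{G}$ and $\Gr{H}$ that are $\Q$-cospectral. Since $\Q$ has size $n \times n$, they have the same order $n$. By Item~\ref{Item 4: signless Laplacian matrix of a graph} of Theorem~\ref{theorem: On the signless Laplacian matrix of a graph}, they also have the same size $m = \tfrac12 \Tr(\Q)$. The identity above then shows that the characteristic polynomials of $\A(\ell(\Gr{G}))$ and $\A(\ell(\Gr{H}))$ coincide, so $\ell(\Gr{G})$ and $\ell(\Gr{H})$ are $\A$-cospectral.

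The main point requiring care is this multiplicity bookkeeping for the eigenvalue $-2$, i.e., for the zero eigenvalue of ${\mathbf{B}}^{\mathrm{T}}{\mathbf{B}}$. It is settled precisely because both $n$ and $m$ are determined by the $\Q$-spectrum.
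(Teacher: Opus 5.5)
Your proof is correct and complete, including the bookkeeping for the eigenvalue $-2$: the identity $x^{n}\det\bigl(x\I{m}-\mathbf{B}^{\mathrm{T}}\mathbf{B}\bigr)=x^{m}\det\bigl(x\I{n}-\Q(\Gr{G})\bigr)$, together with $n$ and $m=\tfrac12\Tr(\Q)$ being read off the $\Q$-spectrum, pins down the characteristic polynomial of $\A(\ell(\Gr{G}))$. The paper gives no proof of its own and only cites \cite[Proposition~7.8.5]{CvetkovicRS2010}. Your argument is the standard one behind that citation: use $\Q(\Gr{G})=\mathbf{B}\mathbf{B}^{\mathrm{T}}$ and $\mathbf{B}^{\mathrm{T}}\mathbf{B}=2\I{m}+\A(\ell(\Gr{G}))$, and note that the two Gram matrices share their nonzero spectrum.
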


\subsection{Nice graphs}
\label{subsection: Nice graphs}

The family referred to as ``nice graphs'' has been recently introduced in \cite{KovalK2024}.
\begin{definition}
\label{definition: nice graphs}
\noindent
\begin{itemize}
\item A graph $\Gr{G}$ is \emph{sunlike} if it is connected, and can be obtained from a cycle
$\mathrm{C}$ by adding some vertices and connecting each of them to some vertex in $\mathrm{C}$.
\item Let $l,k\in \naturals$. A sunlike graph $\Gr{G}$ is \emph{$(l,k)$-nice} if it can be obtained by a cycle $\CG{\ell}$ and
\begin{itemize}
\item There is a single vertex $v_1 \in \V{\CG{\ell}}$ of degree $3$.
\item There are $k$ vertices $u_1, \ldots, u_k \in \V{\CG{\ell}}$ of degree $4$. Let $\set{U} = \{u_1, \ldots, u_k\}$.
\item By starting a walk on $\CG{\ell}$ from $v_1$ at some orientation, then after 4 or 6 steps we get to a vertex $u_1 \in \set{U}$.
Then, after another $4$ or $6$ steps from $u_1$ we get to $u_2 \in \set{U}$, and so on until we get to the vertex $u_k \in \set{U}$.
\end{itemize}
\end{itemize}
\end{definition}

\begin{theorem} \cite{KovalK2024}
\label{theorem: Koval and Kwan, 2024}
Let $l,k \in \naturals$ such that $l \equiv 2 \, (\text{mod } 4)$. Let $\Gr{G}$ be an $(l,k)$-nice graph.
If the order of $\Gr{G}$ is a prime number greater than some $n_0 \in \naturals$, then the line graph
$\ell(\Gr{G})$ is DS.
\end{theorem}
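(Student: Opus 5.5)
The plan follows the route of \cite{KovalK2024}: first pass from the adjacency spectrum of $\ell(\Gr{G})$ to the signless Laplacian spectrum of $\Gr{G}$, and then prove that $\Gr{G}$ is determined by that spectrum among the relevant graphs.

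\textbf{Step 1: the spectrum of $\ell(\Gr{G})$.} Write $n$ for the order of $\Gr{G}$. A sunlike graph is connected and unicyclic, so it has $n$ edges. Since $l\equiv 2 \pmod 4$, the cycle $\CG{l}$ is even and $\Gr{G}$ is bipartite.

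Let $\mathbf{B}$ be the incidence matrix of $\Gr{G}$. Then $\A(\ell(\Gr{G}))=\mathbf{B}^{\mathrm T}\mathbf{B}-2\I{n}$ and $\Q(\Gr{G})=\mathbf{B}\mathbf{B}^{\mathrm T}$. Because $\mathbf{B}^{\mathrm T}\mathbf{B}$ and $\mathbf{B}\mathbf{B}^{\mathrm T}$ share their nonzero eigenvalues, $\sigma_{\A}(\ell(\Gr{G}))$ is exactly $\sigma_{\Q}(\Gr{G})-2$. Moreover $-2$ occurs with multiplicity one, coming from the single bipartite component (Theorem~\ref{theorem: On the signless Laplacian matrix of a graph}).

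\textbf{Step 2: structure of a cospectral mate.} Let $\Gr{H}$ be a graph cospectral with $\ell(\Gr{G})$. Its least eigenvalue is $-2$, so by the classical theory of graphs with spectrum bounded below by $-2$ (\cite[Section~4.5]{BeinekeB21}) $\Gr{H}$ falls into one of two classes:
\begin{itemize}
\item a generalized line graph, or
\item an exceptional graph represented in the root system $E_8$.
\end{itemize}
Exceptional graphs have boundedly many vertices (at most $36$), so they are excluded once $n>n_0$.

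For a generalized line graph $\mathrm{L}(\Gr{G}';a_1,\dots)$, the multiplicity of $-2$ and the count of edges and triangles (Corollary~\ref{corollary: number of edges and triangles in a graph}) should force the petals to vanish, so $\Gr{H}=\ell(\Gr{G}')$ with $\Gr{G}'$ having $n$ edges. The vertex count of $\Gr{G}'$ is read off from the rank of $\mathbf{B}'^{\mathrm T}\mathbf{B}'$, and Step~1 run in reverse gives $\sigma_{\Q}(\Gr{G}')=\sigma_{\Q}(\Gr{G})$ up to zeros. Using again the multiplicity one of the eigenvalue $-2$, I expect to show that $\Gr{G}'$ has exactly one bipartite component and is unicyclic, so that the problem reduces to showing that $\Gr{G}$ is $\Q$-DS within unicyclic graphs. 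Whitney's theorem then lifts $\Gr{G}'\cong\Gr{G}$ to $\Gr{H}\cong\ell(\Gr{G})$; the only exceptions are $\CoG{3}$ versus $\CoBG{1}{3}$, irrelevant for large $n$.

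\textbf{Step 3: $\Q$-spectral determination of nice graphs (main obstacle).} This is where primality of $n$ and the spacings of $4$ or $6$ along the cycle enter. I would work with the walk matrix $[\mathbf{1},\Q\mathbf{1},\dots,\Q^{n-1}\mathbf{1}]$ in the style of Wang's DGS criteria \cite{Wang13,Wang17}. The goal is to show that any orthogonal matrix $\mathbf{U}$ with $\mathbf{U}^{\mathrm T}\Q(\Gr{G})\mathbf{U}=\Q(\Gr{G}')$ and $\mathbf{U}\mathbf{1}=\mathbf{1}$ is forced to be a permutation matrix. The intended argument is that the determinant of the walk matrix has a controlled prime factorization; the pendant-vertex pattern along $\CG{l}$ is what should make it essentially $2^{\lfloor n/2\rfloor}$ times a factor coprime to the obstructions, with primality of $n$ ruling out the remaining level-$p$ obstructions.

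Before this algebra I would use cheap invariants to pin down the combinatorics of $\Gr{G}'$: the degree sequence via $\mathrm{tr}\,\Q^k$ for small $k$, then the numbers of short closed walks, which detect the distances between the vertices of degree $3$ and $4$ on the cycle. Closing the gap between these combinatorial constraints and full isomorphism is the step I expect to be hardest. If the walk-matrix approach stalls, I would fall back on a direct analysis of $\Q$-cospectral unicyclic graphs with prescribed degree sequence, attaching pendant vertices and comparing characteristic polynomials via Schur complements (Theorem~\ref{theorem: Schur complement}).
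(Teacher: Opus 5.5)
The thesis does not prove this statement; it only quotes it from \cite{KovalK2024}, so your proposal has to stand on its own. It does not. Step~1 is correct, but Step~3, which carries the whole content of the theorem, is a plan rather than an argument, and the plan cannot work. A Wang-type criterion needs the walk matrix $[\mathbf{1},\Q\mathbf{1},\ldots,\Q^{n-1}\mathbf{1}]$ to be nonsingular. It also needs an orthogonal $\mathbf{U}$ with $\mathbf{U}\mathbf{1}=\mathbf{1}$, which $\Q$-cospectrality alone does not supply. For $k\ge 1$ the two pendant vertices $w,w'$ at $u_1$ are twins. Swapping them is an automorphism that fixes $\mathbf{1}$ and commutes with $\Q$, so rows $w$ and $w'$ of the walk matrix coincide and its determinant is $0$. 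There is therefore no ``controlled prime factorization'' to exploit. Worse, for $k\ge 2$ your target claim fails even with $\Gr{G}'=\Gr{G}$. The vectors $\mathbf{e}_{w_1}-\mathbf{e}_{w_1'}$ and $\mathbf{e}_{w_2}-\mathbf{e}_{w_2'}$ (twin leaves at $u_1,u_2$) are both $\Q$-eigenvectors for the eigenvalue $1$. A rotation in their span commutes with $\Q(\Gr{G})$, fixes $\mathbf{1}$, and is not a permutation matrix. Your fallback via short closed walks cannot close the gap either. Such walks only count bounded-radius configurations, so they cannot separate $\Gr{G}$ from other nice graphs with the same $n,l,k$ and the same local gap statistics, yet the theorem asserts that none of those is cospectral with $\ell(\Gr{G})$. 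The $4$-or-$6$ spacing must enter through a global spectral argument, and you give none.

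Step~2 also has gaps exactly where you wrote ``should'' and ``I expect''. The $\A$-spectrum does not force $\Gr{H}$ to be connected, so small exceptional \emph{components} are not excluded by $n>n_0$ alone. Simplicity of the eigenvalue $-2$ also does not force the base graph to be connected, unicyclic, or petal-free. The same count is met, for example, by a connected non-bipartite bicyclic base graph on $n-1$ vertices, by trees plus one even-unicyclic and some odd-unicyclic components, or by a tree carrying two petals.

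This is where primality of $n$ and $l\equiv 2 \pmod 4$ can do their work, not in a walk-matrix determinant. The quantity $P(\Gr{H})=\prod_{\lambda\ne-2}(\lambda+2)$, taken over the spectrum, is a cospectral invariant that is multiplicative over components. For $\ell(\Gr{G})$ it equals the product of the nonzero eigenvalues of $\Q(\Gr{G})$, which is $n\,\tau(\Gr{G})=nl$, because $\Q$ and $\LM$ are similar for bipartite graphs. Since $n$ is odd, $nl\equiv 2 \pmod 4$. The possible component contributions are as follows.
\begin{itemize}
\item A component whose base is non-bipartite or carries a petal contributes a multiple of $4$, by the determinant formula for $\Q$ and its signed analogue.
\item A tree base contributes its order.
\item An even-unicyclic base contributes its order times its cycle length.
\item An exceptional component contributes a bounded amount.
\end{itemize}
The prime $n$ must divide a single factor. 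Using also $n>n_0$ and the simplicity of $-2$, this forces $\Gr{H}=\ell(\Gr{U})$, with $\Gr{U}$ connected, bipartite and unicyclic on $n$ vertices, with cycle length $l$, and $\Q$-cospectral with $\Gr{G}$. What remains after this reduction is to show that such a $\Gr{U}$ is isomorphic to $\Gr{G}$, and that is precisely what your proposal does not prove.
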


A more general class of graphs is introduced in \cite{KovalK2024}, where it is shown that for every sufficiently
large $n \in \naturals$, the number of nonisomorphic $n$-vertex DS graphs is at least $e^{cn}$ for some positive
constant $c$ (see \cite[Theorem~1.4]{KovalK2024}). This recent result represents a significant advancement in the
study of Haemers' conjecture because the earlier lower bounds on the number of nonisomorphic $n$-vertex DS graphs
were all of the form of $e^{c \sqrt{n}}$, for some positive constant $c$. As noted in \cite{KovalK2024}, the first
form of such a lower bound was derived by van Dam and Haemers \cite[Proposition~6]{vanDamH09}, who proved that a
graph $\Gr{G}$ is DS if every connected component of $\Gr{G}$ is a complete subgraph, leading to a lower bound that
is approximately of the form $e^{c \sqrt{n}}$ with $c = \sqrt{\tfrac{2}{3}} \, \pi$.
Therefore, the transition to a lower bound in \cite{KovalK2024} that scales exponentially with $n$, rather than with
$\sqrt{n}$, is both remarkable and noteworthy.

\subsection{Friendship graphs and their generalization}
\label{subsection: The friendship graphs}

The next theorem considers whether friendship graphs (see Definition~\ref{definition: friendship graph})
can be uniquely determined by the spectra of four of their associated matrices.

\begin{theorem}
\label{theorem: friendship graphs are X-DS}
The friendship graph $\FG{p}$ satisfies the following properties:
It is DS if and only if $p \ne 16$ (i.e., the friendship graph is DS unless
it has 16~triangles) \cite{CioabaHVW2015}, $\LM$-DS \cite{LinSM2010},
$\Q$-DS \cite{WangBHB2010}, and ${\bf{\mathcal{L}}}$-DS \cite{BermanCCLZ2018}.
\end{theorem}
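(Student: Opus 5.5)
The theorem bundles four separate claims about $\FG{p}$, one for each of $\A$, $\LM$, $\Q$ and ${\bf{\mathcal{L}}}$. Each has its own proof in the literature, so the plan is to treat them one at a time with a common template. First compute the relevant spectrum of $\FG{p}=\CoG{1}\vee(p\,\CoG{2})$ in closed form. Then read off from it the combinatorial invariants available for that matrix. Finally, show that these invariants force any cospectral mate to be $\FG{p}$.

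For the spectra, use the equitable partition \{center\} $\cup$ \{rest\}. Eigenvectors supported on a single triangle pair give the eigenvalues not in the quotient. For $\A$, vectors $e_u-e_v$ on a pair give $-1$ with multiplicity $p$, and alternating-sign combinations of pairs give $1$ with multiplicity $p-1$. The quotient matrix $\begin{pmatrix}0&2p\\1&1\end{pmatrix}$ gives $\tfrac12(1\pm\sqrt{1+8p})$. The computation is analogous for $\LM$, $\Q$ and ${\bf{\mathcal{L}}}$. This is exactly the Schur-complement calculation of Theorem~\ref{theorem: Schur complement}.

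For the $\A$-case, a cospectral mate $\Gr{G}$ has $2p+1$ vertices, $3p$ edges and $p$ triangles by Corollary~\ref{corollary: number of edges and triangles in a graph}. By Cauchy interlacing (Theorem~\ref{thm:interlacing}) it has exactly one eigenvalue greater than $1$ and no eigenvalue below $\tfrac12(1-\sqrt{1+8p})$. Interlacing then excludes forbidden induced subgraphs such as $2\PathG{3}$, $\CoG{1,3}$ with extra structure, and long paths. Combined with the counts of closed walks of length $4$ and $5$, this restricts $\Gr{G}$ to a short list of candidates. Among these, one must check which candidates realize the spectrum. For $p=16$, a cospectral mate exists, as in \cite{CioabaHVW2015}, and it should be exhibited explicitly. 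The main obstacle is the exhaustive case analysis showing that no other exceptional $p$ occurs; there I would follow the Diophantine argument of \cite{CioabaHVW2015}.

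For the Laplacian-type matrices, the extra information is the number of components (for $\LM$ and ${\bf{\mathcal{L}}}$) or the number of bipartite components (for $\Q$). With $\LM$, the largest eigenvalue $2p+1=n$ forces the complement to be disconnected, so $\Gr{G}$ is a join. Induction on the components of the complement, using the edge count and the number of spanning trees (Theorem~\ref{theorem: number of spanning trees}), then yields $\CoG{1}\vee p\CoG{2}$. For $\Q$ and ${\bf{\mathcal{L}}}$, I would cite \cite{WangBHB2010} and \cite{BermanCCLZ2018}. Their key step is recovering the degree sequence: from $\sum\nu_i^2$, and from closed walks in the line graph in the $\Q$ case. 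One shows a unique vertex of degree $2p$ and all other degrees equal to $2$, which forces $\FG{p}$.
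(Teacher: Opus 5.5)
The paper gives no proof of this theorem. It is a survey statement, and each of the four claims is taken from the cited work, so deferring to \cite{CioabaHVW2015}, \cite{WangBHB2010} and \cite{BermanCCLZ2018} is in line with the paper. Your spectrum $\sigma_{\A}(\FG{p})=\{\tfrac12(1\pm\sqrt{1+8p}),[1]^{p-1},[-1]^{p}\}$ is correct.

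Where you argue for yourself, the sketches do not hold up. In the $\A$-case, ``exactly one eigenvalue above $1$'' follows directly from cospectrality. Interlacing only transfers $\lambda_2=1$ and $\lambda_{n-1}=-1$ to induced subgraphs, which gives a hereditary family of forbidden subgraphs. Together with a few closed-walk counts, this does not cut $\Gr{G}$ down to a short list.

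The missing idea is that all but two eigenvalues equal $\pm1$, so $\A^2-\Identity=(\A-\Identity)(\A+\Identity)$ has rank $2$. For $p\ge2$ it is also positive semidefinite, since both exceptional eigenvalues exceed $1$ in absolute value. Its diagonal entries are $d_i-1$ and its off-diagonal entries are common-neighbor counts, so it is a Gram matrix of vectors in $\Reals^2$. I believe this rank-two structure underlies the classification in \cite{CioabaHVW2015}, which then reduces the friendship case to a Diophantine condition isolating $p=16$. Without it, your sketch proves neither direction of the equivalence. The ``if'' part needs that classification, and the ``only if'' part needs an explicit cospectral mate of $\FG{16}$, which you only promise.

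For $\mathcal{L}$, ``recovering the degree sequence'' cannot be the mechanism, because the $\mathcal{L}$-spectrum does not even fix the number of edges ($\CG{4}$ and $\CoBG{1}{3}$ are $\mathcal{L}$-cospectral). What you can use is that $\sigma_{\mathcal{L}}(\FG{p})=\{0,[\tfrac12]^{p-1},[\tfrac32]^{p+1}\}$ has three distinct eigenvalues. For a mate, which must be connected since $0$ is simple, $(\mathcal{L}-\tfrac12\Identity)(\mathcal{L}-\tfrac32\Identity)=(\D^{-1/2}\A\D^{-1/2})^2-\tfrac14\Identity$ is $\tfrac34$ times the projection onto $\D^{1/2}\mathbf{1}$. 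This forces every two vertices to have a common neighbor. For $\Q$, $\mathrm{tr}(\Q)$ and $\mathrm{tr}(\Q^2)$ do not fix the degree sequence: for $p=2$, the sequences $(4,2,2,2,2)$ and $(3,3,3,2,1)$ share $\sum d_i$ and $\sum d_i^2$. Higher traces mix degrees with subgraph counts, so extra input is needed. Your final step, that the degree sequence $(2p,2,\dots,2)$ forces $\FG{p}$, is fine.

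The $\LM$-case closes directly, with no induction and no spanning trees:
\begin{itemize}
\item $\sigma_{\LM}(\FG{p})=\{0,[1]^{p-1},[3]^{p},2p+1\}$, and for $p\ge2$ the eigenvalue $n=2p+1$ is simple.
\item Its multiplicity is one less than the number of components of $\CGr{G}$, so $\CGr{G}$ has two components and $\Gr{G}=\Gr{G}_1\vee\Gr{G}_2$.
\item Apart from $0$ and $n$, the Laplacian eigenvalues of the join are $\mu_i(\Gr{G}_1)+\card{\V{\Gr{G}_2}}$ and $\mu_j(\Gr{G}_2)+\card{\V{\Gr{G}_1}}$ for $i,j\ge2$.
\item If both sides had at least two vertices, all of these would be at least $2$, leaving no eigenvalue $1$. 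Hence one side is $\CoG{1}$.
\item The other side then has $\LM$-spectrum $\{[0]^p,[2]^p\}$, i.e.\ $p$ components and $p$ edges on $2p$ vertices, so it is a forest.
\item Its Laplacian spectral radius satisfies $2\ge\Delta+1$, so $\Delta\le1$ and it is $p\CoG{2}$.
\end{itemize}
The remaining case $\FG{1}=\CoG{3}$ is trivial.
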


The friendship graph $\FG{p}$, where $p \in \naturals$, can be expressed in the
form $\FG{p}=\CoG{1} \vee (p\CoG{2})$ (see Figure~\ref{fig:friendship graph F4}).
The last observation follows from a property of a generalized friendship graph, which is defined as follows.
\begin{definition}
Let $p,q\in \naturals$. The \emph{generalized friendship graph} is given by
$\GFG{p}{q} = \CoG{1} \vee (p \CoG{q})$. Note that $\GFG{p}{2} = \FG{p}$.
\end{definition}

The following theorem addresses the conditions under which generalized friendship graphs can be uniquely
determined by the spectra of their normalized Laplacian matrix.
\begin{theorem}
\label{theorem: Berman's generalized friendship graph}
The generalized friendship graph $\GFG{p}{q}$ is ${\bf{\mathcal{L}}}$-DS if and only if $q \ge 2$, or $q=1$
and $p=2$ \cite{BermanCCLZ2018}.
\end{theorem}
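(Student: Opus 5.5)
We first compute the ${\bf{\mathcal{L}}}$-spectrum of $\GFG{p}{q}=\CoG{1}\vee(p\CoG{q})$ from explicit eigenvectors, and then treat $q=1$ and $q\geq 2$ separately.

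\textbf{Step 1: the spectrum.} Here $n=pq+1$. The center has degree $pq$ and every other vertex has degree $q$. Let $x$ be supported on one clique with zero sum. Then $\LM x=qx+x$, so ${\bf{\mathcal{L}}}x=\tfrac{q+1}{q}x$; this gives multiplicity $p(q-1)$. Let $x$ vanish at the center, be constant on each clique, and have zero total sum. Then $\LM x=x$, so ${\bf{\mathcal{L}}}x=\tfrac1q x$; this gives multiplicity $p-1$. The graph is connected with no isolated vertices, so $0$ is simple. The last eigenvalue follows from $\mathrm{Tr}({\bf{\mathcal{L}}})=n$ (Theorem~\ref{theorem: On the normalized Laplacian matrix of a graph}). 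For $q\geq 2$ it equals $\tfrac{q+1}{q}$, as the triangle $\GFG{1}{2}$ confirms. For $q=1$ it equals $2$, giving $\{0,[1]^{p-1},2\}$.

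\textbf{Step 2: the case $q=1$.} Then $\GFG{p}{1}=\SG{p+1}=\CoBG{1}{p}$. By Theorem~\ref{theorem: On the normalized Laplacian matrix of a graph}, any graph $\Gr{G}$ that is ${\bf{\mathcal{L}}}$-cospectral with it is connected, bipartite and has no isolated vertices. Every $\CoBG{a}{b}$ with $a+b=p+1$ has normalized Laplacian $\I{n}-\D^{-1/2}\A\D^{-1/2}$, where $\D^{-1/2}\A\D^{-1/2}$ has rank~$2$ and eigenvalues $\pm1$. Hence $\CoBG{a}{b}$ has the same spectrum $\{0,[1]^{p-1},2\}$. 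For $p\geq 3$ we may take $\{a,b\}=\{2,p-1\}\neq\{1,p\}$, so $\SG{p+1}$ is not ${\bf{\mathcal{L}}}$-DS; the instance $p=3$ is exactly $\CG{4}$ versus $\SG{4}$ (Remark~\ref{remark: DS regular graphs are not necessarily DS w.r.t. normalized Laplacian}). For $p=2$, the cospectral graph is connected on $3$ vertices and bipartite, hence is $\PathG{3}$. This settles the $q=1$ part of the characterization.

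\textbf{Step 3: the case $q\geq2$.} Let $\Gr{G}$ be ${\bf{\mathcal{L}}}$-cospectral with $\GFG{p}{q}$. By Theorem~\ref{theorem: On the normalized Laplacian matrix of a graph}, $\Gr{G}$ is connected on $n$ vertices, has no isolated vertices, and has no bipartite component. The key algebraic move is to pass to $\mathbf{M}=\I{n}-{\bf{\mathcal{L}}}=\D^{-1/2}\A\D^{-1/2}$, whose spectrum is $\{1,[1-\tfrac1q]^{p-1},[-\tfrac1q]^{p(q)}\}$ (with $p(q-1)+1$ copies of $-\tfrac1q$ once the last eigenvalue is included). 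Therefore
\begin{align}
\D^{-1/2}\bigl(\A+\tfrac1q\D\bigr)\D^{-1/2}=\mathbf{M}+\tfrac1q\I{n}
\end{align}
is positive semidefinite of rank exactly $p$. It follows that $\A+\tfrac1q\D\succeq 0$ has rank $p$, so it is a Gram matrix of vectors $w_v$ with $\|w_v\|^2=d(v)/q$ and $\inner{w_u}{w_v}=A_{u,v}$.

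From this Gram structure I would extract the combinatorics by testing small induced configurations. All principal submatrices must be PSD, and the rank-$p$ constraint forces linear dependencies. The goal is to show that the vertices of degree $q$ group into $p$ cliques $\CoG{q}$. Next, I would show that there is a vertex adjacent to all others, using the eigenvector of eigenvalue $1$, namely $\D^{1/2}\mathbf{1}$, together with the multiplicity $p-1$ of $1-\tfrac1q$.

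\textbf{Main obstacle.} The main obstacle is this structural step: ruling out other degree sequences consistent with the rank and PSD constraints. The first thing I would try is an interlacing argument on the Gram vectors, for example on induced $\PathG{3}$'s, combined with counting the multiplicity $p-1$ of $1-\tfrac1q$ in terms of the clique blocks.
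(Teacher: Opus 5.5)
The thesis gives no proof of this statement; it only cites \cite{BermanCCLZ2018}. Your argument therefore has to stand on its own.

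Steps~1 and~2 are correct. Your eigenvectors give $\sigma_{\mathcal{L}}(\GFG{p}{q})=\{0,[\tfrac1q]^{p-1},[\tfrac{q+1}{q}]^{p(q-1)+1}\}$. For $q=1$, every $\CoBG{a}{b}$ with $a+b=p+1$ is an $\mathcal{L}$-cospectral mate of $\GFG{p}{1}=\CoBG{1}{p}$. There is one omission. For $(p,q)=(1,1)$ the graph is $\CoG{2}$, which \emph{is} $\mathcal{L}$-DS, so the ``only if'' direction needs the standing assumption $p\ge 2$. Step~2 does not settle all of $q=1$ as you claim.

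The genuine gap is Step~3, which is the entire content of the theorem for $q\ge 2$. You correctly reduce to $\A+\tfrac1q\D\succeq 0$ of rank $p$, i.e., to $\tfrac{q+1}{q}$ having multiplicity $n-p$. After that you only describe what you would try. The tools you name for your first sub-goal are PSD and rank tests on principal submatrices, and these only see that one multiplicity, which does not determine the graph. Take the chain of triangles $\{a,b,x\},\{x,c,y\},\{y,d,e\}$ on $7=3\cdot 2+1$ vertices:
\begin{itemize}
\item it is connected and non-bipartite;
\item $2\A+\D=2\mathbf{B}\mathbf{B}^{\mathrm{T}}$, where $\mathbf{B}$ is its $7\times 3$ vertex--triangle incidence matrix, so $2\A+\D$ is PSD of rank $3$;
\item yet its degree-$2$ vertices do not form $3\CoG{2}$, and its $\mathcal{L}$-spectrum is $\{0,\tfrac14,\tfrac34,[\tfrac32]^4\}$.
\end{itemize}

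So the multiplicity $p-1$ of $\tfrac1q$ must enter essentially. In your Gram language, given the rank-$p$ representation, it is equivalent to the frame identity $\sum_v w_vw_v^{\mathrm{T}}/d(v)=\I{p}+WW^{\mathrm{T}}/\bigl(2m(q+1)\bigr)$, where $W=\sum_v w_v$ and $m=\card{\E{\Gr{G}}}$.

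A clean endgame is available once one more fact is proved:
\begin{itemize}
\item The Gram structure alone gives $\inner{w_v}{W}=\tfrac{q+1}{q}\,d(v)$ and $\|W\|^2=\tfrac{q+1}{q}\,2m$. Hence $d(v)\le\tfrac{2m}{q+1}$, with equality iff $w_v\parallel W$.
\item If some $z$ attains equality, then $\inner{w_v}{w_z}>0$ for all $v\ne z$, so $z$ is adjacent to every other vertex.
\item All other degrees then equal $q$.
\item Two adjacent degree-$q$ vertices have equal unit Gram vectors, so they are closed twins. This yields $\Gr{G}-z=p\CoG{q}$.
\end{itemize}
What is missing is a proof, using the frame identity, that such a $z$ exists. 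In the triangle chain it does not: $\max d(v)=4<6=\tfrac{2m}{q+1}$.
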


\begin{corollary}
\label{corollary: friendship graph is DS w.r.t. normalized Laplacian}
The friendship graph $\FG{p}$ is ${\bf{\mathcal{L}}}$-DS \cite{BermanCCLZ2018}.
\end{corollary}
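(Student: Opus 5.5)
This is the special case $q=2$ of Theorem~\ref{theorem: Berman's generalized friendship graph}, since $\GFG{p}{2}=\CoG{1}\vee(p\CoG{2})=\FG{p}$. The plan is therefore to invoke that theorem with $q=2$. The condition ``$q\ge 2$, or $q=1$ and $p=2$'' is then satisfied for every $p\in\naturals$, so $\FG{p}$ is ${\bf{\mathcal{L}}}$-DS. The only verification needed is the identification $\FG{p}=\GFG{p}{2}$ from Definition~\ref{definition: friendship graph}. There, the central vertex is adjacent to all $2p$ other vertices, and each of those vertices has exactly one neighbour besides the centre. Hence the $2p$ vertices induce a perfect matching $p\CoG{2}$, and $\FG{p}$ is the join of $\CoG{1}$ with it.

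If a self-contained argument were wanted instead of citing \cite{BermanCCLZ2018}, I would proceed as follows.

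\textbf{Step 1: the spectrum.} Compute $\sigma_{\mathcal{L}}(\FG{p})$ through an equitable partition. Vectors that are antisymmetric on a triangle's two outer vertices give the eigenvalue $1+\tfrac12=\tfrac32$ with multiplicity $p$. Vectors that are constant on each outer pair but sum to zero across the triangles give the eigenvalue $\tfrac12$ with multiplicity $p-1$. The two remaining eigenvalues come from a $2\times 2$ quotient matrix and are $0$ and $\tfrac32$. Altogether,
\begin{align}
\sigma_{\mathcal{L}}(\FG{p})=\bigl\{0,\,[\tfrac12]^{p-1},\,[\tfrac32]^{p+1}\bigr\}.
\end{align}

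\textbf{Step 2: coarse structure of a cospectral mate.} Let $\Gr{H}$ be $\mathcal{L}$-cospectral with $\FG{p}$. By Theorem~\ref{theorem: On the normalized Laplacian matrix of a graph}, $\Gr{H}$ is connected, non-bipartite, and has no isolated vertices, since the eigenvalue sum equals the order $2p+1$.

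\textbf{Step 3: pinning down the structure (main obstacle).} The normalized Laplacian does not count edges or closed walks, which is exactly what makes this step hard. I would try two things:
\begin{itemize}
\item use the traces $\mathrm{Tr}(\mathcal{L}^k)$ for $k=2,3$, which give $\sum_{\{u,v\}\in\E{\Gr{H}}}\frac{1}{d(u)d(v)}$ together with a weighted triangle count;
\item use the fact that $\Identity-\mathcal{L}$ has only three distinct eigenvalues and rank-type constraints, namely that $\tfrac32$ has multiplicity $p+1$.
\end{itemize}
The aim is to force a vertex adjacent to all the others, after which the remaining vertices are forced to induce $p\CoG{2}$.

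Since Theorem~\ref{theorem: Berman's generalized friendship graph} already settles all of this, I would present the proof as the one-line specialization, possibly with the Step~1 spectrum added as a sanity check.
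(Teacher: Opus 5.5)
Your proof is correct and matches the paper: the corollary is exactly the $q=2$ case of Theorem~\ref{theorem: Berman's generalized friendship graph}, via the identification $\FG{p}=\GFG{p}{2}$ that the paper notes. Your Step~1 spectrum $\{0,[\tfrac12]^{p-1},[\tfrac32]^{p+1}\}$ is also correct, though the self-contained Step~3 is only a plan and is not needed for the corollary.
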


\subsection{Strongly regular graphs}
\label{subsection: strongly regular graphs}

Strongly regular graphs with an identical vector of parameters $(n,d,\lambda,\mu)$ are cospectral, but may not be isomorphic (see,
Corollary~\ref{corollary: cospectral SRGs}, Remark~\ref{remark: NICS SRGs}, and Theorem~\ref{theorem: DS Lattice graphs}).
For that reason, strongly regular graphs are not necessarily determined by their spectrum. There are, however, infinite
families of strongly regular DS graphs:
\begin{theorem} \cite[Proposition~14.5.1]{BrouwerH2011}
\label{theorem: DS SRG}
If $n \neq 8$, $m \neq 4$, $a \geq 2$, and $\ell \geq 2$, then the disjoint union of identical complete graphs $a \CoG{\ell}$,
the line graph of a complete graph $\ell(\CoG{n})$, and the line graph of a complete bipartite graph with partite sets of equal
size $\ell(\CoBG{m}{m})$, as well as their complements, are strongly regular DS graphs.
\end{theorem}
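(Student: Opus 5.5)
The plan is to treat the three families separately. In each case the first step is to check that the spectrum already forces a cospectral mate to be regular and strongly regular with the same parameters. Then a structural characterization of that parameter set finishes the argument. Complements are handled at the end via Item~\ref{item 5: DS graphs} of Theorem~\ref{theorem: special classes of DS graphs}: the complement of a regular DS graph is DS.

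\emph{Step 1 (reduction to strong regularity).} Let $\Gr{G}$ be one of the listed graphs and let $\Gr{H}$ be cospectral with it. Since $\Gr{G}$ is regular, $\sum_i \lambda_i^2 = n\lambda_1$ holds for $\Gr{G}$, and hence for $\Gr{H}$. By Theorem~\ref{theorem: graph regularity from A-spectrum}, $\Gr{H}$ is regular of the same degree $d=\lambda_1$.

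For $\ell(\CoG{n})$ and $\ell(\CoBG{m}{m})$, the largest eigenvalue $d$ has multiplicity~$1$. Since the multiplicity of $d$ equals the number of components of a $d$-regular graph, $\Gr{H}$ is connected. It also has exactly three distinct eigenvalues, so by Item~\ref{Item 5: eigenvalues of srg} of Theorem~\ref{theorem: eigenvalues of srg} it is strongly regular. Its parameters are recovered from $d$ and the two other eigenvalues $r,s$ through $\lambda-\mu=r+s$ and $d-\mu=-rs$. Hence $\Gr{H}$ has parameters $\srg{\tfrac12 n(n-1)}{2(n-2)}{n-2}{4}$ or $\srg{m^2}{2(m-1)}{m-2}{2}$, respectively.

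For $a\CoG{\ell}$, the spectrum is $\{[\ell-1]^{a},[-1]^{a(\ell-1)}\}$. Then $\Gr{H}$ is $(\ell-1)$-regular, and it has $a$ components because the multiplicity of the degree counts them. The minimal eigenvalue $-1$ yields $\A+\I{}\succeq 0$ with rank $a$. A cleaner route is to note that each component is connected, $(\ell-1)$-regular, and has only the eigenvalues $\ell-1$ and $-1$. A connected graph with two distinct eigenvalues is complete, since its diameter is at most $1$. Each component is therefore $\CoG{\ell}$, and $\Gr{H}\cong a\CoG{\ell}$.

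\emph{Step 2 (structural characterization).} For the lattice graphs, the uniqueness of $\srg{m^2}{2(m-1)}{m-2}{2}$ for $m\neq 4$ is exactly Shrikhande's theorem, quoted here as Theorem~\ref{theorem: DS Lattice graphs}. For triangular graphs, I would invoke the analogous classical characterization of Chang and Hoffman: an $\srg{\tfrac12 n(n-1)}{2(n-2)}{n-2}{4}$ is $\ell(\CoG{n})$ unless $n=8$, where the Chang graphs also occur. This is the content of Item~\ref{Item 1: DS line graphs} of Theorem~\ref{theorem: DS line graphs}, which I would simply cite. If a self-contained argument were wanted, the idea is to use the smallest eigenvalue $-2$ to show that $\Gr{H}$ is a line graph, or one of the finitely many exceptional graphs, via the $-2$ root-system theory. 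Then one identifies the root graph from $n$ and $d$; the exceptions can only occur for small $n$, namely $n=8$.

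\emph{Step 3 (complements).} All the graphs above are regular and DS, so their complements are DS by Item~\ref{item 5: DS graphs} of Theorem~\ref{theorem: special classes of DS graphs}. The complements are strongly regular because complements of strongly regular graphs are strongly regular. The hypotheses $a,\ell\ge 2$ ensure that neither the graph nor its complement is complete or empty.

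The main obstacle is Step 2 for the triangular graphs, namely the Chang--Hoffman characterization. If one does not rely on citation, the delicate point is the exceptional case analysis for graphs with least eigenvalue $-2$. I would cite it rather than reprove it.
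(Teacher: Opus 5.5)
Your proposal is correct and follows essentially the same route as the cited result. The paper itself only cites Brouwer--Haemers, but its Theorem~\ref{theorem: on A-spectrum and SRGs} and Corollary~\ref{corollary: SRG DS graphs} amount to your Step~1: a graph cospectral with a connected strongly regular graph is strongly regular with the same parameters. After that, DS reduces to the uniqueness theorems of Shrikhande and Chang--Hoffman, and $a\CoG{\ell}$ and the complements are handled as you do. One thing to state explicitly is the implicit range $n \geq 4$ and $m \geq 2$, which is needed for $\ell(\CoG{n})$ and $\ell(\CoBG{m}{m})$ to be strongly regular with three distinct eigenvalues (e.g., $\ell(\CoG{3}) = \CoG{3}$).
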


We next show that, although connected strongly regular graphs are not generally DS, the property of strong regularity,
as well as the four parameters that characterize strongly regular graphs can be determined by the spectrum of their adjacency
matrix.

\begin{theorem}
\label{theorem: on A-spectrum and SRGs}
Let $\Gr{G}$ be a connected strongly regular graph. Then, its strong regularity, the vector of parameters $(n,d,\lambda,\mu)$,
Lov\'{a}sz $\vartheta$-function $\vartheta(\Gr{G})$, number of edges and triangles, girth, and diameter can be all determined
by its $\A$-spectrum.
\end{theorem}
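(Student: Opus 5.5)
The plan is to read off, from the multiset $\sigma_{\A}(\Gr{G})$ alone, first regularity, then connectedness and strong regularity, and then all the listed invariants as functions of the eigenvalues. The order $n$ is the number of eigenvalues. By Theorem~\ref{theorem: graph regularity from A-spectrum}, the equality $\sum_i \lambda_i^2 = n\lambda_1$ certifies that $\Gr{G}$ is regular, with degree $d=\lambda_1$.

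For a regular graph, connectedness is equivalent to $\lambda_1$ being a simple eigenvalue (Perron--Frobenius: the multiplicity of $d$ equals the number of components). By Item~\ref{Item 5: eigenvalues of srg} of Theorem~\ref{theorem: eigenvalues of srg}, a connected regular graph is strongly regular if and only if it has exactly three distinct eigenvalues with the largest one simple. Hence the spectrum certifies that any cospectral graph is a connected strongly regular graph, and strong regularity is spectrally determined.

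\textbf{Parameters.} We have $n$ and $d$. Let the two remaining distinct eigenvalues be $r>s$. By \eqref{eigs-SRG}, they are the roots of $x^2-(\lambda-\mu)x-(d-\mu)=0$, so $r+s=\lambda-\mu$ and $rs=\mu-d$. This gives $\mu = d+rs$ and $\lambda = \mu + r + s$, so $(n,d,\lambda,\mu)$ is a function of the spectrum. As a consistency check, the multiplicities agree with \eqref{eig-multiplicities-SRG}.

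\textbf{Remaining invariants.} Edges and triangles follow directly from Corollary~\ref{corollary: number of edges and triangles in a graph}: $e=\tfrac12\sum\lambda_i^2=nd/2$ and $t=\tfrac16\sum\lambda_i^3$, which also equals $nd\lambda/6$.

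\emph{Diameter.} A connected graph with three distinct eigenvalues has diameter at most $2$. It has diameter $1$ only if complete, and that is excluded by definition. So the diameter is $2$.

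\emph{Girth.} It is determined by $(\lambda,\mu)$:
\begin{itemize}
\item if $\lambda>0$, the girth is $3$;
\item if $\lambda=0$ and $\mu\ge 2$, the girth is $4$, since two nonadjacent vertices with two common neighbours span a $4$-cycle;
\item if $\lambda=0$ and $\mu=1$, there are no $3$- or $4$-cycles and the diameter is $2$, so the girth is $5$. A $5$-cycle exists because taking two adjacent vertices and going around forces one. This uses $d\ge2$, which holds since the graph is connected and not complete.
\end{itemize}
Note that $\mu\ge1$ always holds in the connected, non-complete case.

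\emph{Lov\'{a}sz function.} The step needing an outside input is $\vartheta(\Gr{G})$. I would invoke the closed form for strongly regular graphs from \cite[Proposition~1]{Sason23} (noted in Remark~\ref{remark: relations to Igal's paper 2023}), namely
\[
\vartheta(\Gr{G}) = \frac{n(\mu-s)}{d+\mu-s} = -\frac{ns}{d-s},
\]
the latter being Lov\'{a}sz's bound. It is attained for strongly regular graphs because the Bose--Mesner algebra allows symmetrization of an optimal SDP solution. Either way, it depends only on $n,d,s$.

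The main obstacle is making this last step airtight using only results stated so far. If the cited formula is taken as given, the rest is routine. Otherwise one must argue via the SDP \eqref{eq: SDP problem - Lovasz theta-function}: averaging over the association scheme reduces a feasible $\mathbf{B}$ to $x\I{n}+y\A+z\overline{\A}$, and the dual $\A$-based bound then matches it.
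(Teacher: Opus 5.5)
Your argument follows the paper's proof essentially step by step. You read off the order and regularity, get strong regularity from the three distinct eigenvalues, recover $\mu = d + rs$ and $\lambda = \mu + r + s$ (the paper's \eqref{eq1:23.09.23}--\eqref{eq2:23.09.23}), get edges and triangles from traces, the diameter as $2$, the girth from $(\lambda,\mu)$, and $\vartheta$ from \cite[Proposition~1]{Sason23}. Two of your additions are small improvements. You check explicitly that a cospectral mate is connected, via the simple largest eigenvalue of a regular graph, which the paper leaves implicit. You also justify the girth cases directly, where the paper cites \cite{CameronL2010}.

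One correction: the first expression in your display for $\vartheta(\Gr{G})$ is false. Cross-multiplying gives $n(\mu-s)(d-s) + ns(d+\mu-s) = n\mu d \neq 0$. Hence $n(\mu-s)/(d+\mu-s) \neq -ns/(d-s)$ for every connected strongly regular graph. For example, the Petersen graph ($n=10$, $d=3$, $\mu=1$, $s=-2$) gives $5$ from the first expression, whereas $\vartheta = 4 = -ns/(d-s)$. Drop that expression and keep only $\vartheta(\Gr{G}) = -ns/(d-s)$. This is the formula the paper uses, and it is the equality case of the upper bound in Theorem~\ref{thm:bounds on the Lovasz function for regular graphs}. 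With that change your conclusion stands.
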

\begin{proof}
The order of a graph $n$ is determined by the $\A$-spectrum, being the number of eigenvalues (including multiplicities).
By Theorem~\ref{theorem: graph regularity from A-spectrum}, the regularity of a graph is determined by its $\A$-spectrum.
By Item~\ref{Item 5: eigenvalues of srg} of Theorem~\ref{theorem: eigenvalues of srg}, a connected regular graph is
strongly regular if and only if it has three distinct eigenvalues. Hence, the strong regularity property of $\Gr{G}$
is determined by its $\A$-spectrum. For such a connected regular, the largest eigenvalue is simple, $\lambda_1=d$, and
the other two distinct eigenvalues of the adjacency matrix of $\Gr{G}$ are given by $\lambda_2$ and $\lambda_n$ with
$\lambda_n<\lambda_2$.
We next show that the number of common neighbors of any pair of adjacent vertices ($\lambda$), and the number of common
neighbors of any pair of nonadjacent vertices ($\mu$) in $\Gr{G}$ are, respectively, given by
\begin{align}
\label{eq1:23.09.23}
& \lambda = \lambda_1 + (1+\lambda_2)(1+\lambda_n) - 1, \\
\label{eq2:23.09.23}
& \mu = \lambda_1 + \lambda_2 \lambda_n,
\end{align}
so, these parameters are explicitly expressed in terms of the adjacency spectrum of the strongly regular graph. Indeed, by
Theorem~\ref{theorem: eigenvalues of srg}, the second-largest and least eigenvalues of the adjacency matrix of $\Gr{G}$ are given by
\begin{align}
\label{eq1:21.01.25}
\begin{dcases}
\lambda_2 = \tfrac12 \Bigl( \lambda -  \mu + \sqrt{(\lambda-\mu)^2 + 4(d-\mu)} \Bigr), \\[0.1cm]
\lambda_n = \tfrac12 \Bigl( \lambda -  \mu - \sqrt{(\lambda-\mu)^2 + 4(d-\mu)} \Bigr),
\end{dcases}
\end{align}
from which it follows that (noting that $d = \lambda_1$)
\begin{align}
\label{eq2:21.01.25}
\begin{dcases}
\lambda_2 + \lambda_n = \lambda - \mu, \\
\lambda_2 \lambda_n = \mu - \lambda_1.
\end{dcases}
\end{align}
This gives \eqref{eq1:23.09.23} and \eqref{eq2:23.09.23} from, respectively, the second equality in \eqref{eq2:21.01.25}
and by adding the two equalities in \eqref{eq2:21.01.25}.

The Lov\'{a}sz $\vartheta$-function of a strongly regular graph is given by (see \cite[Proposition~1]{Sason23})
\begin{align}
\label{eq: Lovasz SRG}
\vartheta(\Gr{G}) = -\frac{n \lambda_n}{d - \lambda_n},
\end{align}
so $\vartheta(\Gr{G})$ is determined by its $\A$-spectrum since the strong regularity property of $\Gr{G}$ was first determined.
The number of edges of $\Gr{G}$ of a $d$-regular graph is given by $\tfrac12 nd$, and the number of triangles of the strongly regular
graph $\Gr{G}$ is given by $\tfrac16 nd \lambda$, so they are both determined once the four parameters of the strongly regular graphs are
revealed. The diameter of a connected strongly regular graph is equal to~2 (note that complete graphs are excluded from the family of
strongly regular graphs). Finally, the girth of the strongly regular graph $\Gr{G}$ is determined as follows \cite{CameronL2010}:
\begin{enumerate}
\item If $\lambda > 0$, then the girth of $\Gr{G}$ is equal to~3;
\item If $\lambda=0$ and $\mu \geq 2$, then the girth of $\Gr{G}$ is equal to~4;
\item If $\lambda=0$ and $\mu=1$, then the girth of $\Gr{G}$ is equal to~5.
\end{enumerate}
\end{proof}

\begin{remark}
\label{remark: connected SRG}
A strongly regular graph is connected if and only if $\mu > 0$.
\end{remark}

By \cite[Proposition~2]{vanDamH03}, no pair of $\A$-cospectral graphs exists where one graph is regular, and
the other is not. The following result extends this observation to strong regularity.
\begin{corollary}
\label{corollary: cospectral graphs - one is SRG}
There are no two $\A$-cospectral connected graphs where one is strongly regular and the other is not.
\end{corollary}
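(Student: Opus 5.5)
Let $\Gr{G}$ be a connected strongly regular graph and let $\Gr{H}$ be a connected graph that is $\A$-cospectral with $\Gr{G}$. The plan is to show that every spectral feature used in the proof of Theorem~\ref{theorem: on A-spectrum and SRGs} to recognize strong regularity is read off from the multiset of eigenvalues alone. Then connectivity of $\Gr{H}$ is the only extra input needed.

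First, both graphs have the same order $n$, the same largest eigenvalue $\lambda_1$, and the same sum of squares $\sum_i \lambda_i^2$. Since $\Gr{G}$ is $d$-regular, $\sum_i \lambda_i^2 = n\lambda_1$. By Theorem~\ref{theorem: graph regularity from A-spectrum}, which uses only the spectrum, $\Gr{H}$ is regular, with degree $\lambda_1 = d$.

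Next, since $\Gr{G}$ is connected, strongly regular, and (by definition) not complete, Item~\ref{Item 5: eigenvalues of srg} of Theorem~\ref{theorem: eigenvalues of srg} says its spectrum has exactly three distinct eigenvalues, with $\lambda_1$ simple. Indeed $\lambda_1 = d$ has multiplicity one by connectivity and Perron--Frobenius, and Item~\ref{Item 2: eigenvalues of srg} identifies the other two. The spectrum of $\Gr{H}$ is the same multiset, so $\Gr{H}$ is a connected regular graph with exactly three distinct eigenvalues and a simple largest eigenvalue. Applying the converse direction of Item~\ref{Item 5: eigenvalues of srg} (equivalently, Item~\ref{Item 3: eigenvalues of srg}), $\Gr{H}$ is strongly regular. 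This contradicts the assumption that one graph is strongly regular and the other is not.

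The only delicate point is to make sure the hypotheses of Item~\ref{Item 5: eigenvalues of srg} hold for $\Gr{H}$. Connectivity of $\Gr{H}$ is assumed in the statement, so that is automatic. We also need $\Gr{H}$ to be neither complete nor empty. This follows because a complete or empty graph has at most two distinct eigenvalues, whereas $\Gr{H}$ has three. As a by-product, \eqref{eq1:23.09.23}--\eqref{eq2:23.09.23} show that $\Gr{H}$ even has the same parameters $(n,d,\lambda,\mu)$ as $\Gr{G}$.
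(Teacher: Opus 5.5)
Your proof is correct and follows the paper's route. The paper's one-line proof appeals to Theorem~\ref{theorem: on A-spectrum and SRGs}, whose proof first recovers regularity from the spectrum via Theorem~\ref{theorem: graph regularity from A-spectrum} and then strong regularity via Item~\ref{Item 5: eigenvalues of srg} of Theorem~\ref{theorem: eigenvalues of srg}, which are exactly the two steps you spell out (your check that $\Gr{H}$ is neither complete nor empty only makes this explicit, and connectivity of $\Gr{H}$ would in fact follow from its regularity together with the simple largest eigenvalue).
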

\begin{proof}
For a connected strongly regular graph, the strong regularity is determined by the $\A$-spectrum.
\end{proof}

Another corollary that follows from Theorem~\ref{theorem: on A-spectrum and SRGs} applies to strongly regular DS graphs.
\begin{corollary}
\label{corollary: SRG DS graphs}
Let $\Gr{G}$ be a connected strongly regular graph such that there is no other nonisomorphic strongly regular graph
with an identical vector of parameters $(n, d, \lambda, \mu)$. Then, $\Gr{G}$ is a DS graph.
\end{corollary}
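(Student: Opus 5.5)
The plan is to start from an arbitrary graph $\Gr{H}$ that is $\A$-cospectral with $\Gr{G}$ and show $\Gr{H} \cong \Gr{G}$. The argument hinges on one fact: for a connected strongly regular graph, both the strong regularity and the parameter vector $(n,d,\lambda,\mu)$ can be read off the $\A$-spectrum. This is exactly Theorem~\ref{theorem: on A-spectrum and SRGs}. The only real work is to confirm that $\Gr{H}$ is itself a connected strongly regular graph, so that this theorem applies to $\Gr{H}$ and not merely to $\Gr{G}$.

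First, $\Gr{H}$ has $n$ vertices, since it has the same number of eigenvalues as $\Gr{G}$. It also satisfies $\sum_i \lambda_i^2 = n\lambda_1$, so Theorem~\ref{theorem: graph regularity from A-spectrum} shows that $\Gr{H}$ is regular, of degree $\lambda_1 = d$.

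Next I establish connectivity. For a $d$-regular graph, the multiplicity of the eigenvalue $d$ equals the number of connected components. Since $\Gr{G}$ is connected, $d$ is a simple eigenvalue of $\Gr{G}$, by Item~\ref{Item 2: eigenvalues of srg} of Theorem~\ref{theorem: eigenvalues of srg}. The same simple eigenvalue occurs in $\Gr{H}$, so $\Gr{H}$ is connected.

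Then I show strong regularity. The spectrum of $\Gr{H}$ has three distinct eigenvalues, and the largest one is simple. Item~\ref{Item 5: eigenvalues of srg} of Theorem~\ref{theorem: eigenvalues of srg} therefore makes $\Gr{H}$ a connected strongly regular graph.

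With this in hand, I recover the parameters of $\Gr{H}$. Its parameters $(n',d',\lambda',\mu')$ are given by \eqref{eq1:23.09.23}--\eqref{eq2:23.09.23} applied to its own eigenvalues $\lambda_1, \lambda_2, \lambda_n$. Those eigenvalues coincide with the ones of $\Gr{G}$, so $(n',d',\lambda',\mu') = (n,d,\lambda,\mu)$. By hypothesis, every strongly regular graph with these parameters is isomorphic to $\Gr{G}$, and hence $\Gr{H} \cong \Gr{G}$.

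The step that needs the most care is the connectivity and strong-regularity step. The point is that the hypothesis in the statement only restricts strongly regular graphs, so $\Gr{H}$ must be placed inside that class using spectral information alone, in line with Remark~\ref{remark: recurring approach}. To do this I will lean on the fact that the multiplicity of the degree eigenvalue counts the components of a regular graph, combined with Item~\ref{Item 5: eigenvalues of srg} of Theorem~\ref{theorem: eigenvalues of srg}. An alternative is to cite Corollary~\ref{corollary: cospectral graphs - one is SRG} directly, but that corollary presupposes that $\Gr{H}$ is connected, so connectivity must be established first in either case.
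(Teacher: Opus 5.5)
Your proposal is correct and follows essentially the same route as the paper, which obtains this corollary directly from Theorem~\ref{theorem: on A-spectrum and SRGs}: strong regularity and the parameters $(n,d,\lambda,\mu)$ of a connected strongly regular graph are read off its $\A$-spectrum via \eqref{eq1:23.09.23}--\eqref{eq2:23.09.23}, and the uniqueness hypothesis then finishes the argument. Your explicit step showing that any cospectral mate is connected (for a regular graph, the multiplicity of the degree eigenvalue equals the number of components) fills in a point that the paper leaves implicit.
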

Corollary~\ref{corollary: SRG DS graphs} naturally raises the following question.
\begin{question}
\label{question: DS SRG}
Which connected strongly regular graphs are determined by their vector of parameters $(n, d, \lambda, \mu)$?
\end{question}
A partial answer to Question~\ref{question: DS SRG} is provided below.

By Corollary~\ref{corollary: SRG DS graphs}, for connected strongly regular graphs, there exists an equivalence between
their spectral determination (due to their regularity and in light of Theorem~\ref{theorem: regular DS graphs}, based on
the spectrum of their adjacency, Laplacian, or signless Laplacian matrices) and the uniqueness of these graphs for the given
parameter vector $(n, d, \lambda, \mu)$.

The study of the number of nonisomorphic strongly regular graphs corresponding to a given set of parameters has been extensively
explored. For example, by Theorem~\ref{theorem: DS Lattice graphs}, there is a unique (up to isomorphism) strongly regular graph
of the form $\srg{q^2}{2(q-1)}{q-2}{2}$ for any given $q \geq 2$ with $q \neq 4$. Specifically, this implies the uniqueness of
$\srg{36}{10}{4}{2}$ (setting $q=6$). On the other hand, a computer search by McKay and Spence established that there are
$32,548$ strongly regular graphs of the form $\srg{36}{15}{6}{6}$, so none of them is DS (by Corollary~\ref{corollary: SRG DS graphs}).

Further results on the uniqueness or non-uniqueness of strongly regular graphs with a given parameter vector $(n, d, \lambda, \mu)$
can be found in \cite{Cameron2003} and the references therein. Infinite families of strongly regular DS graphs are presented in
Theorem~\ref{theorem: DS SRG}. Some known sporadic strongly regular DS graphs are listed in \cite[Table~2]{vanDamH03}, with an
update in \cite[Table~1]{vanDamH09}). The uniqueness of further strongly regular graphs with given parameter vectors, making them
therefore DS graphs, was established, e.g., in \cite{Brouwer83,BrouwerH92,StevanovicM2009,Coolsaet2006,Mesner56}.

A combination of \cite[Table~1]{vanDamH09} and Corollary~\ref{corollary: Petersen graph is DS} implies that, apart from
complete graphs on fewer than three vertices and all complete bipartite regular graphs (which are known to be DS, as stated in
Item~\ref{item 2: DS graphs} of Theorem~\ref{theorem: special classes of DS graphs}), also all the seven currently known triangle-free
strongly regular graphs (see \cite{StruikB2010}) are DS. These include:
\begin{itemize}
\item The Pentagon graph $\CG{5}$ that is $\srg{5}{2}{0}{1}$ (by Item~\ref{item 2: DS graphs} of Theorem~\ref{theorem: special classes of DS graphs},
and see \cite[Section~10.1]{BrouwerM22}),
\item The Petersen graph $\srg{10}{3}{0}{1}$ (by Corollary~\ref{corollary: Petersen graph is DS}, and see \cite[Section~10.3]{BrouwerM22}),
\item Clebsch graph $\srg{16}{5}{0}{2}$ (see \cite[Section~10.7]{BrouwerM22}),
\item Hoffman-Singleton srg $\srg{50}{7}{0}{1}$  (see \cite[Section~10.19]{BrouwerM22}),
\item Gewirtz graph $\srg{56}{10}{0}{2}$ (see \cite[Section~10.20]{BrouwerM22}),
\item Mesner ($\mathrm{M}_{22}$) graph $\srg{77}{16}{0}{4}$ (see \cite[Section~10.27]{BrouwerM22} and \cite{Brouwer83}),
\item Higman-Sims graph $\srg{100}{22}{0}{6}$ (see \cite[Section~10.31]{BrouwerM22} and \cite{Mesner56}).
\end{itemize}

An up-to-date list of strongly regular DS graphs --- strongly regular graphs that are uniquely determined by their parameter
vectors --- as well as the number of strongly regular NICS graphs for given parameter vectors, is available on Brouwer's website
\cite{Brouwer}. An exclamation mark placed to the left of a parameter vector $(n,d,\lambda,\mu)$, without a preceding number, indicates
a strongly regular DS graph. In contrast, an exclamation mark preceded by a natural number greater than~1 specifies the number
of strongly regular NICS graphs with the corresponding parameter vector. For example, as shown in \cite{Brouwer}, strongly regular
graphs with the parameter vectors $(13,6,2,3)$, $(15,6,1,3)$, $(17,8,3,4)$, and $(21,10,3,6)$, among others, are DS graphs. On
the other hand, according to \cite{Brouwer}, there are 15 strongly regular NICS graphs with the parameter vector $(25,12,5,6)$,
10 strongly regular NICS graphs with the parameter vector $(26,10,3,4)$, and so forth.

To conclude, as strongly regular NICS graphs are not DS, $\LM$-DS, or $\Q$-DS, we were recently informed of ongoing research by
Cioaba \textit{et al.} \cite{CioabaGJM25}, which investigates the spectral properties of higher-order Laplacian matrices associated
with these graphs. This research demonstrates that the spectra of these new matrices can distinguish some of the strongly regular
NICS graphs. However, in other cases, strongly regular NICS graphs remain indistinguishable even with the spectra of these
higher-order Laplacian matrices.

\section{Graph operations for the construction of cospectral graphs}
\label{section: graph operations}

This section presents such graph operations, focusing on unitary and binary transformations that enable the systematic
construction of cospectral graphs. These operations are designed to preserve the spectral properties of the original
graph while potentially altering its structure, thereby producing non-isomorphic graphs with identical eigenvalues.
By employing these techniques, one can generate diverse examples of cospectral graphs, offering valuable tools for
investigating the limitations of spectral characterization and exploring the boundaries between graphs that are or
are not determined by their spectrum, which then relates the scope of the present section to Section~\ref{section: special families of graphs}
that deals with graphs or graph families that are determined by their spectrum.

\subsection{Coalescence}
\label{subsection: Coalescence}

\noindent

A construction of cospectral trees has been offered in \cite{Schwenk1973}, implying that almost all trees are not DS.

\begin{definition}
\label{definition: Coalescence}
Let $\Gr{G}_1 = (V_1,E_1), \Gr{G}_2=(V_2,E_2)$ be two graphs with $n_1$, $n_2$ vertices, respectively.
Let $v_1 \in V_1$ and $v_2 \in V_2$ be an arbitrary choice of vertices in both graphs.
The \emph{coalescence of $\Gr{G}_1$ and $\Gr{G}_2$ with respect to $v_1$ and $v_2$} is the graph with
$n_1 + n_2 -1$ vertices, obtained by the union of $\Gr{G}_1$ and $\Gr{G}_2$ where $v_1$ and $v_2$ are
identified as the same vertex in the united graph.
\end{definition}

\begin{theorem}
\label{theorem: Coalescence}
Let $\Gr{G}_1 = (V_1,E_1), \Gr{G}_2=(V_2,E_2)$ be two cospectral graphs, and let $v_1 \in V_1$ and $v_2 \in V_2$
be an arbitrary choice of vertices in both graphs.
Let $\Gr{H}_1$ and $\Gr{H}_2$ be the subgraphs of $\Gr{G}_1$ and $\Gr{G}_2$ that are induced by $V_1 \setminus \{v_1\}$
and $V_2 \setminus \{v_2\}$, respectively.
Let $\Gamma$ be a graph and $u\in V(\Gamma)$. If $\Gr{H}_1$ and $\Gr{H}_2$ are cospectral, then the coalescence of
$\Gr{G}_1$ and $\Gamma$ with respect to $v_1$ and $u$ is cospectral to the coalescence of $\Gr{G}_2$ and $\Gamma$
with respect to $v_2$ and $u$.
\end{theorem}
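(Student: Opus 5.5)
The plan is to show that the characteristic polynomial of a coalescence depends only on the characteristic polynomials of the two pieces and of the two vertex-deleted subgraphs. Let $\Gr{G}$ and $\Gamma$ be graphs, $v\in\V{\Gr{G}}$, $u\in\V{\Gamma}$, and let $\Gr{G}\cdot\Gamma$ denote their coalescence with respect to $v$ and $u$. Write $\phi(\Gr{X};x)=\det(x\I{}-\A(\Gr{X}))$. The target identity is the classical formula
\begin{align}
\phi(\Gr{G}\cdot\Gamma;x)=\phi(\Gr{G};x)\,\phi(\Gamma-u;x)+\phi(\Gr{G}-v;x)\,\phi(\Gamma;x)-x\,\phi(\Gr{G}-v;x)\,\phi(\Gamma-u;x).
\end{align}
Once this is established, the theorem follows at once. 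By hypothesis, $\phi(\Gr{G}_1)=\phi(\Gr{G}_2)$ and $\phi(\Gr{H}_1)=\phi(\Gr{H}_2)$, where $\Gr{H}_i=\Gr{G}_i-v_i$. The same $\Gamma$ and $u$ are used on both sides, so the right-hand sides coincide. Hence the two coalescences have equal characteristic polynomials, i.e., they are cospectral.

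To prove the identity, order the vertices of $\Gr{G}\cdot\Gamma$ as: first the vertices of $\Gr{G}-v$, then the common vertex $w$, then the vertices of $\Gamma-u$. In this ordering, $x\I{}-\A(\Gr{G}\cdot\Gamma)$ is a $3\times3$ block matrix with zero corner blocks, since there are no edges between $\Gr{G}-v$ and $\Gamma-u$. Expand the determinant by linearity in the row of $w$, splitting it as $x=(x)+0$ on the diagonal entry and separating the off-diagonal parts. I would argue as follows:
\begin{itemize}
\item The part of the row of $w$ with entries toward $\Gr{G}-v$ and diagonal $x$, with the entries toward $\Gamma-u$ set to zero, gives $\phi(\Gr{G};x)\,\phi(\Gamma-u;x)$. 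After deleting the zeroed entries, the matrix is block lower-triangular with diagonal blocks $x\I{}-\A(\Gr{G})$ and $x\I{}-\A(\Gamma-u)$.
\item The part with entries toward $\Gamma-u$ and diagonal $0$ yields $\phi(\Gamma;x)\,\phi(\Gr{G}-v;x)-x\,\phi(\Gamma-u;x)\,\phi(\Gr{G}-v;x)$. This is because $\phi(\Gamma)$ with its $(u,u)$ entry replaced by $0$ equals $\phi(\Gamma)-x\,\phi(\Gamma-u)$, again by linearity in that row.
\end{itemize}

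An alternative, cleaner route is a Schur complement (Theorem~\ref{theorem: Schur complement}) with respect to the invertible block $x\I{}-\A(\Gr{G}-v)\oplus x\I{}-\A(\Gamma-u)$, for $x$ not an eigenvalue. This reduces the determinant to a scalar $x-b^{\mathrm{T}}(\cdot)^{-1}b-c^{\mathrm{T}}(\cdot)^{-1}c$. Each quadratic term can be identified via the same Schur formula applied to $\Gr{G}$ and to $\Gamma$ separately, namely $\phi(\Gr{G})=\phi(\Gr{G}-v)\,\bigl(x-b^{\mathrm{T}}(x\I{}-\A(\Gr{G}-v))^{-1}b\bigr)$. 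Substituting these gives the identity for all but finitely many $x$, hence as polynomials.

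The main obstacle is bookkeeping in this expansion, specifically correctly accounting for the diagonal entry $x$ at the shared vertex, which must not be counted twice. The Schur-complement route handles it transparently, since the scalar is $x-(x-\phi(\Gr{G})/\phi(\Gr{G}-v))-(x-\phi(\Gamma)/\phi(\Gamma-u))$. Multiplying through by $\phi(\Gr{G}-v)\,\phi(\Gamma-u)$ gives exactly the displayed formula, so I would present that version.
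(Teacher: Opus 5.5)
Your proof is correct. The identity
\[
\phi(\Gr{G}\cdot\Gamma)=\phi(\Gr{G})\,\phi(\Gamma-u)+\phi(\Gr{G}-v)\,\phi(\Gamma)-x\,\phi(\Gr{G}-v)\,\phi(\Gamma-u)
\]
is established by both of your routes. For example, for $\Gr{G}=\Gamma=\CoG{2}$ it returns $x^3-2x=\phi(\PathG{3})$. Its right-hand side depends on $\Gr{G}_i$ only through $\phi(\Gr{G}_i)$ and $\phi(\Gr{H}_i)$, with $\Gamma,u$ common to both sides, so the theorem follows immediately.

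The paper gives no proof of this statement; it is quoted from Schwenk~\cite{Schwenk1973}. So there is no in-paper argument to compare against. What you give is the standard characteristic-polynomial (Schwenk formula) proof. Your Schur-complement version uses the same tool, Theorem~\ref{theorem: Schur complement}, that the paper itself employs in Chapter~\ref{chap:graphs_of_pyramids} to compute the spectra of $\GOP_{n,k}$.

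Two small housekeeping points:
\begin{itemize}
\item To apply Theorem~\ref{theorem: Schur complement} exactly as stated, with the invertible block in the lower-right corner, first move $w$ to the first position. This is a simultaneous row/column permutation, so it leaves the determinant unchanged.
\item Adopt the convention that the characteristic polynomial of the graph with no vertices equals $1$. This covers the degenerate cases $\Gr{G}_i=\CoG{1}$ or $\Gamma=\CoG{1}$, where one of the vertex-deleted blocks is empty.
\end{itemize}
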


Combinatorial arguments that rely on the coalescence operation on graphs lead to a striking asymptotic result in
\cite{Schwenk1973}, stating that the fraction of $n$-vertex trees with cospectral and nonisomorphic mates, which
are also trees, approaches one as $n$ tends to infinity. Consequently, the fraction of the $n$-vertex nonisomorphic
trees that are determined by their spectrum (DS) approaches zero as $n$ tends to infinity. In other words, this means
that almost all trees are not DS (with respect to their adjacency matrix) \cite{Schwenk1973}.

\subsection{Seidel switching}
\label{subsection: Seidel switching}

\noindent

Seidel switching is one of the well-known methods for the construction of cospectral graphs.

\begin{definition}
\label{definition: Seidel switching}
Let $\Gr{G}$ be a graph, and let $\set{U} \subseteq \V{\Gr{G}}$. Constructing a graph $\Gr{G}_{\set{U}}$ by preserving
all the edges in $\Gr{G}$ between vertices within $\set{U}$, as well as all edges in $\Gr{G}$ between vertices within
the complement set $\cset{U} = \V{\Gr{G}} \setminus \set{U}$, while modifying adjacency and nonadjacency between any
two vertices where one is in $\set{U}$ and the other is in $\cset{U}$, is referred to (up to isomorphism) as
\emph{Seidel switching} of $\Gr{G}$ with respect to $\set{U}$.
\end{definition}
By Definition~\ref{definition: Seidel switching}, the Seidel switching of $\Gr{G}$ with respect to $\set{U}$ is equivalent
to its Seidel switching with respect to $\cset{U}$.
Let $\A(\Gr{G})$ and $\A(\Gr{G}_{\set{U}})$ be the adjacency matrices of a graph $\Gr{G}$ and its Seidel switching
$\Gr{G}_{\set{U}}$, and let $\A_{\set{U}}$ and $\A_{\cset{U}}$ be the submatrices of $\A(\Gr{G})$ that, respectively,
refer to the adjacency matrices of the subgraphs of $\Gr{G}$ induced by $\set{U}$ and $\cset{U}$. Then, for some
$\mathbf{B} \in \{0,1\}^{\card{\cset{U}} \times \card{\set{U}}}$, we get
\begin{align}
\A(\Gr{G})=
\begin{pmatrix}
\A_{\set{U}} & \mathbf{B}^{\mathrm{T}}\\
\mathbf{B} & \A_{\cset{U}}
\end{pmatrix},
\end{align}
and by Definition~\ref{definition: Seidel switching},
\begin{align}
\A(\Gr{G}_{\set{U}})=
\begin{pmatrix}
\A_{\set{U}} & \overline{\mathbf{B}}^{\mathrm{T}}\\
\overline{\mathbf{B}} & \A_{\cset{U}}
\end{pmatrix},
\end{align}
where $\overline{\mathbf{B}}$ is obtained from $\mathbf{B}$ by interchanging zeros and ones. If $\Gr{G}$
is a regular graph, the following necessary and sufficient condition for $\Gr{G}_{\set{U}}$ to be a regular
graph of the same degree of its vertices.
\begin{theorem} \cite[Proposition~1.1.7]{CvetkovicRS2010}
\label{theorem: d-regular graphs in Seidel switching}
Let $\Gr{G}$ be a $d$-regular graph on $n$ vertices. Then, $\Gr{G}_{\set{U}}$ is also $d$-regular if and
only if $\set{U}$ induces a regular subgraph of degree $k$, where $\card{\set{U}} = n - 2(d-k)$.
\end{theorem}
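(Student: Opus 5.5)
The plan is a direct degree count. Regularity of $\Gr{G}_{\set{U}}$ is decided vertex by vertex, so I will express every new degree through the old local data. Write $u := \card{\set{U}}$. For a vertex $x \in \set{U}$, let $k_x$ be its number of neighbours inside $\set{U}$. Then $x$ has $d-k_x$ neighbours in $\cset{U}$ in $\Gr{G}$. By Definition~\ref{definition: Seidel switching}, in $\Gr{G}_{\set{U}}$ the vertex $x$ keeps its $k_x$ inner neighbours and becomes adjacent to the other $(n-u)-(d-k_x)$ vertices of $\cset{U}$. So
\begin{align*}
\deg_{\Gr{G}_{\set{U}}}(x) = 2k_x + n - u - d .
\end{align*}
Similarly, for $w \in \cset{U}$ with $j_w$ neighbours in $\cset{U}$, we get $\deg_{\Gr{G}_{\set{U}}}(w) = 2j_w + u - d$.

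For the ``only if'' direction, assume $\Gr{G}_{\set{U}}$ is $d$-regular. The first formula gives $2k_x = 2d - n + u$ for every $x \in \set{U}$. Hence $k_x$ is the same constant $k$ for all $x \in \set{U}$, so $\set{U}$ induces a $k$-regular subgraph. Rearranging gives exactly $u = n - 2(d-k)$.

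For the ``if'' direction, assume $\set{U}$ induces a $k$-regular subgraph and $u = n-2(d-k)$. Reading the first formula backwards, every $x \in \set{U}$ has degree $2k + 2(d-k) - d = d$ in $\Gr{G}_{\set{U}}$. It remains to handle $w \in \cset{U}$. By the second formula, these have degree $d$ if and only if every such $w$ has exactly $d - j_w = u/2$ neighbours in $\set{U}$.

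To attack this, I will double count the edges between $\set{U}$ and $\cset{U}$. Counted from the $\set{U}$ side there are $u(d-k)$ of them. The hypothesis gives $d - k = (n-u)/2$, so the total is $u\,\card{\cset{U}}/2$. Thus the \emph{average}, over $w \in \cset{U}$, of the number of neighbours in $\set{U}$ is exactly $u/2$, and the cross-degree data are consistent.

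This step is the main obstacle. Averaging alone does not force every individual $w$ to attain $u/2$, so I will look for an additional argument that pins down each vertex of $\cset{U}$. The first thing I would try is the symmetry noted right after Definition~\ref{definition: Seidel switching}: $\Gr{G}_{\set{U}} = \Gr{G}_{\cset{U}}$. Using it, I would try to rerun the $\set{U}$-side argument with the roles of $\set{U}$ and $\cset{U}$ exchanged, to show that $\cset{U}$ also induces a regular subgraph of degree $k'$ with $\card{\cset{U}} = n - 2(d-k')$. By the second formula, that is precisely the per-vertex condition on $\cset{U}$. If this cannot be extracted from the stated hypotheses, I will check small cases, such as $\Gr{G} = \CG{6}$ with $\set{U}$ inducing a perfect matching on four vertices, where it does hold. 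In that event the complete argument will need to rest on reading the hypothesis as in \cite[Proposition~1.1.7]{CvetkovicRS2010}, with the $\set{U}$-side computation above as the core of the proof.
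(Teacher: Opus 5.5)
Your degree formulas, $\deg_{\Gr{G}_{\set{U}}}(x) = 2k_x + n - u - d$ for $x \in \set{U}$ and $\deg_{\Gr{G}_{\set{U}}}(w) = 2j_w + u - d$ for $w \in \cset{U}$, are correct, and your ``only if'' direction is complete. The gap is in the ``if'' direction, where you placed it, and it cannot be closed: that direction, as stated here, is false.

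The hypothesis constrains only the subgraph induced by $\set{U}$. Your double count shows that this fixes only the \emph{average} number of neighbours in $\set{U}$ of a vertex of $\cset{U}$. The identity $\Gr{G}_{\set{U}} = \Gr{G}_{\cset{U}}$ does not help. Rerunning the $\set{U}$-side argument on $\cset{U}$ would need either the conclusion that $\Gr{G}_{\set{U}}$ is $d$-regular, or a regularity hypothesis on the subgraph induced by $\cset{U}$, and you have neither.

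Concretely, take $\Gr{G} = \CG{7}$ with vertices $0,1,\ldots,6$ in cyclic order, so $d=2$, and let $\set{U} = \{0,2,4\}$. This set is independent, so $k=0$ and $\card{\set{U}} = 3 = 7 - 2(2-0)$. Vertex $1$ has both of its neighbours in $\set{U}$, so in $\Gr{G}_{\set{U}}$ it is adjacent only to $4$ and has degree $1$. Meanwhile vertex $5$ acquires degree $3$.

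Your own formulas show why this must fail. Suppose $\emptyset \neq \set{U} \neq \V{\Gr{G}}$ and $\Gr{G}_{\set{U}}$ is $d$-regular. The first formula makes $n-u$ even and the second makes $u$ even, so $n$ is even. This matches the parity fact in the paper's remark on the limitations of Seidel switching. Your $\CG{6}$ check could not detect the problem. Whenever $\card{\cset{U}} = 2$, the two outside vertices automatically have equal cross-degrees, so both equal $u/2$.

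The missing ingredient is a condition on the other side: every vertex of $\cset{U}$ must have exactly $\card{\set{U}}/2$ neighbours in $\set{U}$. Equivalently, $\cset{U}$ must induce a regular subgraph of degree $k'$ with $\card{\cset{U}} = n - 2(d-k')$. The statement as given omits this.

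Once the condition is added, your second formula gives $\deg_{\Gr{G}_{\set{U}}}(w) = d$ for every $w \in \cset{U}$ at once. In the ``only if'' direction, the same formula delivers the condition on $\cset{U}$ as part of the conclusion. So your computation proves the corrected two-sided equivalence in full. Rather than deferring to the reference, state this extra condition explicitly; without it the claimed equivalence does not hold.
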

The next result shows the relevance of Seidel switching for the construction of regular and cospectral graphs.
\begin{theorem} \cite[Proposition~1.1.8]{CvetkovicRS2010}
\label{theorem: Seidel switching}
Let $\Gr{G}$ be a $d$-regular graph, $\set{U} \subseteq \V{\Gr{G}}$, and let $\Gr{G}_{\set{U}}$ be obtained
from $\Gr{G}$ by Seidel switching. If $\Gr{G}_{\set{U}}$ is also a $d$-regular graph, then $\Gr{G}$ and
$\Gr{G}_{\set{U}}$ are cospectral (and due to their regularity, they are $\mathcal{X}$-cospectral for
every $\mathcal{X} \in \{\A, \LM, \Q, \bf{\mathcal{L}}\}$).
\end{theorem}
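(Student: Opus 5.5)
Since $\Gr{G}_{\set{U}}$ is $d$-regular, both $\A(\Gr{G})$ and $\A(\Gr{G}_{\set{U}})$ have the all-ones vector $\mathbf{1}_n$ as an eigenvector with eigenvalue $d$. The plan is to pass through the Seidel matrix $\mathbf{S}(\Gr{G}) \triangleq \J{n,n} - \I{n} - 2\A(\Gr{G})$, which has entries $0$ on the diagonal, $-1$ for edges and $+1$ for non-edges. Let $\mathbf{D}_{\set{U}}$ be the diagonal matrix with entries $-1$ on the coordinates in $\set{U}$ and $+1$ on those in $\cset{U}$. Seidel switching with respect to $\set{U}$ flips the sign of exactly the off-diagonal entries between $\set{U}$ and $\cset{U}$, so
\begin{align}
\mathbf{S}(\Gr{G}_{\set{U}}) = \mathbf{D}_{\set{U}} \, \mathbf{S}(\Gr{G}) \, \mathbf{D}_{\set{U}}.
\end{align}
Because $\mathbf{D}_{\set{U}} = \mathbf{D}_{\set{U}}^{-1}$ is orthogonal, the two Seidel matrices are similar and hence have the same spectrum.

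Next we transfer this back to the adjacency spectra using regularity. For a $d$-regular graph, $\A$, $\J{n,n}$ and $\I{n}$ commute, and $\A$ has an orthonormal eigenbasis containing $\mathbf{1}_n/\sqrt{n}$, with the remaining eigenvectors orthogonal to $\mathbf{1}_n$.
\begin{itemize}
\item On $\mathbf{1}_n$, the Seidel matrix acts as the scalar $n-1-2d$.
\item On an eigenvector $\mathbf{x} \perp \mathbf{1}_n$ of $\A$ with eigenvalue $\lambda$, it acts as $-1-2\lambda$.
\end{itemize}
Hence $\sigma(\mathbf{S}(\Gr{G})) = \{n-1-2d\} \cup \{-1-2\lambda_i(\Gr{G}) : i \geq 2\}$, where the eigenvalue $d$ is removed once, namely the eigenvalue belonging to $\mathbf{1}_n$. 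The same description holds for $\Gr{G}_{\set{U}}$ with the same $n$ and $d$.

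The main obstacle is a coincidence: $n-1-2d$ might also equal $-1-2\lambda_i$ for some $i \geq 2$, that is, $\lambda_i = d - n/2$. In that case one cannot naively tell which Seidel eigenvalue is the "trivial" one. This is resolved by working with multisets. Both Seidel spectra are equal and both contain $n-1-2d$ as the contribution of $\mathbf{1}_n$. Removing one copy of $n-1-2d$ from each leaves equal multisets $\{-1-2\lambda_i(\Gr{G})\}_{i\geq2}$ and $\{-1-2\lambda_i(\Gr{G}_{\set{U}})\}_{i\geq 2}$. The affine map $\lambda \mapsto -1-2\lambda$ is injective, so $\{\lambda_i(\Gr{G})\}_{i \geq 2} = \{\lambda_i(\Gr{G}_{\set{U}})\}_{i\geq 2}$. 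Adding back the eigenvalue $d$ shows that $\Gr{G}$ and $\Gr{G}_{\set{U}}$ are $\A$-cospectral.

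Finally, both graphs are regular and $\A$-cospectral. By Proposition~\ref{proposition: regular graphs cospectrality}, they are therefore $\mathcal{X}$-cospectral for every $\mathcal{X}$ among $\{\A,\LM,\Q,\mathcal{L}\}$; concretely, $\LM = d\I{n}-\A$, $\Q = d\I{n}+\A$ and $\mathcal{L} = \I{n}-\tfrac1d \A$ (for $d>0$) are affine in $\A$. This completes the plan.
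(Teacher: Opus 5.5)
The paper does not prove this statement itself; it only cites \cite[Proposition~1.1.8]{CvetkovicRS2010}. Your proof is correct and is essentially the standard argument behind that reference: the Seidel matrices satisfy $\mathbf{S}(\Gr{G}_{\set{U}}) = \mathbf{D}_{\set{U}} \mathbf{S}(\Gr{G}) \mathbf{D}_{\set{U}}$, and for $d$-regular graphs the adjacency spectrum is recovered from the Seidel spectrum ($n-1-2d$ on $\mathbf{1}_n$, $-1-2\lambda$ on $\mathbf{1}_n^{\perp}$), where your multiset bookkeeping correctly handles a possible coincidence $\lambda_i = d - n/2$. The transfer to $\LM$, $\Q$, $\mathcal{L}$ follows from regularity as you say, and for $d=0$ both graphs are edgeless, so that case is trivial.
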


\begin{remark}
\label{remark: Seidel switching}
Theorem~\ref{theorem: Seidel switching} provides a method for finding cospectral regular graphs. These graphs
may be, however, also isomorphic. If the graphs are nonisomorphic, then it gives a pair of NICS graphs.
\end{remark}

\begin{remark}
\label{remark: limitation of Seidel switching}
A regular graph $\Gr{G}$ on $n$ vertices cannot be switched into another regular graph if $n$ is odd (see
\cite[Corollary~4.1.10]{CvetkovicRS2010}), which means that the conditions in Theorem~\ref{theorem: Seidel switching}
cannot be satisfied for any regular graph of an odd order.
\end{remark}

\begin{remark}
\label{remark: equivalence relation of Seidel switching}
Seidel switching determines an equivalence relation on graphs. This follows from the fact that switching
with respect to a subset $\set{U} \in \V{\Gr{G}}$, and then with respect to a subset $\set{V} \in \V{\Gr{G}}$,
is the same as switching with respect to $(\set{V} \setminus \set{U}) \cup (\set{U} \setminus \set{V})$ (see
\cite[p.~18]{CvetkovicRS2010}).
\end{remark}

\begin{example}
\label{example: NICS graphs by Seidel switching}
The Shrikhande graph can be obtained through Seidel switching applied to the line graph $\ell(\CoBG{4}{4})$ with respect to
four independent vertices of the latter (see \cite[Example~1.2.4]{CvetkovicRS2010}). Both are 6-regular graphs (hence, they
are cospectral graphs by Theorem~\ref{theorem: Seidel switching}). Moreover, the former graph is a strongly regular graph
$\srg{16}{6}{2}{2}$, whereas the line graph $\ell(\CoBG{4}{4})$ is not. Consequently, these are nonisomorphic and cospectral
(NICS) 6-regular graphs on 16 vertices.
\end{example}

\subsection{The Godsil and McKay method}
\label{subsection: Godsil and McKay method}

\noindent

Another construction of cospectral pairs of graphs was offered by Godsil and McKay in \cite{GodsilM1982}.

\begin{theorem}
\label{theorem: Godsil and McKay method}
Let $\Gr{G}$ be a graph with an adjacency matrix of the form
\begin{align}
\A(\Gr{G})= \begin{pmatrix}
{\mathbf{B}}  & {\mathbf{N}} \\
{\mathbf{N}}^{\mathrm{T}} & {\mathbf{C}}
\end{pmatrix}
\end{align}
where the sum of each column in ${\mathbf{N}} \in \{0,1\}^{b \times c}$ is either $0, b$ or $\frac{b}{2}$.
Let $\widehat{{\mathbf{N}}}$ be the matrix obtained by replacing each column $\underline{c}$ in ${\mathbf{N}}$
whose sum of elements is $\frac{b}{2}$ with its complement $\mathbf{1}_n - \underline{c}$. Then, the modified
graph $\widehat{\Gr{G}}$ whose adjacency matrix is given by
\begin{align}
\A(\widehat{\Gr{G}})
= \begin{pmatrix}
{\mathbf{B}}  & \widehat{\mathbf{N}}  \\
{\widehat{\mathbf{N}}}^{\mathrm{T}} & {\mathbf{C}}
\end{pmatrix}
\end{align}
is cospectral with $\Gr{G}$.
\end{theorem}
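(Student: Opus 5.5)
The plan is to exhibit an explicit orthogonal matrix $\mathbf{Q}$ with $\mathbf{Q}^{-1}\A(\Gr{G})\mathbf{Q}=\A(\widehat{\Gr{G}})$. Similar matrices have the same spectrum, so this gives the claim directly. We may order the columns of $\mathbf{N}$ so that the columns with sum $\tfrac{b}{2}$ come last, although the argument works column by column. We will read the statement's $\mathbf{1}_n-\underline{c}$ as $\mathbf{1}_b-\underline{c}$, since the columns of $\mathbf{N}$ have length $b$. Note also that $\mathbf{B}$ must be the adjacency matrix of a \emph{regular} induced subgraph on $b$ vertices. This is the standard Godsil--McKay hypothesis, and the proof needs it; if it is not stated, we will make it explicit. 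The intended reading is that the first $b$ vertices induce a regular subgraph.

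Set $\mathbf{R}_b=\tfrac{2}{b}\J{b,b}-\I{b}$. This matrix is symmetric and satisfies $\mathbf{R}_b^2=\tfrac{4}{b^2}\,b\,\J{b,b}-\tfrac{4}{b}\J{b,b}+\I{b}=\I{b}$, so it is orthogonal and an involution. Take $\mathbf{Q}=\diag{\mathbf{R}_b,\I{c}}$. Then
\begin{align}
\mathbf{Q}\A(\Gr{G})\mathbf{Q}=\begin{pmatrix}\mathbf{R}_b\mathbf{B}\mathbf{R}_b & \mathbf{R}_b\mathbf{N}\\ \mathbf{N}^{\mathrm{T}}\mathbf{R}_b & \mathbf{C}\end{pmatrix}.
\end{align}
The first step is to show that $\mathbf{R}_b\mathbf{B}\mathbf{R}_b=\mathbf{B}$. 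Because $\mathbf{B}$ is the adjacency matrix of a regular graph, it commutes with $\J{b,b}$, hence with $\mathbf{R}_b$. Therefore $\mathbf{R}_b\mathbf{B}\mathbf{R}_b=\mathbf{B}\mathbf{R}_b^2=\mathbf{B}$.

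The second step is to compute $\mathbf{R}_b\underline{c}=\tfrac{2}{b}(\mathbf{1}_b^{\mathrm{T}}\underline{c})\mathbf{1}_b-\underline{c}$ for each column $\underline{c}$ of $\mathbf{N}$, splitting by column sum. If the sum is $0$, then $\underline{c}=\mathbf{0}$ and the column is unchanged. If the sum is $b$, then $\underline{c}=\mathbf{1}_b$ and $\mathbf{R}_b\mathbf{1}_b=2\mathbf{1}_b-\mathbf{1}_b=\mathbf{1}_b$, so the column is again unchanged. If the sum is $\tfrac{b}{2}$, then $\mathbf{R}_b\underline{c}=\mathbf{1}_b-\underline{c}$, which is exactly the complemented column. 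Hence $\mathbf{R}_b\mathbf{N}=\widehat{\mathbf{N}}$. Taking transposes and using the symmetry of $\mathbf{R}_b$ gives $\mathbf{N}^{\mathrm{T}}\mathbf{R}_b=\widehat{\mathbf{N}}^{\mathrm{T}}$.

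Combining the two steps gives $\mathbf{Q}\A(\Gr{G})\mathbf{Q}^{\mathrm{T}}=\A(\widehat{\Gr{G}})$, and cospectrality follows. The only real obstacle is the block $\mathbf{B}$. Without regularity of the induced subgraph on the first $b$ vertices (equivalently, $\mathbf{B}\mathbf{1}_b\in\operatorname{span}\{\mathbf{1}_b\}$), $\mathbf{R}_b$ does not commute with $\mathbf{B}$ and the conjugation fails. So the proof will state this hypothesis explicitly as part of the theorem's setting, as in \cite{GodsilM1982}.
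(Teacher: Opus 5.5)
Your proof is correct. It is the standard Godsil--McKay argument: you conjugate by the symmetric orthogonal involution $\mathrm{diag}\bigl(\tfrac{2}{b}\J{b,b}-\I{b},\,\I{c}\bigr)$. This matrix fixes $\mathbf{B}$ (by regularity), fixes the all-zero and all-one columns of $\mathbf{N}$, and complements the balanced columns. The paper gives no proof of this theorem; it only cites \cite{GodsilM1982}, which uses exactly this conjugation. So there is no alternative route to compare against.

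You were right to add the regularity hypothesis on $\mathbf{B}$. It is needed for the \emph{statement} itself, not just for your method, because as printed the theorem is false. For a counterexample, take $b=4$, $c=1$, $\mathbf{B}=\A(\CoG{2}\DU 2\CoG{1})$ (a single edge on the first two vertices), $\mathbf{C}=(0)$, and $\mathbf{N}=(1,1,0,0)^{\mathrm{T}}$, whose column sum is $\tfrac{b}{2}$. Then $\Gr{G}\cong\CoG{3}\DU 2\CoG{1}$ contains a triangle. On the other hand, $\widehat{\mathbf{N}}=(0,0,1,1)^{\mathrm{T}}$ gives $\widehat{\Gr{G}}\cong\CoG{2}\DU\PathG{3}$, which is triangle-free. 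So $\mathrm{tr}(\A^3)$ differs and the two graphs are not cospectral. Your reading of $\mathbf{1}_n-\underline{c}$ as $\mathbf{1}_b-\underline{c}$ is also the correct fix of a typo.
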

Two examples of pairs of NICS graphs are presented in Section~1.8.3 of \cite{BrouwerH2011}.

\subsection{Graphs resulting from the duplication and corona graphs}
\label{subsection: Graphs resulting from the duplication and corona graphs}

\begin{definition} \cite{SampathkumarW1980}
\label{definition: duplication graph}
Let $\Gr{G}$ be a graph with a vertex set $\V{\Gr{G}}=\{v_1, \ldots, v_n\}$, and
consider a copy of $\Gr{G}$ with a vertex set $\mathrm{\Gr{G}} = \{u_1, \ldots, u_n\}$,
where $u_i$ is a duplicate of the vertex $v_i$. For each $i \in \OneTo{n}$, connect the
vertex $u_i$ to all the neighbors of $v_i$ in $\Gr{G}$, and then delete all edges in
$\Gr{G}$. Similarly, for each $i \in \OneTo{n}$, connect the vertex $v_i$ to all the
neighbors of $u_i$ in the copied graph, and then delete all edges in the copied graph.
The resulting graph, which has $2n$ vertices is called the \emph{duplication graph}
of $\Gr{G}$, and is denoted by $\DuplicationGraph{\Gr{G}}$ (see Figure~\ref{fig:DG(C5)}).
\end{definition}
\begin{figure}[hbt]
\centering
\includegraphics[width=0.30\textwidth]{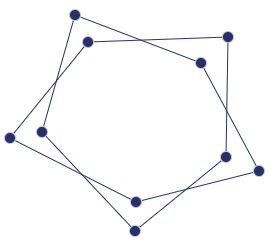}
\caption{\label{fig:DG(C5)} The duplication graph $\DuplicationGraph{\CG{5}}$ (see
Definition~\ref{definition: duplication graph}).}
\end{figure}

\begin{definition} \cite{FruchtH1970}
\label{definition: corona graph}
Let $\Gr{G}_1$ and $\Gr{G}_2$ be graphs on disjoint vertex sets of $n_1$
and $n_2$ vertices, and with $m_{1}$ and $m_{2}$ edges, respectively. The \emph{corona}
of $\Gr{G}_1$ and $\Gr{G}_2$, denoted by $\Corona{\Gr{G}_1}{\Gr{G}_2}$, is a graph
on $n_1+n_1 n_2$ vertices obtained by taking one copy of $\Gr{G}_1$
and $n_1$ copies of $\Gr{G}_2$, and then connecting, for each $i \in \OneTo{n_1}$,
the $i$-th vertex of $\Gr{G}_1$ to each vertex in the $i$-th copy of $\Gr{G}_2$
(see Figure~\ref{fig:C4 CORONA 2K1}).
\begin{figure}[hbt]
\begin{centering}
\includegraphics[width=0.30\textwidth]{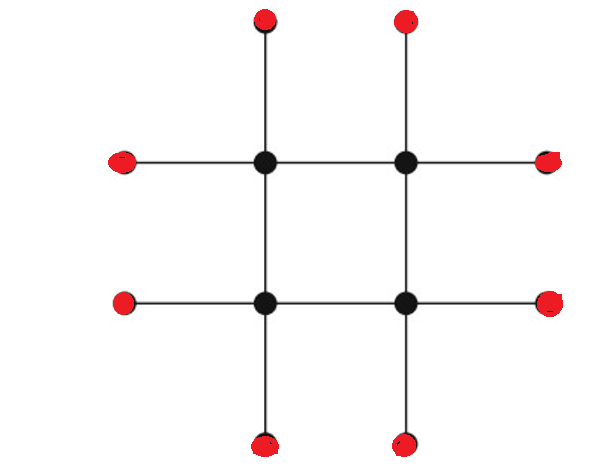}
\par\end{centering}
\caption{\label{fig:C4 CORONA 2K1} The corona graph $\Corona{\CG{4}}{(2\CoG{1})}$
(see Definition~\ref{definition: corona graph}) consists of a single copy of $\CG{4}$
(represented by the black vertices) and four copies of $2\CoG{1}$ (represented by the red vertices).}
\end{figure}
\end{definition}

\begin{definition} \cite{HouS2010}
\label{definition: edge corona graph}
The \emph{edge corona} of $\Gr{G}_1$ and $\Gr{G}_2$, denoted by
$\EdgeCorona{\Gr{G}_1}{\Gr{G}_2}$, is defined as the graph obtained by taking
one copy of $\Gr{G}_1$ and $m_1 = \bigcard{\E{\Gr{G}_1}}$ copies of $\Gr{G}_2$,
and then connecting, for each $j \in \OneTo{m_1}$, the two end-vertices of the
$j$-th edge of $\Gr{G}_1$ to every vertex in the $j$-th copy of $\Gr{G}_2$ (see
Figure~\ref{fig:C4 edge corona P3}).
\begin{figure}[hbt]
\begin{centering}
\includegraphics[width=0.30\textwidth]{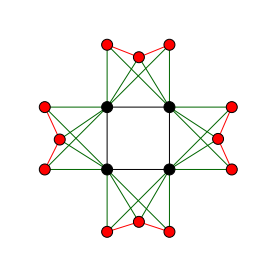}
\par\end{centering}
\caption{\label{fig:C4 edge corona P3}
The edge-corona graph $\EdgeCorona{\CG{4}}{\PathG{3}}$ (see
Definition~\ref{definition: edge corona graph}.}
\end{figure}
\end{definition}

\begin{definition}
\label{definition: duplication corona graph}
Let $\Gr{G}_1$ and $\Gr{G}_2$ be graphs with disjoint vertex sets of $n_1$
and $n_2$ vertices, respectively. Let $Du(\Gr{G}_1)$ be the duplication
graph of $\Gr{G}_1$ with vertex set $\V{\Gr{G}_1} \cup \mathrm{U}(\Gr{G}_1)$, where
$\V{\Gr{G}_1}=\{v_{1},\ldots,v_{n_1}\}$ and with the duplicated vertex set
$\mathrm{U}(\Gr{G}_1)=\{u_{1},\ldots,u_{n_{1}}\}$ (see Definition~\ref{definition: duplication graph}).
The \emph{duplication corona graph}, denoted by $\DuplicationCorona{\Gr{G}_1}{\Gr{G}_2}$,
is the graph obtained from $\DuplicationGraph{\Gr{G}_1}$ and $n_1$ copies of $\Gr{G}_2$
by connecting, for each $i \in \OneTo{n_1}$, the vertex $v_{i} \in \V{\Gr{G}_1}$ of
the graph $\DuplicationGraph{\Gr{G}_1}$ to every vertex in the $i$-th copy of $\Gr{G}_2$
(see Figure~\ref{fig:D.C}).
\begin{figure}[hbt]
\begin{centering}
\includegraphics[width=0.30\textwidth]{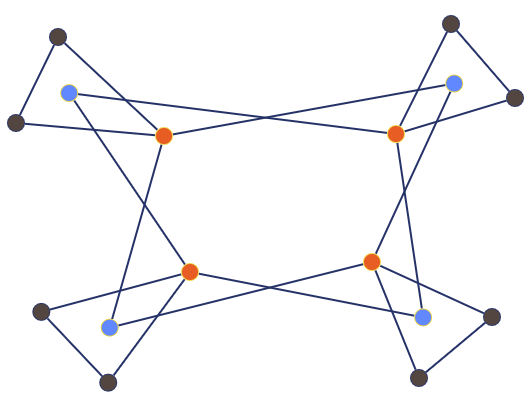}
\par\end{centering}
\caption{\label{fig:D.C} The duplication corona graph $\DuplicationCorona{\CG{4}}{\CoG{2}}$
(see Definition~\ref{definition: duplication corona graph}).}
\end{figure}
\end{definition}

\begin{definition}
\label{definition: duplication neighborhood corona}
Let $\Gr{G}_1$ and $\Gr{G}_2$ be graphs with disjoint vertex sets of $n_1$
and $n_2$ vertices, respectively. Let $\DuplicationGraph{\Gr{G}_1}$ be the duplication
graph of $\Gr{G}_1$ with the vertex set $\mathrm{V}(\Gr{G}_1)\cup \mathrm{U}(\Gr{G}_1)$, where
$\mathrm{V}(\Gr{G}_1)=\{v_1, \ldots, v_{n_1}\}$ and the duplicated vertex set
$\mathrm{U}(\Gr{G}_1)=\{u_1,\ldots,u_{n_1}\}$ (see Definition~\ref{definition: duplication graph}).
The \emph{duplication neighborhood corona}, denoted by $\Gr{G}_1 \boxbar \Gr{G}_2$,
is the graph obtained from $\DuplicationGraph{\Gr{G}_1}$ and $n_1$ copies of $\Gr{G}_2$
by connecting the neighbors of the vertex $v_i \in \V{\Gr{G}_1}$ of $\DuplicationGraph{\Gr{G}_1}$
to every vertex in the $i$-th copy of $\Gr{G}_2$ for $i \in \OneTo{n_1}$ (see Figure~\ref{fig:D.N.C}).
\begin{figure}[hbt]
\begin{centering}
\includegraphics[width=0.30\textwidth]{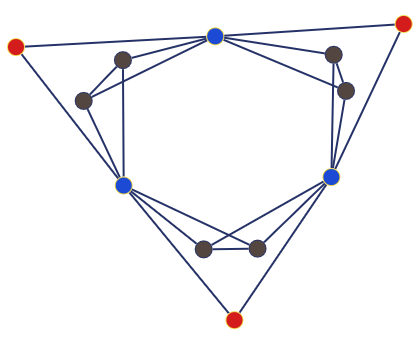}
\par\end{centering}
\caption{\label{fig:D.N.C} The duplication neighborhood corona $\CoG{3} \boxbar \CoG{2}$
(see Definition~\ref{definition: duplication neighborhood corona}).}
\end{figure}
\end{definition}

\begin{definition}
\label{definition: duplication edge corona}
Let $\Gr{G}_1$ and $\Gr{G}_2$ be graphs with disjoint vertex sets of $n_1$
and $n_2$ vertices, respectively. Let $\DuplicationGraph{\Gr{G}_1}$ be the
duplication graph of $\Gr{G}_1$ with vertex set $\V{\Gr{G_1}} \cup
\mathrm{U}(\Gr{G}_1)$, where $\V{\Gr{G}_1}=\{v_1, \ldots, v_{n_1}\}$ is the
vertex set of $\Gr{G}_1$ and $\mathrm{U}(\Gr{G}_1)=\{u_1, \ldots, u_{n_{1}}\}$
is the duplicated vertex set.
The \emph{duplication edge corona}, denoted by $\DuplicationEdgeCorona{\Gr{G}_1}{\Gr{G}_2}$,
is the graph obtained from $\DuplicationGraph{\Gr{G}_1}$ and $\bigcard{\E{\Gr{G}_1}}$
copies of $\Gr{G}_2$ by connecting each of the two vertices $v_i, v_j \in \V{\Gr{G}_1}$
of $\DuplicationGraph{\Gr{G}_1}$ to every vertex in the $k$-th copy of $\Gr{G}_2$
whenever $\{v_i, v_j\}=e_k \in \E{\Gr{G}_1}$ (see Figure~\ref{fig:D.E.C}).
\begin{figure}[hbt]
\begin{centering}
\includegraphics[width=0.30\textwidth]{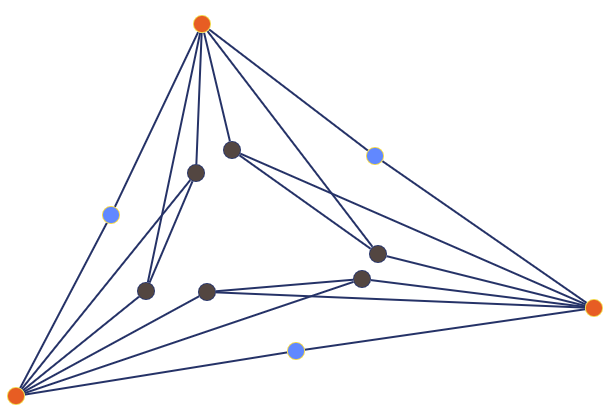}
\par\end{centering}
\centering{} \caption{\label{fig:D.E.C}
The duplication edge corona $\DuplicationEdgeCorona{\CoG{3}}{\CoG{2}}$ (see
Definition~\ref{definition: duplication edge corona}).}
\end{figure}
\end{definition}

\begin{definition}
\label{definition: closed neighborhood corona}
Consider two graphs $\Gr{G}_1$ and $\Gr{G}_2$ with $n_{1}$ and $n_{2}$
vertices and, respectively. The \emph{closed neighborhood corona} of
$\Gr{G}_1$ and $\Gr{G}_2$, denoted by $\ClosedNeighborhoodCorona{\Gr{G}_1}{\Gr{G}_2}$,
is a new graph obtained by creating $n_{1}$ copies of $\Gr{G}_2$. Each
vertex of the $i^{th}$ copy of $\Gr{G}_2$ is then connected to the
$i^{th}$ vertex and neighborhood of the $i^{th}$ vertex of $\Gr{G}_1$
(see Figure~\ref{fig:CNCP}).
\end{definition}

\begin{figure}[hbt]
\begin{centering}
\includegraphics[width=0.30\textwidth]{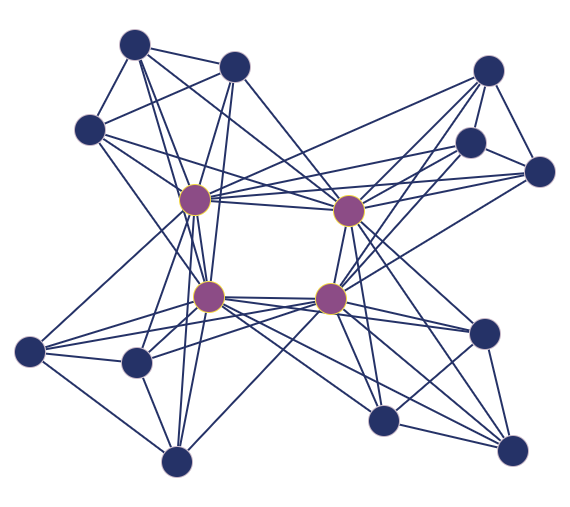}
\par\end{centering}
\caption{\label{fig:CNCP} \centering{The closed neighborhood corona
of the 4-length cycle $\CG{4}$ and the triangle $\CoG{3}$, denoted by
$\ClosedNeighborhoodCorona{\CG{4}}{\CoG{3}}$ (see
Definition~\ref{definition: closed neighborhood corona}).}}
\end{figure}

\begin{theorem}  \cite{AdigaRS2018}
\label{theorem: NICS graphs - 1}
Let $\Gr{G}_1,\Gr{H}_1$ be $r_1$-regular, cospectral graphs, and let
$\Gr{G}_2$ and $H_{2}$ be $r_2$-regular, cospectral, and nonisomorphic (NICS)
graphs. Then, the following holds:
\begin{itemize}
\item The duplication corona graphs $\DuplicationCorona{\Gr{G}_1}{\Gr{G}_2}$ and
$\DuplicationCorona{\Gr{H}_1}{\Gr{H}_2}$ are $\{ \A, \LM, \Q \}$-NICS irregular graphs.
\item The duplication neighborhood corona $\Gr{G}_1\boxbar \Gr{G}_2$ and
$\Gr{H}_1\boxbar H_{2}$ are $\{ \A, \LM, \Q \}$-NICS irregular graphs.
\item The duplication edge corona $\DuplicationEdgeCorona{\Gr{G}_1}{\Gr{G}_2}$ and
$\DuplicationEdgeCorona{\Gr{H}_1}{\Gr{H}_2}$ are $\{ \A, \LM, \Q \}$-NICS irregular graphs.
\end{itemize}
\end{theorem}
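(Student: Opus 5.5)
Since $\Gr{G}_1,\Gr{H}_1$ are $r_1$-regular and cospectral and $\Gr{G}_2,\Gr{H}_2$ are $r_2$-regular and cospectral, the plan is to write each of the three products in block form and to compute its characteristic polynomials (adjacency, Laplacian, signless Laplacian) as functions of the spectra of the factors only. That gives cospectrality. Nonisomorphism and irregularity are then settled separately by combinatorial arguments.

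For the duplication corona $\DuplicationCorona{\Gr{G}_1}{\Gr{G}_2}$, order the vertices as $\V{\Gr{G}_1}$, $\mathrm{U}(\Gr{G}_1)$, then the $n_1$ copies of $\Gr{G}_2$. With $\A_1=\A(\Gr{G}_1)$ and $\A_2=\A(\Gr{G}_2)$, the adjacency matrix becomes
\begin{align}
\begin{pmatrix}
\mathbf{0} & \A_1 & \I{n_1}\otimes \mathbf{1}_{n_2}^{\mathrm{T}}\\
\A_1 & \mathbf{0} & \mathbf{0}\\
\I{n_1}\otimes \mathbf{1}_{n_2} & \mathbf{0} & \I{n_1}\otimes \A_2
\end{pmatrix}.
\end{align}
Take the Schur complement (Theorem~\ref{theorem: Schur complement}) of $x\I{}-\I{n_1}\otimes\A_2$. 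Since $\Gr{G}_2$ is regular, $\mathbf{1}_{n_2}$ is an eigenvector, so $\mathbf{1}_{n_2}^{\mathrm{T}}(x\I{n_2}-\A_2)^{-1}\mathbf{1}_{n_2}=n_2/(x-r_2)$. The determinant therefore factors as
\begin{align}
\prod_{j}(x-\lambda_j(\Gr{G}_2))^{n_1}\cdot\det\begin{pmatrix}(x-\tfrac{n_2}{x-r_2})\I{n_1} & -\A_1\\ -\A_1 & x\I{n_1}\end{pmatrix}.
\end{align}
The remaining block matrix has commuting blocks, so its determinant is $\prod_i\bigl(x(x-\tfrac{n_2}{x-r_2})-\lambda_i(\Gr{G}_1)^2\bigr)$. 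This depends only on the spectra, which gives $\A$-cospectrality.

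For $\LM$ and $\Q$, the degree matrix is constant on each class. Vertices $v_i$ have degree $r_1+n_2$, vertices $u_i$ have degree $r_1$, and copy vertices have degree $r_2+1$. Hence $\LM=\D-\A$ keeps the same block shape with scalar diagonal shifts, and the identical Schur-complement computation goes through, using $\LM(\Gr{G}_2)\mathbf{1}=0$ and $\Q(\Gr{G}_2)\mathbf{1}=2r_2\mathbf{1}$. The same template handles the neighborhood corona $\boxbar$ (coupling block $\A_1\otimes\mathbf{1}^{\mathrm{T}}$, still commuting with $\A_1$) and the duplication edge corona. The edge corona coupling is through the incidence matrix $\mathbf{B}_1$. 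There we use $\mathbf{B}_1\mathbf{B}_1^{\mathrm{T}}=\A_1+r_1\I{}$, so the Schur complement is again a polynomial in $\A_1$. The leftover factor involving $\mathbf{B}_1^{\mathrm{T}}$ contributes eigenvalue $0$ with multiplicity $m_1-n_1$, and $m_1=n_1r_1/2$ is determined.

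Irregularity is immediate from the distinct degrees $r_1+n_2$ and $r_1$ (or $r_2+1$ against the others), and these differ because $n_2\ge1$.

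I expect the main obstacle to be nonisomorphism, which is the step needing a structural argument. The plan is to show that the copies of $\Gr{G}_2$ are recoverable from the product, for instance as the components left after deleting the vertices of $\V{\Gr{G}_1}$, identified by their degree $r_1+n_2$. An isomorphism would then map the copies of $\Gr{G}_2$ onto copies of $\Gr{H}_2$, forcing $\Gr{G}_2\cong\Gr{H}_2$, a contradiction. Degree coincidences (for example $r_2+1=r_1$) may blur the identification by degree. In that case I would instead distinguish copy vertices as those lying in the unique vertex-disjoint family of $n_1$ isomorphic induced subgraphs attached to single vertices, or compare counts of small subgraphs.
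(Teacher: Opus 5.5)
\textbf{Cospectrality.} The thesis does not prove this statement; it only quotes it from the cited source, so there is no in-paper argument to compare your route against. Your Schur-complement route is the natural one, and it is the same device the thesis uses for $\GOP_{n,k}$: you eliminate the copy block $x\I{}-\I{}\otimes\A_2$ and use that $\mathbf{1}_{n_2}$ is a common eigenvector of $\A_2$, $\LM(\Gr{G}_2)$ and $\Q(\Gr{G}_2)$. Your cospectrality computations for all three products are correct.

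One side remark is wrong. In the edge corona, eliminating the copy block leaves only $\mathbf{B}_1\mathbf{B}_1^{\mathrm{T}}=\A_1+r_1\I{n_1}$, so there is no residual factor in $\mathbf{B}_1^{\mathrm{T}}$. The surplus left when the $m_1$ factors $\det(x\I{n_2}-\A_2)$ meet the $n_1$ denominators is a power of $x-r_2$, not of $x$. Also, $m_1-n_1<0$ when $r_1=1$. None of this affects your conclusion, because the determinant identity holds as rational functions.

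\textbf{Nonisomorphism.} This is the step that needs repair. The peeling idea is right, but as written it only fits $\DuplicationCorona{\Gr{G}_1}{\Gr{G}_2}$.
In $\Gr{G}_1\boxbar\Gr{G}_2$ the copies hang on $\mathrm{U}(\Gr{G}_1)$, not on $\V{\Gr{G}_1}$. The degrees there are $r_1$ on $\V{\Gr{G}_1}$, $r_1(n_2+1)$ on $\mathrm{U}(\Gr{G}_1)$, and $r_1+r_2$ on the copies. So deleting $\V{\Gr{G}_1}$ separates nothing; you must delete $\mathrm{U}(\Gr{G}_1)$. In $\DuplicationEdgeCorona{\Gr{G}_1}{\Gr{G}_2}$ you delete $\V{\Gr{G}_1}$, whose degree is $r_1(n_2+1)$, not $r_1+n_2$.

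The degree coincidences you fear never occur, so the vague fallback is unnecessary. In each product the class to delete is exactly the set of vertices of strictly maximum degree. This uses two facts:
\begin{itemize}
\item $r_2\le n_2-2$, because $\CoG{n_2}$ is the only $(n_2-1)$-regular graph on $n_2$ vertices, so a NICS pair cannot be complete;
\item $r_1\ge1$, needed for the last two products.
\end{itemize}
These give $r_1+n_2>\max\{r_1,r_2+1\}$, $r_1(n_2+1)>\max\{r_1,r_1+r_2\}$ and $r_1(n_2+1)>\max\{r_1,r_2+2\}$ for the three products respectively.

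An isomorphism preserves degrees, so it restricts to an isomorphism $n_1\CoG{1}\DU k\,\Gr{G}_2\cong n_1\CoG{1}\DU k\,\Gr{H}_2$, with $k=n_1$ or $k=m_1$. Neither $\Gr{G}_2$ nor $\Gr{H}_2$ has isolated vertices, so comparing multisets of connected components gives $\Gr{G}_2\cong\Gr{H}_2$, the desired contradiction.

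\textbf{The case $r_1=0$.} The third item needs $r_1\ge1$. If $r_1=0$, then $m_1=0$ and both duplication edge coronas equal $2n_1\CoG{1}$, which is regular and isomorphic. Your irregularity argument quotes the degrees of $\DuplicationCorona{\Gr{G}_1}{\Gr{G}_2}$ for all three products and so hides this. For $\boxbar$ with $r_1=0$ the product is $2n_1\CoG{1}\DU n_1\Gr{G}_2$. It is still irregular, and the same component count shows nonisomorphism.
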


\begin{question}
\label{question: NICS graphs - 1}
Are the irregular graphs in Theorem~\ref{theorem: NICS graphs - 1} also cospectral with
respect to the normalized Laplacian matrix?
\end{question}

\begin{theorem}
\label{theorem: NICS graphs - 2}
\cite{SonarS2024} Let $\Gr{G}_1$ and $\Gr{G}_2$ be cospectral regular graphs, and let
$\Gr{H}$ be an arbitrary graph. Then, the following holds:
\begin{itemize}
\item The closed neighborhood corona $\ClosedNeighborhoodCorona{\Gr{G}_1}{\Gr{H}}$ and
$\ClosedNeighborhoodCorona{\Gr{G}_2}{\Gr{H}}$ are $\{ \A, \LM, \Q \}$-NICS irregular graphs.
\item The closed neighborhood corona $\ClosedNeighborhoodCorona{\Gr{H}}{\Gr{G}_{1}}$
and $\ClosedNeighborhoodCorona{\Gr{H}}{\Gr{G}_{2}}$ are $\{ \A, \LM, \Q \}$-NICS irregular graphs.
\end{itemize}
\end{theorem}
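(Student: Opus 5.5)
The plan is to compute the characteristic polynomials of the relevant matrices with Schur's theorem on the Schur complement (Theorem~\ref{theorem: Schur complement}), and to show that they depend on $\Gr{G}_i$ only through its order $n$, its degree $r$, and its adjacency spectrum. Cospectral regular graphs share all three data, since $n$ is the number of eigenvalues and $r=\lambda_1$, which gives cospectrality. Irregularity and nonisomorphism are then argued combinatorially. For the nonisomorphism part I read the hypothesis as saying that $\Gr{G}_1\not\cong\Gr{G}_2$, i.e.\ that they are NICS; without this the conclusion can fail.

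\emph{First construction, $\ClosedNeighborhoodCorona{\Gr{G}_i}{\Gr{H}}$.} Let $\A_1=\A(\Gr{G}_i)$, which is $r$-regular on $n$ vertices, and let $\Gr{H}$ have $m$ vertices. Ordering the vertices of $\Gr{G}_i$ first and then the $n$ copies of $\Gr{H}$, the adjacency matrix is
\begin{align}
\begin{pmatrix} \A_1 & (\A_1+\I{n})\otimes\mathbf{1}_m^{\mathrm{T}} \\ (\A_1+\I{n})\otimes\mathbf{1}_m & \I{n}\otimes\A(\Gr{H}) \end{pmatrix}.
\end{align}
Taking the Schur complement of $x\I{nm}-\I{n}\otimes\A(\Gr{H})$ gives
\begin{align}
\det\bigl(x\I{nm}-\I{n}\otimes\A(\Gr{H})\bigr)\,\det\bigl(x\I{n}-\A_1-\Gamma(x)(\A_1+\I{n})^2\bigr),
\end{align}
where $\Gamma(x)=\mathbf{1}_m^{\mathrm{T}}(x\I{m}-\A(\Gr{H}))^{-1}\mathbf{1}_m$. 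Since $\A_1$ is symmetric, the second determinant equals $\prod_j\bigl(x-\lambda_j-\Gamma(x)(\lambda_j+1)^2\bigr)$, so it depends only on $\sigma_{\A}(\Gr{G}_i)$.

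For $\LM$ and $\Q$ regularity is essential. Every vertex of $\Gr{G}_i$ has degree $r+(r+1)m$. A vertex $w$ in a copy of $\Gr{H}$ has degree $d_{\Gr{H}}(w)+r+1$. Hence the diagonal blocks are $(r+(r+1)m)\I{n}\mp\A_1$ and $\I{n}\otimes\bigl(\D(\Gr{H})+(r+1)\I{m}\mp\A(\Gr{H})\bigr)$. The same Schur complement computation applies, with $\Gamma$ replaced by $\mathbf{1}^{\mathrm{T}}\bigl(x\I{m}-\D(\Gr{H})-(r+1)\I{m}\pm\A(\Gr{H})\bigr)^{-1}\mathbf{1}$. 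This function depends only on $\Gr{H}$ and $r$, which again leaves a polynomial in $\A_1$ whose determinant is expressed through $\sigma_{\A}(\Gr{G}_i)$. The off-diagonal sign in the $\LM$ case becomes a factor $(\A_1+\I{n})^2$ and is therefore harmless.

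\emph{Second construction, $\ClosedNeighborhoodCorona{\Gr{H}}{\Gr{G}_i}$.} Here the roles are swapped, and the lower-right block is block diagonal. Its $j$-th block is $\A(\Gr{G}_i)$ for $\A$, and $(r+d_{\Gr{H}}(v_j)+1)\I{n}\mp\A(\Gr{G}_i)$ for $\LM$ and $\Q$. Regularity gives $\A(\Gr{G}_i)\mathbf{1}_n=r\mathbf{1}_n$. Therefore the coronal of each block is
\begin{align}
\frac{n}{x-r} \quad\text{or}\quad \frac{n}{x-d_{\Gr{H}}(v_j)-1},
\end{align}
so it depends only on $n$ and $r$. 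The Schur complement is then an explicit matrix built from $\A(\Gr{H})$, $\D(\Gr{H})$, $n$ and $r$. The remaining factor is a product of the block determinants $\det\bigl((x-c_j)\I{n}\pm\A(\Gr{G}_i)\bigr)$, each of which is determined by $\sigma_{\A}(\Gr{G}_i)$.

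\emph{Irregularity and nonisomorphism.} Irregularity is immediate from the degree counts above. In the first construction the vertices of $\Gr{G}_i$ have degree $r+(r+1)m$, while a copy vertex has degree at most $m-1+r+1<r+(r+1)m$. In the second construction, a vertex $v_j$ of $\Gr{H}$ has degree $d_{\Gr{H}}(v_j)+(d_{\Gr{H}}(v_j)+1)n$, which exceeds $r+d_{\Gr{H}}(v_j)+1$ because $r\le n-1$. Nonisomorphism is the main obstacle. I would try to recover the base graph canonically from the corona, using the set of vertices of maximum degree in the first case and the copy vertices in the second, and show that this recovery commutes with any isomorphism, so that $\Gr{G}_1\cong\Gr{G}_2$ would follow. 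Degenerate small cases, such as $\Gr{H}$ empty or very dense, may require separate handling.
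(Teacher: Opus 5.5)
The thesis does not prove this statement; it only quotes it from \cite{SonarS2024}, so there is no in-paper argument to compare with. Judged on its own, your cospectrality half is correct. Your block matrices and Schur-complement factorizations for $\A$, $\LM$ and $\Q$ are right in both constructions. You are also right that the statement as printed must additionally assume $\Gr{G}_1\not\cong\Gr{G}_2$. There is one slip: in $\ClosedNeighborhoodCorona{\Gr{H}}{\Gr{G}_i}$ the $\Q$-coronal of the $j$-th block is $n/(x-2r-d_{\Gr{H}}(v_j)-1)$. The value $n/(x-d_{\Gr{H}}(v_j)-1)$ you wrote is the $\LM$ value. The conclusion is unaffected.

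\textbf{The gap: nonisomorphism.} You only outline this part.

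For $\ClosedNeighborhoodCorona{\Gr{G}_i}{\Gr{H}}$ your outline works. Since $r\ge 1$ gives $r+(r+1)m>r+m$, the maximum-degree vertices are exactly those of $\Gr{G}_i$. They induce $\Gr{G}_i$, so an isomorphism restricts to $\Gr{G}_1\cong\Gr{G}_2$.

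For $\ClosedNeighborhoodCorona{\Gr{H}}{\Gr{G}_i}$, degrees do not identify the copy vertices. If $\Gr{H}=\SG{2n+1-r}$, the leaves of $\Gr{H}$ and the vertices of the copy attached to its center all have degree $2n+1$. You need an invariant that actually separates the two kinds of vertices. One that works is $\kappa(x)=\bigl|\{y\neq x:\ N[y]\subseteq N[x]\}\bigr|$, where $N[\cdot]$ is the closed neighborhood.
\begin{itemize}
\item For $w$ in the $j$-th copy $C_j$, we have $N[w]=N_{\Gr{G}_i}[w]\cup N_{\Gr{H}}[v_j]\subseteq N[v_j]$. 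Hence $\kappa(v_j)\ge n$.
\item Conversely, suppose $N[y]\subseteq N[w]$ with $y\neq w$. Then $y$ is not in another copy, since such vertices are not adjacent to $w$.
\item Nor is $y$ a vertex $v_k$ of $\Gr{H}$. Indeed $C_k\subseteq N[v_k]$, while $N[w]$ meets the copies only in the $r+1<n$ vertices of $N_{\Gr{G}_i}[w]$.
\item So $y\in C_j$, and regularity forces $N_{\Gr{G}_i}[y]=N_{\Gr{G}_i}[w]$. Hence $y\in N_{\Gr{G}_i}(w)$ and $\kappa(w)\le r<n$.
\end{itemize}
Any isomorphism preserves $\kappa$, so it maps the union of the copies, which induces $m\Gr{G}_1$, onto the union inducing $m\Gr{G}_2$. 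Comparing connected components then gives $\Gr{G}_1\cong\Gr{G}_2$.

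\textbf{The range of $r$.} Your irregularity claims, and the argument above, need $1\le r\le n-2$, which you never state. Your first inequality fails for $r=0$. From $r\le n-1$ your second only gives $d_{\Gr{H}}(v_j)+(d_{\Gr{H}}(v_j)+1)n\ge r+d_{\Gr{H}}(v_j)+1$, not a strict inequality. In fact $\ClosedNeighborhoodCorona{\overline{\CoG{n}}}{\CoG{m}}=n\CoG{m+1}$ and $\ClosedNeighborhoodCorona{\overline{\CoG{m}}}{\CoG{n}}=m\CoG{n+1}$ are regular. These cases are excluded exactly by the nonisomorphism hypothesis, because an $r$-regular graph on $n$ vertices with $r\in\{0,n-1\}$ is unique up to isomorphism. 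State this explicitly.
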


\begin{question}
\label{question: NICS graphs - 2}
Are the irregular graphs in Theorem~\ref{theorem: NICS graphs - 2} also cospectral with respect
to the normalized Laplacian matrix?
\end{question}

\subsection{Graphs constructions based on the subdivision and bipartite incidence graphs}
\label{subsection: Graphs constructions based on the subdivision and bipartite incidence graphs}
\begin{definition} \cite{CvetkovicRS2010}
\label{definition: subdivision graph}
Let $\Gr{G}$ be a graph. The \emph{subdivision graph} of $\Gr{G}$, denoted by $\SubdivisionGraph{\Gr{G}}$,
is obtained from $\Gr{G}$ by inserting a new vertex into every edge of $\Gr{G}$.
Subdivision is the process of adding a new vertex along an edge, effectively splitting the edge into two
edges connected in series through the new vertex (see Figure~\ref{fig:subdivition}).
\begin{figure}[hbt]
\centering{}\includegraphics[width=0.30\textwidth]{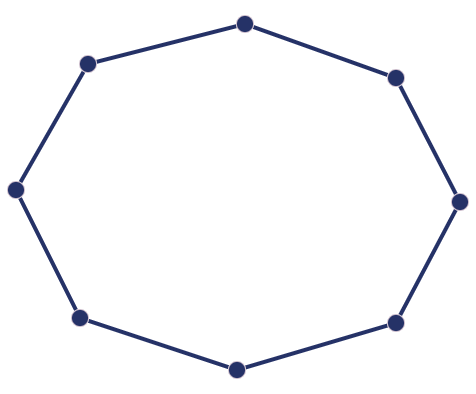}
\caption{\label{fig:subdivition} \centering{The subdivision graph of a $4$-length cycle, denoted by
$\SubdivisionGraph{\CG{4}}$, which is an $8$-length cycle $\CG{8}$ (see Definition~\ref{definition: subdivision graph}).}}
\end{figure}
\end{definition}

\begin{definition}  \cite{PisankiS13}
\label{definition: bipartite incidence graph}
Let $\Gr{G}$ be a graph. The \emph{bipartite incidence graph} of $\Gr{G}$, denoted by $\BipartiteIncidenceGraph{\Gr{G}}$,
is a bipartite graph constructed as follows: For each edge $e \in \Gr{G}$, a new vertex $u_{e}$ is added to
the vertex set of $\Gr{G}$. The vertex $u_e$ is then made adjacent to both endpoints of $e$ in $\Gr{G}$
(see Figure~\ref{fig:R(C4)-1}).
\begin{figure}[H]
\begin{centering}
\includegraphics[width=0.30\textwidth]{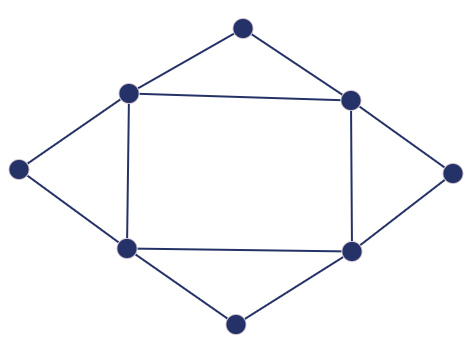}
\par\end{centering}
\centering{}\caption{\label{fig:R(C4)-1} \centering{The bipartite incidence graph of a length-4 cycle $\CG{4}$,
denoted by $\BipartiteIncidenceGraph{\CG{4}}$ (see Definition~\ref{definition: bipartite incidence graph}).}}
\end{figure}
\end{definition}

Let $\Gr{G}_1$ and $\Gr{G}_2$ be two vertex-disjoint graphs with $n_1$ and $n_2$ vertices, and $m_1$ and $m_2$ edges,
respectively. Four binary operations on these graphs are defined as follows.
\begin{definition}
\label{definition: subdivision-vertex-bipartite-vertex join}
The \emph{subdivision-vertex-bipartite-vertex join} of graphs $\Gr{G}_1$ and $\Gr{G}_2$, denoted $\SVBVJ{\Gr{G}_1}{\Gr{G}_2}$,
is the graph obtained from $\SubdivisionGraph{\Gr{G}_1}$ and $\BipartiteIncidenceGraph{\Gr{G}_2}$
by connecting each vertex of $\V{\Gr{G}_1}$ (that is the subset of the original vertices in the vertex set of $\SubdivisionGraph{\Gr{G}_1}$)
with every vertex of $\V{\Gr{G}_2}$ (that is the subset of the original vertices in the vertex set of $\BipartiteIncidenceGraph{\Gr{G}_2}$).
\end{definition}
By Definitions~\ref{definition: subdivision graph}, \ref{definition: bipartite incidence graph}, and~\ref{definition: subdivision-vertex-bipartite-vertex join},
it follows that the graph $\SVBVJ{\Gr{G}_1}{\Gr{G}_2}$ has $n_1+n_2+m_1+m_2$ vertices and $n_1 n_2+2m_1+3m_2$ edges.
Figure~\ref{fig:subdivision-vertex-bipartite-vertex join} displays the graph $\SVBVJ{\CG{4}}{\PathG{3}}$.
\begin{figure}[hbt]
\begin{centering}
\includegraphics[width=0.40\textwidth]{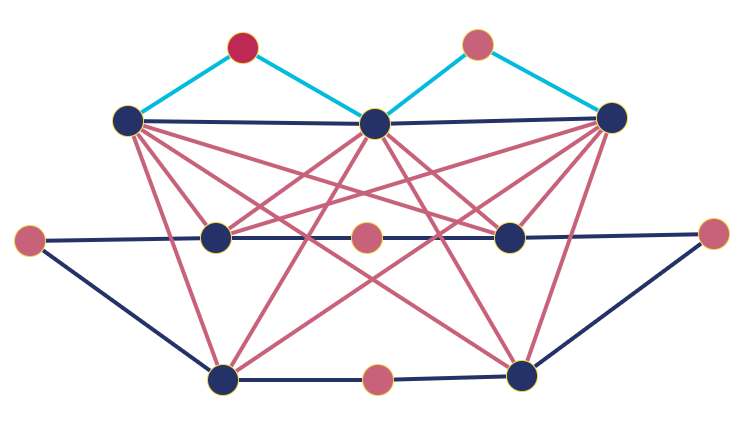}
\par\end{centering}
\caption{\label{fig:subdivision-vertex-bipartite-vertex join} The graph $\SVBVJ{\CG{4}}{\PathG{3}}$ (see Definition~\ref{definition: subdivision-vertex-bipartite-vertex join}).
The black vertices represent the vertices of the length-4 cycle $\CG{4}$ and the vertices of the path $\PathG{3}$. The additional vertices in the
subdivision graph $\SubdivisionGraph{\CG{4}}$ are the four red vertices located at the bottom of this figure (as shown in Figure~\ref{fig:subdivition}).
Similarly, the additional vertices in the bipartite incidence graph of the path $\PathG{3}$ are the two red vertices located at the top of this figure.
The edges of the graph include the following: edges connecting each black vertex of $\CG{4}$ to each black vertex of $\PathG{3}$, edges between the black
and red vertices at the bottom of the figure (that correspond to the subdivision of $\CG{4}$), and the four (light blue) edges connecting the two top red
vertices to the top black vertices of the figure (that correspond to the bipartite incidence graph of $\PathG{3}$).}
\end{figure}

\begin{definition}
\label{definition: subdivision-edge-bipartite-edge join}
The \emph{subdivision-edge-bipartite-edge join} of $\Gr{G}_1$ and $\Gr{G}_2$, denoted by $\SEBEJ{\Gr{G}_1}{\Gr{G}_2}$,
is the graph obtained from $\SubdivisionGraph{\Gr{G}_1}$ and $\BipartiteIncidenceGraph{\Gr{G}_2}$
by connecting each vertex of $\V{\SubdivisionGraph{\Gr{G}_1}} \setminus \V{\Gr{G}_1}$ (that is the subset of the added vertices in the
vertex set of $\SubdivisionGraph{\Gr{G}_1}$) with every vertex of $\V{\BipartiteIncidenceGraph{\Gr{G}_2}} \setminus \V{\Gr{G}_2}$
(that is the subset of the added vertices in the vertex set of $\BipartiteIncidenceGraph{\Gr{G}_2}$).
\end{definition}
By Definitions~\ref{definition: subdivision graph}, \ref{definition: bipartite incidence graph}, and~\ref{definition: subdivision-edge-bipartite-edge join},
it follows that the graph $\SEBEJ{\Gr{G}_1}{\Gr{G}_2}$ has $n_1+n_2+m_1+m_2$ vertices and $m_1 m_2 + 2m_1 + 3m_2$ edges.
Figure~\ref{fig:subdivision-edge-R-edge join} displays the graph $\SEBEJ{\CG{4}}{\PathG{3}}$.
\begin{figure}[hbt]
\begin{centering}
\includegraphics[width=0.40\textwidth]{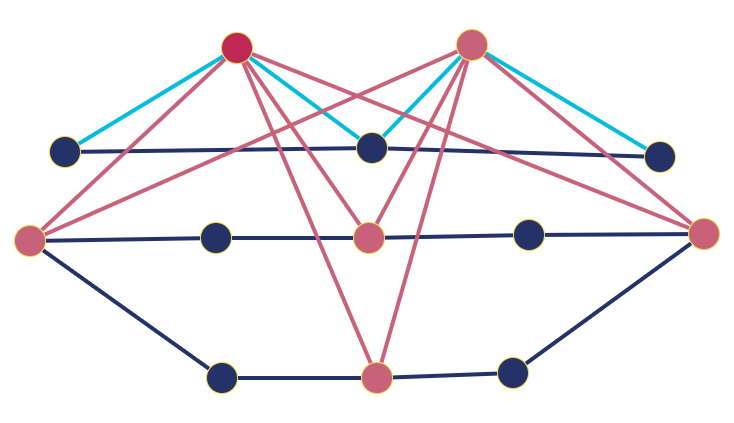}
\par\end{centering}
\caption{\label{fig:subdivision-edge-R-edge join} The graph $\SEBEJ{\CG{4}}{\PathG{3}}$ (see Definition~\ref{definition: subdivision-edge-bipartite-edge join}).
In comparison to Figure~\ref{fig:subdivision-vertex-bipartite-vertex join}, edges connecting each black vertex of $\CG{4}$ to each black vertex of $\PathG{3}$
are deleted and replaced in this figure by edges connecting each red vertex of $\CG{4}$ to each red vertex of $\PathG{3}$.}
\end{figure}

\begin{definition}
\label{definition: subdivision-edge-bipartite-vertex join}
The \emph{subdivision-edge-bipartite-vertex join} of $\Gr{G}_1$ and $\Gr{G}_2$, denoted by $\SEBVJ{\Gr{G}_1}{\Gr{G}_2}$,
is the graph obtained from $\SubdivisionGraph{\Gr{G}_1}$ and $\BipartiteIncidenceGraph{\Gr{G}_2}$ by connecting each vertex
of $\V{\SubdivisionGraph{\Gr{G}_1}} \setminus \V{\Gr{G}_1}$ (that is the subset of the added vertices in the vertex set of
$\SubdivisionGraph{\Gr{G}_1}$) with every vertex of $\V{\Gr{G}_2}$ (that is the subset of the original vertices in the vertex
set of $\BipartiteIncidenceGraph{\Gr{G}_2}$).
\end{definition}
By Definitions~\ref{definition: subdivision graph}, \ref{definition: bipartite incidence graph}, and~\ref{definition: subdivision-edge-bipartite-vertex join},
it follows that the graph $\SEBVJ{\Gr{G}_1}{\Gr{G}_2}$ has $n_1+n_2+m_1+m_2$ vertices and $m_1 n_2 + 2m_1 + 3m_2$ edges.
Figure~\ref{fig:subdivision-edge-R-vertex join} displays the graph $\SEBVJ{\CG{4}}{\PathG{3}}$.
\begin{figure}[hbt]
\begin{centering}
\includegraphics[width=0.40\textwidth]{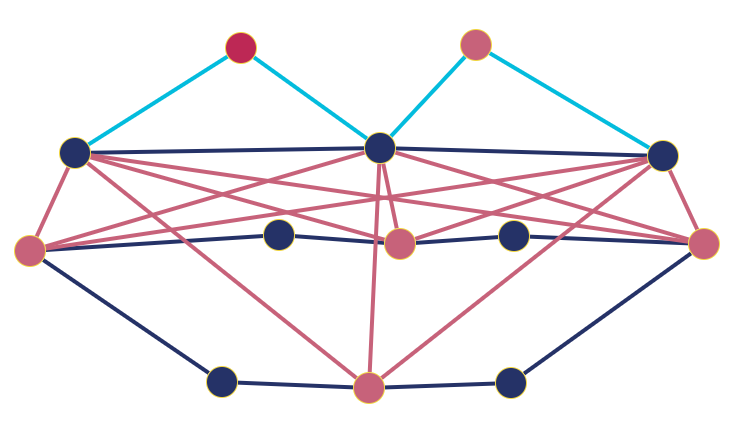}
\par\end{centering}
\caption{\label{fig:subdivision-edge-R-vertex join} The graph $\SEBVJ{\CG{4}}{\PathG{3}}$ (see Definition~\ref{definition: subdivision-edge-bipartite-vertex join}).
In comparison to Figure~\ref{fig:subdivision-vertex-bipartite-vertex join}, edges connecting each black vertex of $\CG{4}$ to each black vertex of $\PathG{3}$
are deleted and replaced in this figure by edges connecting each red vertex of $\CG{4}$ to each black vertex of $\PathG{3}$.}
\end{figure}

\begin{definition}
\label{definition: subdivision-vertex-bipartite-edge join}
The \emph{subdivision-vertex-bipartite-edge join} of $\Gr{G}_1$ and $\Gr{G}_2$, denoted by $\SVBEJ{\Gr{G}_1}{\Gr{G}_2}$,
is the graph obtained from $\SubdivisionGraph{\Gr{G}_1}$ and $\BipartiteIncidenceGraph{\Gr{G}_2}$ by connecting each vertex
of $\V{\Gr{G}_1}$ (that is the subset of the original vertices in the vertex set of $\SubdivisionGraph{\Gr{G}_1}$) with
every vertex of $\V{\BipartiteIncidenceGraph{\Gr{G}_2}} \setminus \V{\Gr{G}_2}$ (that is the subset of the added vertices in the vertex
set of $\BipartiteIncidenceGraph{\Gr{G}_2}$).
\end{definition}
By Definitions~\ref{definition: subdivision graph}, \ref{definition: bipartite incidence graph}, and~\ref{definition: subdivision-vertex-bipartite-edge join},
it follows that the graph $\SVBEJ{\Gr{G}_1}{\Gr{G}_2}$ has $n_1+n_2+m_1+m_2$ vertices and $n_1 m_2 + 2m_1 + 3m_2$ edges.
Figure~\ref{fig:subdivision-vertex-R-edge join} displays the graph $\SVBEJ{\CG{4}}{\PathG{3}}$.
\begin{figure}[hbt]
\begin{centering}
\includegraphics[width=0.40\textwidth]{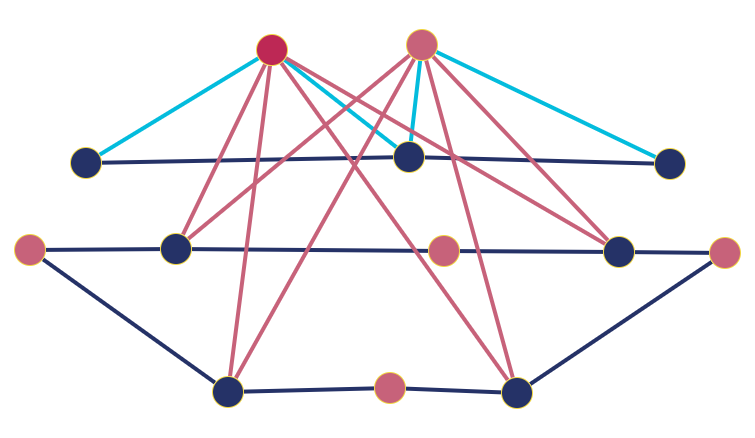}
\par\end{centering}
\caption{\label{fig:subdivision-vertex-R-edge join} The graph $\SVBEJ{\CG{4}}{\PathG{3}}$ (see Definition~\ref{definition: subdivision-vertex-bipartite-edge join}).
In comparison to Figure~\ref{fig:subdivision-vertex-bipartite-vertex join}, edges connecting each black vertex of $\CG{4}$ to each black vertex of $\PathG{3}$
are deleted and replaced in this figure by edges connecting each black vertex of $\CG{4}$ to each red vertex of $\PathG{3}$.}
\end{figure}

We next present the main result of this subsection, which motivates the four binary graph operations introduced in
Definitions~\ref{definition: subdivision-vertex-bipartite-vertex join}--\ref{definition: subdivision-vertex-bipartite-edge join}.
\begin{theorem} \cite{DasP2013}
\label{theorem: NICS - DasP2013} Let $\Gr{G}_i$ , $\Gr{H}_i$, where $i \in \{1,2\}$,
be regular graphs, where $\Gr{G}_1$ need not be different from $\Gr{H}_1$.
If $\Gr{G}_1$ and $\Gr{H}_1$ are $\A$-cospectral, and $\Gr{G}_2$ and $\Gr{H}_2$
are $\A$-cospectral, then
\begin{itemize}
\item $\SVBVJ{\Gr{G}_1}{\Gr{G}_2}$ and $\SVBVJ{\Gr{H}_1}{\Gr{H}_2}$
are irregular $\{ \A, \LM, {\bf{\mathcal{L}}} \} $-NICS graphs.
\item $\SEBEJ{\Gr{G}_1}{\Gr{G}_2}$ and $\SEBEJ{\Gr{H}_1}{\Gr{H}_2}$
are irregular $\{ \A, \LM, {\bf{\mathcal{L}}} \} $-NICS graphs.
\item $\SEBVJ{\Gr{G}_1}{\Gr{G}_2}$ and $\SEBVJ{\Gr{H}_1}{\Gr{H}_2}$
are irregular $\{ \A, \LM, {\bf{\mathcal{L}}} \} $-NICS graphs.
\item $\SVBEJ{\Gr{G}_1}{\Gr{G}_2}$ and $\SVBEJ{\Gr{H}_1}{\Gr{H}_2}$
are irregular $\{ \A, \LM, {\bf{\mathcal{L}}} \} $-NICS graphs.
\end{itemize}
\end{theorem}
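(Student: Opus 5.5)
The plan is to split the statement into two claims: cospectrality of each pair, and irregularity together with nonisomorphism. For cospectrality, the natural tool is the characteristic polynomial of each of the four joins, computed by the Schur complement formula (Theorem~\ref{theorem: Schur complement}). For $\Gr{G}_i$ that is $r_i$-regular on $n_i$ vertices with $m_i = n_i r_i/2$ edges, I use the incidence matrix ${\mathbf{R}}_i$ (Definition~\ref{definition: incidence matrix}). Take $\SVBVJ{\Gr{G}_1}{\Gr{G}_2}$ as the model case, with vertex blocks $\V{\Gr{G}_1}$, $\V{\Gr{G}_2}$, the subdivision vertices of $\Gr{G}_1$, and the incidence vertices of $\Gr{G}_2$. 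Its adjacency matrix is then
\begin{align}
\begin{pmatrix}
\mathbf{0} & \J{n_1,n_2} & {\mathbf{R}}_1 & \mathbf{0} \\
\J{n_2,n_1} & \A(\Gr{G}_2) & \mathbf{0} & {\mathbf{R}}_2 \\
{\mathbf{R}}_1^{\mathrm{T}} & \mathbf{0} & \mathbf{0} & \mathbf{0} \\
\mathbf{0} & {\mathbf{R}}_2^{\mathrm{T}} & \mathbf{0} & \mathbf{0}
\end{pmatrix}.
\end{align}
For $x$ avoiding finitely many values, I take the Schur complement of $x\I{}-\A$ with respect to the two lower diagonal blocks $x\I{m_1}$ and $x\I{m_2}$. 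This leaves a $2\times 2$ block matrix with entries $x\I{n_1}-\tfrac1x {\mathbf{R}}_1{\mathbf{R}}_1^{\mathrm{T}}$, $x\I{n_2}-\A(\Gr{G}_2)-\tfrac1x {\mathbf{R}}_2{\mathbf{R}}_2^{\mathrm{T}}$, and off-diagonal blocks $-\J{}$.

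Next I use ${\mathbf{R}}_i{\mathbf{R}}_i^{\mathrm{T}}=\Q(\Gr{G}_i)=r_i\I{}+\A(\Gr{G}_i)$ (Theorem~\ref{theorem: On the signless Laplacian matrix of a graph}). After this substitution both diagonal blocks are polynomials in $\A(\Gr{G}_i)$, and they share the common eigenvector $\mathbf{1}$. Because of regularity, $\mathbf{1}$ is orthogonal to all other eigenvectors, so the remaining eigenvectors are annihilated by the $\J{}$ blocks. The determinant therefore factors as
\begin{align}
\prod_{j\ge2}\bigl(x-\tfrac{r_1+\lambda_j(\Gr{G}_1)}{x}\bigr)\prod_{j\ge2}\bigl(x-\lambda_j(\Gr{G}_2)-\tfrac{r_2+\lambda_j(\Gr{G}_2)}{x}\bigr)
\end{align}
times a $2\times2$ determinant in $x,n_1,n_2,r_1,r_2$ (the quotient-matrix contribution), times $x^{m_1+m_2}$. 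This depends only on $n_i$, $r_i$ and the spectra, so cospectral regular inputs give identical $\A$-polynomials. The other three joins follow the same pattern; only the position of the $\J{}$ block changes. When $\J{}$ couples the edge blocks, I eliminate the vertex blocks instead and use ${\mathbf{R}}_i^{\mathrm{T}}{\mathbf{R}}_i=2\I{}+\A(\ell(\Gr{G}_i))$. The line graph of a regular graph is regular, and its spectrum is determined by that of $\Gr{G}_i$ (the eigenvalues $\lambda_j+r_i-2$ together with $-2$ of known multiplicity $m_i-n_i$).

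For $\LM$ and ${\bf{\mathcal{L}}}$, the key observation is that every vertex class has a constant degree in the join, determined by $n_i,m_i,r_i$. Hence $\D$ is a block-scalar matrix, and the same Schur complement computation goes through with $x\I{}$ replaced by $(x-c)\I{}$ on each block. For ${\bf{\mathcal{L}}}$, conjugating by $\D^{-1/2}$ rescales each block by constants. In both cases the characteristic polynomial is again a function of the spectra and the parameters only.

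Irregularity is immediate from degree counting: for example, the subdivision vertices have degree $2$ while $\V{\Gr{G}_1}$ vertices have degree $r_1+n_2$. Nonisomorphism cannot hold in the degenerate case $\Gr{G}_i\cong\Gr{H}_i$ for both $i$, so it must be read under the assumption that $\Gr{G}_2\not\cong\Gr{H}_2$ (or $\Gr{G}_1\not\cong\Gr{H}_1$). I would recover $\Gr{G}_2$ from the join: identify the degree-$2$ vertices adjacent only to $\V{\Gr{G}_2}$-type vertices, and read off $\Gr{G}_2$ via the induced subgraph or the incidence structure; degree classes separate the blocks generically. The main obstacle is the bookkeeping of the four Schur complement computations. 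In particular, one must check that the $\J{}$ coupling acts only on the $\mathbf{1}$-eigenspace after elimination, which needs the row sums of ${\mathbf{R}}_i$ and ${\mathbf{R}}_i^{\mathrm{T}}$ to be constant. They are ($r_i$ and $2$), so the argument should close.
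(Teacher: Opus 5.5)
The thesis does not prove this theorem; it only quotes it from the literature, so your proposal has to stand on its own. The cospectrality half does. You eliminate the two blocks not touched by $\J{}$, which leaves a $2\times 2$ block matrix. Its diagonal blocks are symmetric functions of $\A(\Gr{G}_i)$ or of $\mathbf{R}_i^{\mathrm{T}}\mathbf{R}_i$, all having $\mathbf{1}$ as an eigenvector, so the $\J{}$ coupling only touches the $\mathbf{1}$-directions. Block-constant degrees then carry this over to $\LM$ and ${\bf{\mathcal{L}}}$.

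One step is understated. In $\SEBEJ{\Gr{G}_1}{\Gr{G}_2}$ and $\SVBEJ{\Gr{G}_1}{\Gr{G}_2}$ the eliminated block for $\V{\Gr{G}_2}$ is $x\I{n_2}-\A(\Gr{G}_2)$, not a scalar. The surviving block is therefore $x\I{m_2}-\mathbf{R}_2^{\mathrm{T}}\bigl(x\I{n_2}-\A(\Gr{G}_2)\bigr)^{-1}\mathbf{R}_2$. To finish:
\begin{itemize}
\item write $\A(\Gr{G}_2)=\mathbf{R}_2\mathbf{R}_2^{\mathrm{T}}-r_2\I{n_2}$;
\item apply $\mathbf{R}_2^{\mathrm{T}}(c\I{n_2}-\mathbf{R}_2\mathbf{R}_2^{\mathrm{T}})^{-1}\mathbf{R}_2=(c\I{m_2}-\mathbf{R}_2^{\mathrm{T}}\mathbf{R}_2)^{-1}\mathbf{R}_2^{\mathrm{T}}\mathbf{R}_2$ with $c=x+r_2$;
\item this makes the block a rational function of $\A(\ell(\Gr{G}_2))$, and your argument closes.
\end{itemize}
Nothing in your $\LM$ computation depends on the sign in front of $\A$, so the same argument applies verbatim to $\Q=\D+\A$. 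That gives an affirmative answer to Question~\ref{question: NICS - DasP2013}.

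The genuine gap is nonisomorphism. You correctly note it needs the extra hypothesis $\Gr{G}_2\not\cong\Gr{H}_2$, but ``degree classes separate the blocks generically'' is not an argument, and your concrete recipe fails in two of the four cases. In $\SEBEJ{\Gr{G}_1}{\Gr{G}_2}$ and $\SVBEJ{\Gr{G}_1}{\Gr{G}_2}$ the incidence vertices of $\BipartiteIncidenceGraph{\Gr{G}_2}$ have degree $2+m_1$, resp.\ $2+n_1$, not $2$. Degrees of different blocks can also coincide. In $\SVBEJ{\Gr{G}_1}{\Gr{G}_2}$ the vertices of $\V{\Gr{G}_2}$ have degree $2r_2$, which equals $2+n_1$ when $\Gr{G}_1=\CG{6}$ and $\Gr{G}_2$ is the $4$-regular pair of Example~\ref{example: NICS regular graphs on 10 vertices}.

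A working replacement uses triangles. Under $\Gr{G}_2\not\cong\Gr{H}_2$ you have $r_2\ge 3$ and $n_2\ge 10$, because $0$-, $1$-, $2$-regular graphs and regular graphs on fewer than ten vertices are DS. This is also what makes irregularity immediate; it fails in degenerate cases such as $\SVBVJ{\CoG{2}}{\CoG{1}}\cong\CG{4}$. The recovery of $\Gr{G}_2$ from each join then goes as follows.
\begin{itemize}
\item In $\SEBEJ{\Gr{G}_1}{\Gr{G}_2}$ and $\SVBEJ{\Gr{G}_1}{\Gr{G}_2}$, the two endpoints of every join edge and of every edge of $\SubdivisionGraph{\Gr{G}_1}$ have disjoint neighbourhoods. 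Hence the vertices lying on triangles are exactly $\V{\Gr{G}_2}$ and the incidence vertices of $\Gr{G}_2$. They induce $\BipartiteIncidenceGraph{\Gr{G}_2}$, and deleting its degree-$2$ vertices returns $\Gr{G}_2$.
\item In $\SVBVJ{\Gr{G}_1}{\Gr{G}_2}$ and $\SEBVJ{\Gr{G}_1}{\Gr{G}_2}$ the join does create triangles. However, the incidence vertices of $\Gr{G}_2$ are exactly the degree-$2$ vertices whose two neighbours are adjacent. Every other degree-$2$ vertex has its neighbours in the independent set $\V{\Gr{G}_1}$ or in the independent set of subdivision vertices. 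Then $\V{\Gr{G}_2}$ is the neighbourhood of the incidence vertices, and it induces $\Gr{G}_2$.
\end{itemize}
So an isomorphism of the two joins would force $\Gr{G}_2\cong\Gr{H}_2$.
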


In light of Theorem~\ref{theorem: NICS - DasP2013}, the following questions naturally arises.
\begin{question}
\label{question: NICS - DasP2013}
Are the graphs in Theorem~\ref{theorem: NICS - DasP2013} also cospectral with respect to the signless Laplacian matrix (i.e., $\Q$-cospectral)?
\end{question}

\subsection{Connected irregular NICS graphs}
\label{subsection: Connected irregular NICS graphs}

\noindent

The (joint) cospectrality of regular graphs with respect to their adjacency, Laplacian, signless Laplacian, and
normalized Laplacian matrices can be asserted by verifying their cospectrality with respect to only one of these matrices.
In other words, regular graphs are $X$-cospectral for some $X \in \{\A, \LM, \Q, {\bf{\mathcal{L}}} \}$
if and only if they are cospectral with respect to all these matrices (see Proposition~\ref{proposition: regular graphs cospectrality}).
For irregular graphs, this does not hold in general.
Following \cite{Butler2010}, it is natural to ask the following question:
\begin{question}
\label{question: Butler2010}
Are there pairs of irregular graphs that are $\{\A,\LM,\Q,{\bf{\mathcal{L}}}\}$-NICS,
i.e., $X$-NICS with respect to every $X \in \{\A, \LM, \Q, {\bf{\mathcal{L}}}\}$?
\end{question}
This question remained open until Hamud and Berman recently proposed a method for constructing pairs
of irregular graphs that are $X$-cospectral with respect to every $X \in \{\A, \LM, \Q, {\bf{\mathcal{L}}}\}$,
providing explicit constructions \cite{HamudB24}. Building on that work, Sason demonstrated
in \cite{Sason2024} that for every even integer $n \geq 14$, there exist two connected, irregular
$\{\A,\LM,\Q,{\bf{\mathcal{L}}}\}$-NICS graphs on $n$ vertices with identical independence, clique,
and chromatic numbers, yet distinct Lov{\'a}sz $\vartheta$-functions. We now present the preliminary
definitions required to outline the relevant results in \cite{HamudB24} and \cite{Sason2024}, and the
construction of such cospectral irregular $X$-NICS graphs for all $X \in \{\A, \LM, \Q, {\bf{\mathcal{L}}}\}$.

\begin{definition}[Neighbors splitting join of graphs]
\label{definition: neighbors splitting join of graphs}
\cite{LuMZ2023} Let $\Gr{G}$ and $\Gr{H}$ be graphs with disjoint vertex sets, and let
$\V{\Gr{G}} = \{v_1, \ldots, v_n\}$. The {\em neighbors splitting (NS) join}
of $\Gr{G}$ and $\Gr{H}$ is obtained by adding vertices $v'_1, \ldots, v'_n$
to the vertex set of $\Gr{G} \vee \Gr{H}$ and connecting $v'_i$ to $v_j$ if
and only if $\{v_i, v_j\} \in \E{\Gr{G}}$.
The NS join of $\Gr{G}$ and $\Gr{H}$ is denoted by $\Gr{G} \NS \Gr{H}$.
\end{definition}

\begin{definition}[Nonneighbors splitting join of graphs \cite{Hamud23,HamudB24}]
\label{definition: nonneighbors splitting join of graphs}
Let $\Gr{G}$ and $\Gr{H}$ be graphs with disjoint vertex sets, and let
$\V{\Gr{G}} = \{v_1, \ldots, v_n\}$. The {\em nonneighbors splitting (NNS) join}
of $\Gr{G}$ and $\Gr{H}$ is obtained by adding vertices $v'_1, \ldots, v'_n$
to the vertex set of $\Gr{G} \vee \Gr{H}$ and connecting $v'_i$ to $v_j$, with
$i \neq j$, if and only if $\{v_i, v_j\} \not\in \E{\Gr{G}}$.
The NNS join of $\Gr{G}$ and $\Gr{H}$ is denoted by $\Gr{G} \NNS \Gr{H}$.
\end{definition}

\begin{remark}
In general, $\Gr{G} \NS \Gr{H} \not\cong \Gr{H} \NS \Gr{G}$ and
$\Gr{G} \NNS \Gr{H} \not\cong \Gr{H} \NNS \Gr{G}$ (unless $\Gr{G} \cong \Gr{H}$).
\end{remark}

\begin{example}[NS and NNS join of graphs \cite{HamudB24}]
\label{example: NS and NNS Join of Graphs}
Figure~\ref{fig:BermanH23-Fig. 2.2} shows the NS and NNS
joins of the path graphs $\PathG{4}$ and $\PathG{2}$, denoted by $\PathG{4} \NS \PathG{2}$
and $\PathG{4} \NNS \PathG{2}$, respectively.
\vspace*{-0.1cm}
\begin{figure}[H]
\centering
\includegraphics[width=11cm]{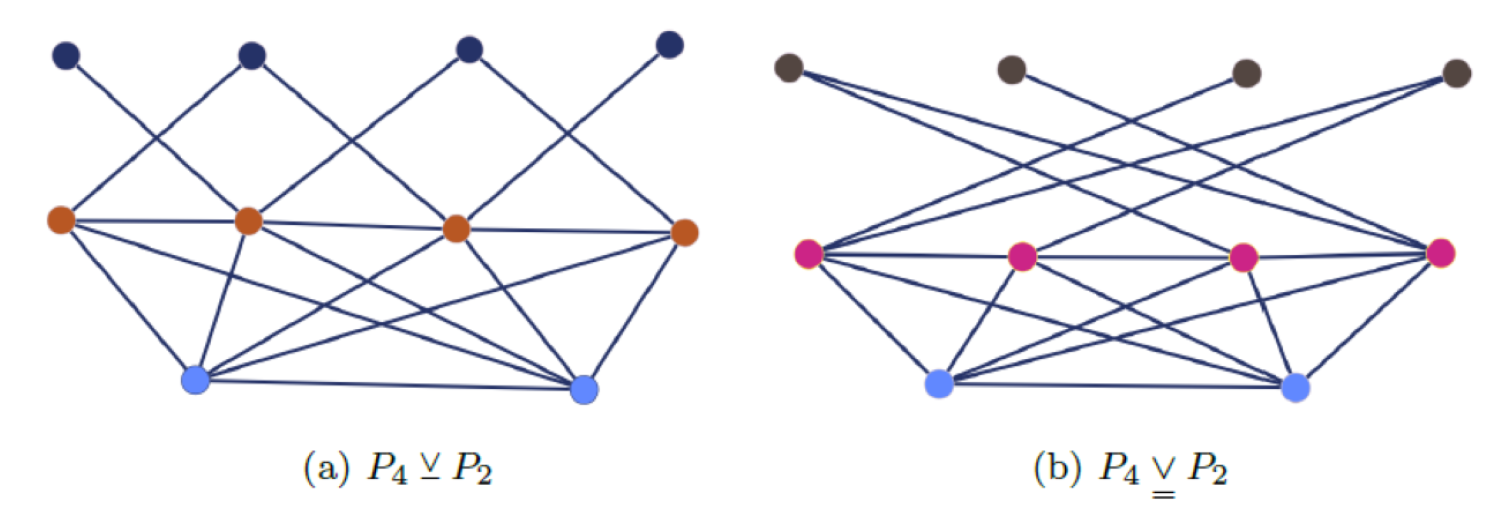}
\caption{The neighbors splitting (NS)
and nonneighbors splitting (NNS) joins of the path graphs $\PathG{4}$ and $\PathG{2}$
are depicted, respectively, on the left and right-hand sides of this figure.
The NS and NNS joins of graphs are, respectively, denoted by $\PathG{4} \NS \PathG{2}$
and $\PathG{4} \NNS \PathG{2}$ \cite{HamudB24}.}\label{fig:BermanH23-Fig. 2.2}
\end{figure}
\end{example}

\begin{theorem}[Irregular $\{\A, \LM, \Q, \bf{\mathcal{L}}\}$-NICS graphs]
\label{theorem: Berman and Hamud, 2023}
Let $\Gr{G}_1$ and $\Gr{H}_1$ be regular and cospectral graphs, and let $\Gr{G}_2$ and $\Gr{H}_2$
be regular, nonisomorphic, and cospectral (NICS) graphs. Then, the following statements hold:
\begin{enumerate}[(1)]
\item  \label{Item 1: Berman and Hamud, 2023}
The NS-join graphs $\Gr{G}_1 \NS \Gr{G}_2$ and $\Gr{H}_1 \NS \Gr{H}_2$
are irregular $\{\A, \LM, \Q, \bf{\mathcal{L}}\}$-NICS graphs.
\item  \label{Item 2: Berman and Hamud, 2023}
The NNS join graphs $\Gr{G}_1 \NNS \Gr{G}_2$ and $\Gr{H}_1 \NNS \Gr{H}_2$
are irregular $\{\A, \LM, \Q, \bf{\mathcal{L}}\}$-NICS graphs.
\end{enumerate}
\end{theorem}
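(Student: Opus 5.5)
The plan is to exploit the block structure that the NS and NNS joins inherit from regular constituents. I will treat both items of Theorem~\ref{theorem: Berman and Hamud, 2023} in parallel. Let $\Gr{G}_1$ be $r_1$-regular on $n_1$ vertices and $\Gr{G}_2$ be $r_2$-regular on $n_2$ vertices, with adjacency matrices $\A_1,\A_2$. Order the vertices of $\Gr{G}_1 \NS \Gr{G}_2$ as $\V{\Gr{G}_1}$, then the copies $v'_1,\dots,v'_n$, then $\V{\Gr{G}_2}$. The adjacency matrix has the block form
\begin{align}
\begin{pmatrix}
\A_1 & \mathbf{X} & \J{n_1,n_2}\\
\mathbf{X} & \mathbf{0}_{n_1,n_1} & \mathbf{0}_{n_1,n_2}\\
\J{n_2,n_1} & \mathbf{0}_{n_2,n_1} & \A_2
\end{pmatrix},
\end{align}
where $\mathbf{X}=\A_1$ for the NS join and $\mathbf{X}=\J{n_1,n_1}-\I{n_1}-\A_1$ for the NNS join.

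The degrees are constant on each of the three blocks. In the NS case they are $2r_1+n_2$, $r_1$ and $r_2+n_1$; in the NNS case they are $n_1-1+n_2$, $n_1-1-r_1$ and $r_2+n_1$. Hence $\D$, and also $\D^{-1/2}$, is a scalar multiple of the identity on each block. Consequently each of $\A,\LM,\Q,\mathcal{L}$ is a $3\times 3$ block matrix whose blocks are polynomials in $\A_1$ and $\J{}$, in $\A_2$ and $\J{}$, or all-ones or zero blocks.

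Next I will exhibit a full eigenbasis for each of the four matrices. Let $\mathbf{x}\perp\mathbf{1}$ be an eigenvector of $\A_1$ with eigenvalue $\mu\neq r_1$. Then $\J{}\mathbf{x}=\mathbf{0}$ and $\mathbf{X}\mathbf{x}$ is a scalar multiple of $\mathbf{x}$, so the span of vectors $(a\mathbf{x},b\mathbf{x},\mathbf{0})$ is invariant. This produces two eigenvalues, the roots of an explicit $2\times2$ matrix whose entries depend only on $\mu,r_1,n_1,n_2,r_2$. Similarly, for $\mathbf{y}\perp\mathbf{1}$ an eigenvector of $\A_2$ with eigenvalue $\nu\neq r_2$, the vector $(\mathbf{0},\mathbf{0},\mathbf{y})$ is an eigenvector with eigenvalue an affine function of $\nu$. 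The remaining three eigenvalues come from the span of the block indicator vectors $(\mathbf{1},\mathbf{0},\mathbf{0})$, $(\mathbf{0},\mathbf{1},\mathbf{0})$ and $(\mathbf{0},\mathbf{0},\mathbf{1})$. This is an equitable partition, so these eigenvalues are those of a $3\times 3$ quotient matrix depending only on $n_1,r_1,n_2,r_2$. Counting gives $2(n_1-1)+(n_2-1)+3=2n_1+n_2$ eigenvectors, which is the full spectrum.

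Cospectral regular graphs share the number of vertices, the degree $r_i=\lambda_1$, and the multiset of remaining eigenvalues. Therefore the four spectra of $\Gr{G}_1\NS\Gr{G}_2$ and $\Gr{H}_1\NS\Gr{H}_2$ coincide, and likewise for the NNS join. Irregularity is immediate because the degrees $2r_1+n_2$ and $r_1$ differ, and in the NNS case $n_1-1+n_2\neq n_1-1-r_1$.

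The main obstacle is nonisomorphism. Any isomorphism $\phi$ must carry the copy of $\Gr{G}_2$ onto the copy of $\Gr{H}_2$, which is impossible since $\Gr{G}_2\not\cong\Gr{H}_2$. Hence I need to recognize $\V{\Gr{G}_2}$ intrinsically. My first try will be by degree, since $\V{\Gr{G}_2}$ is the block of degree $r_2+n_1$. This works unless that degree coincides with one of the other two block degrees, so coincidences need a structural characterization. For the NS case I would use that the primed vertices are exactly the non-$\Gr{G}_2$ vertices whose neighbourhood lies inside $\V{\Gr{G}_1}$ and which are pairwise nonadjacent. I would also use that $\V{\Gr{G}_2}$ is the set of vertices adjacent to every vertex of $\V{\Gr{G}_1}$ and to no primed vertex. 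Together these single out the three classes, up to degenerate small cases ($r_1=0$, or $\Gr{G}_1$ complete in the NNS case), which I would check separately. If this structural route becomes messy, I would instead count, for each vertex, its neighbours of each degree and separate the classes by these refined degree profiles.
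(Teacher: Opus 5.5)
Your cospectrality argument is sound, but it takes a different route from the one the paper points to. The paper gives no proof and defers to Hamud and Berman, who compute the four characteristic polynomials via Schur complements. You instead decompose $\Reals^{2n_1+n_2}$ into the invariant subspaces spanned by $(a\mathbf{x},b\mathbf{x},\mathbf{0})$, by $(\mathbf{0},\mathbf{0},\mathbf{y})$, and by the three block indicators. This treats $\A,\LM,\Q,\mathcal{L}$ uniformly, because $\D$ is constant on blocks. One fix is needed: drop the restriction $\mu\neq r_1$. When $\Gr{G}_1$ is disconnected, $r_1$ also occurs on $\mathbf{1}^{\perp}$, so take an orthogonal eigenbasis of $\A_1$ containing $\mathbf{1}$, which is what your count $2(n_1-1)$ requires.

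The genuine gap is in nonisomorphism. Your structural description is circular, since each class is described through the other two. More seriously, the strategy of singling out $\V{\Gr{G}_2}$ by an isomorphism-invariant property fails for the NS join when $r_1=0$ and $n_2=n_1+r_2$. Take $\Gr{G}_1=\Gr{H}_1=\overline{\CoG{7}}$, $\Gr{G}_2=\overline{\CoG{7}\DU\Gr{S}}$ with $\Gr{S}$ the Shrikhande graph, and $\Gr{H}_2=\overline{\CoG{7}\DU\ell(\CoBG{4}{4})}$. These are $16$-regular NICS graphs on $23$ vertices. Then $\Gr{G}_1\NS\Gr{G}_2=(\overline{\CoG{7}}\vee\Gr{G}_2)\DU 7\CoG{1}$. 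The complement of its nonisolated part is $\CoG{7}\DU\CoG{7}\DU\Gr{S}$: one $\CoG{7}$ sits on $\V{\Gr{G}_1}$ and the other lies inside $\V{\Gr{G}_2}$. Swapping them is an automorphism that does not preserve $\V{\Gr{G}_2}$, so neither degrees nor refined degree profiles can identify it.

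The missing idea for this case is cancellation rather than recognition:
\begin{itemize}
\item $\Gr{H}_1$ is also edgeless.
\item On both sides the isolated vertices are exactly the primed ones.
\item Hence an isomorphism gives $\overline{\CoG{n_1}}\vee\Gr{G}_2\cong\overline{\CoG{n_1}}\vee\Gr{H}_2$, i.e., $\CoG{n_1}\DU\overline{\Gr{G}_2}\cong\CoG{n_1}\DU\overline{\Gr{H}_2}$.
\item The multiset of connected components is an isomorphism invariant, so removing one $\CoG{n_1}$ component yields $\Gr{G}_2\cong\Gr{H}_2$, a contradiction.
\end{itemize}

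All remaining cases close by degrees, more easily than you anticipate.
\begin{itemize}
\item \textbf{NNS join.} $n_1+r_2$ lies strictly between $n_1-1-r_1$ and $n_1+n_2-1$. The upper bound holds because $\Gr{G}_2\neq\CoG{n_2}$, as $\CoG{n}$ is DS. So $\V{\Gr{G}_2}$ is exactly the set of vertices of degree $n_1+r_2$, and $\Gr{G}_1$ complete is not a degenerate case.
\item \textbf{NS join, $r_1\geq 1$.} Always $n_1+r_2>r_1$. If $n_1+r_2=2r_1+n_2$, then $\V{\Gr{G}_2}$ is exactly the set of vertices of degree $n_1+r_2$ having no neighbour of degree $r_1$.
\end{itemize}
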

The proof of Theorem~\ref{theorem: Berman and Hamud, 2023} is provided in \cite{Hamud23,HamudB24}, and it relies
heavily on the notion of the Schur complement (see Theorem~\ref{theorem: Schur complement}).
The interested reader is referred to the recently published paper \cite{HamudB24} for further details.

Relying on Theorem~\ref{theorem: Berman and Hamud, 2023}, the following result is stated and proved in \cite{Sason2024}.
\begin{theorem}[On irregular NICS graphs]
\label{theorem: Sason - existence of NICS graphs with distinct theta functions}
For every even integer $n \geq 14$, there exist two connected, irregular $\{\A, \LM, \Q, \bf{\mathcal{L}}\}$-NICS graphs
on $n$ vertices with identical independence, clique, and chromatic numbers, yet their Lov\'{a}sz $\vartheta$-functions
are distinct.
\end{theorem}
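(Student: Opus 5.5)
The plan is to build the required pairs with Theorem~\ref{theorem: Berman and Hamud, 2023}, seeded by the two regular NICS graphs on ten vertices from Example~\ref{example: NICS regular graphs on 10 vertices}. For $\Gr{G}_2$ and $\Gr{H}_2$ we take either the pair $\{\Gr{G}, \Gr{H}\}$ of Figure~\ref{fig:graphs with 10 vertices} or its complement pair $\{\CGr{G}, \CGr{H}\}$. Both pairs are regular and NICS, and they have distinct Lov\'{a}sz $\vartheta$-functions together with equal independence, clique, and chromatic numbers (namely $3$, $3$, and $4$).

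For $\Gr{G}_1 = \Gr{H}_1$ we take a small regular graph on $m$ vertices, for instance $\CoG{m}$ or $\CGr{\CoG{m}}$ with $m \geq 2$. The NS-join or NNS-join then has $2m + 10$ vertices, which covers every even $n \geq 14$. By Theorem~\ref{theorem: Berman and Hamud, 2023}, the two resulting graphs are irregular and $\{\A, \LM, \Q, {\bf{\mathcal{L}}}\}$-cospectral. Both are connected because of the join with $\Gr{G}_2$ or $\Gr{H}_2$, and each added vertex $v'_i$ has at least one neighbor once $\Gr{G}_1$ is chosen suitably. Nonisomorphism will come out of the $\vartheta$ computation, since distinct $\vartheta$ values already rule out isomorphism.

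The next step is to check that $\alpha$, $\omega$, and $\chi$ agree on the two constructed graphs. We will express each of them through the corresponding invariants of $\Gr{G}_1$ and $\Gr{G}_2$, using the join structure. A clique in the join lies partly in $\Gr{G}_1 \cup \{v'_i\}$ and partly in $\Gr{G}_2$. An independent set lies entirely on one side of the join or among the $v'_i$ together with a side. The chromatic number is additive over the join, and the splitting vertices can then be colored without any extra colors. Since $\Gr{G}_2$ and $\Gr{H}_2$ share these three invariants, so do the constructed graphs.

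The main obstacle is proving that the $\vartheta$-functions differ. There is no simple join formula for $\vartheta$ once the splitting vertices are present; only $\vartheta(\Gr{A} \vee \Gr{B}) = \max\{\vartheta(\Gr{A}), \vartheta(\Gr{B})\}$ holds for plain joins. We will therefore pick $\Gr{G}_1$ so that the side coming from $\Gr{G}_2$ is the one that attains the value, arranging the construction so that $\vartheta$ of the whole graph equals $\vartheta(\Gr{G}_2)$ or $\vartheta(\CGr{G}_2)$ through monotonicity and sandwich bounds. If that fails, the fallback is to verify the small cases directly by solving the SDP \eqref{eq: SDP problem - Lovasz theta-function}, and then to propagate the result to all even $n$ by induction: enlarging $\Gr{G}_1$ does not change the $\Gr{G}_2$-dominated $\vartheta$ value.
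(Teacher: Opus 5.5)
Your architecture is the right one: Theorem~\ref{theorem: Berman and Hamud, 2023} seeded with the ten-vertex pairs, a regular $\Gr{G}_1=\Gr{H}_1$ on $m\ge 2$ vertices, and $n=2m+10$. Your bookkeeping for cospectrality, irregularity, connectivity, $\alpha$, $\omega$ and $\chi$ goes through. The genuine gap is the one step that carries the theorem. You never show that the $\vartheta$-values differ, and the mechanism you propose aims at a false target.

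In $\Gamma=\CoG{m}\NS\Gr{G}_2$ (and in any NS or NNS join), the splitting vertices $v'_1,\dots,v'_m$ are pairwise nonadjacent and nonadjacent to every vertex of $\Gr{G}_2$. So $\Gamma$ contains $m\CoG{1}\DU\Gr{G}_2$ as an induced subgraph, and $\vartheta(\Gamma)\ge m+\vartheta(\Gr{G}_2)>\vartheta(\Gr{G}_2)$. No choice of $\Gr{G}_1$ gives $\vartheta(\Gamma)=\vartheta(\Gr{G}_2)$.

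The sandwich bounds \eqref{eq1: sandwich} cannot separate the two graphs either. Both have independence number $m+3$, so the resulting intervals overlap. The fallback is not a proof. SDP evaluations cover only finitely many $n$, and the induction premise is false: $\vartheta(\Gamma)$ increases by one with each vertex added to $\CoG{m}$. What persists is only the difference, and establishing that requires an exact formula.

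The exact formula is elementary. Use the NS join with $\Gr{G}_1=\Gr{H}_1=\CoG{m}$, $m\ge 2$, so each $v'_i$ has $m-1\ge 1$ neighbors.
\begin{itemize}
\item Lower bound: $\vartheta(\Gamma)\ge m+\vartheta(\Gr{G}_2)$ is the induced-subgraph bound above, combined with additivity of $\vartheta$ over disjoint unions.
\item Upper bound: delete all edges between $\V{\CoG{m}}$ and $\V{\Gr{G}_2}$. This only removes constraints in \eqref{eq: SDP problem - Lovasz theta-function}, so it cannot decrease $\vartheta$. The result is $\Gamma_0\DU\Gr{G}_2$, where $\Gamma_0$ is the subgraph induced by $\V{\CoG{m}}\cup\{v'_1,\dots,v'_m\}$.
\item Value of $\Gamma_0$: the set $\{v'_1,\dots,v'_m\}$ is independent, and $\{v'_1\}\cup(\V{\CoG{m}}\setminus\{v_1\})$, $\{v'_2,v_1\}$, $\{v'_3\},\dots,\{v'_m\}$ cover $\Gamma_0$ by $m$ cliques. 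Hence $\vartheta(\Gamma_0)=m$.
\end{itemize}
Therefore $\vartheta(\CoG{m}\NS\Gr{G}_2)=m+\vartheta(\Gr{G}_2)$, and likewise for $\Gr{H}_2$. The two $\vartheta$-values differ by $\vartheta(\Gr{G}_2)-\vartheta(\Gr{H}_2)\neq 0$ for every $m\ge 2$ at once, using only the ten-vertex values from Example~4.18 of \cite{Sason2024}. This also yields nonisomorphism, while $\indnum{\Gamma}=\clnum{\Gamma}=m+3$ and $\chrnum{\Gamma}=m+4$ for both graphs.
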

That result is in fact strengthened in Section~4.2 of \cite{Sason2024}, and the interested reader is referred to that paper
for further details. The proof of Theorem~\ref{theorem: Sason - existence of NICS graphs with distinct theta functions} is
also constructive, providing explicit such graphs.

\section{Open questions and outlook}
\label{section: summary and outlook}

We conclude this chapter by highlighting some of the most significant open questions in the fascinating area of research related to
cospectral nonisomorphic graphs and graphs determined by their spectrum, leaving them as topics for further future study.

\subsection{Haemers' conjecture}
\label{subsection: Haemers' conjecture}

Haemers' conjecture \cite{Haemers2016,Haemers2024}, a prominent topic in spectral graph theory, posits that almost all graphs
are uniquely determined by their adjacency spectrum. This conjecture suggests that for large graphs, the probability of having two
non-isomorphic graphs, on a large number of vertices, sharing the same adjacency spectrum is negligible. The conjecture has inspired
extensive research, including studies on specific graph families, cospectrality, and algebraic graph invariants, contributing to deeper
insights into the relationship between graph structure and eigenvalues. Haemers' conjecture is stated formally as follows.

\begin{definition}
\label{definition: Haemers' conjecture}
For $n \in \mathbb{N}$, let $I(n)$ to be the numbers of distinct graphs on $n$ vertices, up to isomorphism.
For $\mathcal{X} \subseteq \Gmats$, let $\alpha_{\mathcal{X}}(n)$ be the number of $\mathcal{X}$-DS graphs
on $n$ vertices, up to isomorphism, and (for the sake of simplicity of notation) let $\alpha(n) \triangleq \alpha_{\{ A \}}(n)$
\end{definition}

\begin{conjecture}
\label{conjecture: Haemers' conjecture} \cite{Haemers2016}
For sufficiently large $n$, almost all of the graphs are DS, i.e.,
\begin{align}
\label{eq: Heamers' conjecture}
\lim_{n \rightarrow \infty} \frac{\alpha(n)}{I(n)} = 1.
\end{align}
\end{conjecture}

Several results lend support to this conjecture \cite{Haemers2016, KovalK2024, WangW2024}, but a complete proof in its full generality remains elusive.
By \cite{GodsilR2001}, the number of graphs with $n$ vertices up to isomorphism is
\begin{align}
I(n) = \bigl( 1 - \epsilon(n) \bigr) \, \frac{2^{n(n-1)/2}}{n!},
\end{align}
where $\underset{n \rightarrow \infty}{\lim} \epsilon(n) = 0$.
By Stirling's approximation for $n!$ and straightforward algebra, $I(n)$ can be verified to be given by
\begin{align}
I(n) = 2^{\frac{n(n-1)}{2} \; \bigl(1 - \epsilon(n)\bigr)}.
\end{align}

It is shown in \cite{KovalK2024} that the number of DS graphs on $n$ vertices is at least $e^{cn}$ for some constant $c>0$ (see also the
discussion in Section~\ref{subsection: Nice graphs}).
In light of Remark~\ref{remark: X,Y cospectrality}, Conjecture~\ref{conjecture: Haemers' conjecture} leads to the following question.
\begin{question}
\label{question: generalized Haemers' conjecture}
For what minimal subset $\mathcal{X} \subset \Gmats$ (if any) does the limit
\begin{align}
\label{eq: generalized Haemers' conjecture}
\lim_{n\rightarrow \infty} \frac{\alpha_{\mathcal{X}}(n)}{I(n)} = 1
\end{align}
hold?
\end{question}

Some computer results somewhat support Haemers' Conjecture. Approximately $80\%$ of the graphs with $12$ vertices are DS \cite{BrouwerS2009}.
A new class of graphs is defined in \cite{WangW2024}, offering an algorithmic method for finding all the $\{\A,\overline{\A}\}$-cospectral mates
of a graph $\Gr{G}$ in this class  (i.e., graphs that are $\{\A,\overline{\A}\}$-cospectral with $\Gr{G}$).
Using this algorithm, they found that out of $10,000$ graphs with $50$ vertices, chosen uniformly at random from this class, at least $99.5\%$
of them were $\{\A, \overline{\A}\}$-DS.

\subsection{DS properties of structured graphs}
\label{subsection: DS properties of structured graphs}

This chapter explores various structures of graph families determined by their spectrum with respect to one or more of their associated matrices,
as well as the related problem of constructing pairs of nonisomorphic graphs that are cospectral with respect to some or all of these matrices.
Several questions are interspersed throughout the chapter (see Questions~\ref{question: DS SRG}, \ref{question: NICS graphs - 1}, \ref{question: NICS graphs - 2},
\ref{question: NICS - DasP2013}, and~\ref{question: Butler2010}), which remain open for further study.

In addition to serving as a survey on spectral graph determination, this chapter suggests a new alternative proof to Theorem~3.3 in \cite{MaRen2010},
asserting that all Tur\'{a}n graphs are determined by their adjacency spectrum (see Theorem~\ref{theorem: Turan's graph is DS}).
This proof is based on a derivation of the adjacency spectrum of the family of Tur\'{a}n graphs (see Theorem~\ref{theorem: A-spectrum of Turan graph}).
Since these graphs are generally bi-regular multipartite graphs (i.e., their vertices have only two possible degrees, which in this case are consecutive
integers), it does not necessarily imply that Tur\'{a}n graphs are determined by the spectrum of some other associated matrices, such as the Laplacian,
signless Laplacian, or normalized Laplacian matrices. Determining whether this holds is an open question.

The distance matrix of a connected graph is the symmetric matrix whose columns and rows are indexed by the graph vertices, and its entries are
equal to the pairwise distances between the corresponding vertices. The distance spectrum is the multiset of eigenvalues of the distance matrix,
and its characterization has been a subject of fruitful research (see \cite{AouchicheH14,HuiqiuJJY21} for comprehensive surveys on the distance
spectra of graphs, and \cite{Banerjee23,HogbenR22} on the spectra of graphs with respect to variants of the distance matrix). Nonisomorphic graphs
may share an identical adjacency spectrum, as well as an identical distance spectrum, and therefore be graphs that are not determined by either
their adjacency or distance spectrum.
This holds, e.g., for the Shrikhande graph and the line graph of the complete bipartite graph $\CoBG{4}{4}$, which are nonisomorphic strongly
regular graphs with the identical parameters $(16,6,2,2)$, sharing an identical $\A$-spectrum given by $\{6, [2]^6, [-2]^6\}$ and an identical
distance spectrum given by $\{24, [0]^9, [-4]^6\}$. There exist, however, graphs that are determined by their distance spectrum but are not determined
by their adjacency spectrum. In this context, \cite{JinZ2014} proves that complete multipartite graphs are uniquely determined by their distance
spectra (this result confirms a conjecture proposed in \cite{LinHW2013} and extends the special case established there for complete bipartite graphs).
A Tur\'{a}n graph is, in particular, determined by its distance spectrum \cite{JinZ2014}, and by its adjacency spectra (see
Theorem~\ref{theorem: Turan's graph is DS} here). On the other hand, while complete multipartite graphs are not generally determined by their
adjacency spectra (e.g., for complete bipartite graphs, see Theorem~\ref{thm:when CoBG is DS?}), they are necessarily determined by their distance
spectra \cite{JinZ2014}. Another family of graphs that are determined by their distance spectrum but are not determined by their adjacency spectrum,
named $d$-cube graphs, is provided in \cite{KoolenHI2016,KoolenHI2016b}. These graphs have their vertices represented by binary $n$-length tuples,
where any two vertices are adjacent if and only if their corresponding binary $n$-tuples differ in one coordinate; it is also shown that these graphs
have exactly three distinct distance eigenvalues. The question of which multipartite graphs, or graphs in general, are determined by their distance spectra remains open.

Another newly obtained proof presented in this chapter refers to a necessary and sufficient condition in \cite{MaRen2010} for complete bipartite graphs
to be $\A$-DS (see Theorem~\ref{thm:when CoBG is DS?} and Remark~\ref{remark: complete bipartite graphs} here). These graphs are also bi-regular, and
the two possible vertex degrees can differ by more than one.
Both of these newly obtained proofs, discussed in Section~\ref{section: special families of graphs}, provide insights into the broader question of which
(structured) multipartite graphs are determined by their adjacency spectrum or, more generally, by the spectra of some of their associated matrices.

Even if Haemers' conjecture is eventually proved in its full generality, it remains surprising when a new family of structured graphs is shown to be DS
(or $X$-DS, more generally). This is because for certain structured graphs, such as strongly regular graphs and trees, their spectra often fail to uniquely
determine them \cite{vanDamH03,vanDamH09}. This stark contrast between the fact that almost all random graphs of high order are likely to be DS and the
existence of interesting structured graphs that are not DS has significant implications.

In addition to the questions posed earlier in this chapter, we raise the following additional concrete question:
\begin{question}
{\em By Theorem~\ref{theorem: Berman's generalized friendship graph}, the family of generalized friendship graphs $F_{p,q}$ is ${\bf{\mathcal{L}}}$-DS.
Are these graphs also DS with respect to their other associated matrices?}
\end{question}

We speculate that the DS property of a graph correlates, to some extent, with the number of symmetries that the graph possesses, and we hypothesize
that the size of the automorphism group of a graph can partially indicate whether it is DS.

A justification for this claim is that the automorphism group of a graph reflects its symmetries, which can influence the eigenvalues of its adjacency
matrix. Highly symmetric graphs (i.e., those with large automorphism groups) often exhibit eigenvalue multiplicities and patterns that are shared by other
nonisomorphic graphs, making such graphs less likely to be DS. Conversely, graphs with trivial automorphism groups are typically less symmetric and may
have eigenvalues that uniquely determine their structure, increasing the likelihood that they are DS. As noted in \cite{GodsilR2001}, almost all graphs
have trivial automorphism groups. This observation aligns with the conjecture that most graphs are DS, as the absence of symmetry reduces the likelihood
of two nonisomorphic graphs sharing the same spectrum.

It is noted, however, that the DS property of graphs is not solely dictated by their automorphism groups. Specifically, a graph with a large
automorphism group can still be DS if its eigenvalues uniquely encode its structure (see Section~\ref{subsection: strongly regular graphs}).
In contrast to these DS graphs, other graphs with trivial automorphism groups are not guaranteed to be DS; in such cases, the spectrum might
not capture enough structural information to uniquely determine the graph. Typically, graphs with a small number of distinct eigenvalues
seem to be, in general, the hardest graphs to distinguish by their spectrum. As noted in \cite{vanDamO11}, it seems that most graphs with
few eigenvalues (e.g., most of the strongly regular graphs) are not determined by their spectrum, which served as one of the motivations of
the work in \cite{vanDamO11} in studying graphs whose normalized Laplacian has three eigenvalues.

To conclude, the size of the automorphism group of a graph can provide some indication of whether it is DS, but it is not a definitive criterion. While
large automorphism groups often correlate with the graph not being DS due to shared eigenvalues among nonisomorphic graphs, this is not an absolute rule.
Therefore, the claim should be understood as a general observation that requires qualification to account for exceptions.

\chapter{The graphs of pyramids}
\label{chap:graphs_of_pyramids}

\begin{center}
\textbf{Chapter Overview}
\end{center}
\noindent
For natural numbers $k<n$ we study the graphs $T_{n,k}:=K_{k}\lor\overline{K_{n-k}}$.
For $k=1$, $T_{n,1}$ is the star $S_{n-1}$. For $k>1$ we refer
to $T_{n,k}$ as a \emph{graph of pyramids}. We prove that the graphs
of pyramids are determined by their spectrum, and that a star $S_{n}$
is determined by its spectrum iff $n$ is prime. We also show that
the graphs $T_{n,k}$ are completely positive iff $k\le2$. 
\par\medskip

\section{Introduction }
\label{sec:introduction_graphs_of_pyramids}

In the second year of my undergraduate studies, I took an advanced course in matrix theory. This chapter is one of the outcomes of this course.
Two of the topics studied in the course were completely positive (CP)
matrices and graphs and cospectrality of graphs. In one of the problems
in the course we used the Schur complement to show that the graph
$\GOP_{n}$, the graph of $n-2$ triangles with a common base, is CP and DS.

In this chapter, we generalize $\GOP_{n}$ to $\GOP_{n,k}$ - graphs that
consist of $n-k$ pyramids $\Complete_{k+1}$ with $\Complete_{k}$ as a common base
and use the Schur complement to compute their spectrum. For $k>1$
we refer to $\GOP_{n,k}$ as graphs of pyramids and show that they are
determined by their spectrum (DS). For $k=1$, $\GOP_{n,1}$ is a star
$\SG{n}$, and it is DS if and only if $n-1$ is prime. Since the
motivation for this chapter was the proof that $\GOP_{n} = \GOP_{n,2}$ is completely
positive, we start the chapter with a short survey of complete positivity.
We summarize the chapter with a table that classifies the graphs $\GOP_{n,k}$
according to their being DS/CP.

\begin{figure}[H]
\begin{centering}
\includegraphics[scale=0.3]{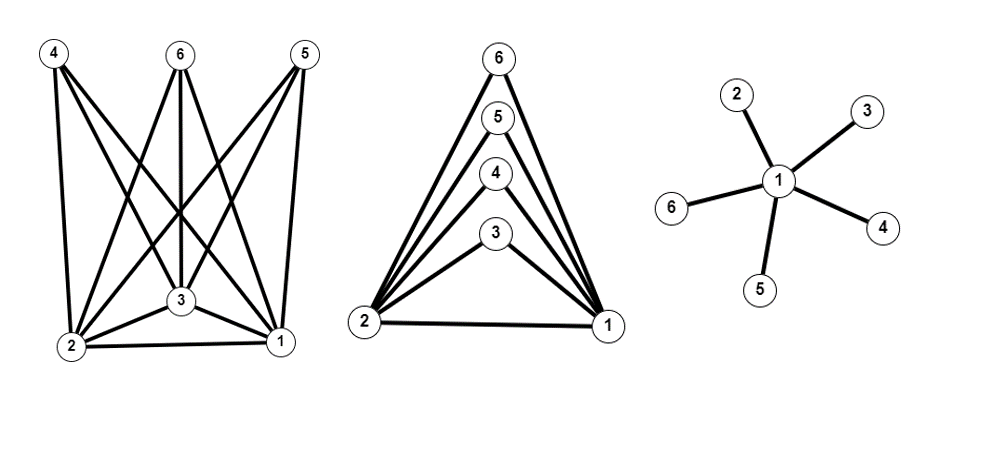}
\par\end{centering}
\centering{}\caption{the graphs $\GOP_{6,3}$ , $\GOP_{6,2}$ and $\GOP_{6,1}$}
\end{figure}

\section{Completely positive matrices and graphs}

A matrix $\A$ is \emph{completely positive} if there exists a nonnegative
matrix $\mathbf{B}$ s.t. $\A=\mathbf{B}\mathbf{B}^{T}$ (see Definition~\ref{definition: completely positive matrix}).
The readers are referred to \cite{BermanSM2003} and \cite{ShakedBbook19} for the properties and the many applications of CP matrices. A necessary condition for a matrix to be completely positive is that it is \emph{doubly nonnegative}
i.e. nonnegative and positive semidefinite. This condition is not sufficient.
\begin{example}
\label{exa:non-CP-matrix}
The matrix
\begin{align}
	\A = \begin{pmatrix}1 & 1 & 0 & 0 & 1\\
		1 & 2 & 1 & 0 & 0\\
		0 & 1 & 2 & 1 & 0\\
		0 & 0 & 1 & 2 & 1\\
		1 & 0 & 0 & 1 & 3
	\end{pmatrix}
\end{align}
is doubly nonnegative but not CP. 
\end{example}

When is the necessary condition sufficient ? 

\begin{definition}
\begin{enumerate}
	\item The graph of a matrix $\A \in \Sym_n$, denoted $\Gr{G}(\A)$, is
	a graph $\Gr{G}=(\Vertex,\Edge)$ where 
	\begin{align}
		\Vertex=\langle n\rangle \ ,\ \Edge=\left\{ (i,j)\ \mid\ i\ne j\ ,\ a_{i,j} \ne 0 \right\} .
	\end{align}
	\item If $\Gr{G}(\A)=\Gr{G}$, we say that $\A$ is a \emph{matrix realization}
	of $\Gr{G}$.
\end{enumerate}
\end{definition}

\begin{example}
The graph of the matrix in Example \ref{exa:non-CP-matrix} is a pentagon. 
\end{example}

\begin{definition}
A graph $\Gr{G}$ is \emph{completely positive} ( \emph{CP graph} ) if every doubly nonnegative matrix realization of $\Gr{G}$ is completely positive. 
\end{definition}

Examples of CP graphs are : 
\begin{itemize}
	\item Graphs with less than 5 vertices \cite{MaxfieldM1962}.
	\item Bipartite graphs \cite{BermanG1988}.
	\item $\GOP_{n}$ \cite{KoganB1993}.
\end{itemize}

\begin{definition}
A \emph{long odd cycle} is a cycle of odd length greater than 3. 
\end{definition}

Graphs that contain a long odd cycle are not completely positive \cite{BermanH1987}.
Examples of graphs that contain a long odd cycle are $\mathsf{B}_{n}$ \cite{BermanS2012}, a cycle $\Cycle_{n}$ with a chord between vertices $1$ and $3$ (Figure \ref{fig:H5-house-graph}), for every $n \ge 5$.

\begin{figure}[H]
	\centering{}\includegraphics[scale=0.3]{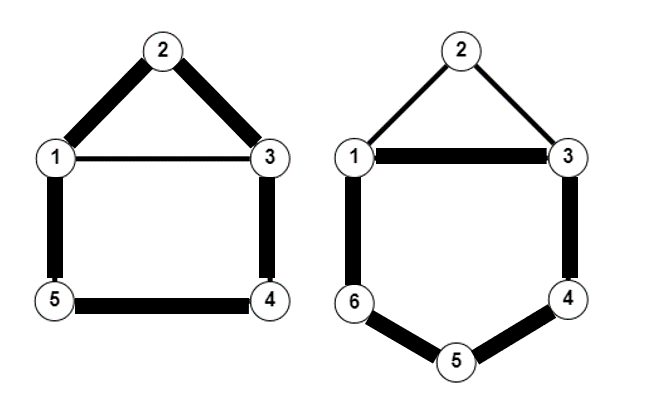}\caption{\label{fig:H5-house-graph}The graphs $\mathsf{B}_{5}$ and $\mathsf{B}_{6}$.}
\end{figure}

A characterization of completely positive graphs is : 
\begin{theorem}
\label{thm:CP-of-graph} \cite{KoganB1993} The following
properties of a graph $\Gr{G}$ are equivalent :
\begin{enumerate}
	\item $\Gr{G}$ is CP. 
	\item $\Gr{G}$ does not contain a long odd cycle
	\item The line graph of $\Gr{G}$ is perfect. 
\end{enumerate}
\end{theorem}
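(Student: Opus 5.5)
We plan to prove the cycle of implications $(1)\Rightarrow(2)\Rightarrow(1)$, and separately $(2)\Leftrightarrow(3)$. For $(1)\Rightarrow(2)$ we argue by contraposition: if $\Gr{G}$ contains a long odd cycle $\Cycle_k$ (with $k\ge 5$ odd, possibly with chords), we will build a doubly nonnegative realization of $\Gr{G}$ that is not CP. The base matrix is $\mathbf{M}_0=t\,\I{k}+\A(\CG{k})$ with $2\cos(\pi/k)<t<2$. Since the least eigenvalue of $\A(\CG{k})$ is $-2\cos(\pi/k)$ for odd $k$, $\mathbf{M}_0$ is positive definite and nonnegative.

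The graph of $\mathbf{M}_0$ is triangle free. For such matrices, complete positivity is equivalent to positive semidefiniteness of the comparison matrix $t\,\I{k}-\A(\CG{k})$, a known result we would cite. That comparison matrix has eigenvalue $t-2<0$, so $\mathbf{M}_0$ is not CP; this generalizes Example~\ref{exa:non-CP-matrix}. Next we pad $\mathbf{M}_0$ by an identity block on the remaining vertices and perturb by $\varepsilon$ on every other edge of $\Gr{G}$, including the chords of the cycle. The unperturbed matrix is positive definite, so small perturbations stay positive definite. The CP cone is closed, so its complement is open and small perturbations of a non-CP matrix stay non-CP. For small $\varepsilon>0$ this yields a DNN, non-CP realization of $\Gr{G}$.

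For $(3)\Rightarrow(2)$, take a long odd cycle of minimum length in $\Gr{G}$. Its edges form a cycle of the same odd length $\ge 5$ in $\ell(\Gr{G})$, and two of these edges are adjacent in the line graph only if they are consecutive on the cycle. So they induce an odd hole, contradicting perfectness. For $(2)\Rightarrow(3)$ we use the strong perfect graph theorem. An odd hole in $\ell(\Gr{G})$ comes from an odd cycle of length $\ge5$ in $\Gr{G}$. An odd antihole $\overline{\Cycle_m}$ with $m\ge7$ in a line graph is impossible without such a cycle: we would show directly that the edge configuration realizing it contains a long odd cycle. Alternatively, $\overline{\Cycle_7}$ is not a line graph at all, and for larger $m$ we would argue via the forbidden-subgraph characterization of line graphs. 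Here we may simply cite Trotter's theorem if the direct check gets messy.

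The main obstacle is $(2)\Rightarrow(1)$. We plan a structural decomposition. First we show that every $2$-connected block of a graph without long odd cycles is bipartite, a $\CoG{4}$, or a book graph $\GOP_{m}$ ($m-2$ triangles on a common base). The argument is an ear-decomposition analysis: adding an ear to a nonbipartite block creates a long odd cycle unless it closes only triangles on one fixed edge. Each block type is CP: bipartite graphs by \cite{BermanG1988}, graphs on fewer than five vertices by \cite{MaxfieldM1962}, and $\GOP_m$ by \cite{KoganB1993}, whose Schur-complement argument we would reproduce. Finally we show that CP graphs are closed under gluing at a cut vertex. Given a DNN realization with a cut vertex $v$, we split the diagonal entry $a_{vv}$ between the two sides so that both principal pieces remain DNN, which is possible by a Schur-complement computation at $v$. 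We then decompose each piece as $\mathbf{B}_i\mathbf{B}_i^{\mathrm{T}}$ and concatenate the columns. Choosing the split of $a_{vv}$ so that both pieces stay positive semidefinite is the delicate step, and we expect it to require the most care.
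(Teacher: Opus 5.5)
The paper gives no proof of this theorem: it quotes it from \cite{KoganB1993}, attributes the implication (1)$\Rightarrow$(2) to \cite{BermanH1987}, and attributes (2)$\Leftrightarrow$(3) to \cite{Trotter1977}. Your outline is correct, and it reconstructs essentially the standard argument behind those citations. In one direction, you take a positive definite, non-CP realization of the odd cycle, using the comparison-matrix criterion for triangle-free graphs. You then extend it to all of $\Gr{G}$ by an $\varepsilon$-perturbation, using that the CP cone is closed. In the other direction, you use three ingredients:
\begin{enumerate}
\item the block structure, where every block is bipartite, $\CoG{4}$, or $\GOP_m$; your ear-by-ear check is right, and $\CoG{4}$ admits no further ear;
\item CP-ness of each block type;
\item gluing at cut vertices.
\end{enumerate}

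Three remarks on where the weight actually lies.

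\textbf{The cut-vertex split is routine.} You call this step delicate, but it is not. Let $\A_1,\A_2$ be the principal blocks on the two sides, and let $\mathbf{b}_1,\mathbf{b}_2$ be the corresponding parts of the row of $v$. Positive semidefiniteness gives $a_{vv}\ge \mathbf{b}_1^{\mathrm{T}}\A_1^{+}\mathbf{b}_1+\mathbf{b}_2^{\mathrm{T}}\A_2^{+}\mathbf{b}_2$, and any division of the slack works.

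\textbf{The real content is the $\GOP_m$ step.} The Schur complement $\mathbf{S}$ of the diagonal apex block can have a negative off-diagonal entry, so simply ``reproducing'' a Schur-complement factorization does not finish the job. One clean fix is as follows. Set $p_i=a_{1i}/\sqrt{a_{ii}}$, $q_i=a_{2i}/\sqrt{a_{ii}}$ and $\mu_i=p_iq_i/\sum_j p_jq_j$. Give the triangle $\{1,2,i\}$ the base block $\mu_i\mathbf{S}+(p_i,q_i)^{\mathrm{T}}(p_i,q_i)$, together with its apex entries. Each piece is positive semidefinite, because its Schur complement is $\mu_i\mathbf{S}$. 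Its $(1,2)$ entry equals $\mu_i a_{12}\ge 0$. So each piece is a $3\times 3$ DNN matrix, hence CP by \cite{MaxfieldM1962}, and the pieces sum to $\A$.

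\textbf{Using SPGT for (2)$\Rightarrow$(3) is valid but heavy.} Odd holes in $\ell(\Gr{G})$ pull back to long odd cycles. Odd antiholes of length at least $7$ are never line graphs: for $m\ge 9$, $\overline{\CG{m}}$ induces $\CoG{5}$ minus an edge on $\{0,1,3,5,7\}$, and $\overline{\CG{7}}$ fails a degree count. However, this route uses a far deeper theorem than Trotter's direct argument needs; what it buys is brevity. Also, the minimality of the cycle in your (3)$\Rightarrow$(2) is unnecessary: the edge set of any cycle induces a hole in $\ell(\Gr{G})$.
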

The equivalence between 2 and 3 was proved in \cite{Trotter1977}. 

\section{The graphs $\GOP_{n}$}
\label{sec:graphs_Tn}
Recall that the graph $\GOP_{n}$ consists of $n-2$ triangles with a common base. 

\begin{theorem}
\label{thm:The-spectrum-of-Tn}
The spectrum of $\GOP_{n}$ is 
\begin{align}
	\sigma(\GOP_{n}) = \left\{ \left(\frac{1-\sqrt{8n-15}}{2}\right),(-1),(0)^{n-3},\left(\frac{1+\sqrt{8n-15}}{2}\right)\right\}.
\end{align}
\end{theorem}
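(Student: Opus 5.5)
We compute the spectrum of $\GOP_{n} = \GOP_{n,2} = \CoG{2} \vee \CGr{\CoG{n-2}}$ by ordering the vertices so that the two base vertices come first. The adjacency matrix then has the block form
\begin{align}
\A(\GOP_n)=
\begin{pmatrix}
\J{2,2}-\I{2} & \J{2,n-2}\\
\J{n-2,2} & \mathbf{0}_{n-2,n-2}
\end{pmatrix}.
\end{align}

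The plan is to determine the characteristic polynomial $\det(x\I{n}-\A)$ via Theorem~\ref{theorem: Schur complement}, taking $\mathbf{D}=x\I{n-2}$, which is invertible for $x\neq 0$. The Schur complement is the $2\times 2$ matrix
\begin{align}
x\I{2}-(\J{2,2}-\I{2})-\tfrac{n-2}{x}\,\J{2,2}
=(x+1)\I{2}-\Bigl(1+\tfrac{n-2}{x}\Bigr)\J{2,2}.
\end{align}
Its eigenvalues are obtained from those of $\J{2,2}$, namely $2$ and $0$. They are $x+1$ and $x+1-2\bigl(1+\tfrac{n-2}{x}\bigr)=\tfrac{x^2-x-2(n-2)}{x}$. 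Multiplying by $\det\mathbf{D}=x^{n-2}$ gives
\begin{align}
\det(x\I{n}-\A)=x^{n-3}(x+1)\bigl(x^2-x-2(n-2)\bigr).
\end{align}
This holds for all $x\neq 0$, and since both sides are polynomials it holds identically.

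The roots can then be read off directly: $0$ with multiplicity $n-3$, the root $-1$, and $\tfrac{1\pm\sqrt{1+8(n-2)}}{2}=\tfrac{1\pm\sqrt{8n-15}}{2}$.

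As a sanity check, we will use Corollary~\ref{corollary: number of edges and triangles in a graph}. The eigenvalues sum to $0$, and their squares sum to $1+\tfrac{2+2(8n-15)}{4}=4n-6=2(2n-3)$, which matches the $2n-3$ edges of $\GOP_n$.

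The only mildly delicate step is justifying the polynomial identity at $x=0$, which the continuity/polynomial argument above handles. Alternatively, one can count the zero eigenvalues directly: $\rank\A=3$, so $0$ has multiplicity $n-3$.
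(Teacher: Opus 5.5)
Your proof is correct and follows essentially the same route as the paper: a Schur complement with respect to the block $x\I{n-2}$ reduces $\det(x\I{n}-\A)$ to a $2\times 2$ determinant, giving $x^{n-3}(x+1)\bigl(x^2-x-2(n-2)\bigr)$. Your version is cleaner in two places. You justify the case $x=0$ through the polynomial identity or the rank count, which the paper leaves implicit. You also avoid two slips in the paper's displayed computation: its $2\times 2$ matrix should have off-diagonal entries $-1-\tfrac{n-2}{\lambda}$, and its final factorization writes $\lambda$ where $\lambda^{n-3}$ is meant.
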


\begin{proof}
The adjacency matrix of $\GOP_{n}$ is 
\begin{align}
	\A(\GOP_{n}) = \left(\begin{array}{cccccc}
		0 & 1 & 1 & 1 & \cdots & 1\\
		1 & 0 & 1 & 1 & \cdots & 1\\
		1 & 1 & 0 & 0 & \cdots & 0\\
		\vdots & \vdots & 0 & 0 & \cdots & \vdots\\
		1 & 1 & 0 & 0 & \cdots & 0\\
		1 & 1 & 0 & 0 & \cdots & 0
	\end{array} \right).
\end{align}
Let $\mathbf{M} = \lambda \Identity -\A\left(\GOP_{n}\right)$ be the characteristic matrix
of $\GOP_{n}$. The characteristic polynomial of $\GOP_{n}$ is 
\begin{align}
	\det(\mathbf{M})=\det \left(\begin{array}{cccccc}
		\lambda & -1 & -1 & -1 & \cdots & -1\\
		-1 & \lambda & -1 & -1 & \cdots & -1\\
		-1 & -1 & \lambda & 0 & \cdots & 0\\
		\vdots & \vdots & 0 & \lambda & 0 & 0\\
		-1 & -1 & 0 & 0 & \ddots & 0\\
		-1 & -1 & 0 & 0 & \cdots & \lambda
	\end{array}\right).
\end{align}

Denote $\A=\begin{pmatrix} 
	\lambda & -1\\ -1 & \lambda
\end{pmatrix}\ \ ,\mathbf{B}= -\AllOne_{2,n}\ \ ,\mathbf{D} = \lambda \cdot \Identity_{n-2}$. Then
\begin{align}
	\mathbf{M}=\begin{pmatrix} \mathbf{A} & \mathbf{B}\\
		\mathbf{B}^{T} & \mathbf{D}
	\end{pmatrix}.
\end{align}

Using Theorem \ref{theorem: Schur complement} , we get 
\begin{align}
	\det(\mathbf{M}) &= \det(\mathbf{D})\cdot \det\left(\nicefrac{\mathbf{M}}{\mathbf{D}}\right)=\det(\mathbf{D})\cdot \det\left(\mathbf{A}-\mathbf{B}^{T}\mathbf{D}^{-1}\mathbf{B}\right) \\
	& =\det\left(\lambda\cdot \Identity_{n-2}\right)\cdot \det\left(\A-\AllOne_{2,n} \cdot \frac{1}{\lambda} \Identity_{n-2}\cdot \AllOne_{n,2}\right) \\
	& =\left(\lambda\right)^{n-2}\cdot \det\left(\begin{pmatrix}\lambda & -1 \\
		-1 & \lambda
	\end{pmatrix} -\frac{n-2}{\lambda}\cdot \AllOne_{2,2}\right)=\left(\lambda\right)^{n-2}\cdot \det\begin{pmatrix}\lambda-\frac{n-2}{\lambda} & \frac{n-2}{\lambda}\\
		\frac{n-2}{\lambda} & \lambda-\frac{n-2}{\lambda}
	\end{pmatrix} \\
	& = (\lambda)^{n-2} \cdot \left[\left(\lambda-\frac{n-2}{\lambda}\right)^{2}-\left(\frac{n-2}{\lambda}\right)^{2}\right]=\ \lambda\cdot\left(\lambda+1\right)\cdot\left(\lambda^{2}-\lambda+2\cdot\left(2-n\right)\right).
\end{align}
Hence, the spectrum is: 
\begin{align}
	\sigma\left(\GOP_{n}\right)=\left\{ \left(\frac{1-\sqrt{8n-15}}{2}\right),\left(-1\right),\left(0\right)^{n-3},\left(\frac{1+\sqrt{8n-15}}{2}\right)\right\} .
\end{align}
\end{proof}

Observe that for $n=3$ we get the spectrum of a triangle $\sigma(\GOP_{3})=\sigma(\Complete_{3})=\sigma(\Cycle_{3})=\left\{ (-1)^{2},(2)\right\}$.

In the next section we study the graphs $\GOP_{n,k}$, and show that for $k>1$ they are DS (Theorem \ref{thm:MAIN Tnk DS}). Since $\GOP_{n} = \GOP_{n,2}$, we get the following theorem as special case of Theorem \ref{thm:MAIN Tnk DS}. 
\begin{theorem}
 \label{thm:MAIN-Tn-DS}The graph $\GOP_{n}$ is determined by its spectrum. 
\end{theorem}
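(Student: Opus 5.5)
The plan is to combine the spectral data of $\GOP_n$ from Theorem~\ref{thm:The-spectrum-of-Tn} with Smith's characterization (Theorem~\ref{theorem:Smith_theorem_1}), which reduces any cospectral mate to a complete multipartite graph plus isolated vertices. The remaining parameters are then pinned down by counting edges and triangles. Let $\Gr{G}$ be a graph cospectral with $\GOP_n$, with $n \geq 3$.

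\emph{Step 1: counting edges and triangles.} By Theorem~\ref{thm:The-spectrum-of-Tn}, the spectrum is $\{\tfrac{1\pm\sqrt{8n-15}}{2}, -1, [0]^{n-3}\}$. By Corollary~\ref{corollary: number of edges and triangles in a graph}, $\Gr{G}$ has as many edges and triangles as $\GOP_n$, namely $2n-3$ edges and $n-2$ triangles. (One can also recompute these from $\sum\lambda_j^2$ and $\sum\lambda_j^3$ as a sanity check.)

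\emph{Step 2: structure via Smith's theorem.} The spectrum has exactly one positive eigenvalue, $\tfrac{1+\sqrt{8n-15}}{2}$. By Theorem~\ref{theorem:Smith_theorem_1}, $\Gr{G}=\Gr{H}\cup\CGr{\CoG{m}}$, where $\Gr{H}=\CoG{n_1,\dots,n_r}$ is a nonempty complete multipartite graph. By Theorem~\ref{theorem: spectrum of multipartite graphs}, $\Gr{H}$ has exactly $r-1$ negative eigenvalues, since each $-\lambda$ is squeezed between consecutive part sizes, all of which are at least $1$. The isolated vertices contribute only zeros. The spectrum of $\Gr{G}$ has exactly two negative eigenvalues, $-1$ and $\tfrac{1-\sqrt{8n-15}}{2}$, so $r=3$ and $\Gr{H}=\CoG{a,b,c}$ with $1\le a\le b\le c$ and $a+b+c\le n$.

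\emph{Step 3: the arithmetic, which I expect to be the crux.} In $\CoG{a,b,c}$, the edge and triangle counts from Step~1 give
\begin{align}
ab+bc+ca = 2n-3, \qquad abc = n-2,
\end{align}
hence $ab+bc+ca = 2abc+1$. If $a\ge 2$, then $2abc+1 > 4bc \ge 3bc \ge ab+bc+ca$, a contradiction, so $a=1$. Substituting $a=1$ gives $b+c+bc = 2bc+1$, i.e.\ $(b-1)(c-1)=0$, so $b=1$ and $c=abc=n-2$.

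\emph{Step 4: conclusion.} Now $a+b+c=n$, so there are no isolated vertices ($m=0$). Thus $\Gr{G}=\CoG{1,1,n-2}=\CoG{2}\vee\CGr{\CoG{n-2}}=\GOP_n$, which proves the theorem. The only delicate point is Step~2, namely that the count of negative eigenvalues of a complete $r$-partite graph is exactly $r-1$; this I would take directly from Theorem~\ref{theorem: spectrum of multipartite graphs}.
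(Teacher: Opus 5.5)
Your proof is correct. It takes a different route from the paper, which does not prove this statement on its own but obtains it as the case $k=2$ of Theorem~\ref{thm:MAIN Tnk DS}.

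\emph{The paper's route.} That proof uses only Cauchy interlacing on induced subgraphs:
\begin{itemize}
\item it excludes $\Complete_{k+2}$ and $\Path_4$ (Lemma~\ref{lem:k+2-clique});
\item it excludes two disjoint nonempty induced subgraphs with no edge between them (Lemma~\ref{lem:edge-disjoind-induced-cliques});
\item it shows, by a case analysis involving the explicit six-vertex graphs $\Gr{H}_3$ and $\Gr{H}_4$, that the complement of a maximum clique is independent (Lemma~\ref{lem:spanning-clique});
\item it finishes with the edge count.
\end{itemize}

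\emph{Your route.} You use Smith's theorem (Theorem~\ref{theorem:Smith_theorem_1}) to reduce at once to a complete multipartite graph plus isolated vertices. You then get $r=3$ from the two negative eigenvalues (equivalently, from $\rank \A(\Gr{G})=3$). Finally, the edge and triangle counts force $\{a,b,c\}=\{1,1,n-2\}$ and $m=0$. Every step checks out, including $\frac{1-\sqrt{8n-15}}{2}<0$ for $n\ge 3$.

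\emph{Comparison.} Your argument mirrors the paper's own proof for Tur\'an graphs (Theorem~\ref{theorem: Turan's graph is DS}). It is much shorter and avoids the delicate case analysis of Lemma~\ref{lem:spanning-clique}. The price is that Smith's theorem is a nontrivial black box and you also need the third spectral moment. The paper's argument needs only interlacing and the edge count, and it treats all $k\ge 2$ at once.

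Your method does extend to $\GOP_{n,k}$, but it needs one more ingredient, since there are now $k+1$ unknown part sizes. The interlacing between part sizes and negative eigenvalues in Theorem~\ref{theorem: spectrum of multipartite graphs} forces $k-1$ singleton parts. After that, the edge and triangle counts give $b+c=n-k+1$ and $bc=n-k$ for the remaining two parts, which again pins down the graph.
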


In the previous section we showed that $\GOP_{n}$ is CP. For $k>2$, $\GOP_{n,k}$ contains a long odd cycle, so they are not CP. Hence $\GOP_{n}$ are the only graphs of pyramids that are both DS and CP.

\section{The graphs $\GOP_{n,k}$}
\label{sec:the_graphs_Tnk_graphs_of_pyramids}
Let $k<n$ be natural numbers. Define $\GOP_{n,k} = \Complete_{k} \lor \overline{\Complete_{n-k}}$ (not to be confused with Turan graph $T(n,k)$). For $k=1$, $\GOP_{n,1}=\SG{n}$, and for $k=2$, $\GOP_{n,2}=\GOP_{n}$.

Since the graph $\GOP_{n,k}$, $k>1$ consists of $n-k$ pyramids ($\Complete_{k+1}$)
that intersect in $\Complete_{k}$ we refer to it as a \emph{graph of pyramids}. 


\begin{theorem}
\label{claim:spectrum-of-Tnk} 
The spectrum of $\GOP_{n,k}$ is 
\begin{align}
	\sigma(\GOP_{n,k}) = \left\{ (\lambda_{2}),(-1)^{k-1},(0)^{n-k-1},(\lambda_{1})\right\} , 
\end{align}
where $\lambda_{1},\lambda_{2}=\frac{(k-1)\pm\sqrt{(k-1)^{2}+4k(n-k)}}{2}$. 
\end{theorem}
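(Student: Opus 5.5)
The plan mirrors the proof of Theorem~\ref{thm:The-spectrum-of-Tn}: compute the characteristic polynomial of $\GOP_{n,k}$ with the Schur complement (Theorem~\ref{theorem: Schur complement}).

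Order the vertices so the $k$ clique vertices come first. Then
\begin{align}
\A(\GOP_{n,k}) = \begin{pmatrix} \J{k,k}-\I{k} & \J{k,n-k} \\ \J{n-k,k} & \mathbf{0}_{n-k,n-k} \end{pmatrix}.
\end{align}
Set $\mathbf{M}=\lambda\I{n}-\A(\GOP_{n,k})$. Its blocks are $\mathbf{A}=(\lambda+1)\I{k}-\J{k,k}$, $\mathbf{B}=-\J{k,n-k}$ and $\mathbf{D}=\lambda\I{n-k}$, which is invertible for $\lambda\neq0$. The Schur complement is
\begin{align}
\mathbf{M}/\mathbf{D}=(\lambda+1)\I{k}-\Bigl(1+\tfrac{n-k}{\lambda}\Bigr)\J{k,k}.
\end{align}

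Its determinant follows from the spectrum of $\J{k,k}$, which is $\{k,[0]^{k-1}\}$. This gives
\begin{align}
\det(\mathbf{M}/\mathbf{D})=(\lambda+1)^{k-1}\Bigl(\lambda+1-k-\tfrac{k(n-k)}{\lambda}\Bigr).
\end{align}
Multiplying by $\det\mathbf{D}=\lambda^{n-k}$ yields
\begin{align}
\det\mathbf{M}=\lambda^{n-k-1}(\lambda+1)^{k-1}\bigl(\lambda^2-(k-1)\lambda-k(n-k)\bigr).
\end{align}
This is a polynomial identity valid for all $\lambda\neq0$, so it holds identically.

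Reading off the roots gives $0$ with multiplicity $n-k-1$ and $-1$ with multiplicity $k-1$. The quadratic factor contributes $\lambda_{1,2}=\tfrac12\bigl((k-1)\pm\sqrt{(k-1)^2+4k(n-k)}\bigr)$. The multiplicities sum to $n$, as required. As sanity checks, the eigenvalues sum to $-(k-1)+(k-1)=0$, and $k=2$ recovers Theorem~\ref{thm:The-spectrum-of-Tn}.

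The main obstacle is only bookkeeping: computing the determinant of the rank-one perturbation $(\lambda+1)\I{k}-c\J{k,k}$ correctly, and handling the removable singularity at $\lambda=0$ by the polynomial-identity argument.
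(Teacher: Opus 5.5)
Your proposal is correct and follows essentially the same route as the paper. Both take the Schur complement of $\mathbf{M}=\lambda\I{n}-\A(\GOP_{n,k})$ with respect to $\lambda\I{n-k}$, use the spectrum of $\J{k,k}$ to evaluate the determinant of $(\lambda+1)\I{k}-\bigl(1+\tfrac{n-k}{\lambda}\bigr)\J{k,k}$, and read the eigenvalues off $\lambda^{n-k-1}(\lambda+1)^{k-1}\bigl(\lambda^2-(k-1)\lambda-k(n-k)\bigr)$.

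Your explicit remark that the identity extends to $\lambda=0$ because both sides are polynomials is a small gain in rigor, since the paper leaves this point implicit.
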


\begin{remark}
We precede the proof with a sanity check. For $k=1$, we get the spectrum
of $\SG{n} = \Complete_{1,n-1}$ (Theorem \ref{thm:spectrum of CoBG}). For $k=2$, we
get the spectrum of $\GOP_{n}$ (Theorem \ref{thm:The-spectrum-of-Tn}).
\end{remark}

\begin{proof}
(of Theorem \ref{claim:spectrum-of-Tnk}) Let $k\le n$ be natural
numbers and denote $r:=n-k$. The adjacency matrix of $\GOP_{n,k}$ is
\begin{align}
	\A(\GOP_{n,k})=\begin{pmatrix} \AllOne_{k} - \Identity_{k} & \AllOne_{k,r}\\
	\AllOne_{r,k} & \Zero_{r}
\end{pmatrix}.
\end{align}
Let $\mathbf{M} = \lambda \Identity_{n} - \A(\GOP_{n,k})$ be the characteristic matrix of $\GOP_{n,k}$. The characteristic polynomial of $\GOP_{n,k}$ is 
\begin{align}
	\det(\mathbf{M})=\det \begin{pmatrix}\left(\lambda+1\right) \Identity_{k} - \AllOne_{k} & -\AllOne_{k,r}\\
		-\AllOne_{r,k} & \lambda \Identity_{r}
	\end{pmatrix}.
\end{align}
Denote $\mathbf{B} = (\lambda+1) \Identity_{k}-\AllOne_{k}$. Then 
\begin{align}
	\mathbf{M} = \begin{pmatrix}\mathbf{B} & -\AllOne_{k,r}\\
		-\AllOne_{r,k} & \lambda \Identity_{r}
	\end{pmatrix}.
\end{align}
The Schur complement $\nicefrac{\mathbf{M}}{\lambda \Identity_{r}}$ is 
\begin{align}
	\nicefrac{\mathbf{M}}{\lambda \Identity_{r}} &= \mathbf{B} - \AllOne_{k,r} \cdot (\lambda \Identity)^{-1} \cdot \AllOne_{r,k} = \mathbf{B} - \frac{1}{\lambda} \AllOne_{k,r} \cdot \AllOne_{r,k}=\mathbf{B} - \frac{r}{\lambda} \AllOne_{k} \\
	& =(\lambda+1) \Identity_{k} - \AllOne_{k} - \frac{r}{\lambda} \AllOne_{k}= (\lambda+1) \Identity_{k}-\frac{r+\lambda}{\lambda} \AllOne_{k}.
\end{align}
By the Schur complement Theorem (Theorem \ref{theorem: Schur complement})
\begin{align}	
	\det(\mathbf{M}) &= \det(\lambda \Identity_{r}) \cdot \det\left(\nicefrac{\mathbf{M}}{\lambda \Identity_{r}}\right) = \lambda^{r} \cdot \det\left(\nicefrac{\mathbf{M}}{\lambda \Identity_{r}}\right).
\end{align}
The determinant of the Schur complement is : 
\begin{align}
	\det\left(\nicefrac{\mathbf{M}}{\lambda \Identity_{r}}\right) & = \det\left((\lambda+1)\Identity_{k}-\frac{r+\lambda}{\lambda}\AllOne_{k}\right) = \det\left(\frac{r+\lambda}{\lambda}\cdot\left(\frac{\lambda\cdot(\lambda+1)}{\lambda+r}\Identity_{k}-\AllOne_{k}\right)\right) \\
	& =\left(\frac{r+\lambda}{\lambda}\right)^{k}\cdot \det\left(\frac{\lambda \cdot (\lambda+1)}{\lambda+r} \Identity_{k} - \AllOne_{k}\right).
\end{align}
Since $\sigma(\AllOne_{k})=\left\{ (0)^{k-1},(k)\right\}$, the spectrum of $\frac{\lambda \cdot (\lambda+1)}{\lambda+r} \Identity_{k} - \AllOne_{k}$
is 
\begin{align}	
	\sigma\left(\frac{\lambda \cdot (\lambda+1)}{\lambda+r} \Identity_{k}- \AllOne_{k}\right)= \left\{ \left(\frac{\lambda\cdot\left(\lambda+1\right)}{\lambda+r}-k\right),\left(\frac{\lambda\cdot\left(\lambda+1\right)}{\lambda+r}\right)^{k-1}\right\} ,
\end{align}

so 
\begin{align}
	& \det\left(\frac{\lambda\cdot(\lambda+1)}{\lambda+r}\Identity_{k} - \AllOne_{k}\right) = \left(\frac{\lambda \cdot (\lambda+1)}{\lambda+r}-k\right)\cdot\left(\frac{\lambda\cdot(\lambda+1)}{\lambda+r}\right)^{k-1} \\
	\implies & \det\left(\nicefrac{\mathbf{M}}{\lambda \Identity_{r}}\right) = \left(\frac{r+\lambda}{\lambda}\right)^{k} \cdot \left(\frac{\lambda\cdot(\lambda+1)}{\lambda+r}-k\right) \cdot \left(\frac{\lambda \cdot (\lambda+1)}{\lambda+r}\right)^{k-1} \\
	& \qquad \qquad \; =\frac{\left(\lambda+1\right)^{k-1} \cdot \left(\lambda \cdot (\lambda+1)-k(\lambda+r)\right)}{\lambda}.
\end{align}

Hence, 
\begin{align}
	\det(\mathbf{M}) &= \lambda^{r}\cdot \det\left(\nicefrac{\mathbf{M}}{\lambda \Identity_{r}}\right) = \lambda^{r} \cdot \left(\frac{(\lambda+1)^{k-1} \cdot \left(\lambda \cdot (\lambda+1)- k(\lambda+r)\right)}{\lambda}\right) \\
	&=\lambda^{r-1}\cdot\left(\lambda+1\right)^{k-1}\cdot\left(\lambda\left(\lambda+1\right)-k\left(\lambda+r\right)\right) \\ 
	&=\lambda^{r-1}\cdot\left(\lambda+1\right)^{k-1}\cdot\left(\lambda^{2}+\left(1-k\right)\lambda-rk\right).		
	\end{align}
 Thus, the eigenvalues of $\GOP_{n,k}$ are $0$ with multiplicity $n-k-1$,
$-1$ with multiplicity $k-1$ and the two roots of $\lambda^{2}+\left(1-k\right)\lambda-\left(n-k\right)k$.
\end{proof}

Is $\GOP_{n,k}$ determined by its spectrum ? We consider two cases - the star graphs and the graphs of pyramids.

\begin{claim}
The star graph $\Star_{n} = \GOP_{n,1}$ is DS iff $n-1$ is prime. 
\end{claim}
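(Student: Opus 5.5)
The plan is to reduce the claim to Theorem~\ref{thm:when CoBG is DS?}, since $\SG{n}=\CoBG{1}{n-1}$. That theorem says $\CoBG{1}{n-1}$ is DS if and only if $\{1,n-1\}$ is an AM-minimizer. So the whole task is to show that $\{1,n-1\}$ is an AM-minimizer exactly when $n-1$ is prime.

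I would prefer to keep the chapter self-contained, so I would also sketch the argument directly via the Schur-complement and interlacing tools of this chapter. The spectrum from Theorem~\ref{claim:spectrum-of-Tnk} with $k=1$ is $\{-\sqrt{n-1},[0]^{n-2},\sqrt{n-1}\}$. Let $\Gr{G}$ be cospectral with $\SG{n}$. Then:
\begin{enumerate}
\item $\Gr{G}$ has $n-1$ edges, by Corollary~\ref{corollary: number of edges and triangles in a graph}.
\item $\Gr{G}$ is bipartite, by Theorem~\ref{theorem: equivalences for bipartite graphs}.
\item $\A(\Gr{G})$ has rank $2$, so $\PathG{3}$ (rank $3$) is not an induced subgraph, by Theorem~\ref{thm:interlacing}.
\item $\Gr{G}$ has a single positive eigenvalue. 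Any two nonempty components would each contribute a positive eigenvalue, contradicting interlacing, so only one component is nonempty.
\end{enumerate}
A connected bipartite graph with no induced $\PathG{3}$ is complete bipartite. Hence $\Gr{G}=\CoBG{a}{b}\DU\CGr{\CoG{r}}$ with $ab=n-1$ and $a+b+r=n$. This is exactly Item~\ref{thm:when CoBG is DS? - part1} of Theorem~\ref{thm:when CoBG is DS?} specialized to $p=1$, $q=n-1$.

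For the arithmetic part, the nontrivial direction is the following. Suppose $n-1$ is composite, say $n-1=ab$ with $2\le a\le b$. Then $a+b\le ab<ab+1=n$, so $r=n-a-b\ge 1$. The graph $\CoBG{a}{b}\DU\CGr{\CoG{r}}$ is cospectral with $\SG{n}$, since both have spectrum $\{\pm\sqrt{n-1},[0]^{n-2}\}$. It is also nonisomorphic to $\SG{n}$, being disconnected. Hence $\SG{n}$ is not DS.

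Conversely, suppose $n-1$ is prime. The only factorization $ab=n-1$ with $a,b\in\naturals$ is $\{1,n-1\}$, which forces $r=0$ and $\Gr{G}\cong\SG{n}$.

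The main obstacle is the edge case $n-1=1$, i.e.\ $n=2$, $\SG{2}=\CoG{2}$. This graph is DS, but $1$ is not prime. The claim as literally stated excludes it, so the proof must note that the star convention here presumably requires $n\ge3$, where the AM-minimizer dichotomy is exactly primality. For $n-1=1$ the AM-minimizer criterion still gives DS, and I would flag this convention explicitly. I would also double-check the indexing mismatch between $\SG{n}$ and $\Star_{n-1}$ in the chapter overview and state the result for the star on $n$ vertices.
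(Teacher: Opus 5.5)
Your main route is the paper's own proof: write $\SG{n}=\CoBG{1}{n-1}$, invoke Theorem~\ref{thm:when CoBG is DS?}, and check that $\{1,n-1\}$ is an AM-minimizer exactly when $n-1$ is prime. Your arithmetic ($a+b\le ab<ab+1=n$ for $2\le a\le b$) is right. Your explicit cospectral mate $\CoBG{a}{b}\DU\CGr{\CoG{r}}$ is correctly a disjoint union; the paper's proof of the complete-bipartite theorem writes a join there. Your $n=2$ caveat is also a genuine correction. $\{1,1\}$ is an AM-minimizer and $\SG{2}=\CoG{2}$ is DS although $1$ is not prime, so the statement needs $n\ge 3$. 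The paper's one-line ``equivalent to $n-1$ being prime'' passes over this.

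The self-contained sketch, however, contains a false step. $\A(\PathG{3})$ has rank $2$ (its spectrum is $\{\pm\sqrt{2},0\}$), not $3$, so neither rank nor interlacing forbids an induced $\PathG{3}$. Indeed, step~3 would apply to $\Gr{G}=\SG{n}$ itself, which contains an induced $\PathG{3}$ for every $n\ge 3$. Likewise, a connected bipartite graph with no induced $\PathG{3}$ is only $\CoG{1}$ or $\CoG{2}$, not a general complete bipartite graph.

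The correct forbidden subgraph is $\PathG{4}$. It is nonsingular with two positive eigenvalues, so by Theorem~\ref{thm:interlacing} it cannot be induced in a graph of rank $2$ with a single positive eigenvalue. Conversely, a connected bipartite graph that is not complete bipartite does contain an induced $\PathG{4}$. Take nonadjacent $x,y$ in opposite parts: a shortest $x$--$y$ path is induced and has odd length at least $3$, so its first four vertices induce $\PathG{4}$. With $\PathG{4}$ in place of $\PathG{3}$, your steps 1--4 give $\Gr{G}=\CoBG{a}{b}\DU\CGr{\CoG{r}}$ with $ab=n-1$, and the rest of your argument goes through.

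The paper's proof of Item~\ref{thm:when CoBG is DS? - part1} makes the same $\PathG{3}$ slip. Citing that theorem, as both you and the paper do, therefore relies on this $\PathG{4}$ correction.
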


\begin{proof}
	The star graph $\Gr{G} \triangleq \Star_{n} = \Complete_{1,n-1}$ is a complete bipartite graph.
	Thus, by Theorem \ref{thm:when CoBG is DS?} $\Gr{G}$ is DS iff the pair $(n-1,1)$ is an AM-minimizer, which is equivalent to $n-1$ being prime.
\end{proof}

\begin{theorem}
\label{thm:MAIN Tnk DS}
The graphs of pyramids are DS.
\end{theorem}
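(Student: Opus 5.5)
Let $\Gr{G}$ be a graph cospectral with $\GOP_{n,k}$, where $1<k<n$. By Theorem~\ref{claim:spectrum-of-Tnk}, $\Gr{G}$ has exactly one positive eigenvalue $\lambda_1$ and eigenvalue $0$ with multiplicity $n-k-1$. It also has exactly $k$ negative eigenvalues: $-1$ with multiplicity $k-1$, and $\lambda_2<-1$ (one checks $\lambda_2<-1$ whenever $n>k$). By Corollary~\ref{corollary: number of edges and triangles in a graph}, the number of edges is $\binom{k}{2}+k(n-k)$ and the number of triangles is that of $\GOP_{n,k}$, namely $\binom{k}{3}+(n-k)\binom{k}{2}$.

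The first step is to apply Theorem~\ref{theorem:Smith_theorem_1}. Since $\Gr{G}$ has exactly one positive eigenvalue, $\Gr{G}=\CoG{m_1,\ldots,m_r}\cup\overline{\CoG{\ell}}$. The spectrum of a complete multipartite graph is given by Theorem~\ref{theorem: spectrum of multipartite graphs}: it has $r-1$ negative eigenvalues, the nullity is $\sum_i (m_i-1)+\ell$, and the negative eigenvalues interlace the part sizes $m_1\le\cdots\le m_r$. Comparing with the spectrum of $\Gr{G}$ gives $r-1=k$, so $r=k+1$. The interlacing chain then yields
\begin{align}
m_1\le 1\le m_2\le 1\le\cdots\le m_k\le -\lambda_2\le m_{k+1},
\end{align}
because the $k-1$ eigenvalues equal to $-1$ occupy the first $k-1$ slots. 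Hence $m_1=\cdots=m_k=1$, and $\Gr{G}=\CoG{k}\vee\overline{\CoG{m}}\cup\overline{\CoG{\ell}}=\GOP_{k+m,k}\cup\overline{\CoG{\ell}}$ with $m+\ell=n-k$.

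The second step is to use the edge count: $\binom{k}{2}+km=\binom{k}{2}+k(n-k)$ forces $m=n-k$ and $\ell=0$, which completes the proof. The trace of $\A^3$ could be used instead, but it is not needed.

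The main obstacle is applying Theorem~\ref{theorem: spectrum of multipartite graphs} carefully. The theorem is stated for a connected complete multipartite graph, so first strip off the isolated vertices; they only add zero eigenvalues. Then count the negative eigenvalues of $\Gr{G}$ exactly. That count is $k$, since $\lambda_2\neq -1$ and $\lambda_2<0$ (their product is $-k(n-k)<0$). The interlacing inequalities then pin down the $k$ singleton parts. If the interlacing gets delicate, a fallback is Cauchy interlacing (Theorem~\ref{thm:interlacing}). An induced $\overline{\CoG{2}}$ inside a part, joined to another vertex, gives an induced $\PathG{3}$; counting how many parts of size at least $2$ are possible against the single eigenvalue below $-1$, with $\lambda_{n-1}=-1$, shows that at most one part has size greater than $1$.
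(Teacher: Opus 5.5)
Your reduction via Smith's theorem to $\Gr{G}=\CoG{m_1,\ldots,m_{k+1}}\cup\overline{\CoG{\ell}}$ with $r=k+1$ is fine, but the next step fails. With negative eigenvalues $-1,\ldots,-1,\lambda_2$, the Esser--Harary chain reads $m_1\le 1\le m_2\le\cdots\le m_{k-1}\le 1\le m_k\le -\lambda_2\le m_{k+1}$. This pins $m_1=\cdots=m_{k-1}=1$, but it only gives $1\le m_k\le -\lambda_2$, and $-\lambda_2>1$ whenever $n\ge k+2$. You are therefore left with $\Gr{G}=(\CoG{k-1}\vee\CoG{a,b})\cup\overline{\CoG{\ell}}$ with $1\le a\le b$. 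The edge count only yields $(a-1)(b-1)=k\ell$, which has solutions with $a\ge 2$. For instance, $\CoG{1,2,3}\cup\CoG{1}$ has the same order, the same number of edges ($11$) and the same inertia ($1$ positive, $4$ zero, $2$ negative eigenvalues) as $\GOP_{7,2}$. It also satisfies the interlacing chain, since $1\le 1\le 2\le \tfrac12(\sqrt{41}-1)\le 3$. Nothing in your main line excludes it, so ``hence $m_1=\cdots=m_k=1$'' is unjustified, and your claim that the triangle count is not needed is false.

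The repair is short. The triangle count you set aside does it. The graph $(\CoG{k-1}\vee\CoG{a,b})\cup\overline{\CoG{\ell}}$ has $\binom{k-1}{3}+\binom{k-1}{2}(a+b)+(k-1)ab$ triangles. Using $(a-1)(b-1)=k\ell$, this exceeds $\binom{k}{3}+(n-k)\binom{k}{2}$ by exactly $\ell\binom{k}{2}$, which forces $\ell=0$ and then $a=1$.

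Alternatively, use the exact multiplicity of $-1$. The partition into the clique and the two remaining parts is equitable and contributes $-1$ with multiplicity $k-2$. The $3\times 3$ quotient matrix $Q$ satisfies $\det(Q+\I{3})=(k-1)(1-a)(1-b)$, so requiring multiplicity $k-1$ forces $a=1$.

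Your Cauchy-interlacing fallback also works once made precise, but the obstructing induced subgraph is $\CoG{1,2,2}$, not $\PathG{3}$. Its spectrum $\{1+\sqrt5,0,0,1-\sqrt5,-2\}$ has two eigenvalues below $-1$, contradicting $\lambda_{n-1}(\Gr{G})=-1$.

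With this fix, your route (Smith plus Esser--Harary plus counting) differs genuinely from the paper's. The paper never invokes Smith's theorem. It excludes induced $\CoG{k+2}$, $\PathG{4}$ and $2\CoG{2}$ by interlacing, then shows via a case analysis with the auxiliary graphs $\Gr{H}_3,\Gr{H}_4$ that a maximum clique meets every edge, and finishes by counting edges. Yours is shorter and gets the multipartite structure for free. One minor slip: $\lambda_2<-1$ fails for $n=k+1$, where $\GOP_{k+1,k}=\CoG{k+1}$ and $\lambda_2=-1$. Your count of negative eigenvalues only needs $\lambda_2<0$, though, and in that case the chain plus the nullity gives $\Gr{G}=\CoG{k+1}$ directly, so nothing breaks.
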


Before proving this theorem, we present a preliminary theorem that will be used in the proof.

The following theorem is a corollary of Theorem \ref{thm:interlacing}:
\begin{theorem}
\label{thm:induced-subgraph-interlacing} 
Let $\Gr{H}$ be an induced subgraph of $\Gr{G}$. Let $\lambda_{n}\le...\le\lambda_{2}\le\lambda_{1}$
be the eigenvalues of $\Gr{G}$ and $\mu_{m}\le...\le\mu_{2}\le\mu_{1}$ the eigenvalues of $\Gr{H}$. Then
\begin{enumerate}
	\item $\mu_{1}\le\lambda_{1}$ and $\lambda_{n}\le\mu_{m}$.
	\item For every real number $a$, $\nu\left(\Gr{G},a\right) \ge \nu \left(\Gr{H},a\right)$ and $\mu\left(\Gr{G},a\right)\ge\mu\left(\Gr{H}, a \right)$.
	\item $\rank\left(\A(\Gr{H})\right)\le \rank\left(\A(\Gr{G})\right)$.
\end{enumerate}
\end{theorem}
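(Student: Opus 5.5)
The plan is to reduce everything to Cauchy's Interlacing Theorem (Theorem~\ref{thm:interlacing}). Since $\Gr{H}$ is an induced subgraph of $\Gr{G}$, after relabeling the vertices so that $\V{\Gr{H}}$ comes first, $\A(\Gr{H})$ is the principal $m \times m$ submatrix of the symmetric matrix $\A(\Gr{G})$ obtained by deleting the rows and columns of $\V{\Gr{G}} \setminus \V{\Gr{H}}$. Hence, with both spectra listed in decreasing order, we have $\lambda_i \ge \mu_i \ge \lambda_{n-m+i}$ for every $i \in \OneTo{m}$. Item~1 is then immediate: take $i=1$ for $\mu_1 \le \lambda_1$, and take $i=m$ for $\mu_m \ge \lambda_n$.

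For Item~2, I read $\nu(\cdot,a)$ as the number of eigenvalues (with multiplicity) strictly greater than $a$, and $\mu(\cdot,a)$ as the number strictly smaller than $a$; the same argument works verbatim with non-strict inequalities. Let $p = \nu(\Gr{H},a)$. Since the spectrum of $\Gr{H}$ is sorted, $\mu_1,\dots,\mu_p > a$. Interlacing gives $\lambda_i \ge \mu_i > a$ for $i \le p$, so $\Gr{G}$ has at least $p$ eigenvalues exceeding $a$, i.e.\ $\nu(\Gr{G},a) \ge p$.

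For the lower count, let $j = \mu(\Gr{H},a)$. Then $\mu_{m-j+1},\dots,\mu_m < a$. The right-hand inequality of interlacing with $i = m-t+1$ gives $\lambda_{n-t+1} \le \mu_{m-t+1} < a$ for $t \in \OneTo{j}$. Thus $\lambda_{n-j+1},\dots,\lambda_n$ are all below $a$, and $\mu(\Gr{G},a) \ge j$. The only care needed is this index bookkeeping in the second inequality.

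For Item~3, note that for a real symmetric matrix the rank equals the number of nonzero eigenvalues. Hence $\rank(\A(\Gr{H})) = \nu(\Gr{H},0) + \mu(\Gr{H},0)$, and similarly for $\Gr{G}$. Applying Item~2 with $a=0$ to both counts and adding yields the claim. (Alternatively, a submatrix never has larger rank than the full matrix, but the spectral route keeps the proof within the interlacing framework.) I expect no real obstacle; the only subtle point is fixing the precise meaning of $\nu$ and $\mu$ and handling the reversed indices in the lower-tail count.
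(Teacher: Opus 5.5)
Your proposal is correct and is essentially the paper's route: the paper gives no written proof and presents this result simply as a corollary of Cauchy's Interlacing Theorem (Theorem~\ref{thm:interlacing}), which is exactly what your index bookkeeping for Items~1--3 verifies. Your reading of $\nu(\cdot,a)$ and $\mu(\cdot,a)$ as the numbers of eigenvalues above and below $a$ matches how the paper later applies the result, namely to counts of positive eigenvalues, negative eigenvalues, and eigenvalues below $-1$.
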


\begin{proof}[Proof of Theorem \ref{thm:MAIN Tnk DS}]
Let $1<k\le n$ and let $\Gr{G}=(\Vertex, \Edge)$ be a graph that is cospectral with $\GOP_{n,k}$. We have to show that $\Gr{G}$ and $\GOP_{n,k}$ are isomorphic.
We start by showing that 
\begin{itemize}
\item $\Complete_{k+2}$ and $\Path_{4}$ are not induced subgraphs of $\Gr{G}$ (Lemma \ref{lem:k+2-clique}). 
\item Every two non-empty induced subgraphs of $\Gr{G}$ are connected by an
edge (Lemma \ref{lem:edge-disjoind-induced-cliques}).
\item If $U$ is a maximum clique in $\Gr{G}$, then every edge in $\Gr{G}$ has an
end in $U$ (Lemma \ref{lem:spanning-clique}). 
\end{itemize}

\begin{lemma}
\label{lem:properties_of_G_Tnk} 
\begin{enumerate}
\item $\Gr{G}$ has one positive eigenvalue. 
\item $\Gr{G}$ has $k$ negative eigenvalues.
\item $\Gr{G}$ has one eigenvalue strictly smaller than $-1$.
\end{enumerate}
\end{lemma}

\begin{proof}
Since $\Gr{G}$ is cospectral with $\GOP_{n,k}$, its spectrum is 
\begin{align}
	\sigma(\Gr{G})=\left\{ (\lambda_{2}),(-1)^{k-1},(0)^{n-k-1},(\lambda_{1})\right\} 
\end{align}
where $\lambda_{1},\lambda_{2}=\frac{(k-1)\pm\sqrt{(k-1)^{2}+4k(n-k)}}{2}$. 
\end{proof}

\begin{lemma}
\label{lem:k+2-clique}
\begin{enumerate}
\item $\Gr{G}$ does not contain a clique of size $k+2$.
\item $\Gr{G}$ does not have $\Path_{4}$ as an induced subgraph.
\end{enumerate}
\end{lemma}

\begin{proof}
The spectrum of $\Complete_{k+2}$ is $\sigma(\Complete_{k+2})=\left\{ (-1)^{k+1},(k+1)\right\}$,
and the spectrum of $\Path_{4}$ is $\sigma(\Path_{4})=\left\{ 2\cdot cos\left(\frac{\pi\cdot k}{5}\right)\ \mid\ k\in \langle 4 \rangle \right\}$.
$\Complete_{k+2}$ has $k+1$ negative eigenvalues and $\Path_{4}$ has $2$ positive
eigenvalues. By Lemma \ref{lem:properties_of_G_Tnk}, $\Gr{G}$ has $k$
negative eigenvalues and $1$ positive eigenvalue. Thus, by Theorem
\ref{thm:induced-subgraph-interlacing}, $\Path_{4}$ and $\Complete_{k+2}$ are
not induced subgraphs of $\Gr{G}$.
\end{proof}

\begin{lemma}
\label{lem:edge-disjoind-induced-cliques}
Every two disjoint non-empty induced subgraphs in $\Gr{G}$ are connected by an edge. 
\end{lemma}

\begin{proof}
Assume there exist two disjoint non-empty induced subgraphs $\Gr{H}_{1}=(\Vertex_{1},\Edge_{1})$ and $\Gr{H}_{2}=(\Vertex_{2},\Edge_{2})$.
Let $\Gr{H}=\Gr{H}_{1}\DU \Gr{H}_{2}$ be the subgraph induced by $\Vertex_{1}\cup \Vertex_{2}$.
\begin{align}
	\A(\Gr{H})=\begin{pmatrix} \A(\Gr{H}_{1}) & 0\\
0 & \A(\Gr{H}_{2})
\end{pmatrix}.
\end{align}
The spectrum of $\Gr{H}$ is given by $\sigma(\Gr{H})=\sigma(\Gr{H}_{1})+\sigma(\Gr{H}_{2})$. Both $\Gr{H}_{1}$ and $\Gr{H}_{2}$ have a positive eigenvalue, since they are non-empty. 
Thus, $\Gr{H}$ has at least two positive eigenvalues, contradicting Theorem \ref{thm:induced-subgraph-interlacing}. 
\end{proof}

For the next lemma we need the following definition.
\begin{definition}
Let $\Gr{G}=(\Vertex,\Edge)$ be a graph and let $U\subseteq \Vertex$ be a
set of vertices. $U$ is said to be a \emph{maximum clique} if it is a clique, and $\Gr{G}$ does
not have a clique of greater order. 
\end{definition}

\begin{lemma}
\label{lem:spanning-clique}
Let $\Vertex_1$ be a maximum clique in $G$, and let $\Gr{H}_{1}=(\Vertex_{1},\Edge_{1})$. Let $\Gr{H}_{2}=(\Vertex_{2},\Edge_{2})$ be the subgraph induced by the vertices $\Vertex_{2}:=\Vertex\setminus \Vertex_{1}$.
Then $\Gr{H}_{2}$ is an empty graph. 
\end{lemma}

\begin{proof}
Without loss of generality we assume that $\Vertex_{1}=\langle m\rangle$.
By Lemma \ref{lem:k+2-clique} $m\le k+1$. We have to show that $\Gr{H}_{2}$ is an empty graph.
Suppose to the contrary that there is an edge $e = (v_{1},v_{2}) \in \Edge_{2}$, for some $v_{1},v_{2}\in \Vertex_{2}$. 
Since $\Gr{H}_{1}$ is induced by a maximum clique, there exists a vertex $u_{1}\in \Vertex_{1}$ s.t. $(v_{1},u_{1})\notin \Edge$ (otherwise, the subgraph obtained by $\Vertex\DU\{v_{1}\} $ is a complete graph of order greater than $m$, contradicting the maximality of $\Gr{H}_{1}$). 
Similarly, there exists a vertex $u_{2}\in \Vertex_{1}$ s.t. $(v_{2},u_{2})\notin \Edge$. 
We can also assume that $u_{1}\ne u_{2}$, since if both $v_{1},v_{2}$ are connected to all the vertices except $u_{1}$, then $\left(V_{1}\setminus\left\{ u\right\} \right)\DU \{ v_{1},v_{2}\} $ is a clique, contradicting the maximality of $V_{1}$. 

We now show that the following cases are impossible : 
\begin{enumerate}
\item $(v_{1},u_{2})\notin \Edge$ and $(v_{2},u_{1})\notin \Edge$
\item $(v_{1},u_{2})\in \Edge$ but $(v_{2},u_{1})\notin \Edge$ 
\item $(v_{1},u_{2})\notin \Edge$ but $(v_{2},u_{1})\in \Edge$ 
\item $(v_{1},u_{2})\in \Edge$ and $(v_{2},u_{1})\in \Edge$
\end{enumerate}

Case $1$ is impossible since in this case the subgraph obtained by the vertices $\{ v_{1},v_{2},u_{1},u_{2}\} $ is $\Complete_{2}\DU \Complete_{2}$.
Thus, $\Gr{G}$ has induced cliques that are not connected by an edge, contradicting Lemma \ref{lem:edge-disjoind-induced-cliques}. 
Case $2$ is impossible since the subgraph induced by the vertices $\{ v_{1},v_{2},u_{1},u_{2}\} $
is $\Path_{4}$, contradicting Lemma \ref{lem:k+2-clique}. Similarly, Case $3$ is impossible. 
The proof that Case $4$ is impossible needs more work:

Since $\Gr{G}$ is cospectral with $\GOP_{n,k}$ they have the same number
of edges. Thus, there exists another edge in $\Edge_{2}$. By Lemma \ref{lem:edge-disjoind-induced-cliques}, $\Gr{G}$ does not have disjoint cliques. 
Thus, there is an edge $(v_{1},v_{3})$ for some $v_{3}\in \Vertex_{2}$. 
By Lemma \ref{lem:k+2-clique} there exists $u_{3}\in \Vertex_{1}$ s.t. $(u_{3},v_{3})\notin \Edge$. 
Up to isomorphism, there are two possibilities for the subgraph induced by vertices $\{ v_{1},v_{2},v_{3},u_{1},u_{2},u_{3}\} $
- $(v_{2},v_{3})\in \Edge_{2}$ or $(v_{2},v_{3})\notin \Edge_{2}$. 

If $(v_{2},v_{3})\in \Edge_{2}$, then $\{ v_{1},v_{2},v_{3}\} $ is a triangle in $\Gr{H}_{2}$. 
Since $\Gr{H}_{1}$ is a maximum clique, there exists a vertex $u_{3}\in \Vertex_{1}$ s.t. $(v_{3},u_{3})\notin \Edge$
(otherwise $\{ v_{1},v_{3}\} $ or $\{ v_{2},v_{3}\} $ satisfies one of the previous conditions). 
In this case, the subgraph induced by the vertices $\{ v_{1},v_{2},v_{3},u_{1},u_{2},u_{3}\} $
is $\Gr{H}_{3}$ in Figure \ref{fig:h3}. Its adjacency matrix is 
\begin{align}
	\A(\Gr{H}_{3})= \begin{pmatrix}0 & 1 & 1 & 0 & 1 & 1\\
		1 & 0 & 1 & 1 & 0 & 1\\
		1 & 1 & 0 & 1 & 1 & 0\\
		0 & 1 & 1 & 0 & 1 & 1\\
		1 & 0 & 1 & 1 & 0 & 1\\
		1 & 1 & 0 & 1 & 1 & 0
	\end{pmatrix}, 
\end{align}
and its spectrum is 
\begin{align}
	\sigma(\Gr{H}_{3})=\left\{ (0)^{3},(-2)^{2},(4)\right\} .
\end{align}

\begin{figure}[H]
\centering{}\includegraphics[scale=0.4]{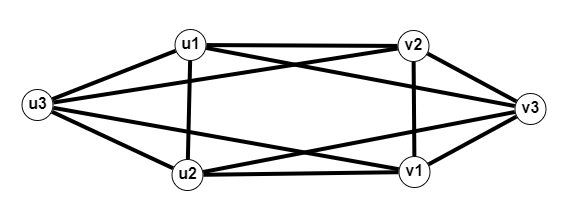}\caption{\label{fig:h3}$\Gr{H}_{3}$}
\end{figure}

Hence, $\Gr{H}_{3}$ has $2$ eigenvalues strictly smaller than $-1$, contradicting Theorem \ref{thm:induced-subgraph-interlacing}. 

If $(v_{2},v_{3})\notin \Edge_{2}$, there exists a vertex $u_{3}\in V_{1}$ s.t. $(v_{3},u_{3})\notin \Edge$. 
Assume that there exists $u_{3}\notin\{ u_{1},u_{2}\} $ such that $(v_{1},u_{3})\in \Edge$ and $(v_{2},u_{3})\in \Edge$ (otherwise one of the previous conditions occur, with respect to $v_{3}$).
In this case, the subgraph induced by the vertices $\{ v_{1},v_{2},v_{3},u_{1},u_{2},u_{3}\} $ is $\Gr{H}_{4}$ in Figure \ref{fig:H4}. Its adjacency matrix is 
\begin{align}
	\A(\Gr{H}_{4})=\begin{pmatrix}0 & 1 & 1 & 0 & 1 & 1\\
		1 & 0 & 0 & 1 & 0 & 1\\
		1 & 0 & 0 & 1 & 1 & 0\\
		0 & 1 & 1 & 0 & 1 & 1\\
		1 & 0 & 1 & 1 & 0 & 1\\
		1 & 1 & 0 & 1 & 1 & 0
	\end{pmatrix}
\end{align}
and its spectrum is 
\begin{align}
	\sigma(\Gr{H}_{4})=\left\{ \left(\frac{-\sqrt{5}-1}{2}\right),(-2.231),(-0.483),(0),\left(\frac{-\sqrt{5}-1}{2}\right),(3.714)\right\} .
\end{align}

\begin{figure}[H]
\begin{centering}
\includegraphics[scale=0.4]{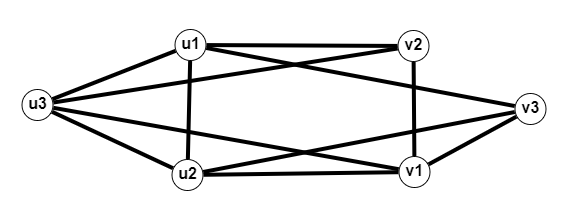}
\par\end{centering}
\caption{\label{fig:H4}$\Gr{H}_{4}$}
\end{figure}
Hence, $\Gr{H}_{4}$ also has $2$ eigenvalues strictly smaller than $-1$,
contradicting Theorem \ref{thm:induced-subgraph-interlacing}. 
\end{proof}

Now we can complete the proof of the theorem. The adjacency matrix of $\Gr{G}$ is
\begin{align}	
	\A(\Gr{G}) = \begin{pmatrix} \A(\Gr{H}_{1}) & \mathbf{B}\\
		\mathbf{B}^{T} & \Zero_{n-m}
	\end{pmatrix} 
	= \begin{pmatrix} \AllOne_{m} - \Identity_{m} & \mathbf{B}\\
		\mathbf{B}^{T} & \Zero_{n-m}
	\end{pmatrix}
\end{align}
for some block matrix $\mathbf{B}$. Recall that 
\begin{align}
	\A(\Gr{T}_{n,k}) = \begin{pmatrix} \AllOne_{k+1} - \Identity_{k+1} & \AllOne_{k+1,n-k-1}\\
		\AllOne_{n-k-1,k+1} & \Zero_{n-k-1}
	\end{pmatrix}.
\end{align}
We want to show that $m=k+1$ and $\mathbf{B}=\AllOne_{n\times (n-m)}$.
Assume to the contrary $m\ne k+1$. By Lemma \ref{lem:k+2-clique}, $m<k+1$. 
Then clearly the number of edges of $\Gr{G}$ is smaller then
the number of edges of $\GOP_{n,k}$, contradicting the fact that the number of edges is determined by the spectrum, so two cospectral graphs must have the same number of edges (See \ref{table:properties_determined by the spectrum} and the followed discussion).
Hence, $m=k+1$ and for some $\mathbf{B}\in\Reals^{(k+1) \times (n-k-1)}$
\begin{align}
	\A(\Gr{G}) = \begin{pmatrix} \AllOne_{k+1} - \Identity_{k+1} & \mathbf{B}\\
		\mathbf{B}^{T} & \Zero_{n-k-1}
	\end{pmatrix}.
\end{align}
Again by considering the number of edges, all the entries of $\mathbf{B}$
are equal to $1$. Thus 
\begin{align}
	\A(\Gr{G})=\begin{pmatrix}\AllOne_{k+1} - \Identity_{k+1} & \AllOne_{k+1,n-k-1}\\
		\AllOne_{n-k-1,k+1} & \Zero_{n-k-1}
	\end{pmatrix}=\A(\GOP_{n,k}).
\end{align}
\end{proof}

\section{Summary}
We showed that the graphs of pyramids $\GOP_{n,k}$ ($2\le k<n$) are DS (Theorem \ref{thm:MAIN Tnk DS}), and that for $k=1$, the graph $\GOP_{n,1}=\Star_{n}$ is DS if and only if $n-1$ is prime.
In the next table we list the graphs $\GOP_{n,k}$ according to their being DS/CP:

\begin{table}[H]
\centering{}%
\begin{tabular}{ccc}
\toprule 
 & CP & not CP\tabularnewline
\midrule 
DS & $k=2$ or $k=1,\ n$ is prime & $3\le k$\tabularnewline
\midrule 
not DS & $k=1,\ n$ is composite & \tabularnewline
\bottomrule
\end{tabular}\caption{Classification of the graphs $\GOP_{n,k}$ according to their being DS/CP.}
\end{table}

We have an empty field in the table, since the only graphs $\GOP_{n,k}$ that are not DS are stars, and stars are CP since they are bipartite.
This suggests the following questions on general graphs (not only $\GOP_{n,k}$).

\begin{question}
What is the smallest order $\nu$ of a graph that is not CP neither DS ? 
\end{question}

\begin{figure}[H]
	\centering
	\includegraphics[scale=0.3]{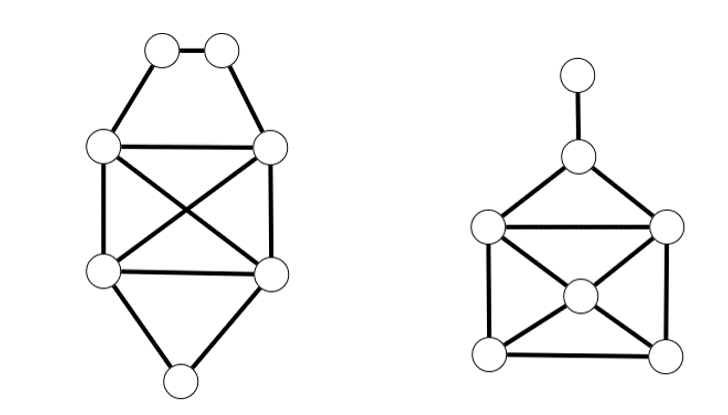}
	\caption{\label{fig:cospectral 7 vertices} Two cospectral graphs with 7 vertices $\Gr{G}_{1}$ and $\Gr{G}_{2}$}
\end{figure}

\begin{proof}[solution]
The two graphs in Figure~\ref{fig:cospectral 7 vertices} are not DS, and by Theorem \ref{thm:CP-of-graph} are not CP, so $\nu\le7$.
On the other hand, $\nu \ge 6$ since all the graphs with $4$ vertices are CP, and there is only one pair of non-isomorphic cospectral graphs with $5$ vertices (Figure \ref{fig:graphs with 5 vertices}), and both are CP.
Computer search shows that $\nu>6$ \cite{CvetkovicP1984}, so $\nu=7$.
\end{proof}

\chapter{On the transitivity of generalized-Hamming graphs and their complements}
\label{chap:on_the_transitivity_of_Gilbert_graphs_and_their_complements}

\begin{center}
\textbf{Chapter Overview}
\end{center}
\noindent
The generalized-Hamming graph $\Gilbert{q,n,d}$, which arises naturally in graph theory and coding theory,
is the regular graph on $\mathbb{F}_q^n$ in which two vertices are adjacent if their Hamming distance
is less than $d$, and it is vertex-transitive. We classify all parameters $(q,n,d)$ for which
$\Gilbert{q,n,d}$ is edge-transitive or distance-transitive, and separately classify all parameters
for which its complement has these properties. We prove that $\Gilbert{q,n,d}$ is edge-transitive
if and only if it is distance-transitive, and that this occurs precisely when $d=2$, $(q,d)=(2,3)$,
or $(q,d)=(2,n)$. For the complement graphs, we determine all parameters yielding edge- or
distance-transitivity using spectral methods based on Krawtchouk polynomials and the structure of
the Hamming association scheme. In contrast to the generalized-Hamming graphs, where the parameter sets corresponding
to edge- and distance-transitivity coincide, we show that for their complements the set of parameters
yielding distance-transitivity is strictly contained in the set yielding edge-transitivity. As an
application, we compute the exact values of the Lov\'{a}sz $\vartheta$-function of generalized-Hamming graphs,
as well as of their complements, in all cases where either one of them is edge-transitive.
\par\medskip

\section{Introduction}
\label{section: Introduction_Gilbert}
Let $q\in \naturals$ be a prime power, and let $n,d \in \naturals$ with $2 \le d \le n$.
In this chapter, the \emph{generalized-Hamming graph} $\Gilbert{q,n,d}$ denotes the graph with vertex set $\mathbb{F}_q^n$ in which two distinct vertices are adjacent if their Hamming distance is less than $d$.
In the literature, the term generalized-Hamming graph is used to refer to this graph or to closely related variants \cite{WangCLL2026, QianWX2022, LiZW2023, AffifChaoucheB2006}. 
The generalized-Hamming graph also arises in information theory in connection with both the classical Gilbert--Varshamov bound and the Shannon capacity of graphs (see \cite{JiangVardy2004, RouayhebGSS07, Sason_2025, McElieceRR1978, Tolhuizen1997}).

In \cite{Sason_2025}, the generalized-Hamming graph $\Gilbert{2,5,3}$ was used to show that Schrijver's $\vartheta$-function need not be an upper bound
on the Shannon capacity of graphs. Relying on \cite{Sason2024}, the selection of that graph as a potential example was justified
by the fact that the graph and its complement are both vertex-transitive, the graph is edge-transitive, whereas its complement
is not (had the graph and its complement both been vertex- and edge-transitive, it could not serve as an example \cite{Sason2024}).
This observation motivates the present paper, whose goal is to classify the parameters $q, n, d$ for which the generalized-Hamming graph or its complement
are edge-transitive. We show that in all cases where the generalized-Hamming graph is edge-transitive, it is even distance-transitive (recall that
distance-transitivity implies edge-transitivity, but the opposite is not necessarily true). The paper also classifies the parameters $q, n, d$
for which the complement of the generalized-Hamming graph, denoted by $\overline{\Gilbert{q,n,d}}$, is edge-transitive, and those for which
it is distance-transitive. The proof of the latter classification relies on spectral methods in graph theory and the theory of association
schemes. To the best of our knowledge, a complete classification of the parameters $q, n, d$ for which generalized-Hamming graphs or their complements are
edge-transitive or distance-transitive has not previously appeared in the literature.

The main theorems in this paper are as follows.
\begin{theorem}
\label{thm: main result}
Let $q \in \naturals$ be a prime power, let $n,d \in \naturals$ with $2 \leq d \le n$, and
let $\Gr{G} = \Gilbert{q,n,d}$. Then, the following statements are equivalent:
\begin{enumerate}
\item $\Gr{G}$ is edge-transitive.
\item $\Gr{G}$ is distance-transitive.
\item Either $d = 2$, or $(q,d) = (2,3)$, or $(q,d) = (2,n)$.
\end{enumerate}
\end{theorem}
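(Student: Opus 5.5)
We prove the cycle $(3)\Rightarrow(2)\Rightarrow(1)\Rightarrow(3)$. The middle implication is standard, since distance-transitivity includes transitivity on pairs at distance one, i.e.\ on (ordered) edges. For $(3)\Rightarrow(2)$ we identify $\Gilbert{q,n,d}$ with a known distance-transitive graph in each of the three cases.

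If $d=2$, then $\Gilbert{q,n,2}$ is the Hamming graph $\mathrm{H}(n,q)$. It is distance-transitive because its automorphism group contains $\Sym_q \wr \Sym_n$, and this group acts transitively on pairs of words at any fixed Hamming distance; graph distance coincides with Hamming distance here. If $(q,d)=(2,n)$, two vertices are adjacent unless they are antipodal. So the graph is $\CoG{2^n}$ with a perfect matching removed (the cocktail-party graph), which is distance-transitive because $\Sym_2\wr\Sym_n$ preserves the antipodal matching and is transitive on edges and on non-edges. If $(q,d)=(2,3)$, then $\Gilbert{2,n,3}$ is the distance-$\{1,2\}$ graph of the hypercube. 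We would show it is isomorphic to the halved $(n+1)$-cube: map $\mathbf{x}\in\Field_2^n$ to $(\mathbf{x},\parity(\mathbf{x}))$ in the even-weight code of length $n+1$. Then Hamming distances $1,2$ in $\Field_2^n$ become distance $2$ among even words, and halved cubes are distance-transitive.

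For $(1)\Rightarrow(3)$, assume $d\ge3$ and $(q,d)\ne(2,3),(2,n)$, and find an invariant that separates edges. For an edge $\{\mathbf{x},\mathbf{y}\}$ we count common neighbours $c(\mathbf{x},\mathbf{y})$, which must be constant on an edge-transitive graph. By translation invariance (the graph is a Cayley graph on $\Field_q^n$), this number depends only on $\mathbf{y}-\mathbf{x}$. By coordinate permutations and scaling by field units, it depends only on the weight $i=\wt{y-x}$ with $1\le i\le d-1$. So it suffices to exhibit $i\ne j$ in $[d-1]$ with $c_i\ne c_j$. Explicitly,
\begin{align}
c_i = \#\{\mathbf{z}\ne \mathbf{0},\mathbf{u}: \wt{z}\le d-1,\ \Dh(\mathbf{z},\mathbf{u})\le d-1\}, \quad \wt{u}=i,
\end{align}
which is an intersection number of the Hamming scheme summed over a range. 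The plan is to compare $c_1$ with $c_{d-1}$, which are the extreme cases. Moving $\mathbf{u}$ further from $\mathbf{0}$ should strictly shrink the intersection of two Hamming balls of radius $d-1$. We would make this rigorous by a direct injection argument: the ball intersection for weight $i+1$ maps injectively into that for weight $i$, and the map misses at least one point. A point of weight $d-1$ with a nonzero entry in a carefully chosen coordinate should work.

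The main obstacle is exactly the boundary cases where this strictness fails. When $q=2$, $d=3$, or $d=n$, the counts can coincide, consistent with those graphs being transitive. So the injection step must use either $q\ge3$, or room to place a nonzero symbol outside the supports of both words (this needs $d<n$ and $d\ge4$ when $q=2$). We would treat $q\ge3$ and $q=2$ separately. For $q\ge3$ we use a coordinate where the entry of $\mathbf{u}$ is changed to a third symbol. For $q=2$ with $4\le d\le n-1$ we use a coordinate outside $\mathrm{supp}(\mathbf{u})$, which exists since $i\le d-1<n$. If the injection bookkeeping proves messy, the fallback is to compute $c_1-c_{d-1}$ explicitly as sums of products of binomial coefficients and powers of $q-1$, and verify positivity term by term.
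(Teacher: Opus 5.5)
Your cycle $(3)\Rightarrow(2)\Rightarrow(1)\Rightarrow(3)$ is the same as the paper's. In $(1)\Rightarrow(3)$ your invariant is the paper's in disguise: the paper's $F(\mathbf{v})=|\Neighbors(\mathbf{v})\setminus\Neighbors(\mathbf{0})|$ equals $\Delta$ minus your common-neighbour count. Because that count is symmetric in the two endpoints, you avoid the paper's translation--negation step for orienting the edge. Your identification of $\Gilbert{2,n,3}$ with the halved $(n+1)$-cube via $\mathbf{x}\mapsto(\mathbf{x},\parity(\mathbf{x}))$ is correct, and it is a self-contained replacement for the citation the paper relies on. Two steps, however, fail as written.

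\textbf{The case $(q,d)=(2,n)$.} The group $\Sym_2\wr\Sym_n$ acting on $\Field_2^n$ preserves Hamming distance. So for $n\ge 3$ it cannot map an edge at Hamming distance $1$ to one at distance $2$, and it is not edge-transitive on $\Gilbert{2,n,n}$. What you need instead is $\Sym_2\wr\Sym_{2^{n-1}}$, i.e.\ arbitrary permutations of the $2^{n-1}$ antipodal pairs together with swaps inside pairs. Alternatively, use the paper's explicit automorphism exchanging $\mathbf{u}\leftrightarrow\mathbf{v}$ and $\overline{\mathbf{u}}\leftrightarrow\overline{\mathbf{v}}$.

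\textbf{The strictness claim in $(1)\Rightarrow(3)$.} You assert that moving $\mathbf{u}$ away from $\mathbf{0}$ strictly shrinks the ball intersection. For $q=2$ this fails well beyond your boundary cases: for every binary $d\ge 3$ one has $c_1=c_2$, and more generally $c_{2j-1}=c_{2j}$. So you must make the injection and its missed points explicit.
\begin{itemize}
\item \emph{The injection.} Write $B(\mathbf{x},r)$ for the Hamming ball and $\mathbf{e}_1$ for the first unit vector. Take $\wt{u}=i+1$ with $u_1=a\neq 0$, and set $\mathbf{u}'=\mathbf{u}-a\mathbf{e}_1$. Send $\mathbf{z}$ to itself if $\Dh(\mathbf{z},\mathbf{u}')\le d-1$, and to $\mathbf{z}-a\mathbf{e}_1$ otherwise.
\item \emph{What it misses.} This maps $B(\mathbf{0},d-1)\cap B(\mathbf{u},d-1)$ injectively into $B(\mathbf{0},d-1)\cap B(\mathbf{u}',d-1)$. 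It misses exactly the $\mathbf{w}$ with $w_1=0$ and $\wt{w}=\Dh(\mathbf{w},\mathbf{u}')=d-1$.
\item \emph{Case $q\ge 3$.} With $i=1$ there are $(q-2)\binom{n-2}{d-2}(q-1)^{d-2}>0$ such $\mathbf{w}$, so $c_1>c_2$.
\item \emph{Case $q=2$.} Such $\mathbf{w}$ exist if and only if $i$ is even and $d+i/2\le n$. The usable step is therefore $2\to 3$, which requires exactly $4\le d\le n-1$ and misses $2\binom{n-3}{d-2}$ points. This gives $c_1=c_2>c_3$, reproducing the paper's value of $F(\mathbf{u}_3)-F(\mathbf{u}_1)$.
\end{itemize}
Comparing $c_1$ with $c_{d-1}$ then also works by non-strict monotonicity, but stopping at $c_3$ is enough.

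Incidentally, the $q\ge 3$ count exposes a slip in the paper. Its closed form for $F(\mathbf{u}_2)$ omits the weight-$d$ vertices beginning with $(1,x)$ or $(x,1)$, $x\notin\{0,1\}$, so its difference $F(\mathbf{u}_2)-F(\mathbf{u}_1)$ comes out with the wrong sign. That difference still vanishes exactly when $q=2$, so the paper's conclusion is unaffected.
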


\begin{theorem}
\label{thm: main result complement}
Let $q \in \naturals$ be a prime power, let $n,d \in \naturals$ with $2 \leq d \le n$, and let $\Gr{G} = \overline{\Gilbert{q,n,d}}$. Then,
\begin{enumerate}
\item $\Gr{G}$ is edge-transitive if and only if either $(q,d)=(2,n-1)$ for an even $n$, or $d = n$.
\item $\Gr{G}$ is distance-transitive if and only if one of the following two conditions holds:
\begin{enumerate}
\item $(q,d)=(2,n-1)$ for an even $n$,
\item $d = n$ and either $q=2$ or $n=2$.
\end{enumerate}
\end{enumerate}
\end{theorem}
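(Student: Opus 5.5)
The plan is to work inside the Hamming association scheme $H(n,q)$ on $\Field_q^n$. Its relations $R_0,\dots,R_n$ are given by Hamming distance, and its intersection numbers are $p^{i}_{j\ell}$. By definition, $\overline{\Gilbert{q,n,d}}$ is the union of the distance-$j$ graphs for $j\in\{d,\dots,n\}$. The group generated by translations, coordinate permutations and coordinatewise symbol permutations acts as automorphisms of every $R_j$. It is also transitive on ordered pairs at each fixed Hamming distance $j$. Hence the complement is always vertex-transitive, and its automorphism group is transitive on the edges of each single distance class $R_j$ with $j\ge d$. The whole question is therefore whether some automorphism fuses two different classes.

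\textbf{Sufficiency for edge-transitivity.} If $d=n$, the edge set is the single class $R_n$, so $\overline{\Gilbert{q,n,n}}$ is edge-transitive by the remark above.

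If $q=2$, $d=n-1$ and $n$ is even, I use the map $\varphi(\mathbf{x})=\mathbf{x}+\parity(\mathbf{x})\mathbf{1}_n$, which adds the all-ones vector to every odd-weight word. Pairs of equal parity keep their distance $j$. Pairs of opposite parity have their distance sent to $n-j$. Since $n$ is even, pairs at distance $n$ have equal parity and stay at distance $n$. Pairs at distance $n-1$ have opposite parity and go to distance $1$. Thus $\varphi$ is an isomorphism from $\overline{\Gilbert{2,n,n-1}}$ onto the folded $n$-cube, i.e.\ the graph with distance set $\{1,n\}$. The folded cube is known to be distance-transitive, hence edge-transitive. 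This settles sufficiency in item~1 and condition~(a) of item~2.

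\textbf{Necessity for edge-transitivity.} Suppose $d\le n-1$ and we are not in the exceptional binary case. Then both $R_n$ and $R_{n-1}$ are edge classes of the complement. The invariant I will use is the number of common neighbours of an edge at Hamming distance $i$:
\begin{align}
c_i=\sum_{j,\ell\ge d} p^{i}_{j\ell}.
\end{align}
Edge-transitivity forces $c_i$ to be constant over $i\in\{d,\dots,n\}$. So it suffices to show $c_n\neq c_{n-1}$ (or $c_d\ne c_{d+1}$ when that is easier) outside the stated exceptions. I will compute these via the explicit formula for $p^i_{j\ell}$ in $H(n,q)$, and use the Krawtchouk eigenvalues $\sum_{j\ge d}K_j(s;n,q)$ as a cross-check through the Bose--Mesner algebra.

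This step is the main obstacle. The difference $c_n-c_{n-1}$ is an alternating-looking sum of products of binomials and powers of $q-1$, and I must show it is nonzero for all $q\ge3$, and for $q=2$ whenever $n$ is odd or $d\le n-2$. My first attempt is to isolate the contribution of words agreeing with neither endpoint in the differing coordinates; that contribution carries factors $(q-2)$ and should dominate for $q\ge3$. For $q=2$, with $d\le n-2$, I would argue that the triangle counts at distances $n$ and $n-2$ differ by a parity argument. If this invariant fails in some sporadic case, I will fall back on an eigenvalue-multiplicity argument: an edge-transitive, vertex-transitive fusion must itself be a fusion scheme of $H(n,q)$.

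\textbf{Distance-transitivity.} Condition (a) was handled above, so take $d=n$, where the graph is $R_n$. For $q=2$ it is $2^{n-1}\CoG{2}$, which is trivially distance-transitive. For $n=2$ it is $\CoG{q}\times\CoG{q}$ (the complement of $\mathrm{L}_2(q)$), which is strongly regular (or a matching/empty-type case for small $q$) and is distance-transitive via the wreath product action. For $q\ge3$ and $n\ge3$, every pair at Hamming distance $j\in\{1,\dots,n-1\}$ is at graph distance $2$. The numbers of common neighbours $p^{j}_{nn}=\sum_t\binom{j}{t}\cdots$ differ for $j=1$ and $j=2$: this can be checked by direct counting of words differing from both endpoints in every coordinate, which gives $(q-2)(q-1)^{n-1}$ versus $(q-2)^2(q-1)^{n-2}+(q-1)^{n-2}$. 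Since these counts differ, no automorphism maps a distance-$1$ pair to a distance-$2$ pair, so the graph is not distance-transitive. Combining this with the edge-transitivity classification in item~1 yields item~2.
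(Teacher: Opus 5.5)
\textbf{Gap: the necessity half of item~1 is only sketched, and your chosen invariant cannot complete it.} Your sufficiency arguments are sound. The map $\mathbf{x}\mapsto\mathbf{x}+\mathrm{par}(\mathbf{x})\mathbf{1}_n$ is a clean alternative to the paper's parity automorphism; its image is the folded $(n+1)$-cube, not the folded $n$-cube.

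Write $\mathbf{A}=\mathbf{J}-\mathbf{B}$, where $\mathbf{B}=\sum_{j<d}\mathbf{A}_j$ is the radius-$(d-1)$ ball matrix with row sums $V=\sum_{j<d}\binom{n}{j}(q-1)^j$. Then
\[
c_i=(\mathbf{A}^2)_{\mathbf{0},\mathbf{u}}=q^n-2V+\bigl|B(\mathbf{0},d-1)\cap B(\mathbf{u},d-1)\bigr|,\qquad w_H(\mathbf{u})=i,
\]
and the intersection is empty as soon as $i\ge 2d-1$. This has three consequences:
\begin{itemize}
\item $c_n=c_{n-1}$ for \emph{every} $q$ whenever $n\ge 2d$. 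For example, $(q,n,d)=(3,4,2)$ gives $c_3=c_4=63$. So no $(q-2)$-term can dominate.
\item For $q=2$, $c_n=c_{n-2}$ whenever $n\ge 2d+1$, so the proposed parity argument is false as well.
\item For $q=2$, $d=n-1$ and odd $n\ge 5$, the graph is triangle-free: two words of weight at least $n-1$ are at Hamming distance at most $2$. So every edge has zero common neighbours, yet $\overline{\Gilbert{2,n,n-1}}$ is not edge-transitive. No comparison of the numbers $c_i$, at any pair of distances, can settle this case.
\end{itemize}

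\textbf{The missing idea} is to use all polynomials in $\mathbf{A}$, not just $\mathbf{A}^2$. Every $p(\mathbf{A})$ commutes with the permutation matrix of each automorphism, so, being symmetric, it is constant on each $\mathrm{Aut}$-orbit of edges. The paper proceeds as follows:
\begin{itemize}
\item It computes $\gamma_j=-K_{d-1}(j-1;n-1,q)$ for $j\ge1$.
\item It proves $\gamma_1<\gamma_m$ for $2\le m\le n-1$ by a character-sum argument, and $\gamma_1<\gamma_0$.
\item It shows $\gamma_1=\gamma_n$ if and only if $q=2$ and $d$ is odd.
\item Lagrange interpolation then gives $p$ with $p(\mathbf{A})=\mathbf{E}_1$, or $p(\mathbf{A})=\mathbf{E}_1+\mathbf{E}_n$ in the binary odd-$d$ case.
\end{itemize}
The entry $(\mathbf{E}_1)_{\mathbf{0},\mathbf{u}}=\bigl((q-1)n-qi\bigr)/q^n$ separates $i=n$ from $i=n-1$. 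The entry $(\mathbf{E}_1+\mathbf{E}_n)_{\mathbf{0},\mathbf{u}}=\bigl(n-2i+(-1)^i\bigr)/2^n$ separates $i=n$ from $i=n-2$. It agrees at $i=n$ and $i=n-1$ exactly when $n$ is even, which is precisely the exceptional case. Hence edge-transitivity fails in all remaining cases. Your ``fusion scheme'' fallback points in this direction, but it does not supply this eigenvalue separation, which is the real content.

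\textbf{A smaller slip in the distance-transitivity part.} The vertex $\mathbf{0}$ and a word of weight $j$ have exactly $(q-2)^j(q-1)^{n-j}$ common neighbours. The extra $(q-1)^{n-2}$ in your $j=2$ count is wrong, and with it your two counts would coincide at $q=3$. With the correct counts, the argument goes through.
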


\begin{corollary}  \label{corollary: generalized-Hamming graphs and complements}
Let $\Gr{G}$ be a generalized-Hamming graph such that $\Gr{G}$ and $\CGr{G}$ are both edge-transitive. Then, either $(q,d) = (2,n)$ or  $(q,d,n) = (2,3,4)$.
\end{corollary}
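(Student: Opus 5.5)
This corollary is obtained by intersecting the two parameter classifications of Theorems~\ref{thm: main result} and~\ref{thm: main result complement}; no new graph-theoretic argument is needed. Throughout, $q$ is a prime power and $2 \le d \le n$.

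By Theorem~\ref{thm: main result}, $\Gr{G}=\Gilbert{q,n,d}$ is edge-transitive if and only if $d=2$, or $(q,d)=(2,3)$, or $(q,d)=(2,n)$. By Item~1 of Theorem~\ref{thm: main result complement}, $\CGr{G}$ is edge-transitive if and only if $d=n$, or $(q,d)=(2,n-1)$ with $n$ even. We check all six pairings of these conditions.

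\begin{enumerate}
\item $d=2$ and $d=n$. Then $n=2$ and $d=n$. For $q=2$ this lies in the family $(q,d)=(2,n)$. For $q>2$ the graph $\Gilbert{q,2,2}$ has $q^2$ vertices, adjacency means Hamming distance $1$, so it is the rook's graph $\mathrm{L}_2(q)$, which is not complete; its complement is also nonempty.

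\item $d=2$ and $(q,d)=(2,n-1)$ with $n$ even. Then $n=3$, which is odd, so this case is impossible.

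\item $(q,d)=(2,3)$ and $d=n$. Then $(q,d)=(2,n)$ with $n=3$.

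\item $(q,d)=(2,3)$ and $(q,d)=(2,n-1)$ with $n$ even. Then $n=4$, giving $(q,d,n)=(2,3,4)$.

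\item $(q,d)=(2,n)$ and $d=n$. This is exactly the family $(q,d)=(2,n)$.

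\item $(q,d)=(2,n)$ and $(q,d)=(2,n-1)$. These force $n=n-1$, which is impossible.
\end{enumerate}

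Every surviving case falls into $(q,d)=(2,n)$ or $(q,d,n)=(2,3,4)$, except $(q,d,n)=(q,2,2)$ with $q>2$ from the first pairing. That exceptional family is not in the corollary's list, so the proof must resolve it.

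\begin{itemize}
\item If the corollary implicitly excludes $n=2$, the case is ruled out by hypothesis.
\item Otherwise it is a genuine exception. In $\mathrm{L}_2(q)$, $\CGr{G}$ is the $(q-1)^2$-regular graph $\CoG{q}\times\CoG{q}$. The map swapping the two coordinates, together with $\Sym_q\times\Sym_q$, acts transitively on the edges of both $\mathrm{L}_2(q)$ and its complement. So $(q,2,2)$ should be stated alongside $(2,n)$ and $(2,3,4)$.
\end{itemize}

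Consistently with this, Theorem~\ref{thm: main result complement} lists $d=n$ with $n=2$ among the distance-transitive complements.

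The main obstacle is only bookkeeping: handling the degenerate overlap $n=2$ correctly, including whether $n=2$ is allowed at all. The rest is immediate from the two theorems.
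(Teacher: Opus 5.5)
Your derivation is the intended one. The paper gives no separate argument: the corollary is just the intersection of the classifications in Theorems~\ref{thm: main result} and~\ref{thm: main result complement}, and your six pairings compute that intersection correctly.

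The exceptional family you found is real, so you should drop the hedge that $n=2$ might be implicitly excluded. The paper explicitly allows $n=2$: it assumes only $2\le d\le n$, and Item~2(b) of Theorem~\ref{thm: main result complement} treats $n=2$ separately. For $(q,d,n)=(q,2,2)$ with $q\ge 3$, both $\Gilbert{q,2,2}=\mathrm{L}_2(q)$ and its complement $\CoG{q}\times\CoG{q}$ are edge-transitive, exactly as you argue. A concrete witness is $q=3$: the map $(a,b)\mapsto(a+b,a-b)$ over $\Field_3$ shows that $\Gilbert{3,2,2}$ is the self-complementary Paley graph on $9$ vertices.

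Hence the corollary as printed is false in this case. Its conclusion should read: $(q,d)=(2,n)$, or $(q,d,n)=(2,3,4)$, or $d=n=2$ with $q$ arbitrary. Alternatively, the hypothesis $n\ge 3$ should be added.
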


The paper alternates between background material and new results.
The even-numbered sections provide necessary background on graph transitivity, association schemes, Krawtchouk polynomials, and the Lov\'{a}sz
$\vartheta$-function, while the odd-numbered sections present the proofs of our results and their applications.
An exception is Section~\ref{subsection: Spectrum of the complement of the generalized-Hamming graph}, where we derive spectral properties
of the complement of generalized-Hamming graphs. Since these results appear to be new, Section~\ref{subsection: Spectrum of the complement of the generalized-Hamming graph}
contains a complete proof instead of referring to external sources.

In regard to the odd-numbered sections, Theorem~\ref{thm: main result} is proved in Section~\ref{sec: transitive gilbert graphs}, and
Theorem~\ref{thm: main result complement} is proved in Section~\ref{sec: transitive complement gilbert graphs}. The results are then
applied in Section~\ref{sec: applications} to compute the Lov\'{a}sz $\vartheta$-function of edge-transitive generalized-Hamming graphs and their complements.

\section{Graph transitivity}
\label{section: transitivity}

All graphs in this paper are finite, simple, undirected, and connected. We denote by $\Gr{G}$ a graph with vertex set $\V{\Gr{G}}$ and edge
set $\E{\Gr{G}}$. For every vertex $v \in \V{\Gr{G}}$, we denote by $\Neighbors(v)$ the neighborhood of $v$, i.e.,
the set of vertices adjacent to $v$. The \emph{distance} between two vertices $u,v \in \V{\Gr{G}}$ is the length of the
shortest path between them and is denoted by $d_{\Gr{G}}(u,v)$. A \emph{graph automorphism} is a permutation of the vertex set that preserves
edges and non-edges. The set of all graph automorphisms of $\Gr{G}$ forms an \emph{automorphism group}, denoted by $\Aut(\Gr{G})$.
The following types of graph symmetries are considered:
\begin{enumerate}
\item $\Gr{G}$ is \emph{vertex-transitive} if, for every two vertices $u,v \in \V{\Gr{G}}$, there exists an automorphism of $\Gr{G}$ that maps $u$ to $v$.
\item $\Gr{G}$ is \emph{edge-transitive} if, for every pair of edges $\{u_1,v_1\}, \{u_2,v_2\} \in \E{\Gr{G}}$, there exists an automorphism of $\Gr{G}$
that maps $\{u_1, v_1\}$ to $\{u_2, v_2\}$.
\item $\Gr{G}$ is \emph{distance-transitive} if, for every four vertices $u_1, u_2, v_1, v_2 \in \V{\Gr{G}}$ satisfying the equality
$d_{\Gr{G}}(v_1,v_2) = d_{\Gr{G}}(u_1,u_2)$, there exists an automorphism of $\Gr{G}$ that maps $v_1$ to $u_1$ and $v_2$ to $u_2$.
\end{enumerate}
If a graph is distance-transitive, then it is also edge-transitive and vertex-transitive.
A Cayley graph of an abelian group is vertex-transitive \cite[Theorem 3.1.2]{GodsilRoyle2001}, and
the generalized-Hamming graph is a Cayley graph of the additive group $\Field_q^n$. In particular,
\begin{align}
&\Gilbert{q,n,d} = \Cayley{\Field_q^n}{\mathcal{S}_{n,d}} \\
&\mathcal{S}_{n,d} := \left\{\mathbf{x} \in \Field_q^n \, : \, 1 \leq \wt{\mathbf{x}} \leq d-1\right\},
\end{align}
where $\wt{\mathbf{x}}$ is the Hamming weight of the vector $\mathbf{x}$, i.e., the number of nonzero coordinates of $\mathbf{x}$, and
$\mathcal{S}_{n,d}$ is called a {\em connection set}. Similarly,
the complement graph is also a Cayley graph:
\begin{align}
\overline{\Gilbert{q,n,d}} = \Cayley{\Field_q^n}{\Field_q^n \setminus (\mathcal{S}_{n,d} \cup \{\mathbf{0}\})}.
\end{align}
Thus, both the generalized-Hamming graph and its complement are vertex-transitive, but they are not necessarily edge-transitive
(hence, they need not be distance-transitive).

\section{Transitive generalized-Hamming graphs}
\label{sec: transitive gilbert graphs}

This section proves Theorem~\ref{thm: main result}, which fully characterizes the parameters
for which generalized-Hamming graphs are edge-transitive or distance-transitive, and establishes that a
generalized-Hamming graph is edge-transitive if and only if it is distance-transitive.

The generalized-Hamming graph $\Gilbert{q,n,2}$ is also called the Hamming graph, and it
is known to be distance-transitive \cite[Table~6.1]{CohenBN1989}. A proof that $\Gilbert{2,n,3}$
is distance-transitive is presented in \cite{Mirafzal2021}. The following lemma completes the proof
of one direction of Theorem~\ref{thm: main result}.

\begin{lemma}
\label{lem: gilbert (2,n,n) is distance-transitive}
The generalized-Hamming graph $\Gilbert{2,n,n}$ is distance-transitive.
\end{lemma}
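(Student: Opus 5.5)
The plan is to identify $\Gilbert{2,n,n}$ explicitly and read off its distance structure. In $\Gilbert{2,n,n}$, two distinct vectors of $\Field_2^n$ are adjacent exactly when their Hamming distance is at most $n-1$. So each vertex $\mathbf{x}$ is adjacent to every other vertex except its antipode $\mathbf{x}+\mathbf{1}$, the all-ones complement. The graph is therefore the complete graph $\CoG{2^n}$ with the perfect matching $\{\mathbf{x},\mathbf{x}+\mathbf{1}\}$ removed, i.e. the cocktail-party graph $\CoG{2,2,\ldots,2}$ (complete $2^{n-1}$-partite with parts of size $2$). For $n\geq 2$ it has $2^{n-1}\geq 2$ parts, so it is connected with diameter $2$. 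Two distinct vertices are at distance $1$ if they are not antipodal. They are at distance $2$ if they are antipodal, since they share a common neighbour outside their part, and such a vertex exists because $2^n\geq 4$.

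Next I show that the automorphism group is transitive on pairs at each distance. By vertex-transitivity (Cayley graph, Section~\ref{section: transitivity}), it suffices to fix $v_1=u_1=\mathbf{0}$ and show that the stabiliser of $\mathbf{0}$ acts transitively on vertices at distance $1$ and on vertices at distance $2$.

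For distance $2$ there is nothing to prove, since $\mathbf{1}$ is the unique vertex at distance $2$ from $\mathbf{0}$.

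For distance $1$, I use the structure of the cocktail-party graph. Any permutation of $\Field_2^n$ that maps antipodal pairs to antipodal pairs preserves non-adjacency, hence adjacency, and so is an automorphism. Take $\mathbf{y},\mathbf{z}\notin\{\mathbf{0},\mathbf{1}\}$. The pairs $\{\mathbf{y},\mathbf{y}+\mathbf{1}\}$ and $\{\mathbf{z},\mathbf{z}+\mathbf{1}\}$ both differ from $\{\mathbf{0},\mathbf{1}\}$. Let the permutation swap these two pairs, with $\mathbf{y}\mapsto\mathbf{z}$ and $\mathbf{y}+\mathbf{1}\mapsto\mathbf{z}+\mathbf{1}$ (and the reverse), and fix everything else. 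This is an automorphism that fixes $\mathbf{0}$ and sends $\mathbf{y}$ to $\mathbf{z}$.

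To get a general pair $(v_1,v_2)\mapsto(u_1,u_2)$, I compose translations with this map: translate $v_1$ to $\mathbf{0}$, apply the stabiliser element, then translate $\mathbf{0}$ to $u_1$. Translations preserve antipodality because $(\mathbf{x}+\mathbf{a})+\mathbf{1}=(\mathbf{x}+\mathbf{1})+\mathbf{a}$. Distance $0$ is handled by vertex-transitivity alone.

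I do not expect a real obstacle; the only care needed is the boundary case. For $n=2$, $\Gilbert{2,2,2}$ is $\CG{4}=\CoG{2,2}$ and the argument still works, since there are two parts and the distance-$2$ vertices are exactly the antipodes. The main point to state explicitly is that preserving the antipodal matching is equivalent to being an automorphism. This holds because the non-edges of the graph are exactly the matching.
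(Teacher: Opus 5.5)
Your proposal is correct and follows essentially the same route as the paper. Both reduce to the stabiliser of $\mathbf{0}$ via vertex-transitivity, observe that $\mathbf{1}_n$ is the unique vertex at distance $2$, and handle distance $1$ with the automorphism that swaps the antipodal pairs $\{\mathbf{y},\mathbf{y}+\mathbf{1}_n\}$ and $\{\mathbf{z},\mathbf{z}+\mathbf{1}_n\}$ while fixing all other vertices. You also state explicitly that a permutation is an automorphism exactly when it preserves the antipodal matching (the set of non-edges), which is the justification the paper leaves implicit.
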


\begin{proof}
Every generalized-Hamming graph is vertex-transitive since it is a Cayley graph.
Consequently, it is also distance-transitive if and only if for every two vertices $\mathbf{u},\mathbf{v} \in \Field_q^n \setminus \{\mathbf{0}\}$, such that $\DG(\mathbf{0},\mathbf{u}) = \DG(\mathbf{0},\mathbf{v})$, there exists a graph automorphism that maps $\{\mathbf{0},\mathbf{u}\}$ to $\{\mathbf{0},\mathbf{v}\}$.
Let $\Gr{G} = \Gilbert{2,n,n}$. For every vertex $\mathbf{v}\in \Field_2^n$, the only vertex that is not adjacent to $\mathbf{v}$ is its complement $\overline{\mathbf{v}} \triangleq \mathbf{v} \oplus \mathbf{1}_n$, where $\oplus$ is the componentwise modulo-2 sum.
Let $\mathbf{u}$ and $\mathbf{v}$ be two vertices such that $\DG(\mathbf{0},\mathbf{v}) = \DG(\mathbf{0},\mathbf{u})$. Consider the mapping
$\varphi \colon \Field_q^n \to \Field_q^n$ given by
\begin{align}
& \varphi(\mathbf{x}) =
\begin{dcases}
\mathbf{u} & \text{if } \mathbf{x} = \mathbf{v}, \\
\mathbf{\overline{u}} & \text{if } \mathbf{x} = \overline{\mathbf{v}}, \\
\mathbf{v} & \text{if } \mathbf{x} = \mathbf{u}, \\
\overline{\mathbf{v}} & \text{if } \mathbf{x} = \overline{\mathbf{u}}, \\
\mathbf{x} & \text{otherwise}.
\end{dcases}
\end{align}
If $\DG(\mathbf{0},\mathbf{v}) = \DG(\mathbf{0},\mathbf{u}) = 2$, then $\mathbf{v} = \mathbf{u} = \mathbf{1}_n$ and $\varphi$ is
the identity mapping. Otherwise, we have $\DG(\mathbf{0},\mathbf{v}) = \DG(\mathbf{0},\mathbf{u}) = 1$.
In the latter case, $\varphi$ permutes between $\mathbf{v}$ and $\mathbf{u}$, and also between their complements, while keeping
all other vertices unchanged. Hence, $\varphi$ is a graph automorphism that maps $\{\mathbf{0},\mathbf{v}\}$ to $\{\mathbf{0},\mathbf{u}\}$.
\end{proof}

The next lemma proves that every generalized-Hamming graph not listed in Theorem~\ref{thm: main result} is not edge-transitive. The proof identifies
an invariant of edges in edge-transitive graphs and shows that this invariant takes different values for two edges of the graph.

\begin{lemma}
\label{lem: gilbert only if direction}
Let $q,n,d \in \naturals$ be such that $(q,n,d)$ does not satisfy any of the conditions in Item~3 of Theorem~\ref{thm: main result}.
Then, the generalized-Hamming graph $\Gilbert{q,n,d}$ is not edge-transitive.
\end{lemma}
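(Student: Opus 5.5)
The plan is to find an edge invariant that every edge-transitive graph must have constant, and show that it takes two different values on $\Gilbert{q,n,d}$. The invariant is the number of triangles containing an edge, i.e.\ the number of common neighbours $\lambda(\mathbf{u},\mathbf{v}) = |\Neighbors(\mathbf{u}) \cap \Neighbors(\mathbf{v})|$ of its endpoints. An automorphism mapping $\{\mathbf{u}_1,\mathbf{v}_1\}$ to $\{\mathbf{u}_2,\mathbf{v}_2\}$ preserves this number. Since $\Gilbert{q,n,d}$ is a Cayley graph of $\Field_q^n$, translations are automorphisms, so it suffices to look at edges $\{\mathbf{0},\mathbf{x}\}$ with $1 \le \wt{\mathbf{x}} \le d-1$. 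Moreover, coordinate permutations and nonzero scalings of individual coordinates preserve Hamming weight. Hence $\lambda(\mathbf{0},\mathbf{x})$ depends only on $w = \wt{\mathbf{x}}$, and we write $\lambda(w)$.

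Let $V = \sum_{i=0}^{d-1} \binom{n}{i}(q-1)^i$ be the size of the Hamming ball of radius $d-1$. Then
\begin{align}
\lambda(w) = V - 2 - N(w), \qquad N(w) \triangleq \bigl|\{\mathbf{y} : \wt{\mathbf{y}} \le d-1,\ \wt{\mathbf{y}-\mathbf{x}} \ge d\}\bigr|,
\end{align}
so the task reduces to showing that $N$ is not constant on $\{1,\dots,d-1\}$.

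First compute the case $w = 1$ explicitly. For $\mathbf{x} = \mathbf{e}_1$, a vector $\mathbf{y}$ in the ball has $\wt{\mathbf{y}-\mathbf{e}_1} \ge d$ exactly when $y_1 = 0$ and $\wt{\mathbf{y}} = d-1$. This gives $N(1) = \binom{n-1}{d-1}(q-1)^{d-1}$.

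Next compare $N(w)$ with $N(w+1)$ by a coupling. Let $\mathbf{x}$ have support $\{1,\dots,w\}$ and let $\mathbf{x}' = \mathbf{x} + \mathbf{e}_{w+1}$. Any $\mathbf{y}$ with $y_{w+1} = 0$ that is counted in $N(w)$ satisfies $\wt{\mathbf{y}-\mathbf{x}'} = \wt{\mathbf{y}-\mathbf{x}} + 1 \ge d$, so it is also counted in $N(w+1)$. The vectors with $y_{w+1} \ne 0$ need a separate matching: vary the value $y_{w+1}$, and handle the value $y_{w+1} = 1$, which lowers the distance to $\mathbf{x}'$, against the value $y_{w+1} = 0$. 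From this matching I aim to derive $N(w+1) \ge N(w)$. For strictness I then exhibit a witness counted for $\mathbf{x}'$ but not produced by the matching. The natural candidate is $\mathbf{y}$ of weight $d-1$ with $y_{w+1} = 0$ and $\wt{\mathbf{y}-\mathbf{x}} = d-1$ exactly, so that the extra coordinate pushes the distance to $d$. Such a $\mathbf{y}$ uses a coordinate in $\{1,\dots,w\}$ with a value different from both $0$ and $x_i$ (this needs $q \ge 3$), or it trades support inside and outside $\operatorname{supp}(\mathbf{x})$ (this needs enough room, namely $d \ge 4$ and $d \le n-1$ when $q = 2$).

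Finally, I will split into the two regimes left by the hypothesis, namely $q \ge 3$ with $d \ge 3$, and $q = 2$ with $4 \le d \le n-1$. In each regime I either show strict growth of $N$ between some consecutive weights, or directly compare $N(1)$ with $N(d-1)$ by counting. I expect the main obstacle to be making the strictness argument airtight: the coupling must be a genuine injection, and the witnesses must exist exactly under the stated parameter conditions. As a sanity check, the argument must break down in the excluded cases. For $(q,d) = (2,3)$, where $w \in \{1,2\}$, I expect $N(1) = N(2)$. For $(q,d) = (2,n)$ no witness should exist, since the graph there is the complement of a perfect matching. If the general coupling proves messy, my fallback is to compute $N(1)$ and $N(2)$ in closed form by summing over the overlap pattern of $\mathbf{y}$ with $\operatorname{supp}(\mathbf{x})$, and show they differ in both regimes.
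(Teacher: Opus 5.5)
Your main route is correct. It uses the same invariant as the paper but organizes the computation differently.

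\textbf{Comparison with the paper.} The paper's $F(\mathbf{v})=|\mathcal{N}(\mathbf{v})\setminus\mathcal{N}(\mathbf{0})|$ equals, by regularity, the degree minus your common-neighbour count. The paper first composes with a translation and the negation map to get an automorphism fixing $\mathbf{0}$. You do not need that step, since your count is symmetric in the two endpoints. The paper then evaluates $F$ in closed form at $\mathbf{e}_1$ and $\mathbf{e}_1+\mathbf{e}_2$, and for $q=2$ also at $\mathbf{e}_1+\mathbf{e}_2+\mathbf{e}_3$.

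\textbf{Your coupling works and gives an exact identity.} The difference $d_H(\mathbf{y},\mathbf{x}')-d_H(\mathbf{y},\mathbf{x})$ is $+1$, $-1$ or $0$ according as $y_{w+1}=0$, $y_{w+1}=1$, or otherwise. So the lost vectors are those in the ball with $y_{w+1}=1$ and $d_H(\mathbf{y},\mathbf{x})=d$. Resetting $y_{w+1}$ to $0$ maps them bijectively onto the gained vectors of weight at most $d-2$. Hence $N(w+1)-N(w)$ is exactly the number of your witnesses, namely the $\mathbf{y}$ with $y_{w+1}=0$, $w_H(\mathbf{y})=d-1$ and $d_H(\mathbf{y},\mathbf{x})=d-1$.

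On $\operatorname{supp}(\mathbf{x})$, let $a$, $b$, $c$ count the coordinates where $y_i=x_i$, $y_i\notin\{0,x_i\}$ and $y_i=0$, respectively. The witness condition is then $a=c$.
\begin{itemize}
\item For $q\ge 3$, taking $w=1$ gives $N(2)-N(1)=(q-2)(q-1)^{d-2}\binom{n-2}{d-2}>0$.
\item For $q=2$ we must have $b=0$, so $w$ is even, and $N(3)-N(2)=2\binom{n-3}{d-2}$, which is positive iff $d\le n-1$.
\end{itemize}
What this buys over the paper is monotonicity of $N$ in the weight, and a structural explanation of why the excluded parameters fail.

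\textbf{It also exposes a slip in the paper.} The paper's count of $F(\mathbf{u}_2)$ admits only weight-$d$ vectors with prefix $(1,1)$. It therefore misses the $2(q-2)(q-1)^{d-2}\binom{n-2}{d-2}$ vectors with prefix $(1,c)$ or $(c,1)$, $c\notin\{0,1\}$; these are precisely your $q\ge3$ witnesses. The true difference $F(\mathbf{u}_2)-F(\mathbf{u}_1)$ is $(q-2)(q-1)^{d-2}\binom{n-2}{d-2}$, not its negative. The paper's conclusion survives, since both expressions vanish iff $q=2$.

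\textbf{Your fallback, however, would fail.} For $q=2$ the increment from any odd $w$ is zero, so $N(1)=N(2)=\binom{n-1}{d-1}$ for every $d$. Comparing weights $1$ and $2$ can therefore never separate edges in the binary regime. There you must reach weight $3$, exactly as the paper does with $\mathbf{u}_3$; this is where the hypotheses $d\ge4$ and $d\le n-1$ are needed.
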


\begin{proof}
Let $\Gr{G} = \Gilbert{q,n,d}$ be a generalized-Hamming graph such that $(q,n,d)$ does not satisfy any of the conditions in Item~3 of Theorem~\ref{thm: main result},
and suppose to the contrary that $\Gr{G}$ is edge-transitive. Define $F \colon \Neighbors(\mathbf{0}) \to \naturals$ to be the number
of neighbors of a vertex $\mathbf{v} \in \Neighbors(\mathbf{0})$ that are not neighbors of $\mathbf{0} \in \Field_q^n$, i.e.,
\begin{align}
F(\mathbf{v}) = |\Neighbors(\mathbf{v}) \setminus \Neighbors(\mathbf{0})|.
\end{align}
Under the assumption that $\Gr{G}$ is edge-transitive, for every pair of vertices $\mathbf{u}, \mathbf{v} \in \Neighbors(\mathbf{0})$, there
exists an automorphism $\varphi$ of $\Gr{G}$ such that $\varphi(\{\mathbf{0},\mathbf{u}\}) = \{\mathbf{0},\mathbf{v}\}$.
Hence either $\varphi(\mathbf{0}) = \mathbf{0}$ and $\varphi(\mathbf{u}) = \mathbf{v}$,
or $\varphi(\mathbf{0}) = \mathbf{v}$ and $\varphi(\mathbf{u}) = \mathbf{0}$.
In the latter case, since $\Gr{G}$ is a Cayley graph with a symmetric connection set,
both the translation $\tau_{-\mathbf{v}} \colon \mathbf{x} \mapsto \mathbf{x}-\mathbf{v}$
and the negation map $\iota \colon \mathbf{x} \mapsto -\mathbf{x}$ are automorphisms.
Therefore, the composition
\[
\psi := \iota \circ \tau_{-\mathbf{v}} \circ \varphi
\]
is an automorphism of $\Gr{G}$ satisfying $\psi(\mathbf{0}) = \mathbf{0}$ and
$\psi(\mathbf{u}) = \mathbf{v}$. Consequently, for every pair
$\mathbf{u}, \mathbf{v} \in \Neighbors(\mathbf{0})$, there exists an automorphism $\psi$
fixing $\mathbf{0}$ and mapping $\mathbf{u}$ to $\mathbf{v}$.
Graph automorphisms preserve adjacency, and hence
\begin{align*}
F(\mathbf{u}) &= |\Neighbors(\mathbf{u}) \setminus \Neighbors(\mathbf{0})| \\
&= |\Neighbors(\psi(\mathbf{u})) \setminus \Neighbors(\psi(\mathbf{0}))| \\
&= |\Neighbors(\mathbf{v}) \setminus \Neighbors(\mathbf{0})| \\
&= F(\mathbf{v}).
\end{align*}
Hence the value $F(\mathbf{v})$ is identical for all vertices $\mathbf{v} \in \Neighbors(\mathbf{0})$.
In particular, consider the two vertices
\[
\mathbf{u}_1 = (1,0,0,\ldots,0), \qquad \mathbf{u}_2 = (1,1,0,\ldots,0).
\]
Since the third condition in Theorem~\ref{thm: main result} is not satisfied, it follows that $d \ge 3$.
Hence, $\mathbf{u}_1, \mathbf{u}_2 \in \Neighbors(\mathbf{0})$, and therefore $F(\mathbf{u}_1) = F(\mathbf{u}_2)$.

We use combinatorial arguments to derive closed-form expressions for $F(\mathbf{u}_1)$ and $F(\mathbf{u}_2)$. We adopt
the convention for binomial coefficients that $\binom{n}{k} = 0$ if $k > n$ or $k < 0$.

The neighbors of $\mathbf{u}_1$ that are not neighbors of $\mathbf{0}$ are precisely the vertices in $\mathbb{F}_q^n$
whose first coordinate equals $1$ and whose Hamming weight is $d$. Hence, the number of such vertices is
\begin{align}
\label{eq: F(u1)}
F(\mathbf{u}_1) = \binom{n-1}{d-1} \, (q-1)^{d-1}.
\end{align}
The set of neighbors of $\mathbf{u}_2$ that are not neighbors of $\mathbf{0}$ is the disjoint union of the following sets:
\begin{itemize}
\item The neighbors of $\mathbf{u}_2$ that are not neighbors of $\mathbf{0}$ and have Hamming weight $d$. Such vertices cannot begin with the
subsequences $(0,0)$, $(0,1)$, or $(1,0)$, since in each of these cases their Hamming weight would be less than $d$, and hence they would be
adjacent to $\mathbf{0}$. Therefore, they must begin with $(1,1)$. The number of such vertices is $\binom{n-2}{d-2} \, (q-1)^{d-2}$.
\item The neighbors of $\mathbf{u}_2$ that are not neighbors of $\mathbf{0}$ and have Hamming weight $d+1$. By the same reasoning, each of
these vertices starts with $(1,1)$, and their number is $\binom{n-2}{d-1} \, (q-1)^{d-1}$.
\end{itemize}
Overall,
\begin{align}
\label{eq: F(u2)}
F(\mathbf{u}_2) = \binom{n-2}{d-2} \, (q-1)^{d-2} + \binom{n-2}{d-1} \, (q-1)^{d-1}.
\end{align}
Consequently, by \eqref{eq: F(u1)} and \eqref{eq: F(u2)}, the equality $F(\mathbf{u}_2) - F(\mathbf{u}_1) = 0$ yields
\begin{align}
&\binom{n-2}{d-2} \, (q-1)^{d-2} + \binom{n-2}{d-1} \, (q-1)^{d-1} - \binom{n-1}{d-1} \, (q-1)^{d-1} = 0, \\[0.1cm]
\iff \quad & \binom{n-2}{d-2} + \binom{n-2}{d-1} \, (q-1) - \binom{n-1}{d-1} \, (q-1) = 0, \\[0.1cm]
\iff \quad & \frac{1}{n-d} + \frac{q-1}{d-1} - \frac{(n-1)(q-1)}{(n-d)(d-1)} = 0, \\[0.1cm]
\iff \quad & (d-1) + (n-d)(q-1) - (n-1)(q-1) = 0, \\[0.1cm]
\iff \quad & (q-2)(1-d) = 0. \label{eq: 17.02.26}
\end{align}
By \eqref{eq: 17.02.26}, if $q \ne 2$, we obtain a contradiction to the assumption that $\Gr{G}$ is edge-transitive (recall that $d \geq 3$).

Otherwise, if none of the conditions in Item~3 of Theorem~\ref{thm: main result} are satisfied, then $q = 2$ and $4 \leq d \leq n-1$.
In the latter case,
\[
\mathbf{u}_3 \triangleq (1,1,1,0,\ldots,0) \in \Neighbors(\mathbf{0}).
\]
Let $q = 2$ and $4 \leq d \leq n-1$.
We calculate $F(\mathbf{u}_3)$ and compare it to $F(\mathbf{u}_1)$, since, by the above, these two values are identical as
$\mathbf{u}_1, \mathbf{u}_3 \in \Neighbors(\mathbf{0})$.
The set of neighbors of $\mathbf{u}_3$ that are not neighbors of $\mathbf{0}$ is the disjoint union of the following sets:
\begin{itemize}
\item The neighbors of $\mathbf{u}_3$ with Hamming weight $d$. These vertices can differ from $\mathbf{u}_3$ in at most one
of the first three coordinates; hence, the Hamming weight of their first three coordinates must be $2$ or $3$. We consider
these two cases separately.
\begin{enumerate}
\item If this Hamming weight is $2$, then the first three coordinates of these vertices are either $(0,1,1)$, $(1,0,1)$, or
$(1,1,0)$, and in each of these three cases, there are $\binom{n-3}{d-2}$ possibilities for the remaining coordinates.
\item If the first three coordinates are $(1,1,1)$, then there are $\binom{n-3}{d-1}$ possibilities for the remaining coordinates.
\end{enumerate}
Therefore, the total number of neighbors of $\mathbf{u}_3$ with Hamming weight $d$ is
\[
3 \, \binom{n-3}{d-2} + \binom{n-3}{d-3}.
\]
\item The neighbors of $\mathbf{u}_3$ with Hamming weight $d+1$. Such vertices must have their first three coordinates equal to
$(1,1,1)$, and there are $\binom{n-3}{d-2}$ such vertices.
\item The neighbors of $\mathbf{u}_3$ with Hamming weight $d+2$. Again, the first three coordinates must be $(1,1,1)$, and there
are $\binom{n-3}{d-1}$ such vertices.
\end{itemize}
Overall,
\begin{align}  \label{eq2: 17.02.26}
F(\mathbf{u}_3) &= 4 \, \binom{n-3}{d-2} + \binom{n-3}{d-3} + \binom{n-3}{d-1}.
\end{align}
By invoking Pascal's identity
\[
\binom{a}{b} = \binom{a-1}{b-1} + \binom{a-1}{b}, \quad a,b \in \naturals,
\]
three times, equality \eqref{eq2: 17.02.26} can be rewritten as
\begin{align}
F(\mathbf{u}_3) &= 3 \, \binom{n-3}{d-2} + \Biggl[ \binom{n-3}{d-2} + \binom{n-3}{d-3} \Biggr] + \binom{n-3}{d-1} \nonumber \\
&= 3 \, \binom{n-3}{d-2} + \binom{n-2}{d-2} + \binom{n-3}{d-1} \nonumber \\
&= 2 \, \binom{n-3}{d-2} + \Biggl[ \binom{n-3}{d-2} + \binom{n-3}{d-1} \Biggr] + \binom{n-2}{d-2} \nonumber \\
&= 2 \, \binom{n-3}{d-2} + \binom{n-2}{d-1} + \binom{n-2}{d-2} \nonumber \\
\label{eq: F(u3)}
&= 2 \, \binom{n-3}{d-2} + \binom{n-1}{d-1}.
\end{align}
Consequently, by \eqref{eq: F(u1)} (for $q=2$) and \eqref{eq: F(u3)}, the equality $F(\mathbf{u}_3) - F(\mathbf{u}_1) = 0$ yields
\begin{align}
\label{eq1: 09.03.26}
F(\mathbf{u}_3) - F(\mathbf{u}_1) = 2 \, \binom{n-3}{d-2} = 0.
\end{align}
Since $4 \leq d \leq n-1$ in the considered case, equality \eqref{eq1: 09.03.26} holds if and only if $d-2 > n-3$, i.e., $d > n-1$.
This leads to a contradiction. Therefore, in this case as well, we obtain a contradiction to the assumption that $\Gr{G}$ is edge-transitive.
\end{proof}

\section{Association Schemes}
\label{sec: association schemes}

The proof of Theorem~\ref{thm: main result complement}, which appears in the next section (Section~\ref{sec: transitive complement gilbert graphs}),
relies on the theory of association schemes. This section begins by briefly recalling the necessary definitions and properties of association schemes.

\subsection{Association schemes and their matrix representation}
\label{subsection: Association schemes and their matrix representation}

\begin{definition}[association scheme]
\label{def: association scheme}
Let $\mathcal{X}$ be a finite set and let $m \in \naturals$.
An \emph{$(m+1)$-class association scheme} on $\mathcal{X}$ is a partition
$\mathcal{R} = \{R_0, R_1, \ldots, R_m\}$ of $\mathcal{X} \times \mathcal{X}$
(i.e., $\bigcup_{i=0}^{m} R_i = \mathcal{X} \times \mathcal{X}$ and
$R_i \cap R_j = \varnothing$ for all $i \neq j$) that satisfies the following properties:
\begin{itemize}
\item $R_0 = \{(x,x): \, x \in \mathcal{X}\}$ is the identity relation.
\item For every $0 \le i \le m$, the relation $R_i^{\star} \triangleq \{(y,x): \, (x,y) \in R_i\}$ also belongs to $\mathcal{R}$
(i.e., $R_i^{\star} = R_j$ for some $j$).
In the special case where $R_i^{\star} = R_i$ for every $0 \le i \le m$, the association scheme is called \emph{symmetric}.
\item For every $0 \le i,j,k \le m$, there exist integers $p_{ij}^{k}$ such that for every $(x,y) \in R_k$,
\[
\Bigl| \, \bigl\{z \in \mathcal{X}: \, (x,z) \in R_i \text{ and } (z,y) \in R_j\bigr\} \, \Bigr| = p_{ij}^{k},
\]
i.e., the above quantity depends only on $i,j,k$ and not on the particular pair $(x,y) \in R_k$.
The integers $\{p_{ij}^{k}\}$ are called the \emph{intersection numbers} of the association scheme.
\end{itemize}
\end{definition}

For every $i \in \{0, 1, \ldots, m\}$, the \emph{adjacency matrix representation} of the relation $R_i$ is the matrix
$\A_i \in \{0,1\}^{|\mathcal{X}| \times |\mathcal{X}|}$, where
\begin{align}
\A_i(x,y) =
\begin{cases}
1 & \text{if } (x,y) \in R_i, \\
0 & \text{otherwise.}
\end{cases}
\end{align}

One can represent the relations of an association scheme by adjacency matrices. Accordingly, an association scheme
$\mathcal{R} = \{R_0, R_1, \ldots, R_m\}$ can be represented by the set of matrices $\{\A_0, \A_1, \ldots, \A_m\}$. In this
representation, the conditions of Definition~\ref{def: association scheme} are rewritten as follows:
\begin{itemize}
\item $\sum_{i=0}^{m} \A_i = \AllOne$, where $\AllOne$ denotes the all-ones matrix of size $\card{\mathcal{X}} \times \card{\mathcal{X}}$.
\item $\A_0 = \Identity$ is the identity matrix of size $\card{\mathcal{X}} \times \card{\mathcal{X}}$.
\item For every $0 \le i \le m$, the equality $\A_i^T = \A_{j}$ holds for some $0 \le j \le m$.
If the association scheme is symmetric then $\A_i^T = \A_i$ for every $0 \le i \le m$.
\item For every $0 \le i,j \le m$,
\begin{align}
\label{eq: 21.03.26}
\A_i \A_j = \sum_{k=0}^{m} p_{ij}^k \A_k = \A_j \A_i.
\end{align}
\end{itemize}
Since the matrices $\A_0, \A_1, \ldots, \A_m$ commute pairwise, they admit a common basis of eigenvectors and can therefore be simultaneously
diagonalized. The matrices $\A_0, \A_1, \ldots, \A_m$ span a commutative algebra called the \emph{Bose--Mesner algebra}.
\begin{definition}[Bose--Mesner algebra]
Let $\mathcal{R}$ be an association scheme on a finite set $\mathcal{X}$ with adjacency matrices $\A_0,\dots,\A_m$.
The \emph{Bose--Mesner algebra} of $\mathcal{R}$, denoted by $\boseMesner(\mathcal{R})$, is the subalgebra of
$\mathbb{R}^{|\mathcal{X}|\times|\mathcal{X}|}$ consisting of all real linear combinations of these matrices:
\[
\boseMesner(\mathcal{R})=\mathrm{span}_{\mathbb{R}}\{\A_0,\dots,\A_m\}.
\]
Since the products $\A_i\A_j$ are again linear combinations of $\A_0,\dots,\A_m$ (see \eqref{eq: 21.03.26}), this span is closed under matrix multiplication.
\end{definition}

\subsection{Dual basis and eigenmatrices}
\label{subsection: Dual basis and eigenmatrices}
Apart from the basis formed by the adjacency matrices, the Bose--Mesner algebra
has another distinguished basis called the \emph{dual basis}, consisting of
orthogonal projections (the primitive idempotents). The following theorem
states this fact.

\begin{theorem}
\label{thm: dual basis of association scheme}
\cite[Theorem~2.2]{Delsarte1973}
Let $\mathcal{R}$ be an association scheme on a set $\mathcal{X}$ with
adjacency matrices $\{\A_0,\A_1,\ldots,\A_m\}$. Then there exists a basis of
$\boseMesner(\mathcal{R})$ consisting of orthogonal projections
$\Ei_0,\Ei_1,\ldots,\Ei_m$. In particular, there exist scalars $q_k(i)$ and
$p_i(k)$ for $0\le i,k\le m$ such that
\begin{align}
\Ei_k = \frac{1}{|\mathcal{X}|}\sum_{i=0}^m q_k(i)\A_i,
\qquad
\A_i = \sum_{k=0}^m p_i(k)\Ei_k.
\end{align}
Moreover, for all $0\le i,j\le m$,
\begin{align}
\label{eq: orthogonality of primitive idempotents}
\Ei_i \Ei_j = \delta_{i,j} \Ei_i, \text{ where } \delta_{i,j} \triangleq \begin{cases}
1 & \text{if } i = j, \\
0 & \text{otherwise.}
\end{cases}
\end{align}
\end{theorem}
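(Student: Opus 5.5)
We prove Theorem~\ref{thm: dual basis of association scheme} by showing that $\boseMesner(\mathcal{R})$ is a commutative semisimple algebra of dimension $m+1$ closed under transposition. Its primitive idempotents then give the required basis.

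First, the matrices $\A_0,\ldots,\A_m$ are linearly independent, because their supports are the disjoint nonempty relations $R_i$. Hence $\dim \boseMesner(\mathcal{R}) = m+1$. By \eqref{eq: 21.03.26}, the span is closed under multiplication and commutative. It contains $\A_0=\Identity$. It is closed under transposition, since $\A_i^T=\A_j$ for some $j$.

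The main step is to show that $\boseMesner(\mathcal{R})$ is simultaneously diagonalizable by orthogonal projections. Every matrix $\mathbf{M}$ in the algebra commutes with $\mathbf{M}^T$, which also lies in the algebra, so each $\mathbf{M}$ is normal. A commuting family of normal matrices is unitarily simultaneously diagonalizable. Decompose $\mathbb{C}^{|\mathcal{X}|}$ into the common eigenspaces $V_0,\ldots,V_r$. These are the maximal subspaces on which every $\A_i$ acts as a scalar, and they are mutually orthogonal. Let $\Ei_k$ be the orthogonal projection onto $V_k$. Then $\Ei_i\Ei_j=\delta_{i,j}\Ei_i$ and $\sum_k \Ei_k=\Identity$. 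Writing $p_i(k)$ for the eigenvalue of $\A_i$ on $V_k$ gives $\A_i=\sum_k p_i(k)\Ei_k$.

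It remains to show that each $\Ei_k$ lies in $\boseMesner(\mathcal{R})$ and that $r=m$. For the first claim, I would use Lagrange interpolation. For $k\neq l$, some $\A_i$ separates $V_k$ from $V_l$, meaning $p_i(k)\neq p_i(l)$. Take a polynomial $f_{k,l}$ in that $\A_i$ which equals $1$ on eigenvalue $p_i(k)$ and $0$ on $p_i(l)$. Then $\Ei_k=\prod_{l\neq k} f_{k,l}$ is a polynomial in the $\A_i$, and so lies in the algebra. The $\Ei_k$ are linearly independent, being nonzero orthogonal idempotents, and they span every $\A_i$. Hence they form a basis, and $r+1=m+1$.

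The projections are real. Complex conjugation maps the algebra to itself and permutes the primitive idempotents. Each $\Ei_k$ can be written as a real combination once we note that the algebra has a real basis. Alternatively, each $\A_i$ is symmetric in the symmetric case, so everything can be done over $\Reals$ from the start. Inverting the change of basis gives $\Ei_k=\tfrac{1}{|\mathcal{X}|}\sum_i q_k(i)\A_i$, which defines the scalars $q_k(i)$. The normalization $1/|\mathcal{X}|$ is just a convention.

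The main obstacle is showing that the $\Ei_k$ belong to $\boseMesner(\mathcal{R})$ rather than merely commuting with it. The interpolation argument handles this. In the non-symmetric case, the reality of the projections needs a short extra argument.
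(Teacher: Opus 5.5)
Your core argument is correct. Disjoint supports give linear independence. Every element of the algebra is normal, because its transpose lies in the same commutative algebra. The joint eigenspaces are mutually orthogonal. The interpolation product $\prod_{l\neq k} f_{k,l}$ places each projection $\mathbf{E}_k$ inside the algebra, and the dimension count forces $r=m$.

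The flaw is the paragraph on reality. Complex conjugation does permute the $\mathbf{E}_k$, but nothing forces it to fix them. Likewise, the existence of the real basis $\{\mathbf{A}_i\}$ says nothing about whether the coefficients $q_k(i)$ are real. In fact the claim is false for non-symmetric schemes. Take $\mathcal{X}=\mathbb{Z}_3$ with $R_j=\{(x,x+j)\}$, so that $\mathbf{A}_1=\mathbf{P}$ is a cyclic shift and $\mathbf{A}_2=\mathbf{P}^2=\mathbf{P}^{\mathrm{T}}$. With $\omega=e^{2\pi i/3}$, the idempotents are $\mathbf{E}_k=\tfrac{1}{3}\sum_{j=0}^{2}\omega^{-jk}\mathbf{P}^j$, and $\mathbf{E}_1$ and $\mathbf{E}_2=\overline{\mathbf{E}_1}$ are not real. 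Worse, $\mathrm{span}_{\mathbb{R}}\{\mathbf{I},\mathbf{P},\mathbf{P}^2\}\cong\mathbb{R}[x]/(x^3-1)\cong\mathbb{R}\times\mathbb{C}$ contains only the idempotents $\mathbf{0}$, $\tfrac{1}{3}\mathbf{J}$, $\mathbf{I}-\tfrac{1}{3}\mathbf{J}$ and $\mathbf{I}$. So this three-dimensional real algebra has no basis of projections at all. With $\mathcal{B}(\mathcal{R})$ defined as a real span, the statement therefore holds only for symmetric schemes, and no ``short extra argument'' can rescue the non-symmetric case.

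The fix is to restrict to symmetric schemes, as your own ``alternatively'' sentence suggests. There every $\mathbf{A}_i$ is real symmetric, and commutativity is automatic rather than borrowed from \eqref{eq: 21.03.26}. All $p_i(k)$ are real, the interpolating polynomials have real coefficients, and your argument runs verbatim over $\mathbb{R}$. This is the only case the paper uses, namely the Hamming scheme. Alternatively, for commutative non-symmetric schemes, complexify the algebra and read ``orthogonal projection'' as Hermitian idempotent, with complex $p_i(k)$ and $q_k(i)$.

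Once restricted this way, your proof is complete, but it follows a different route from the one behind the paper's statement. The paper gives no proof and simply cites Delsarte. The standard argument in that setting is algebraic: closure under the adjoint rules out nonzero nilpotents, so the commutative algebra is semisimple and has a unique basis of primitive idempotents, which are then shown to be self-adjoint.

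Your approach stays within elementary linear algebra and makes the objects explicit: $\mathbf{E}_k$ is the projection onto a common eigenspace, and $p_i(k)$ is the eigenvalue of $\mathbf{A}_i$ on it. Your interpolation step is also the device the paper later uses to build polynomials with $p(\mathbf{A}(\mathsf{G}))=\mathbf{E}_1$ in its non-edge-transitivity lemmas. The algebraic route gives uniqueness of the idempotent basis for free, though the statement does not require it.
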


Define the matrices $\mathbf{P}\triangleq[p_i(k)]_{i,k=0}^m$ and
$\mathbf{Q}\triangleq[q_k(i)]_{k,i=0}^m$.
The matrices $\mathbf{P}$ and $\mathbf{Q}$ are called the
\emph{first eigenmatrix} and the \emph{second eigenmatrix} of the
association scheme, respectively, and they satisfy
\begin{align}
\mathbf{P}\mathbf{Q}=\mathbf{Q}\mathbf{P}=|\mathcal{X}|\,\I.
\end{align}

\medskip
Association schemes may be viewed as a natural generalization of
distance-regular graphs. In particular, every distance-regular graph
naturally gives rise to an association scheme, called its
\emph{distance scheme}. The Hamming scheme arises in this way from
the Hamming graph $\mathrm{H}(n,q) = \Gilbert{q,n,2}$.
Introductions to association schemes can be found, e.g., in
\cite{Delsarte1973,DelsarteLevenshtein2002} and in
\cite[Chapter~21]{MacWilliamsSloane1977}.

\subsection{The Hamming scheme and Krawtchouk polynomials}
\label{subsection: The Hamming scheme and Krawtchouk polynomials}

\begin{definition}[The Hamming scheme]
The Hamming association scheme $\mathcal{H}$ is an $(n+1)$-class symmetric
association scheme on the set $\Field_q^n$, with adjacency matrices
$\{\A_0, \A_1, \ldots, \A_n\}$. For $0 \le i \le n$, the matrix $\A_i$
is the \emph{distance-$i$ matrix}, defined by
\[
(\A_i)_{\mathbf{u},\mathbf{v}} =
\begin{cases}
1 & \text{if } \mathbf{u},\mathbf{v} \in \Field_q^n \text{ differ in exactly } i \text{ coordinates},\\
0 & \text{otherwise}.
\end{cases}
\]
\end{definition}

The dual basis of the Hamming scheme is given in terms of the Krawtchouk polynomials.

\begin{definition}[Krawtchouk polynomials]  \cite[Eq.~(16)]{DelsarteLevenshtein2002}
\label{def:krawtchouk_polynomials}
Let $q,n \in \mathbb{N}$, $q \geq 2$, and $0 \leq i,\ell \leq n$.
The \emph{$q$-ary Krawtchouk polynomial} $K_\ell(i;n,q)$ is defined by
\begin{align}
\label{eq: Krawtchouk polynomial}
K_\ell(i;n,q) = \sum_{r=0}^{\ell} \binom{i}{r} \, \binom{n-i}{\ell-r} \, (-1)^r \, (q-1)^{\,\ell-r}.
\end{align}
Note that $K_\ell(i;n,q)$ is a polynomial of degree $\ell$ in $i$, since for every $r \in \{0,1,\ldots,\ell\}$
the product $\binom{i}{r}\binom{n-i}{\ell-r}$ is itself a polynomial of degree $\ell$ in $i$.
\end{definition}

\begin{theorem}
\label{thm: P and Q matrices of Hamming scheme}
\cite[Example 1]{DelsarteLevenshtein2002}.
The entries of the first and second eigenmatrices $\mathbf{P} \triangleq [p_i(k)]_{i,k=0}^{m}$ and
$\mathbf{Q} \triangleq [q_k(i)]_{k,i=0}^{m}$ of the Hamming scheme are, respectively, given by
\begin{align}
p_i(k) = K_i(k;n,q), \quad q_k(i) = K_k(i;n,q).
\end{align}
In particular, we have
\begin{align}
& \A_i = \sum_{k=0}^{n} K_i(k;n,q) \, \Ei_k, \\
& \Ei_k = \frac{1}{q^n} \sum_{i=0}^{n} K_k(i;n,q) \, \A_i.
\end{align}
\end{theorem}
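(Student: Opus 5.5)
The plan is to diagonalize all distance matrices $\A_i$ at once using the additive characters of $\Field_q^n$, and then read off both eigenmatrices from a single character-sum identity. Fix a nontrivial additive character $\chi$ of $\Field_q$ (for instance $\chi(a)=e^{2\pi i\,\mathrm{Tr}(a)/p}$, where $p$ is the characteristic of $\Field_q$). For each $\mathbf{y}\in\Field_q^n$, set $\chi_{\mathbf{y}}(\mathbf{x})=\chi(\inner{\mathbf{x}}{\mathbf{y}})$. These $q^n$ characters are pairwise orthogonal, with $\sum_{\mathbf{x}}\chi_{\mathbf{y}}(\mathbf{x})\overline{\chi_{\mathbf{y}'}(\mathbf{x})}=q^n\delta_{\mathbf{y},\mathbf{y}'}$, so they form an orthogonal basis of $\mathbb{C}^{\Field_q^n}$.

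The first step is to show that each $\chi_{\mathbf{y}}$ is an eigenvector of every $\A_i$. Since $(\A_i)_{\mathbf{u},\mathbf{v}}$ depends only on $\wt{\mathbf{u}-\mathbf{v}}$, $\A_i$ is a convolution operator, and
\begin{align}
(\A_i\chi_{\mathbf{y}})(\mathbf{u})=\chi_{\mathbf{y}}(\mathbf{u})\sum_{\mathbf{x}:\,\wt{\mathbf{x}}=i}\chi_{\mathbf{y}}(\mathbf{x}).
\end{align}

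The key computation, which I expect to be the main obstacle, is evaluating this character sum as $K_i(k;n,q)$ with $k=\wt{\mathbf{y}}$. The sum depends only on the support pattern of $\mathbf{y}$, so I will choose the support $S$ of $\mathbf{x}$ (with $|S|=i$) and let $r=|S\cap\mathrm{supp}(\mathbf{y})|$. The sum then factors over the coordinates in $S$. Each coordinate $j\in S$ contributes $\sum_{a\neq 0}\chi(ay_j)$, which equals $q-1$ if $y_j=0$ and $-1$ otherwise, because the full sum $\sum_{a\in\Field_q}\chi(ay_j)$ vanishes when $y_j\neq 0$. There are $\binom{k}{r}\binom{n-k}{i-r}$ choices of $S$ for each $r$. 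Summing over $r$ gives exactly \eqref{eq: Krawtchouk polynomial} evaluated at $\ell=i$ and argument $k$, so $p_i(k)=K_i(k;n,q)$.

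Next, I will define $\Ei_k=q^{-n}\sum_{\wt{\mathbf{y}}=k}\chi_{\mathbf{y}}\chi_{\mathbf{y}}^{*}$. By character orthogonality, $\Ei_k$ is the orthogonal projection onto the span of the weight-$k$ characters. It follows directly that $\Ei_k\Ei_l=\delta_{k,l}\Ei_k$ and $\sum_k\Ei_k=\Identity$, and the eigenvector computation above yields $\A_i=\sum_k K_i(k;n,q)\Ei_k$. Computing the entries of $\Ei_k$,
\begin{align}
(\Ei_k)_{\mathbf{u},\mathbf{v}}=q^{-n}\sum_{\wt{\mathbf{y}}=k}\chi_{\mathbf{y}}(\mathbf{u}-\mathbf{v}),
\end{align}
and this is the same character sum as before with the roles of the two weights swapped, so it equals $q^{-n}K_k(\wt{\mathbf{u}-\mathbf{v}};n,q)$. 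Hence $\Ei_k=q^{-n}\sum_i K_k(i;n,q)\A_i$. In particular $\Ei_k$ is real, lies in $\boseMesner(\mathcal{H})$, and the $n+1$ matrices $\Ei_k$ (all nonzero) form the dual basis of Theorem~\ref{thm: dual basis of association scheme}, so $q_k(i)=K_k(i;n,q)$.

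Finally, the identity $\mathbf{PQ}=q^n\Identity$ follows as a consistency check: substitute one expansion into the other and use the linear independence of the $\A_i$.
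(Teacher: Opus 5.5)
The thesis gives no proof of this statement; it quotes it from \cite[Example~1]{DelsarteLevenshtein2002}, so there is no in-paper argument to match. Your character-theoretic derivation is correct and follows the standard route for this classical fact. The characters $\chi_{\mathbf{y}}$ simultaneously diagonalize every $\A_i$, and the weight-$k$ eigenspaces give the idempotents. Both eigenmatrices then come from the single sum $\sum_{w_H(\mathbf{x})=i}\chi_{\mathbf{y}}(\mathbf{x})=K_i(w_H(\mathbf{y});n,q)$, used a second time with the roles of $\mathbf{x}$ and $\mathbf{y}$ exchanged via the symmetry of $\inner{\mathbf{x}}{\mathbf{y}}$.

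Compared with the citation, your version is self-contained. As a by-product it also proves Part~1 of Theorem~\ref{thm: character theory definition}, which the thesis imports separately from \cite{Polyanskiy2019} and later uses in Part~2 of Theorem~\ref{thm: spectrum of complement of generalized-Hamming graph}.

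The one step you should spell out is why your $\Ei_k$ are the projections of Theorem~\ref{thm: dual basis of association scheme}. Nonzero pairwise orthogonal idempotents are linearly independent: multiply a vanishing combination by $\Ei_l$. Hence your $n+1$ of them form a basis of the $(n+1)$-dimensional algebra $\boseMesner(\mathcal{H})$. The minimal idempotents of a commutative semisimple algebra are unique up to relabeling, so your $\Ei_k$ are exactly those projections. Labeling them by weight fixes the indexing of $\mathbf{P}$ and $\mathbf{Q}$. Finally, the linear independence of the $\A_i$ (they have disjoint supports) makes the coefficients $p_i(k)$ and $q_k(i)$ well defined.
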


\begin{theorem}[Properties of Krawtchouk polynomials]
\label{thm: properties of krawtchouk polynomials}
Let $n \in \naturals$, $q \ge 2$, and let $K_l(i;n,q)$ denote the $q$-ary Krawtchouk
polynomials for $0 \le l,i \le n$. Then, the following properties hold:
\begin{enumerate}
\item \label{item: generating function} \cite[Proposition 1.2]{Coleman2011}:
The Krawtchouk polynomials satisfy the generating function identity
\begin{align}
\sum_{k=0}^{n} K_k(i;n,q) \, z^k = \bigl(1+(q-1)z \bigr)^{n-i} (1-z)^i.
\end{align}
\item \label{item: krawtchouk sum} \cite[Theorem 2.1]{Coleman2011}:
For every $i, d \in [n]$, the following summation formula holds
\begin{align}
\sum_{k=0}^{d} K_k(i;n,q) = K_d(i-1;n-1,q).
\end{align}
\end{enumerate}
\end{theorem}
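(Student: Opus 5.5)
Both items of Theorem~\ref{thm: properties of krawtchouk polynomials} will follow from the explicit definition \eqref{eq: Krawtchouk polynomial}, through one generating-function computation. The sum formula in Item~\ref{item: krawtchouk sum} will then be obtained by manipulating formal power series.

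For Item~\ref{item: generating function}, I will expand the right-hand side as a product of two binomial expansions:
\begin{align}
\bigl(1+(q-1)z\bigr)^{n-i}(1-z)^i = \sum_{s=0}^{n-i}\binom{n-i}{s}(q-1)^s z^s \, \sum_{r=0}^{i}\binom{i}{r}(-1)^r z^r .
\end{align}
Collecting the coefficient of $z^k$ and setting $s=k-r$ gives $\sum_{r}\binom{i}{r}\binom{n-i}{k-r}(-1)^r(q-1)^{k-r}$. This is exactly $K_k(i;n,q)$, since the convention $\binom{a}{b}=0$ for $b>a$ or $b<0$ removes the out-of-range terms. The polynomial has degree at most $n$, so summing over $k=0,\ldots,n$ captures every coefficient, and the identity follows.

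For Item~\ref{item: krawtchouk sum}, I will use the standard fact that the partial sums $\sum_{k=0}^{d}a_k$ of a sequence are the coefficients of $z^d$ in $\frac{1}{1-z}\sum_k a_k z^k$. Applying this to the generating function of Item~\ref{item: generating function} with $i\ge 1$ gives
\begin{align}
\frac{1}{1-z}\bigl(1+(q-1)z\bigr)^{n-i}(1-z)^{i} = \bigl(1+(q-1)z\bigr)^{(n-1)-(i-1)}(1-z)^{i-1}.
\end{align}
By Item~\ref{item: generating function} with parameters $(i-1;n-1,q)$, the right-hand side is the generating function of $K_k(i-1;n-1,q)$. Comparing the coefficients of $z^d$ then gives $\sum_{k=0}^{d}K_k(i;n,q)=K_d(i-1;n-1,q)$.

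The main point needing care is the range of indices in this last comparison. The left-hand generating function is a polynomial truncated at degree $n$, while the partial-sum series is infinite. For $d\le n-1$ the coefficient comparison is valid as written. For $d=n$ the left sum is the value of the generating function at $z=1$, namely $q^{n-i}\cdot 0^{i}=0$ because $i\ge1$, and the right side is $K_n(i-1;n-1,q)=0$ because $\binom{n-i}{n-r}=0$ whenever $r\le i-1<n$. So both sides agree in this case too. The hypothesis $i\ge 1$ is exactly what makes the division by $1-z$ legitimate, since it guarantees that $(1-z)^{i-1}$ is a polynomial.
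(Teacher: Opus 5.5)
Your argument is correct. The paper gives no proof of its own here: both items are simply quoted from \cite{Coleman2011} (Proposition~1.2 and Theorem~2.1 there), so your proposal supplies a self-contained derivation where the paper relies on a citation.

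Item~1 is exactly the coefficient extraction you describe. The product has degree exactly $n$ because $q\ge 2$, so nothing is lost by stopping the sum at $k=n$. For Item~2, the identity of formal power series $\frac{1}{1-z}\bigl(1+(q-1)z\bigr)^{n-i}(1-z)^{i}=\bigl(1+(q-1)z\bigr)^{(n-1)-(i-1)}(1-z)^{i-1}$ does all the work. Two small remarks follow.

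\emph{The case $d=n$.} Your separate treatment is not strictly needed. Directly from the explicit formula, $K_k(i;n,q)=0$ for $k>n$ and $K_k(i-1;n-1,q)=0$ for $k>n-1$. So the coefficient comparison already holds for every $d\ge 0$, and at $d=n$ it just reads off the vanishing $z^n$-coefficient on the right. Your $z=1$ evaluation is nonetheless a valid alternative, and it is the same evaluation the paper later uses to get $\sum_{i=0}^{n}K_i(j;n,q)=0$. To complete the vanishing of $K_n(i-1;n-1,q)$, also note that for $r\ge i$ the factor $\binom{i-1}{r}$ vanishes. This edge case is worth keeping explicit, because $K_n(i-1;n-1,q)$ lies outside the range $0\le \ell\le n-1$ of the definition, so the statement at $d=n$ only makes sense through the explicit sum.

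\emph{The role of $i\ge 1$.} Division by $1-z$ is always legitimate in the ring of formal power series, since $1-z$ is a unit there. What $i\ge 1$ (together with $i\le n$) actually buys is that the quotient has exponents $0\le i-1\le n-1$. That is what lets it match Item~1 with parameters $(i-1;n-1,q)$.
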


\begin{theorem}[Character-theoretic representation of Krawtchouk polynomials]
\label{thm: character theory definition}
Let $p$ be a prime, let $q = p^m$, and let $0 \le i,k \le n$.
For $\mathbf{x} \in \Field_q^n$ with $\wt{\mathbf{x}} = i$, the following holds:
\begin{enumerate}
\item \cite[Eq.~(29)]{Polyanskiy2019}:
The Krawtchouk polynomial admits the representation
\begin{align}
K_k(i;n,q) =
\sum_{\substack{\mathbf{y} \in \Field_q^n \\ \wt{\mathbf{y}} = k}}
\chi_{\mathbf{y}}(\mathbf{x}),
\end{align}
where $\chi_{\mathbf{y}}$ is the additive character of $\Field_q^n$ indexed by $\mathbf{y}$
and defined by (see \cite[Theorem~5.7]{LidlNiederreiter96})
\[
\chi_{\mathbf{y}}(\mathbf{x}) =
\exp\left(\frac{2\pi i}{p} \cdot \Tr\,\big(\langle \mathbf{x}, \mathbf{y} \rangle \big) \right),
\]
$\langle \mathbf{x}, \mathbf{y} \rangle := \sum_{j=1}^n x_j y_j$ denotes the standard inner product
over $\Field_q$, and $\Tr : \Field_q \to \Field_p$ is the trace function given by
\[
\Tr(a) = \sum_{j=0}^{m-1} a^{p^j},  \qquad a \in \Field_q.
\]
\item The trace map $\Tr : \Field_q \to \Field_p$ is $\Field_p$-linear and surjective
\cite[Theorem~2.23]{LidlNiederreiter96}.
\end{enumerate}
\end{theorem}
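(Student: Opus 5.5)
We prove Item~2 first, since Item~1 relies on the nontriviality of the character built from $\Tr$. Write $\sigma(a)=a^p$ for the Frobenius map on $\Field_q$. Recall that $\sigma$ is additive, since $(a+b)^p=a^p+b^p$ in characteristic $p$. It also fixes $\Field_p$ pointwise, and $\sigma^m=\mathrm{id}$ on $\Field_q$. We then argue in three steps.
\begin{enumerate}
\item $\Tr(a)^p=\sum_{j=0}^{m-1}a^{p^{j+1}}=\Tr(a)$, because $a^{p^m}=a$. Hence $\Tr(a)$ is a root of $X^p-X$, so $\Tr(a)\in\Field_p$.
\item Additivity of $\sigma$ gives $\Tr(a+b)=\Tr(a)+\Tr(b)$. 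For $c\in\Field_p$ we have $(ca)^{p^j}=c\,a^{p^j}$, so $\Tr(ca)=c\,\Tr(a)$. Thus $\Tr$ is $\Field_p$-linear.
\item $\Tr$ is given by a nonzero polynomial of degree $p^{m-1}<q$. It therefore cannot vanish on all $q$ elements of $\Field_q$. Its image is a nonzero $\Field_p$-subspace of the one-dimensional space $\Field_p$, so it is all of $\Field_p$, and $\Tr$ is surjective.
\end{enumerate}

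For Item~1, set $\chi(a)\triangleq\exp\bigl(\tfrac{2\pi i}{p}\Tr(a)\bigr)$ for $a\in\Field_q$. By the additivity of $\Tr$, $\chi$ is a homomorphism from $(\Field_q,+)$ to the unit circle, and $\chi_{\mathbf{y}}(\mathbf{x})=\prod_{j=1}^n\chi(x_jy_j)$.

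The key auxiliary fact is that for every $c\in\Field_q\setminus\{0\}$,
\[
\sum_{b\in\Field_q}\chi(cb)=0 .
\]
To see this, note that $b\mapsto cb$ is a bijection of $\Field_q$, so the sum equals $\sum_b\chi(b)$. By surjectivity of $\Tr$ there is some $b_0$ with $\chi(b_0)\neq1$. Translating the summation variable by $b_0$ multiplies the sum by $\chi(b_0)$, which forces the sum to be zero. Removing the term $b=0$ then gives
\[
\sum_{b\neq0}\chi(cb)=-1 \quad\text{if } c\neq0, \qquad \sum_{b\neq0}\chi(0\cdot b)=q-1 .
\]

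Now organize the sum over $\mathbf{y}$ with $\wt{\mathbf{y}}=k$ by the support $S=\mathrm{supp}(\mathbf{y})$, where $|S|=k$. For a fixed $S$, the coordinates $y_j$ with $j\in S$ range independently over $\Field_q\setminus\{0\}$, and $y_j=0$ off $S$. The sum therefore factorizes as $\prod_{j\in S}\sum_{b\neq0}\chi(x_jb)$. Let $r\triangleq|S\cap\mathrm{supp}(\mathbf{x})|$. By the auxiliary fact, this product equals $(-1)^r(q-1)^{k-r}$.

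Since $\wt{\mathbf{x}}=i$, the number of supports $S$ with a given $r$ is $\binom{i}{r}\binom{n-i}{k-r}$. Summing over $r$ reproduces exactly the defining formula \eqref{eq: Krawtchouk polynomial}.

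There is no real obstacle in this argument. The only delicate point is the vanishing of the full character sum, which is precisely where the surjectivity from Item~2 is needed.
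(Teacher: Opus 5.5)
Your proof is correct. For Item~2, the argument via Frobenius invariance plus the degree count $p^{m-1}<q$ is the standard textbook proof of Lidl--Niederreiter's Theorem~2.23. For Item~1, factorizing over supports and using $\sum_{b\neq 0}\chi(cb)\in\{-1,\,q-1\}$ is the usual derivation of the character-sum form of $K_k(i;n,q)$. The paper gives no argument of its own and simply cites these sources, so your write-up is a self-contained version of the same reasoning.
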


\begin{remark}
\label{rem: binary krawtchouk polynomials}
For $q=2$, the Krawtchouk polynomials defined in \eqref{eq: Krawtchouk polynomial}
reduce to the binary Krawtchouk polynomials
\begin{align}
\label{eq: binary}
K_j(i) \triangleq K_j(i;n,2) = \sum_{r=0}^{j} (-1)^r \, \binom{i}{r} \, \binom{n-i}{j-r},
\qquad j \in \{0, 1, \ldots, n\}.
\end{align}
\end{remark}

\subsection{Spectral properties of the complement of the generalized-Hamming graph}
\label{subsection: Spectrum of the complement of the generalized-Hamming graph}

The theory of association schemes enables a characterization of the spectral properties of the complement of the generalized-Hamming graph.
The next result, stated in Theorem~\ref{thm: spectrum of complement of generalized-Hamming graph}, provides such a characterization.
Since this result appears to be new, we include a proof for completeness.

\begin{theorem}
\label{thm: spectrum of complement of generalized-Hamming graph}
Let $\Gr{G} = \overline{\Gilbert{q,n,d}}$ be the complement of the generalized-Hamming graph.
\begin{enumerate}
\item The adjacency matrix of $\Gr{G}$ admits the decomposition
\begin{align}
\label{eq: decomposition}
\Adjacency(\Gr{G}) = \sum_{j=0}^{n} \gamma_j \Ei_j,
\end{align}
where the scalars $\gamma_j$ are the eigenvalues of $\Adjacency(\Gr{G})$, given by
\begin{align}
\label{eq: 20.03.26}
\gamma_0 = \Delta(\Gr{G}), \quad \gamma_j = - K_{d-1}(j-1;n-1,q), \, \forall 1 \le j \le n,
\end{align}
and $\Delta(\Gr{G}) = \sum_{i=d}^{n} \binom{n}{i} (q-1)^i$ denotes the degree of the regular graph $\Gr{G}$.
\item \label{item: distinct eigenvalues} $\gamma_1 < \gamma_m$ for every $m$ with $2 \leq m \leq n-1$.
\item \label{item: equal eigenvalues} $\gamma_1 = \gamma_n$ if and only if $q=2$ and $d$ is odd.
\item \label{item: polynomial expression} For every polynomial $p$,
\begin{align}
\label{eq:12.02.26}
p\bigl(\Adjacency(\Gr{G})\bigr) = \sum_{j=0}^{n} p(\gamma_j) \, \Ei_j.
\end{align}
\end{enumerate}
\end{theorem}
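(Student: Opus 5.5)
The plan is to express $\A(\Gr{G})$ in the Hamming scheme and read off everything from Theorem~\ref{thm: P and Q matrices of Hamming scheme} and the Krawtchouk identities. Since two distinct vertices of $\CGr{\Gilbert{q,n,d}}$ are adjacent exactly when their Hamming distance is at least $d$, we have $\A(\Gr{G})=\sum_{i=d}^{n}\A_i$. Substituting $\A_i=\sum_{k}K_i(k;n,q)\Ei_k$ gives \eqref{eq: decomposition} with $\gamma_k=\sum_{i=d}^{n}K_i(k;n,q)$. For $k=0$ we have $K_i(0;n,q)=\binom{n}{i}(q-1)^i$, so $\gamma_0=\Delta(\Gr{G})$.

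For $k\ge1$, evaluate the generating function in Item~\ref{item: generating function} of Theorem~\ref{thm: properties of krawtchouk polynomials} at $z=1$. This gives $\sum_{i=0}^{n}K_i(k;n,q)=q^{n-k}\cdot 0^k=0$, so $\gamma_k=-\sum_{i=0}^{d-1}K_i(k;n,q)$. Item~\ref{item: krawtchouk sum} of the same theorem then turns this into $-K_{d-1}(k-1;n-1,q)$. The $\gamma_j$ are genuinely the eigenvalues, with $\Ei_j$ the eigenprojections, because the $\Ei_j$ are orthogonal idempotents that sum to $\I{}$. Indeed, $\A_0=\I{}=\sum_k K_0(k)\Ei_k$ and $K_0\equiv1$.

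Item~\ref{item: polynomial expression} follows from the same fact. Using $\Ei_i\Ei_j=\delta_{i,j}\Ei_i$ we get $\A(\Gr{G})^r=\sum_j\gamma_j^r\Ei_j$ for every $r\ge0$, where $r=0$ uses $\sum_j\Ei_j=\I{}$. The general case then follows by linearity in $p$.

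For Items~\ref{item: distinct eigenvalues} and~\ref{item: equal eigenvalues}, write $N=n-1$ and $\ell=d-1\in[1,N]$. Then $\gamma_1=-K_\ell(0;N,q)=-\binom{N}{\ell}(q-1)^\ell$ and $\gamma_m=-K_\ell(m-1;N,q)$.

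For Item~\ref{item: equal eigenvalues}, a direct evaluation from \eqref{eq: Krawtchouk polynomial} gives $K_\ell(N;N,q)=(-1)^\ell\binom{N}{\ell}$, since only $r=\ell$ survives. Thus $\gamma_1=\gamma_n$ holds iff $(-1)^\ell=(q-1)^\ell$, which happens iff $q=2$ and $\ell$ is even, i.e., $d$ is odd.

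For Item~\ref{item: distinct eigenvalues}, I would use Theorem~\ref{thm: character theory definition}. Fix $\mathbf{x}\in\Field_q^{N}$ of weight $i=m-1\in[1,N-1]$. Then $K_\ell(i;N,q)$ is a sum of $K_\ell(0;N,q)$ unit complex numbers $\chi_{\mathbf{y}}(\mathbf{x})$, so it is at most $K_\ell(0;N,q)$. Equality forces $\Tr\langle\mathbf{x},\mathbf{y}\rangle=0$ for every $\mathbf{y}$ of weight $\ell$. I would refute this by exhibiting two weight-$\ell$ vectors $\mathbf{y},\mathbf{y}'$ that differ only by moving one nonzero entry from a coordinate $j'\notin\operatorname{supp}(\mathbf{x})$ to a coordinate $j\in\operatorname{supp}(\mathbf{x})$. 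Their inner products then differ by $x_jc$, and surjectivity of $\Tr$ lets us choose $c$ so that $\Tr(x_jc)\neq0$.

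This exchange needs a weight-$\ell$ pattern that includes $j'$ but not $j$, which exists whenever $\ell\le N-1$. It is also available when $q>2$ by varying a single entry on $\operatorname{supp}(\mathbf{x})$ among the nonzero values.

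The main obstacle is the boundary case $q=2$, $\ell=N$ (i.e., $d=n$). There $\mathbf{y}=\mathbf{1}$ is forced and $K_N(i;N,2)=(-1)^i$, so for even $m-1$ one actually gets $\gamma_m=\gamma_1=-1$. This agrees with the graph being a perfect matching with spectrum $\{\pm1\}$. I would therefore carefully check the hypotheses and state Item~\ref{item: distinct eigenvalues} with this case excluded, or handle it separately.
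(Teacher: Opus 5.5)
Your proposal follows the paper's proof essentially step for step: the decomposition $\A(\Gr{G})=\sum_{i\ge d}\A_i$, the generating function at $z=1$ combined with the summation formula, the character representation for Item~2, and the direct evaluation of $K_{d-1}(n-1;n-1,q)$ for Item~3. Your exchange argument gives a real proof of the strictness in Item~2, which the paper only asserts by claiming that a nontrivial character cannot be identically $1$ on a weight class; in the relevant range that claim fails precisely in the boundary case you flag.

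That boundary observation is correct. For $q=2$ and $d=n\ge 4$, Item~2 is false (for instance $\gamma_3=\gamma_1=-1$). The paper's proof tacitly avoids this by opening with the assumption $1<d<n$, and that is also the only range in which the theorem is later used (Lemmata~\ref{lem: complement of gilbert not edge transitive part 1} and~\ref{lem: complement of gilbert not edge transitive part 2}).
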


\begin{proof}
Let $\Gr{G} = \overline{\Gilbert{q,n,d}}$ be the complement of the generalized-Hamming graph, $1<d<n$.
\begin{enumerate}
\item The adjacency matrix of $\Gr{G}$ can be expressed as follows:
\begin{align}
\Adjacency(\Gr{G}) = \sum_{i=d}^{n} \A_i = \sum_{i=d}^{n} \left(\sum_{j=0}^{n} K_i(j; n,q) \Ei_j\right)
= \sum_{j=0}^{n} \left( \sum_{i=d}^{n} K_i(j;n,q) \right) \Ei_j \triangleq \sum_{j=0}^{n} \gamma_j \Ei_j,
\end{align}
where $\{\A_i\}_{i=0}^{n}$ are the distance $i$ matrices of the Hamming scheme, and $\{\Ei_j\}_{j=0}^{n}$
is the dual basis of its Bose-Mesner algebra. The values $\{\gamma_j\}_{j=0}^{n}$ are given by
\begin{align}
\label{eq: gammaj definition}
\gamma_j \triangleq \sum_{i=d}^{n} K_i(j; n,q) = \sum_{i=0}^{n} K_i(j; n,q) - \sum_{i=0}^{d-1} K_i(j; n,q),
\end{align}
and form the spectrum of $\Gr{G}$ with respect to its adjacency matrix.
The largest eigenvalue $\gamma_0$ is the degree of regularity of $\Gr{G}$, i.e.
\begin{align}
\label{eq: gamma0 expression}
\gamma_0 = \Delta(\Gr{G}) = \sum_{i=d}^{n} \binom{n}{i} (q-1)^i.
\end{align}
By evaluating the generating function at $z=1$ from Theorem~\ref{thm: properties of krawtchouk polynomials}, we can conclude that for every $0 < j \le n$,
\begin{align}
\sum_{i=0}^{n} K_i(j; n,q) = 0.
\end{align}
By the Summation formula in Theorem~\ref{thm: properties of krawtchouk polynomials}, for every $1 \le j \le n$,
\begin{align}
\label{eq: gamma j expression}
\gamma_j = - \sum_{i=0}^{d-1} K_i(j; n,q) = - K_{d-1}(j-1;n-1,q).
\end{align}
Together, equalities \eqref{eq: gamma0 expression} and \eqref{eq: gamma j expression} conclude the proof.

\item We compare $\gamma_1$ and $\gamma_m$ for $2 \le m \le n-1$. By \eqref{eq: gamma j expression},
\begin{align}
\gamma_1 - \gamma_m
&= K_{d-1}(m-1;n-1,q) - K_{d-1}(0;n-1,q) \\
&= K_{d-1}(m-1;n-1,q) - (q-1)^{d-1} \binom{n-1}{d-1}.
\end{align}
Let $\mathbf{x} \in \Field_q^{n-1}$ be a vector with Hamming weight $\wt{\mathbf{x}} = m-1$. By the character-theoretic representation of Krawtchouk polynomials (Theorem~\ref{thm: character theory definition}),
\begin{align}
K_{d-1}(m-1;n-1,q)
= \sum_{\mathbf{y} \in \Field_q^{n-1} \,:\, \wt{\mathbf{y}} = d-1}
\omega^{\trace{\inner{\mathbf{x}}{\mathbf{y}}}},
\end{align}
where $\omega = e^{2\pi i/p}$. Since $|\omega^{\trace{\inner{\mathbf{x}}{\mathbf{y}}}}|=1$, we have
\begin{align}
\label{eq: krawtchouk upper bound}
K_{d-1}(m-1;n-1,q)
\le \sum_{\mathbf{y} \in \Field_q^{n-1} \,:\, \wt{\mathbf{y}} = d-1} 1
= (q-1)^{d-1} \binom{n-1}{d-1}.
\end{align}
We claim that the inequality is strict. Indeed, equality would require that
\[
\omega^{\trace{\inner{\mathbf{x}}{\mathbf{y}}}} = 1
\quad \text{for all } \mathbf{y} \text{ with } \wt{\mathbf{y}} = d-1,
\]
i.e., the additive character $\mathbf{y} \mapsto \omega^{\trace{\inner{\mathbf{x}}{\mathbf{y}}}}$ is identically equal to $1$ on this set. However, when $\mathbf{x} \neq 0$, this character is nontrivial (see, e.g., \cite[Ch.~5]{LidlNiederreiter96} or \cite[Ch.~5]{MacWilliamsSloane1977}), and therefore it cannot be identically equal to $1$ on all vectors of a given weight. Since $\wt{\mathbf{x}} = m-1 \ge 1$, we conclude that the inequality in \eqref{eq: krawtchouk upper bound} is strict. Hence,
\begin{align}
\gamma_1 - \gamma_m < 0, \qquad \forall\, 2 \le m \le n-1.
\end{align}

\item In the cases where $j=1$ or $j=n$, the eigenvalues $\gamma_1$ and $\gamma_n$ can be expressed explicitly as follows:
\begin{align}
\gamma_1 &= - K_{d-1}(0;n-1,q) = -(q-1)^{d-1} \binom{n-1}{d-1} \\
\gamma_n &= - K_{d-1}(n-1;n-1,q) = (-1)^d \binom{n-1}{d-1}.
\end{align}
Hence,
\begin{align}
\label{eq: gamma 1 equals gamma n}
\gamma_1 = \gamma_n \iff q=2 \text{ and } d \text{ is odd.}
\end{align}

\item \label{subtheorem: part 4}
By Equations~\eqref{eq: orthogonality of primitive idempotents} and \eqref{eq: decomposition}, every polynomial
$p(\Adjacency(\Gr{G})) = \sum_{k=0}^{r} \alpha_k \Adjacency(\Gr{G})^k$ of the adjacency matrix can be expressed
in terms of the orthogonal projections as follows:
\begin{align}
\label{eq: polynomial of adjacency matrix}
p(\Adjacency(\Gr{G})) &= \sum_{k=0}^{r} \alpha_k \Adjacency(\Gr{G})^k
= \sum_{k=0}^{r} \alpha_k \left( \sum_{j=0}^{n} \gamma_j \Ei_j \right)^k
= \sum_{k=0}^{r} \sum_{j=0}^{n} \alpha_k \gamma_j^k \Ei_j = \sum_{j=0}^{n} p(\gamma_j) \Ei_j.
\end{align}
\end{enumerate}
\end{proof}

\begin{remark}
The eigenvalues of the generalized-Hamming graph $\Gr{H} \triangleq \Gilbert{q,n,d}$ are given in \cite{ye2021improving}:
\begin{align}
\label{eq: spectrum of gilbert}
\bigl(\Delta(\Gr{H}), K_{d-1}(0;n-1,q)-1, \ldots, K_{d-1}(n-1;n-1,q)-1\bigr).
\end{align}
Since $\Gilbert{q,n,d}$ is regular with degree $\Delta(\Gr{H})$, By \cite[Section 1.3.2]{BrouwerH2011} 
the eigenvalues of $\overline{\Gilbert{q,n,d}}$ are given as a function of $|\V{\Gr{H}}| = q^n$ and the eigenvalues of $\Gr{H}$ as follows:
\begin{align}
\bigl(q^n - 1 - \Delta(\Gr{H}), -K_{d-1}(0;n-1,q), \ldots, -K_{d-1}(n-1;n-1,q) \bigr).
\end{align}
This expression is consistent with the spectrum of $\overline{\Gilbert{q,n,d}}$ given in Theorem~\ref{thm: spectrum of complement of generalized-Hamming graph}.
Parts~\ref{item: distinct eigenvalues}, \ref{item: equal eigenvalues}, and \ref{item: polynomial expression} of
Theorem~\ref{thm: spectrum of complement of generalized-Hamming graph} will be used in the proof of Theorem~\ref{thm: main result complement}, specifically in
the proof of Lemmata~\ref{lem: complement of gilbert not edge transitive part 1} and~\ref{lem: complement of gilbert not edge transitive part 2}.
\end{remark}

\section{Proof of Theorem~\ref{thm: main result complement}}
\label{sec: transitive complement gilbert graphs}

Let $\Gr{G} = \overline{\Gilbert{q,n,d}}$ be the complement of the generalized-Hamming graph. Since $\Gr{G}$ is a Cayley graph, it is vertex-transitive,
so it is edge-transitive if and only if for every two neighbors of the zero vector, $\mathbf{u},\mathbf{v} \in \Neighbors(\mathbf{0})$, there
exists an automorphism $\varphi \in \Aut(\Gr{G})$ that maps the edge $\{\mathbf{0},\mathbf{u}\}$ to $\{\mathbf{0},\mathbf{v}\}$.
The proof of Theorem~\ref{thm: main result complement} is divided into several lemmas. We first show that $\Gr{G}$ is not edge-transitive under
certain parameter choices. Afterwards, we prove that in the remaining cases $\Gr{G}$ is indeed edge-transitive.

\begin{lemma}
\label{lem: complement of gilbert not edge transitive part 1}
Let $\Gr{G} = \overline{\Gilbert{q,n,d}}$. If $q\ne 2$ and $1 < d < n$, or if $q=2$, $1 < d < n$ and $d$ is even, then $\Gr{G}$ is not edge-transitive.
\end{lemma}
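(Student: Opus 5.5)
The plan is to exhibit a spectral invariant that an edge-transitive graph must give every edge, and then show, using the decomposition in Theorem~\ref{thm: spectrum of complement of generalized-Hamming graph}, that it takes different values on two edges of $\Gr{G}$ that are incident to $\mathbf{0}$.

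\emph{The invariant.} Suppose $\Gr{G}$ is edge-transitive, and let $\mathbf{u},\mathbf{v}\in\Neighbors(\mathbf{0})$. There is an automorphism $\varphi$ with $\varphi(\{\mathbf{0},\mathbf{u}\})=\{\mathbf{0},\mathbf{v}\}$. For every $k$, the entry $(\Adjacency(\Gr{G})^k)_{\mathbf{x},\mathbf{y}}$ counts walks of length $k$ from $\mathbf{x}$ to $\mathbf{y}$; this count is preserved by automorphisms and is symmetric in $\mathbf{x},\mathbf{y}$. Hence $(\Adjacency^k)_{\mathbf{0},\mathbf{u}}=(\Adjacency^k)_{\mathbf{0},\mathbf{v}}$ whichever way $\varphi$ orients the edge. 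By linearity, $(p(\Adjacency))_{\mathbf{0},\mathbf{u}}=(p(\Adjacency))_{\mathbf{0},\mathbf{v}}$ for every polynomial $p$.

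\emph{Computing the entries.} By Item~\ref{item: polynomial expression} of Theorem~\ref{thm: spectrum of complement of generalized-Hamming graph} and Theorem~\ref{thm: P and Q matrices of Hamming scheme},
\begin{align}
(p(\Adjacency))_{\mathbf{0},\mathbf{u}}=\sum_{j=0}^n p(\gamma_j)(\Ei_j)_{\mathbf{0},\mathbf{u}}=q^{-n}\sum_{j=0}^n p(\gamma_j)\,K_j\bigl(w_H(\mathbf{u});n,q\bigr).
\end{align}

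\emph{The key step: $\gamma_1$ is attained only at $j=1$.} We compare it with each other index.
\begin{itemize}
\item $j=0$: we have $\gamma_1=-(q-1)^{d-1}\binom{n-1}{d-1}<0<\gamma_0$.
\item $2\le j\le n-1$: Item~\ref{item: distinct eigenvalues} gives $\gamma_1<\gamma_j$.
\item $j=n$: Item~\ref{item: equal eigenvalues} gives $\gamma_1\neq\gamma_n$, because by hypothesis either $q\neq 2$ or $d$ is even.
\end{itemize}
Therefore we can choose a Lagrange interpolation polynomial $p$ with $p(\gamma_1)=1$ and $p(\theta)=0$ for every other distinct eigenvalue $\theta$. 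For this $p$,
\begin{align}
(p(\Adjacency))_{\mathbf{0},\mathbf{u}}=q^{-n}K_1(w_H(\mathbf{u});n,q)=q^{-n}\bigl((q-1)n-q\,w_H(\mathbf{u})\bigr).
\end{align}

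\emph{The contradiction.} The neighbors of $\mathbf{0}$ in $\Gr{G}$ are exactly the vectors of weight between $d$ and $n$. Since $d<n$, we may take $\mathbf{u}$ of weight $d$ and $\mathbf{v}=\mathbf{1}_n$ of weight $n$. The formula above gives them different values, because $d\neq n$. This contradicts the invariance established in the first step.

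The main obstacle is the isolation of $\gamma_1$ in the key step. It is exactly where the hypothesis of the lemma enters: $q\neq2$ or $d$ even is precisely what guarantees $\gamma_1\neq\gamma_n$ by Item~\ref{item: equal eigenvalues}. The remaining steps are routine, apart from checking the edge case $n=2$, where the range $2\le m\le n-1$ is empty. That case does not actually arise, since $1<d<n$ forces $n\ge3$.
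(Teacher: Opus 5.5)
Your proof is correct and follows essentially the paper's route: you use Items~2--4 of Theorem~\ref{thm: spectrum of complement of generalized-Hamming graph} to write $\Ei_1$ as a polynomial in $\Adjacency(\Gr{G})$, then reach a contradiction because $K_1$ takes different values at the weights of two neighbors of $\mathbf{0}$. The differences are minor: you compare weights $d$ and $n$ instead of $n-1$ and $n$, you separate $\gamma_1$ from $\gamma_0$ by a sign argument rather than via connectivity and Perron--Frobenius, and you explicitly handle an automorphism that reverses the edge, which the paper leaves implicit in the symmetry of $\Ei_1$.
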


\begin{proof}
By the connectivity of the graph $\Gr{G}$, the largest eigenvalue $\gamma_0$ of its adjacency matrix is simple (see, e.g., \cite[Proposition~12.1.1]{CioabaM21});
in particular, $\gamma_1 < \gamma_0$.
By Part~2 of Theorem~\ref{thm: spectrum of complement of generalized-Hamming graph}, it also holds that $\gamma_1 < \gamma_m$ for all $m \in \{2, \ldots, n-1\}$.
Furthermore, under the hypotheses of the lemma that $q \neq 2$ or $d$ is even if $q=2$, Part~3 of Theorem~\ref{thm: spectrum of complement of generalized-Hamming graph}
ensures that $\gamma_1 \neq \gamma_n$. Hence, the polynomial $p \colon \Reals \to \Reals$ given by
\begin{align}
p(t) \triangleq \prod_{m \neq 1} \frac{t - \gamma_m}{\gamma_1 - \gamma_m}, \qquad t \in \Reals,
\end{align}
is well defined under the hypotheses of the lemma. By construction, it satisfies $p(\gamma_1) = 1$ and $p(\gamma_m) = 0$ for every $m \neq 1$ (i.e.,
$m \in \{0, 2, \ldots, n\}$).
Consequently, by Theorem~\ref{thm: spectrum of complement of generalized-Hamming graph} (see \eqref{eq:12.02.26}), we obtain
\begin{align}
p(\Adjacency(\Gr{G})) = \Ei_1,
\end{align}
whose explicit expression is given by
\begin{align}
\Ei_1 = \frac{1}{q^n} \sum_{i=0}^{n} K_1(i;n,q) \A_i.
\end{align}
Therefore, for every two vertices $\mathbf{u},\mathbf{v} \in \Field_q^n$ with Hamming distance $\wt{\mathbf{u},\mathbf{v}} = i$, the entry
that corresponds to $(\mathbf{u},\mathbf{v})$ in the matrix $\Ei_1$ is given by
\begin{align}
\label{eq: E1 uv entry}
(\Ei_1)_{\mathbf{u},\mathbf{v}} = \frac{K_1(i;n,q)}{q^n} = \frac{(q-1)(n - i) - i}{q^n}.
\end{align}

Suppose, to the contrary, that $\Gr{G}$ is edge-transitive. Let $\mathbf{u},\mathbf{v} \in \Neighbors(\mathbf{0})$ be two vertices such that
$\wt{\mathbf{u}} = n$ and $\wt{\mathbf{v}} = n-1$. (Indeed, since $\Gr{G}$ is the complement of $\Gilbert{q,n,d}$ with $d < n$, it follows
from the assumptions of the lemma that both $\mathbf{u}$ and $\mathbf{v}$ are adjacent to the vertex corresponding to the all-zero vector.)
By the edge transitivity assumption, there exists an automorphism $\varphi \in \Aut(\Gr{G})$ that maps the edge $\{\mathbf{0},\mathbf{u}\}$
to $\{\mathbf{0},\mathbf{v}\}$. Let $P_{\varphi}$ be the permutation matrix that corresponds to an automorphism $\varphi$ in $\Gr{G}$. Then,
\begin{align}
\label{eq: E1 = P E1 P^-1}
\Ei_1 = p(\Adjacency(\Gr{G})) = P_{\varphi} p(\Adjacency(\Gr{G})) P_{\varphi}^{-1} = P_\varphi \Ei_1 P_\varphi^{-1},
\end{align}
which implies that the following relation is satisfied:
\begin{align}
\label{eq: E1 uv - E1 vv = 0}
(\Ei_1)_{\mathbf{0},\mathbf{v}} - (\Ei_1)_{\mathbf{0},\mathbf{u}} = 0.
\end{align}
Setting $i=n$ and $i=n-1$ in \eqref{eq: E1 uv entry} (as the Hamming weights of $\mathbf{u}$ and $\mathbf{v}$, respectively) gives
\begin{align}
&(\Ei_1)_{\mathbf{0},\mathbf{u}} = -\frac{n}{q^n} \\[0.1cm]
&(\Ei_1)_{\mathbf{0},\mathbf{v}} = \frac{q - n}{q^n} \\
\implies &(\Ei_1)_{\mathbf{0},\mathbf{v}} - (\Ei_1)_{\mathbf{0},\mathbf{u}} = \frac{1}{q^{n-1}} \ne 0,
\end{align}
contradicting equation \eqref{eq: E1 uv - E1 vv = 0}. Hence, $\Gr{G}$ cannot be edge-transitive under the hypotheses of
Lemma~\ref{lem: complement of gilbert not edge transitive part 1}.
\end{proof}

\begin{lemma}
\label{lem: complement of gilbert not edge transitive part 2}
Let $\Gr{G} = \overline{\Gilbert{q,n,d}}$. If $q=2$ and $d < n-1$ is an odd number, then $\Gr{G}$ is not edge-transitive.
\end{lemma}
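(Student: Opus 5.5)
Since $q=2$ and $d$ is odd, Part~\ref{item: equal eigenvalues} of Theorem~\ref{thm: spectrum of complement of generalized-Hamming graph} gives $\gamma_1=\gamma_n$. So the interpolation argument of Lemma~\ref{lem: complement of gilbert not edge transitive part 1} cannot isolate $\Ei_1$ alone. I would instead isolate the full eigenspace projection $\Ei_1+\Ei_n$ and run the same automorphism-invariance argument with that matrix.

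\emph{Step 1: isolating $\Ei_1+\Ei_n$.} The eigenvalue $\gamma_0$ is simple and larger than $\gamma_1$, by connectivity as in the previous lemma. By Part~\ref{item: distinct eigenvalues} of Theorem~\ref{thm: spectrum of complement of generalized-Hamming graph}, $\gamma_1<\gamma_m$ for all $2\le m\le n-1$. Hence the value $\gamma_1=\gamma_n$ differs from every $\gamma_m$ with $m\notin\{1,n\}$. Define the Lagrange-type polynomial
\[
p(t)=\prod_{\gamma\in\{\gamma_m:\, m\notin\{1,n\}\}}\frac{t-\gamma}{\gamma_1-\gamma},
\]
taken over the distinct values. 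Then $p(\gamma_1)=p(\gamma_n)=1$ and $p(\gamma_m)=0$ otherwise. By \eqref{eq:12.02.26}, $p(\Adjacency(\Gr{G}))=\Ei_1+\Ei_n$.

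\emph{Step 2: entries of $\Ei_1+\Ei_n$.} By Theorem~\ref{thm: P and Q matrices of Hamming scheme}, for $\mathbf{u},\mathbf{v}$ at Hamming distance $i$,
\[
(\Ei_1+\Ei_n)_{\mathbf{u},\mathbf{v}}=\frac{K_1(i;n,2)+K_n(i;n,2)}{2^n}=\frac{n-2i+(-1)^i}{2^n}.
\]
Here $K_n(i;n,2)=(-1)^i$ follows directly from \eqref{eq: binary}.

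\emph{Step 3: two neighbors of $\mathbf{0}$ with different entries.} Suppose $\Gr{G}$ is edge-transitive. As in the previous lemma, every automorphism $\varphi$ commutes with $p(\Adjacency(\Gr{G}))$. Reducing to automorphisms that fix $\mathbf{0}$, via translation and negation as in Lemma~\ref{lem: gilbert only if direction}, the entry $(\Ei_1+\Ei_n)_{\mathbf{0},\mathbf{w}}$ must then be constant over $\mathbf{w}\in\Neighbors(\mathbf{0})$, i.e., over all weights $i\ge d$. Since $d<n-1$, the weights $n-2$, $n-1$ and $n$ are all at least $d$, so all three occur as neighbors of $\mathbf{0}$. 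The choice of pair depends on the parity of $n$:
\begin{itemize}
\item If $n$ is odd, compare weights $n$ and $n-1$. The numerators are $-n-1$ and $-n+3$, which differ by $4$.
\item If $n$ is even, compare weights $n-1$ and $n-2$. The numerators are $-n+1$ and $-n+5$, which also differ by $4$.
\end{itemize}
Either way the entries differ, which is a contradiction.

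\emph{Main obstacle.} The key point is the one just handled: for $n$ even, the pair of weights $(n,n-1)$ used in Lemma~\ref{lem: complement of gilbert not edge transitive part 1} gives equal entries, because the $(-1)^i$ term cancels the linear term. The hypothesis $d<n-1$ is exactly what lets us use weight $n-2$ instead. The only other thing to verify is that $p$ is well defined, and that rests entirely on Part~\ref{item: distinct eigenvalues} of Theorem~\ref{thm: spectrum of complement of generalized-Hamming graph}.
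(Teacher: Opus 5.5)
Your proof is correct and follows the paper's argument: the same interpolation polynomial isolating $\Ei_1+\Ei_n$, the same entry formula $\frac{n-2i+(-1)^i}{2^n}$, and the same invariance contradiction. The only difference is the choice of test vertices. The paper compares weights $n$ and $n-2$, whose equal parity makes the $(-1)^i$ terms coincide, so it obtains the difference $2^{-(n-2)}$ uniformly and avoids your case split on the parity of $n$.
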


\begin{proof}
The proof is similar to that of Lemma~\ref{lem: complement of gilbert not edge transitive part 1}.
By Theorem~\ref{thm: spectrum of complement of generalized-Hamming graph}, the polynomial
\begin{align}
p(t) \triangleq \prod_{m \notin \{1,n\}} \frac{t - \gamma_m}{\gamma_1 - \gamma_m}
\end{align}
is well defined. Evaluating the polynomial at $\gamma_i$ for $0 \le i \le n$ gives
$p(\gamma_1) = p(\gamma_n) = 1$ and $p(\gamma_m) = 0$ for every $m \notin \{1,n\}$.
Thus, by equality~\eqref{eq:12.02.26},
\begin{align}
p(\Adjacency(\Gr{G})) = \Ei_1 + \Ei_n.
\end{align}
An explicit expression for $\Ei_n$ is given by
\begin{align}
\Ei_n &= \frac{1}{2^n} \sum_{i=0}^{n} K_n(i) \A_i \nonumber \\
\label{eq: En expression}
&= \frac{1}{2^n} \sum_{i=0}^{n} (-1)^i \A_i.
\end{align}
where equality \eqref{eq: En expression} holds since, by \eqref{eq: binary},
\[
K_n(i) := K_n(i;n,2) = (-1)^i, \qquad i \in \{0, 1, \ldots, n\}.
\]
Let $\mathbf{u}, \mathbf{v}\in \Neighbors(\Zero)$ be two vertices such that $\wt{\mathbf{u}} = n$ and $\wt{\mathbf{v}} = n-2$
(note that, since $d<n-1$, both $\mathbf{u}$ and $\mathbf{v}$ are neighbors of the vertex corresponding to the all-zero vector
in $\Gr{G} = \overline{\Gilbert{q,n,d}}$).
Suppose to the contrary that $\Gr{G}$ is edge-transitive. Then, there exists an automorphism $\varphi \in \Aut(\Gr{G})$ that maps
the edge $\{\Zero,\mathbf{u}\}$ to the edge $\{\Zero,\mathbf{v}\}$. Let $P_{\varphi}$ be the permutation matrix corresponding to
the automorphism $\varphi$. Since $\varphi$ is a graph automorphism, we get
\begin{align}
\label{eq: E1+En = P (E1+En)}
\Ei_1 + \Ei_n = p(\Adjacency(\Gr{G})) = P_{\varphi} p(\Adjacency(\Gr{G})) P_{\varphi}^{-1} = P_\varphi (\Ei_1 + \Ei_n) P_\varphi^{-1}.
\end{align}
Hence, the following relation must hold:
\begin{align}
\label{eq: E1+En uv - E1+En vv = 0}
(\Ei_1 + \Ei_n)_{\Zero,\mathbf{v}} - (\Ei_1 + \Ei_n)_{\Zero,\mathbf{u}} = 0.
\end{align}
However, by equations \eqref{eq: E1 uv entry}, \eqref{eq: En expression}, and \eqref{eq: E1+En = P (E1+En)}, we obtain
\begin{align}
&(\Ei_1 + \Ei_n)_{\Zero,\mathbf{u}} = \frac{(n - 2n) + (-1)^{n}}{2^n} = \frac{-n + (-1)^{n}}{2^n}, \\
&(\Ei_1 + \Ei_n)_{\Zero,\mathbf{v}} = \frac{(n - 2(n-2)) + (-1)^{n-2}}{2^n} = \frac{-n + 4 + (-1)^{n}}{2^n}, \\[0.1cm]
\implies &(\Ei_1 + \Ei_n)_{\Zero,\mathbf{v}} - (\Ei_1 + \Ei_n)_{\Zero,\mathbf{u}} = 2^{-(n-2)},
\end{align}
which contradicts equation \eqref{eq: E1+En uv - E1+En vv = 0}. Hence, the graph $\Gr{G}$ cannot be edge-transitive.
\end{proof}

\begin{lemma}
\label{lem: gilbert complement distance transitivity n}
Let $\Gr{G}=\overline{\Gilbert{q,n,n}}$ be the complement of the generalized-Hamming graph with $d=n$. Then,
\begin{enumerate}
\item $\Gr{G}$ is edge-transitive.
\item It is distance transitive if and only if $q=2$ or $n=2$.
\end{enumerate}
\end{lemma}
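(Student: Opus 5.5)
The graph $\overline{\Gilbert{q,n,n}}$ has vertex set $\Field_q^n$, and $\mathbf{x}\sim\mathbf{y}$ if and only if $\Dh(\mathbf{x},\mathbf{y})=n$, i.e., the two vectors differ in every coordinate. My plan is to identify this graph with a familiar product. For each coordinate, the relation ``differ'' is the complete graph $\CoG{q}$ on $\Field_q$. Hence $\overline{\Gilbert{q,n,n}}$ is the direct (tensor/categorical) product $\CoG{q}\times\cdots\times\CoG{q}$ with $n$ factors.

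\textbf{Edge-transitivity.} Since $\Gr{G}$ is a Cayley graph, it suffices to exhibit, for any two neighbors $\mathbf{u},\mathbf{v}$ of $\mathbf{0}$, an automorphism fixing $\mathbf{0}$ and sending $\mathbf{u}$ to $\mathbf{v}$; a translation then handles the general case, and the negation map reverses an edge if needed, as in the proof of Lemma~\ref{lem: gilbert only if direction}.

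Neighbors of $\mathbf{0}$ are exactly the vectors with all coordinates nonzero. For each coordinate $j$, choose a permutation $\pi_j$ of $\Field_q$ with $\pi_j(0)=0$ and $\pi_j(u_j)=v_j$; for instance, multiplication by $v_j u_j^{-1}$. The coordinatewise map $\mathbf{x}\mapsto(\pi_1(x_1),\ldots,\pi_n(x_n))$ preserves ``differ in every coordinate'', so it is an automorphism with the required property. This gives Part~1.

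\textbf{Distance-transitivity.} First I will compute distances in $\Gr{G}$ from $\mathbf{0}$.
\begin{itemize}
\item For $q=2$, every vertex has the unique neighbor $\overline{\mathbf{x}}$, so $\Gr{G}\cong 2^{n-1}\CoG{2}$. Distances are $0$, $1$, or $\infty$. I will check that automorphisms act transitively on ordered pairs at each distance: a coordinate permutation combined with a translation moves any disconnected pair to any other, and Part~1 handles distance~1. So $\Gr{G}$ is distance-transitive.
\item For $n=2$, $\Gr{G}=\CoG{q}\times\CoG{q}$, which is the complement of the rook's graph $\Gilbert{q,2,2}$. For $q\ge3$ it is strongly regular and connected with diameter~2. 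The automorphism group contains $S_q\wr S_2$, which acts transitively on nonadjacent pairs at distance~2 (these are the pairs sharing exactly one coordinate, with the coordinate swap allowed). Hence $\Gr{G}$ is distance-transitive.
\end{itemize}

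\textbf{Failure for $q\ge3$ and $n\ge3$.} For $q\ge3$ the graph is connected. A vertex $\mathbf{y}\neq\mathbf{0}$ is nonadjacent to $\mathbf{0}$ exactly when $\mathbf{y}$ has a zero coordinate. Any such $\mathbf{y}$ is at distance~2 from $\mathbf{0}$: pick $\mathbf{z}$ with each $z_j\notin\{0,y_j\}$, which is possible since $q\ge3$. So the distance-2 set is all nonzero vectors with some zero coordinate. For $n\ge3$ this set contains $\mathbf{y}_1$ of weight $n-1$ and $\mathbf{y}_2$ of weight $1$. I will separate them by counting common neighbors with $\mathbf{0}$, a quantity preserved by any automorphism fixing $\mathbf{0}$. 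For a vector $\mathbf{y}$ of weight $w$, the number of common neighbors is
\begin{align}
(q-1)^{n-w}(q-2)^{w}.
\end{align}
Taking $w=n-1$ and $w=1$, equality would require $(q-1)^{n-2}=(q-2)^{n-2}$, which fails for $n\ge3$. Thus no automorphism maps $(\mathbf{0},\mathbf{y}_1)$ to $(\mathbf{0},\mathbf{y}_2)$, and $\Gr{G}$ is not distance-transitive.

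The main obstacle I expect is the $q=2$ case, where the graph is disconnected. I will need to fix a convention: the chapter assumes graphs are connected, yet $\overline{\Gilbert{2,n,n}}$ is a perfect matching. My plan is to treat infinite distance as a single distance class and verify transitivity on that class.
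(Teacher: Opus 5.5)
Your edge-transitivity argument, your $n=2$ case, and your argument that the graph is not distance-transitive for $q\ge 3$, $n\ge 3$ are the same as the paper's proof. Both use the coordinatewise scaling $x_j\mapsto v_ju_j^{-1}x_j$, coordinate swaps combined with coordinatewise permutations when $n=2$, and the common-neighbor count $(q-1)^{n-w}(q-2)^{w}$ comparing weights $w=1$ and $w=n-1$. You also justify that every nonzero vector with a zero coordinate is at distance~$2$ from $\mathbf{0}$, which the paper only asserts.

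The step that fails as written is your $q=2$ verification. Translations and coordinate permutations preserve Hamming distance. So any composition of them can send $(\mathbf{0},\mathbf{y})$ only to a pair $(\mathbf{x},\mathbf{x}')$ with $\Dh(\mathbf{x},\mathbf{x}')=w_H(\mathbf{y})$. The pairs at infinite distance, however, are $(\mathbf{0},\mathbf{y})$ with $w_H(\mathbf{y})$ anywhere in $\{1,\ldots,n-1\}$. For instance, with $n=3$, the pairs $(\mathbf{0},(1,0,0))$ and $(\mathbf{0},(1,1,0))$ both join vertices in different components, yet no such map relates them.

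You need automorphisms of $2^{n-1}\CoG{2}$ that are not Hamming isometries. For $\mathbf{y},\mathbf{y}'\notin\{\mathbf{0},\mathbf{1}_n\}$, swap $\mathbf{y}\leftrightarrow\mathbf{y}'$ and $\overline{\mathbf{y}}\leftrightarrow\overline{\mathbf{y}'}$, and fix every other vertex. If $\mathbf{y}'=\overline{\mathbf{y}}$, just swap $\mathbf{y}\leftrightarrow\overline{\mathbf{y}}$. This map fixes $\mathbf{0}$ and preserves the matching. It is the same map as in the proof of Lemma~\ref{lem: gilbert (2,n,n) is distance-transitive}, since a graph and its complement have the same automorphisms. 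Together with translations, it gives transitivity on the infinite-distance class, and with that repair your argument is complete.
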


\begin{proof}
We first start by proving that  $\Gr{G}$  is edge-transitive, followed by the proof that $\Gr{G}$ is
distance-transitive if $q=2$ or $n=2$. Then we show that if the latter condition is violated, then
$\Gr{G}$ is not distance-transitive.
\begin{enumerate}
\item If $q=2$, the only neighbor of $\mathbf{0}$ is $\mathbf{1}_n$.
Hence, $\Gr{G}$ is trivially edge-transitive.

If $q > 2$, let $\mathbf{u}, \mathbf{v} \in \Field_q^n$ have all entries nonzero.
Define $\varphi \colon \Field_q^n \to \Field_q^n$ by
\[
\varphi(x_1, \dots, x_n) = (u_1^{-1} x_1 v_1, \dots, u_n^{-1} x_n v_n).
\]
Then $\varphi$ is a bijection with inverse
\[
\varphi^{-1}(y_1, \dots, y_n)
= (u_1 y_1 v_1^{-1}, \dots, u_n y_n v_n^{-1}).
\]
Each coordinate is multiplied by a nonzero field element.
Hence, Hamming distances are preserved. Therefore, $\varphi \in \Aut(\Gr{G})$
(i.e., the mapping $\varphi$ is a graph automorphism of $\Gr{G}$).
Moreover, $\varphi(\mathbf{0}) = \mathbf{0}$ and $\varphi(\mathbf{u}) = \mathbf{v}$.
Thus, $\{\mathbf{0}, \mathbf{u}\}$ maps to $\{\mathbf{0}, \mathbf{v}\}$.
Hence, $\Gr{G}$ is edge-transitive.

\item If $q=2$, then $\Gr{G}$ is a disjoint union of $K_2$'s.
Thus, it is distance-transitive.

\medskip
Assume $q \ge 3$, and consider the cases where $n=2$ or $n \geq 3$ separately.

\noindent
\textbf{Case $n=2$.}
The vertices at distance $2$ from $\mathbf{0}$ are those of weight $1$.
For any two such vertices $u,v \in \mathbb{F}_q^n$, there exists a coordinate
permutation $\pi$ such that $u$ and $\pi(v)$ have the same support.
Composing with $\varphi$ gives distance-transitivity.

\noindent
\textbf{Case $n>2$.}
We show that $\Gr{G}$ is not distance-transitive. Let
\[
\mathbf{u} = (1,0,0,\dots,0), \qquad
\mathbf{v} = (0,1,1,\dots,1).
\]
Both are at distance~$2$ from $\mathbf{0}$.
Assume, for contradiction, that $\Gr{G}$ is distance-transitive.
Then, there exists $\varphi \in \Aut(\Gr{G})$ fixing $\mathbf{0}$ and mapping $\mathbf{u}$ to $\mathbf{v}$.
Define
\[
C(\mathbf{x},\mathbf{y}) = |\Neighbors(\mathbf{x}) \cap \Neighbors(\mathbf{y})|.
\]
In a distance-transitive graph, $C(\mathbf{x},\mathbf{y})$ depends only on $d(\mathbf{x},\mathbf{y})$.
Hence,
\[
C(\mathbf{0},\mathbf{u}) = C(\mathbf{0},\mathbf{v}).
\]

\noindent
\textbf{Computation of $C(\mathbf{0},\mathbf{u})$.}
A common neighbor $z$ satisfies $z_i \neq 0$ for all $i \in [n]$, and $z_1 \neq 1$. Thus,
\[
z_1 \in \Field_q \setminus \{0,1\}, \quad z_i \in \Field_q \setminus \{0\} \ (i \ge 2).
\]
Hence,
\[
C(\mathbf{0},\mathbf{u}) = (q-2)(q-1)^{n-1}.
\]

\noindent
\textbf{Computation of $C(\mathbf{0},\mathbf{v})$.}
Now $z$ satisfies $z_i \neq 0$ for all $i$, and $z_i \neq 1$ for $i \ge 2$. Thus,
\[
z_1 \in \Field_q \setminus \{0\}, \quad
z_i \in \Field_q \setminus \{0,1\} \ (i \ge 2).
\]
Hence,
\[
C(\mathbf{0},\mathbf{v}) = (q-1)(q-2)^{n-1}.
\]
For $n>2$, $C(\mathbf{0},\mathbf{u}) \neq C(\mathbf{0},\mathbf{v})$, which leads to a contradiction.
Therefore, $\Gr{G}$ is not distance-transitive.
\end{enumerate}
\end{proof}

\begin{lemma}
\label{lem: gilbert complement distance transitivity n-1}
If $(q,d) = (2,n-1)$ and $n$ is even, then $\Gr{G} = \overline{\Gilbert{2,n,n-1}}$ is distance-transitive,
and hence edge-transitive.
\end{lemma}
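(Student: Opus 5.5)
The plan is to exhibit an explicit isomorphism between $\Gr{G}=\overline{\Gilbert{2,n,n-1}}$ and a classical distance-transitive graph, namely the folded $(n+1)$-cube. First observe that $\Gr{G}$ is the Cayley graph on $\Field_2^n$ with connection set $\{\mathbf{x}: \wt{\mathbf{x}}\in\{n-1,n\}\}$. So two vertices are adjacent if and only if their Hamming distance is $n-1$ or $n$.

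Next define $f\colon \Field_2^n\to\Field_2^n$ by $f(\mathbf{x})=\mathbf{x}$ if $\wt{\mathbf{x}}$ is even, and $f(\mathbf{x})=\mathbf{x}\oplus\mathbf{1}_n$ if $\wt{\mathbf{x}}$ is odd. Since $n$ is even, adding $\mathbf{1}_n$ preserves the parity of the weight. Hence $f$ is an involution, and in particular a bijection.

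Then compute how $f$ transforms Hamming distances. If $\mathbf{x},\mathbf{y}$ have weights of the same parity, then $\Dh(f(\mathbf{x}),f(\mathbf{y}))=\Dh(\mathbf{x},\mathbf{y})$. If the parities differ, then $\Dh(f(\mathbf{x}),f(\mathbf{y}))=n-\Dh(\mathbf{x},\mathbf{y})$. The parity of $\Dh(\mathbf{x},\mathbf{y})$ equals the parity of $\wt{\mathbf{x}}+\wt{\mathbf{y}}$, and $n$ is even. Therefore a pair at distance $n$ has equal parities and is mapped to a pair at distance $n$. A pair at distance $n-1$ (which is odd) has different parities and is mapped to a pair at distance $1$. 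Running the same argument backwards, pairs at distance $1$ or $n$ are pulled back to pairs at distance $n-1$ or $n$.

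Consequently $f$ is an isomorphism from $\Gr{G}$ onto the Cayley graph of $\Field_2^n$ with connection set $\{\mathbf{e}_1,\ldots,\mathbf{e}_n,\mathbf{1}_n\}$. This is the hypercube $\mathrm{Q}_n$ with its antipodal matching added, i.e., the folded $(n+1)$-cube. The folded cube is known to be distance-transitive \cite{CohenBN1989}, and distance-transitivity is invariant under isomorphism. Hence $\Gr{G}$ is distance-transitive, and therefore edge-transitive.

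The main obstacle is making the citation self-contained. The folded cube is the quotient of $\mathrm{Q}_{n+1}$ by the antipodal map. Its automorphism group contains $\mathrm{Sym}(n+1)$ acting on the $n+1$ generators $\mathbf{e}_1,\ldots,\mathbf{e}_n,\mathbf{1}_n$: these $n+1$ vectors sum to zero and any $n$ of them form a basis, so every permutation of them extends to a linear automorphism of $\Field_2^n$ preserving the connection set. Together with translations, this group acts transitively on the pairs of vertices at each fixed distance. If a direct argument is preferred, it follows from this observation as well: the graph distance from $\mathbf{0}$ to $\mathbf{x}$ is the minimum number of generators summing to $\mathbf{x}$. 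The permutations of the generators act transitively on the sets of generators of each size $j\le (n+1)/2$, and these sets index the vertices at distance $j$, which gives transitivity on each distance layer.
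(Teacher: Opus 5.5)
Your proof is correct, but it takes a different route from the paper.

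\textbf{Your route.} You transport $\Gr{G}$ to the folded $(n+1)$-cube via the parity map $f$. Incidentally, $f$ is $\Field_2$-linear, $f(\mathbf{x})=\mathbf{x}\oplus\parity(\mathbf{x})\mathbf{1}_n$. It sends the connection set $\{\mathbf{1}_n,\mathbf{1}_n\oplus\mathbf{e}_1,\ldots,\mathbf{1}_n\oplus\mathbf{e}_n\}$ onto $\{\mathbf{1}_n,\mathbf{e}_1,\ldots,\mathbf{e}_n\}$, which would shorten your distance bookkeeping. You then get transitivity on each distance layer from the $\mathrm{Sym}(n+1)$ of linear maps permuting the $n+1$ generators, since vertices at distance $j$ are sums of $j$-subsets of generators.

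\textbf{The paper's route.} The paper never leaves $\Gr{G}$. It asserts that the distance-$k$ layer from $\mathbf{0}$ consists of the vectors of weight in $\{n-k+1,n-k\}$ for odd $k$, or in $\{k-1,k\}$ for even $k$. Two vectors of equal weight are handled by a coordinate permutation. The remaining case, weights $2i$ and $2i-1$, is handled by the single automorphism $\varphi(\mathbf{x})=(x_1,\ldots,x_{n-1},\parity(\mathbf{x}))$, checked directly to send pairs at Hamming distance $n-1$ or $n$ to such pairs.

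\textbf{How they relate.} The two arguments are closer than they look. The map $\varphi$ is linear, swaps $\mathbf{1}_n$ and $\mathbf{1}_n\oplus\mathbf{e}_n$, and fixes every $\mathbf{1}_n\oplus\mathbf{e}_i$ with $i<n$. Together with coordinate permutations, it therefore generates exactly your $\mathrm{Sym}(n+1)$, acting on $\Gr{G}$'s own connection set. That set's $n+1$ elements also sum to $\mathbf{0}$ when $n$ is even, and $f$ conjugates this action onto the standard folded-cube one.

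\textbf{What each buys.} Your version gives structure: it identifies the graph as a classical distance-transitive graph. The layer description and the diameter $n/2$ can be read off the subset picture, as $\min(|T|,n+1-|T|)$, rather than being asserted as in the paper. The paper's version gives economy: one explicit non-coordinate automorphism suffices, with no external identification or citation.
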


\begin{proof}
The distance layers from $\mathbf{0}$ are determined by Hamming weights.
In particular, the diameter of $\Gr{G}$ is $\lceil n/2 \rceil$, and the
distance-$k$ layer consists of all vectors $\mathbf{v}$ with
\[
\wt{\mathbf{v}} \in
\begin{cases}
\{n-k+1,\, n-k\}, & \text{if $k$ is odd},\\
\{k-1,\, k\}, & \text{if $k$ is even}.
\end{cases}
\]
Since $n$ is even, any two vectors in the same layer have weights that are either equal or differ by $1$.
Let $1 \le k \le \frac{n}{2}$, and let $\mathbf{u}, \mathbf{v} \in \V{\Gr{G}}$ satisfy
\[
\DG(\mathbf{0}, \mathbf{u}) = \DG(\mathbf{0}, \mathbf{v}) = k.
\]
If $\wt{\mathbf{u}} = \wt{\mathbf{v}}$, then there exists a coordinate permutation $\pi$ that maps
$\mathbf{u}$ to $\mathbf{v}$. Since $\pi$ fixes $\mathbf{0}$, it is an automorphism that maps
$\{\mathbf{u}, \mathbf{0} \}$ to $\{\mathbf{v}, \mathbf{0}\}$. 

Assume now that $\wt{\mathbf{u}} \ne \wt{\mathbf{v}}$. Since $n$ is even, we may assume
\[
\wt{\mathbf{u}} = 2i, \qquad \wt{\mathbf{v}} = 2i-1,
\]
for some $1 \le i \le \frac{n}{2}$. Since coordinate permutations are automorphisms of $\Gr{G}$,
it suffices to construct an automorphism mapping
$(\mathbf{0}, \tilde{\mathbf{u}})$ to $(\mathbf{0}, \tilde{\mathbf{v}})$, where
\begin{align}
\tilde{\mathbf{u}} &= (\AllOne_{2i-1}, \Zero_{n-2i}, 1), \\
\tilde{\mathbf{v}} &= (\AllOne_{2i-1}, \Zero_{n-2i}, 0).
\end{align}
Define $\varphi \colon \Field_2^n \to \Field_2^n$ by
\begin{align}
\varphi(x_1, \ldots, x_n) &= (x_1, \ldots, x_{n-1}, \parity(\mathbf{x})), \\
\parity(\mathbf{x}) &= \wt{\mathbf{x}} \bmod 2.
\end{align}
Then $\varphi$ is an involution, and hence bijective. Moreover, $\varphi(\mathbf{0}) = \mathbf{0}$.
We show that $\varphi$ is an automorphism of $\Gr{G}$. Let $\mathbf{x}, \mathbf{y} \in \Field_2^n$.
\begin{itemize}
\item If $\Dh(\mathbf{x}, \mathbf{y}) = n$, then the first $n-1$ coordinates of $\varphi(\mathbf{x})$
and $\varphi(\mathbf{y})$ differ. Hence,
\[
\Dh(\varphi(\mathbf{x}), \varphi(\mathbf{y})) \ge n-1,
\]
so $\{\varphi(\mathbf{x}), \varphi(\mathbf{y})\} \in \E{\Gr{G}}$.

\item If $\Dh(\mathbf{x}, \mathbf{y}) = n-1$, then $x_k = y_k$ for a unique $k$. Since $n$ is even,
\[
\parity(\mathbf{x}) + \parity(\mathbf{y}) = \parity(\mathbf{x} + \mathbf{y}) = n-1 \equiv 1 \pmod{2},
\]
so $\parity(\mathbf{x}) \ne \parity(\mathbf{y})$. Hence,
\[
(\varphi(\mathbf{x}))_i \ne (\varphi(\mathbf{y}))_i \quad \text{for all } i \ne k,
\]
and therefore
\[
\Dh(\varphi(\mathbf{x}), \varphi(\mathbf{y})) \ge n-1.
\]
Thus, $\{\varphi(\mathbf{x}), \varphi(\mathbf{y})\} \in \E{\Gr{G}}$.
\end{itemize}
Since $\varphi$ is bijective, it follows that $\varphi \in \Aut(\Gr{G})$.
Finally, since $\wt{\tilde{\mathbf{u}}} = 2i$ and $\wt{\tilde{\mathbf{v}}} = 2i-1$, we have
$\parity(\tilde{\mathbf{u}}) = 0$ and $\parity(\tilde{\mathbf{v}}) = 1$. Hence,
$\varphi(\tilde{\mathbf{u}}) = \tilde{\mathbf{v}}$. This completes the proof of the lemma.
\end{proof}

\subsection*{Completion of the proof of Theorem~\ref{thm: main result complement}}
Summarizing Lemmas~\ref{lem: complement of gilbert not edge transitive part 1}--\ref{lem: gilbert complement distance transitivity n-1},
we obtain the following:
\begin{enumerate}
\item If $d=n$, then $\overline{\Gilbert{q,n,d}}$ is edge-transitive.
In this case, it is distance-transitive if and only if $q=2$ or $n=2$
(Lemma~\ref{lem: gilbert complement distance transitivity n}).

\item If $d=n-1$ and $n$ is even, then $\overline{\Gilbert{q,n,d}}$ is distance-transitive,
and hence edge-transitive (Lemma~\ref{lem: gilbert complement distance transitivity n-1}).

\item In all remaining cases, the graph is not edge-transitive.
Specifically, this holds if $q \ne 2$ and $1 < d < n$, or if $q=2$ and $1 < d < n-1$
(Lemma~\ref{lem: complement of gilbert not edge transitive part 1}),
or if $d=n-1$ and $n$ is odd
(Lemma~\ref{lem: complement of gilbert not edge transitive part 2}).
In these cases, the graph is also not distance-transitive, since distance-transitivity
implies edge-transitivity.
\end{enumerate}
This completes the proof of Theorem~\ref{thm: main result complement}.

\section{\texorpdfstring{The Lov\'{a}sz $\vartheta$ function}{The Lovasz theta function}}
\label{sec: lovasz theta function}

The Lov\'{a}sz $\vartheta$-function of a graph, denoted $\vartheta(\Gr{G})$, is a graph invariant
that exhibits deep connections to several fundamental graph invariants, including the independence
number $\indnum{\Gr{G}}$, clique number $\clnum{\Gr{G}}$, chromatic number $\chrnum{\Gr{G}}$,
fractional chromatic number $\fchrnum{\Gr{G}}$, Shannon capacity $\Theta(\Gr{G})$, and others
(see, e.g., \cite{BallaJS2024, Orel23}). For a comprehensive overview, we refer to \cite{Sason2024},
and for a unified framework encompassing these invariants, see \cite{Fritz21}. Owing to its central
role, the Lov\'{a}sz $\vartheta$-function has also been generalized in \cite{Roberson16} for the
study of graph homomorphisms. For the definition of the Lov\'{a}sz $\vartheta$-function, see definitions \ref{def: orthogonal representation}, \ref{definition: Lovasz theta function}. and the followed discussion.
The present section provides additional necessary background on the Lov\'{a}sz
$\vartheta$-function for Section~\ref{sec: applications}.

\begin{theorem}[Properties of the Lov\'{a}sz $\vartheta$-function]
\label{thm: properties of the Lovasz function}
Let $\Gr{G}$ be a finite, simple, and undirected graph on $N$ vertices. The following properties hold:
\begin{enumerate}
\item \label{item: Lovasz product}
\begin{align}
\vartheta(\Gr{G}) \; \vartheta(\CGr{G}) \geq N,
\end{align}
with an equality if $\Gr{G}$ is vertex-transitive or strongly regular.
\item \label{item: general Lovasz bounds}
The following inequalities hold:
\begin{align}
\label{eq1: 20.03.26}
\indnum{\Gr{G}} &\le \vartheta(\Gr{G}) \le \fchrnum{\CGr{G}} \le \chrnum{\CGr{G}} \\
\label{eq2: 20.03.26}
\clnum{\Gr{G}} &\le \vartheta(\CGr{G}) \le \fchrnum{\Gr{G}} \le \chrnum{\Gr{G}}.
\end{align}
\end{enumerate}
\end{theorem}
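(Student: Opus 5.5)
The statement is the standard list of properties of the Lov\'{a}sz $\vartheta$-function, so I will prove each item from the two equivalent characterizations already recorded: the orthonormal-representation definition in Definition~\ref{definition: Lovasz theta function} and the SDP formulation \eqref{eq: SDP problem - Lovasz theta-function}. I will use the second mainly through its dual, which describes $\vartheta(\Gr{G})$ as the minimum of $\lambda_{\max}(\mathbf{M})$ over symmetric $\mathbf{M}$ with $M_{ij}=1$ whenever $i=j$ or $\{i,j\}\notin\E{\Gr{G}}$.

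\textbf{Item~\ref{item: Lovasz product}, the inequality.} I will take an optimal orthonormal representation $\{\mathbf{u}_i\}$ with handle $\mathbf{c}$ for $\Gr{G}$, and an optimal pair $\{\mathbf{v}_i\}$, $\mathbf{d}$ for $\CGr{G}$. For nonadjacent $i\ne j$ in $\Gr{G}$ we have $\mathbf{u}_i\perp\mathbf{u}_j$, and for adjacent $i\ne j$ we have $\mathbf{v}_i\perp\mathbf{v}_j$. Hence the tensors $\mathbf{u}_i\otimes\mathbf{v}_i$ are pairwise orthonormal. Bessel's inequality applied to the unit vector $\mathbf{c}\otimes\mathbf{d}$ gives
\[
\sum_i (\mathbf{c}^{\mathrm T}\mathbf{u}_i)^2(\mathbf{d}^{\mathrm T}\mathbf{v}_i)^2\le 1 .
\]
Each summand is at least $1/(\vartheta(\Gr{G})\vartheta(\CGr{G}))$, so summing over the $N$ vertices yields $\vartheta(\Gr{G})\vartheta(\CGr{G})\ge N$.

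\textbf{Item~\ref{item: Lovasz product}, equality in the vertex-transitive case.} I will show $\vartheta(\Gr{G})\vartheta(\CGr{G})\le N$. Given a feasible $\mathbf{B}$ for the primal SDP of $\CGr{G}$ and any automorphism $\sigma$, the conjugate $P_\sigma\mathbf{B}P_\sigma^{\mathrm T}$ is again feasible with the same objective. Averaging over $\Aut(\Gr{G})$ therefore produces an optimal $\mathbf{B}$ that commutes with every automorphism; by vertex-transitivity it has constant diagonal $1/N$. Setting $\mathbf{M}\triangleq \vartheta(\CGr{G})^{-1}N\mathbf{B}$ suitably shifted gives a feasible matrix for the dual of $\Gr{G}$ with $\lambda_{\max}\le N/\vartheta(\CGr{G})$. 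This is Lov\'{a}sz's Theorem~8 argument, and I will cite \cite{Lovasz79_IT} for the bookkeeping.

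\textbf{Item~\ref{item: Lovasz product}, equality in the strongly regular case.} Here I will instead use the closed form \eqref{eq: Lovasz SRG}. Applied to $\Gr{G}$ and to $\CGr{G}$, whose least eigenvalue is $-1-\lambda_2$ and whose degree is $n-d-1$, it gives a product that simplifies to $n$ using $\lambda_2\lambda_n=\mu-d$ and the relation $(n-d-1)\mu=d(d-\lambda-1)$ from Proposition~\ref{proposition: necessary condition for the parameter vector of SRGs}. \emph{This algebraic verification is the step I expect to be most error-prone, so I will carry it out carefully.}

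\textbf{Item~\ref{item: general Lovasz bounds}.} The bounds $\indnum{\Gr{G}}\le\vartheta(\Gr{G})$ and $\fchrnum{\CGr{G}}\le\chrnum{\CGr{G}}$ are standard: the first follows from Bessel's inequality on an independent set, and the second holds because an integral coloring is in particular a fractional one. The middle bound $\vartheta(\Gr{G})\le\fchrnum{\CGr{G}}$ is the substantive part. I will take an optimal fractional clique cover of $\Gr{G}$ (equivalently, a fractional coloring of $\CGr{G}$) with weights $w_C$, and build the dual matrix
\[
\mathbf{M}=\J{N}-\tfrac{1}{t}\sum_C w_C N_C ,
\]
where $N_C$ penalizes pairs outside clique $C$. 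The alternative I prefer, since it avoids the dual, is monotonicity plus submultiplicativity: $\vartheta$ of the complete multipartite cover gives $\vartheta\le\chi$, and the fractional version follows by applying the same argument to the lexicographic product with $\CoG{b}$ together with $\vartheta(\Gr{G}\boxtimes\Gr{H})=\vartheta(\Gr{G})\vartheta(\Gr{H})$. Finally, \eqref{eq2: 20.03.26} is obtained by applying \eqref{eq1: 20.03.26} to $\CGr{G}$.
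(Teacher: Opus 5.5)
The paper states this theorem as background and gives no proof, so I am comparing your sketch with the standard Lov\'{a}sz arguments. Item~1 is essentially right. The tensor/Bessel bound is Lov\'{a}sz's tensor-product argument, and the averaging argument is the standard proof of the vertex-transitive case.

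There are two bookkeeping points in Item~1.
\begin{itemize}
\item After averaging, what lets you compute $\lambda_{\max}$ is that $\mathbf{B}'$ also has constant row sums, $\mathbf{B}'\mathbf{1}_N=\tfrac{\vartheta(\CGr{G})}{N}\mathbf{1}_N$. This follows from commuting with a transitive group, and it makes $\mathbf{1}_N$ a common eigenvector of $\mathbf{B}'$ and $\J{N}$. The correct dual matrix is $\mathbf{M}=\J{N}-\tfrac{N}{\vartheta(\CGr{G})}\bigl(N\mathbf{B}'-\I{N}\bigr)$, with $\lambda_{\max}(\mathbf{M})=N/\vartheta(\CGr{G})$. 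So the coefficient of $\mathbf{B}'$ is $N^2/\vartheta(\CGr{G})$, not $N/\vartheta(\CGr{G})$; with the latter, no shift by multiples of $\I{N}$ and $\J{N}$ is both feasible and has small enough $\lambda_{\max}$.
\item In the strongly regular case your algebra reduces to $(d-\lambda_2)(d-\lambda_N)=N\mu$. Substituting $\lambda_2+\lambda_N=\lambda-\mu$ and $\lambda_2\lambda_N=\mu-d$ turns this into exactly \eqref{eq: necessary condition for the parameter vector of SRGs}, so it works. However, one of $\Gr{G},\CGr{G}$ is disconnected when $\Gr{G}$ is $m\CoG{r}$ or its complement, while the paper states \eqref{eq: Lovasz SRG} only for connected graphs. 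The formula still holds there ($\vartheta(m\CoG{r})=m$, and the complete $m$-partite graph with parts of size $r$ has $\vartheta=r$), but you should say so.
\end{itemize}

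The genuine gap is the middle inequality $\vartheta(\Gr{G})\le\fchrnum{\CGr{G}}$ in Item~2. Neither of your routes works as written.
\begin{itemize}
\item \emph{Dual construction.} Every nonadjacent pair lies outside every clique, and the dual requires those entries to stay equal to $1$. A correction ``penalizing pairs outside $C$'' changes exactly those entries. Also, $\J{N}-\tfrac1t\sum_C w_C N_C$ does not keep a unit diagonal.
\item \emph{Product route.} $\chi(\CGr{G}[\CoG{b}])$ is the $b$-fold chromatic number $\chi_b(\CGr{G})$, and the complement of $\CGr{G}[\CoG{b}]$ is the blow-up $\Gr{G}[\overline{\CoG{b}}]$, where $(u,x)\sim(v,y)$ iff $u\sim v$ in $\Gr{G}$. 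So the integral bound gives $\vartheta(\Gr{G}[\overline{\CoG{b}}])\le\chi_b(\CGr{G})$, and you then need $\vartheta(\Gr{G}[\overline{\CoG{b}}])\ge b\,\vartheta(\Gr{G})$.
\item \emph{Why $\boxtimes$-multiplicativity fails to supply this.} $\Gr{G}\boxtimes\overline{\CoG{b}}=b\Gr{G}$ is a spanning subgraph of $\Gr{G}[\overline{\CoG{b}}]$. Since $\vartheta$ can only decrease when edges are added, this yields $\vartheta(\Gr{G}[\overline{\CoG{b}}])\le b\,\vartheta(\Gr{G})$, the wrong direction. Meanwhile $\vartheta(\CGr{G}\boxtimes\CoG{b})=\vartheta(\CGr{G})$ combined with Item~1 only gives $N/\vartheta(\CGr{G})\le\fchrnum{\CGr{G}}$, which is weaker than the claim.
\end{itemize}

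The missing inequality is immediate from \eqref{eq: SDP problem - Lovasz theta-function}. If $\mathbf{B}$ is optimal for $\Gr{G}$, then $\tfrac1b\,\mathbf{B}\otimes\J{b}$ is feasible for $\Gr{G}[\overline{\CoG{b}}]$ with objective $b\,\vartheta(\Gr{G})$.

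You can also avoid products altogether:
\begin{itemize}
\item Write an optimal $\mathbf{B}=\mathbf{X}^{\mathrm{T}}\mathbf{X}$ with columns $\mathbf{x}_i$, and set $\mathbf{s}=\sum_i\mathbf{x}_i$, so that $\vartheta(\Gr{G})=\|\mathbf{s}\|^2$.
\item Columns indexed by a clique $C$ of $\Gr{G}$ are pairwise orthogonal. Bessel therefore gives $\sum_{i\in C}(\mathbf{s}^{\mathrm{T}}\mathbf{x}_i)^2/\|\mathbf{x}_i\|^2\le\|\mathbf{s}\|^2$, summing over $i$ with $\mathbf{x}_i\neq\mathbf{0}$.
\item Cauchy--Schwarz with $\sum_i\|\mathbf{x}_i\|^2=1$, together with a fractional clique cover ($\sum_{C\ni i}w_C\ge 1$), then gives $\vartheta(\Gr{G})^2=\|\mathbf{s}\|^4\le\|\mathbf{s}\|^2\sum_C w_C$.
\end{itemize}

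If you prefer to keep the dual, the construction that works is as follows.
\begin{itemize}
\item Normalize the weights so that $\sum_{C\ni i}w_C=1$; this is allowed because subsets of cliques are cliques.
\item Let $t=\sum_C w_C$ and $\mathbf{P}=\sum_C w_C\mathbf{1}_C\mathbf{1}_C^{\mathrm{T}}$, where $\mathbf{1}_C$ is the indicator vector of $C$.
\item Take $\mathbf{M}=\J{N}+t\I{N}-t\mathbf{P}$. Its corrections sit only on adjacent pairs, and $t\I{N}-\mathbf{M}=t\mathbf{P}-\J{N}\succeq 0$ by Cauchy--Schwarz.
\end{itemize}
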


\begin{definition}[Maximum cut and surplus]
Let $\Gr{G} = (\V{\Gr{G}}, \E{\Gr{G}})$ be a graph. A \emph{maximum cut} of $\Gr{G}$ is a partition
of the vertex set $\V{\Gr{G}}$ into two disjoint subsets $\set{S}$ and $\set{T}$ that maximizes
the number of edges with one endpoint in $\set{S}$ and the other in $\set{T}$.
\begin{enumerate}
\item
The \emph{max-cut} of $\Gr{G}$, denoted by $\mathrm{mc}(\Gr{G})$, is the maximum number of such edges,
taken over all partitions of $\V{\Gr{G}}$. Equivalently, $\mathrm{mc}(\Gr{G})$ is the maximum number
of edges in a bipartite spanning subgraph of $\Gr{G}$.
\item
The \emph{surplus} of $\Gr{G}$ is defined as
\[
\mathrm{sp}(\Gr{G}) \triangleq \mathrm{mc}(\Gr{G}) - \frac{1}{2}\,\card{\E{\Gr{G}}}.
\]
\end{enumerate}
\end{definition}

\begin{theorem}[Lower bounds on the surplus, \cite{BallaJS2024}]
\label{theorem: surplus LB}
For every graph $\Gr{G}$, its max-cut satisfies
\begin{align}
\label{eq: surplus LB}
\mathrm{sp}(\Gr{G}) \ge \frac{1}{\pi} \cdot \frac{\card{\E{\Gr{G}}}}{\vartheta(\CGr{G}) - 1} \ge 0.
\end{align}
\end{theorem}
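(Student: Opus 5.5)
This is the surplus lower bound of Balla, Janzer and Sudakov, cited in the paper as Theorem~\ref{theorem: surplus LB}. My plan is to prove it through the standard SDP (Goemans--Williamson type) rounding argument driven by an optimal orthonormal representation attached to $\vartheta(\CGr{G})$.

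\textbf{Step 1 (vector configuration).} Let $\vartheta \triangleq \vartheta(\CGr{G})$. By the dual (vector) characterization of the Lov\'{a}sz function, there are unit vectors $\mathbf{x}_v$, one for each $v \in \V{\Gr{G}}$, such that
\[
\mathbf{x}_u^{\mathrm{T}}\mathbf{x}_v = -\frac{1}{\vartheta-1} \quad \text{for every } \{u,v\}\in\E{\Gr{G}}.
\]
This is the strict vector-chromatic formulation for $\CGr{G}$, whose optimum equals $\vartheta(\CGr{G})$. I would derive it from Definition~\ref{definition: Lovasz theta function} applied to $\CGr{G}$. Take the optimal handle $\mathbf{c}$ and orthonormal representation $\mathbf{u}_i$ of $\CGr{G}$, so that nonadjacent vertices of $\CGr{G}$ (that is, edges of $\Gr{G}$) receive orthogonal vectors. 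Then set $\mathbf{x}_i \propto \mathbf{u}_i/(\mathbf{c}^{\mathrm{T}}\mathbf{u}_i) - \mathbf{c}$ and normalize; a short computation gives the inner product above, with $\le$ in general. The case $\vartheta=1$ means $\Gr{G}$ has no edges and is trivial.

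\textbf{Step 2 (rounding).} Choose a uniformly random hyperplane through the origin and set $\set{S}=\{v : \mathbf{r}^{\mathrm{T}}\mathbf{x}_v\ge 0\}$. An edge $\{u,v\}$ with angle $\theta_{uv}$ between $\mathbf{x}_u$ and $\mathbf{x}_v$ is cut with probability $\theta_{uv}/\pi$. Writing $\theta = \pi/2 + \arcsin(-\mathbf{x}_u^{\mathrm{T}}\mathbf{x}_v)$ gives
\[
\Pr[\text{cut}] = \tfrac12 + \tfrac1\pi\arcsin\bigl(-\mathbf{x}_u^{\mathrm{T}}\mathbf{x}_v\bigr) \ge \tfrac12 + \tfrac1\pi\cdot\frac{1}{\vartheta-1},
\]
using $\arcsin t\ge t$ for $t\in[0,1]$ and monotonicity of $\arcsin$.

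\textbf{Step 3 (summing).} Summing over edges and applying linearity of expectation shows that some cut has at least $\tfrac12|\E{\Gr{G}}| + \tfrac{1}{\pi}\,|\E{\Gr{G}}|/(\vartheta-1)$ edges. This is exactly \eqref{eq: surplus LB}. The trailing $\ge 0$ holds because $\vartheta(\CGr{G})\ge \clnum{\Gr{G}}\ge 1$, with the edgeless case handled separately.

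The main obstacle is Step~1: establishing the equivalence between $\vartheta(\CGr{G})$ and the vector-chromatic program with the correct sign conventions and the complement. The rounding itself is routine. If the direct construction turns out messy, I would instead invoke SDP duality for \eqref{eq: SDP problem - Lovasz theta-function} to obtain a Gram matrix with unit diagonal and entries $-1/(\vartheta-1)$ on the edges of $\Gr{G}$.
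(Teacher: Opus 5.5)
Your argument is correct. The thesis only quotes this result without proof, so the relevant comparison is the cited source of Balla, Janzer and Sudakov, whose proof is essentially yours: a vector colouring of $\Gr{G}$ extracted from $\vartheta(\CGr{G})$, random-hyperplane rounding, and $\arcsin t \ge t$, which is exactly where the factor $\frac{1}{\pi}$ comes from. Your direct construction already suffices without SDP duality. Indeed, $\mathbf{y}_i=\mathbf{u}_i/(\mathbf{c}^{\mathrm{T}}\mathbf{u}_i)-\mathbf{c}$ satisfies $\mathbf{y}_i^{\mathrm{T}}\mathbf{y}_j=-1$ on edges (so $\mathbf{y}_i\neq\mathbf{0}$ at every non-isolated vertex) and $\|\mathbf{y}_i\|^2\le\vartheta-1$, giving $\mathbf{x}_i^{\mathrm{T}}\mathbf{x}_j\le-1/(\vartheta-1)$. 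The one point to state more sharply is that an edge forces $\vartheta(\CGr{G})\ge\clnum{\Gr{G}}\ge 2$, not just $\ge 1$; this is what places $1/(\vartheta-1)$ in $(0,1]$ and makes the bound positive.
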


\noindent
In particular, any upper bound on $\vartheta(\CGr{G})$ yields a corresponding (possibly weaker)
lower bound on the surplus $\mathrm{sp}(\Gr{G})$.

\medskip
The next theorem bounds the Lov\'{a}sz $\vartheta$-function in terms of the second largest eigenvalue
of the adjacency matrix of the graph.

\begin{theorem}[Bounds on the Lov\'{a}sz $\vartheta$-function of regular graphs, \cite{Sason23}]
\label{thm:bounds on the Lovasz function for regular graphs}
Let $\Gr{G}$ be a $\Delta$-regular graph of order $N$, which is neither complete nor empty.
Then, the following bounds hold for the Lov\'{a}sz $\vartheta$-function of $\Gr{G}$ and its
complement $\CGr{G}$:
\begin{enumerate}[(1)]
\item
\begin{eqnarray}
\label{eq:bounds on the Lovasz function for regular graphs 1}
\frac{N-\Delta+\Eigval{2}{\Gr{G}}}{1+\Eigval{2}{\Gr{G}}} \leq \vartheta(\Gr{G})
\leq -\frac{N \Eigval{N}{\Gr{G}}}{\Delta - \Eigval{N}{\Gr{G}}}.
\end{eqnarray}
\begin{itemize}
\item Equality holds in the leftmost inequality of \eqref{eq:bounds on the Lovasz function for regular graphs 1} if $\CGr{G}$
is both vertex-transitive and edge-transitive, or if $\Gr{G}$ is a strongly regular graph;
\item Equality holds in the rightmost inequality of \eqref{eq:bounds on the Lovasz function for regular graphs 1} if $\Gr{G}$
is edge-transitive, or if $\Gr{G}$ is a strongly regular graph.
\end{itemize}
\item
\begin{eqnarray}
\label{eq:bounds on the Lovasz function for regular graphs 2}
1 - \frac{\Delta}{\Eigval{N}{\Gr{G}}} \leq \vartheta(\CGr{G})
\leq \frac{N \bigl(1+\Eigval{2}{\Gr{G}}\bigr)}{N-\Delta+\Eigval{2}{\Gr{G}}}.
\end{eqnarray}
\begin{itemize}
\item Equality holds in the leftmost inequality of \eqref{eq:bounds on the Lovasz function for regular graphs 2}
if $\Gr{G}$ is both vertex-transitive and edge-transitive, or if $\Gr{G}$ is
a strongly regular graph;
\item Equality holds in the rightmost inequality of \eqref{eq:bounds on the Lovasz function for regular graphs 2}
if $\CGr{G}$ is edge-transitive, or if $\Gr{G}$ is a strongly regular graph.
\end{itemize}
\end{enumerate}
\end{theorem}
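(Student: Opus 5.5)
The plan is to derive everything from the dual SDP characterization of the Lov\'{a}sz function (equivalent to \eqref{eq: SDP problem - Lovasz theta-function}), namely
\begin{align}
\vartheta(\Gr{G}) = \min \bigl\{ \lambda_{\max}(\mathbf{M}) : \mathbf{M} \in \Sym_N, \; M_{i,i}=1, \; M_{i,j}=1 \text{ whenever } \{i,j\} \notin \E{\Gr{G}} \bigr\},
\end{align}
together with Item~\ref{item: Lovasz product} of Theorem~\ref{thm: properties of the Lovasz function}. I will first prove the rightmost inequality in \eqref{eq:bounds on the Lovasz function for regular graphs 1} and then obtain the other three bounds by duality and complementation.

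\textbf{Step 1: the upper bound on $\vartheta(\Gr{G})$.} For a real parameter $t$, take $\mathbf{M}_t = \J{N} - t\A(\Gr{G})$, which is feasible. Since $\Gr{G}$ is $\Delta$-regular, $\mathbf{1}_N$ is a common eigenvector of $\J{N}$ and $\A$. On the orthogonal complement $\J{N}$ vanishes, so $\A$ and $\J{N}$ are simultaneously diagonalizable. Hence the eigenvalues of $\mathbf{M}_t$ are $N - t\Delta$ and $-t\Eigval{i}{\Gr{G}}$ for $i \ge 2$. For $t>0$ the largest of these is
\begin{align}
\max\bigl\{N - t\Delta, \, -t\Eigval{N}{\Gr{G}}\bigr\}.
\end{align}
Because $\Gr{G}$ is nonempty, $\Eigval{N}{\Gr{G}} < 0$, so this maximum is minimized at $t = N/(\Delta - \Eigval{N}{\Gr{G}})$. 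Substituting gives the bound $-N\Eigval{N}{\Gr{G}}/(\Delta - \Eigval{N}{\Gr{G}})$.

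\textbf{Step 2: the remaining bounds.} The complement $\CGr{G}$ is $(N-1-\Delta)$-regular, and its smallest eigenvalue is $-1-\Eigval{2}{\Gr{G}}$ (by the relation $\A(\CGr{G}) = \J{N} - \I{N} - \A(\Gr{G})$ on $\mathbf{1}_N^{\perp}$). Applying Step~1 to $\CGr{G}$ gives the rightmost bound in \eqref{eq:bounds on the Lovasz function for regular graphs 2}:
\begin{align}
\vartheta(\CGr{G}) \le \frac{N(1+\Eigval{2}{\Gr{G}})}{N-\Delta+\Eigval{2}{\Gr{G}}}.
\end{align}
This step requires $N - \Delta + \Eigval{2}{\Gr{G}} > 0$, which holds because $\CGr{G}$ is nonempty. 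Combining this with $\vartheta(\Gr{G})\vartheta(\CGr{G}) \ge N$ gives the leftmost bound in \eqref{eq:bounds on the Lovasz function for regular graphs 1}. Symmetrically, combining Step~1 for $\Gr{G}$ with the product inequality gives the leftmost bound in \eqref{eq:bounds on the Lovasz function for regular graphs 2}.

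\textbf{Step 3: equality for edge-transitive graphs.} If $\Gr{G}$ is edge-transitive, let $\mathbf{M}$ be an optimal dual matrix and average $\mathbf{P}_\varphi \mathbf{M} \mathbf{P}_\varphi^{\mathrm{T}}$ over $\varphi \in \Aut(\Gr{G})$. The averaged matrix is still feasible, and by convexity of $\lambda_{\max}$ it is still optimal. Its entries on edges are constant, so it has the form $\J{N} - t\A$. Step~1 already optimized over this family, so the upper bound is attained. If moreover $\Gr{G}$ (respectively $\CGr{G}$) is vertex-transitive, the product inequality is an equality. This converts tightness of one upper bound into tightness of the corresponding lower bound, which gives the stated conditions in both items.

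\textbf{Step 4 (main obstacle): the strongly regular case.} Here no symmetry argument is available, so I would exhibit a matching primal solution. I would take $\mathbf{B} = \frac{1}{N}\bigl(\I{N} + x\A(\CGr{G})\bigr)$, which satisfies the trace and zero-on-edges constraints. Because a strongly regular graph has only three eigenvalues, the eigenvalues of $\mathbf{B}$ on $\mathbf{1}_N^{\perp}$ take just two values. I would choose $x$ so that the one involving $\Eigval{N}{\Gr{G}}$ is zero, i.e.\ so that $\mathbf{B} \succeq 0$ holds with equality there. I would then check that $\mathrm{Tr}(\mathbf{B}\J{N}) = 1 + x(N-1-\Delta)$ equals $-N\Eigval{N}{\Gr{G}}/(\Delta - \Eigval{N}{\Gr{G}})$. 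This check uses the parameter identity \eqref{eq: necessary condition for the parameter vector of SRGs}, written via \eqref{eq2:21.01.25} in terms of $\Delta$, $\Eigval{2}{\Gr{G}}$, $\Eigval{N}{\Gr{G}}$. Since $\CGr{G}$ is also strongly regular, the same argument gives equality in the rightmost bound of \eqref{eq:bounds on the Lovasz function for regular graphs 2}. The product then equals $N$, which yields the lower-bound equalities as well.
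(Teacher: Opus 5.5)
The paper does not prove this statement; it quotes it from \cite{Sason23}, so your argument has to stand on its own. Steps~1--3 are correct. They are the standard route: Lov\'{a}sz's dual characterization, the test matrices $\J{N}-t\A$, the product inequality, and averaging over $\Aut(\Gr{G})$. One small addition is needed in Step~3. The averaged matrix is $\J{N}-t\A$ for a real $t$ of unknown sign, while Step~1 only treated $t>0$. For $t\le 0$, the eigenvalue $N-t\Delta\ge N$ already dominates $-N\Eigval{N}{\Gr{G}}/(\Delta-\Eigval{N}{\Gr{G}})\le N$, so the minimum over all $t$ is still the Step~1 value.

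Step~4, however, fails as written. On $\mathbf{1}_N^{\perp}$ the eigenvalues of $\mathbf{B}=\frac1N\bigl(\I{N}+x\A(\CGr{G})\bigr)$ are $\frac1N\bigl(1-x(1+\Eigval{i}{\Gr{G}})\bigr)$ for $i\ge 2$, and on $\mathbf{1}_N$ the eigenvalue is $\frac1N\bigl(1+x(N-1-\Delta)\bigr)$.
\begin{itemize}
\item Killing the eigenvalue attached to $\Eigval{N}{\Gr{G}}$ forces $x=1/(1+\Eigval{N}{\Gr{G}})$. This is undefined when $\Eigval{N}{\Gr{G}}=-1$ and negative otherwise, since interlacing with an edge gives $\Eigval{N}{\Gr{G}}\le -1$.
\item With $x<0$ the objective $1+x(N-1-\Delta)$ is below $1\le\vartheta(\Gr{G})$, so it can never match the upper bound. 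Usually $\mathbf{B}$ is not even positive semidefinite.
\item For the Petersen graph ($N=10$, $\Delta=3$, eigenvalues $3,1,-2$) your choice gives $x=-1$. Then $\mathbf{B}$ has eigenvalue $-\tfrac12$ on $\mathbf{1}_N$ and $\mathrm{Tr}(\mathbf{B}\J{N})=-5$, whereas $\vartheta=4$.
\end{itemize}

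The fix is to take $x=1/(1+\Eigval{2}{\Gr{G}})$. For $x>0$ the binding constraint comes from $\Eigval{2}{\Gr{G}}$, equivalently from $\Eigval{N}{\CGr{G}}=-1-\Eigval{2}{\Gr{G}}$. This $\mathbf{B}$ is feasible for every non-complete regular graph and gives
\[
\mathrm{Tr}(\mathbf{B}\J{N})=\frac{N-\Delta+\Eigval{2}{\Gr{G}}}{1+\Eigval{2}{\Gr{G}}}.
\]
That is a direct primal proof of the leftmost bound in (1), without the product inequality. For strongly regular $\Gr{G}$, this value equals $-N\Eigval{N}{\Gr{G}}/(\Delta-\Eigval{N}{\Gr{G}})$ if and only if
\[
N\bigl(\Delta+\Eigval{2}{\Gr{G}}\Eigval{N}{\Gr{G}}\bigr)=\bigl(\Delta-\Eigval{2}{\Gr{G}}\bigr)\bigl(\Delta-\Eigval{N}{\Gr{G}}\bigr).
\]
By \eqref{eq2:21.01.25} this reads $N\mu=\Delta^2-\Delta(\lambda-\mu)+\mu-\Delta$, which is exactly \eqref{eq: necessary condition for the parameter vector of SRGs}. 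In the disconnected case $m\CoG{r}$ both sides vanish, since $\Eigval{2}{\Gr{G}}=\Delta$ and $\mu=0$. So the two bounds in (1) coincide, which gives both equalities in (1) at once. Applying the same argument to the strongly regular $\CGr{G}$ gives both equalities in (2). With this correction your proof is complete.
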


Since the Lov\'{a}sz $\vartheta$-function is computable in polynomial time via SDP,
the sandwich inequalities in \eqref{eq1: 20.03.26} and \eqref{eq2: 20.03.26} and the
lower bound on the surplus of a graph in Theorem~\ref{theorem: surplus LB} are particularly
useful since they provide bounds on graph invariants whose computation is, in general, NP-hard.

\section{Application}
\label{sec: applications}

While the algorithm for solving the SDP problem in \eqref{eq: SDP problem - Lovasz theta-function}
is polynomial in $\card{\V{\Gr{G}}}$, for the generalized-Hamming graph $\Gilbert{q,n,d}$ we have
$\card{\V{\Gilbert{q,n,d}}} = q^n$. Hence, although the algorithm is polynomial in the number of vertices,
its complexity is exponential in $n$.
However, for any set of parameters $(q,n,d)$ such that $\Gilbert{q,n,d}$ or its complement is edge-transitive,
it is possible to compute the exact value of the Lov\'{a}sz $\vartheta$-function of both $\Gilbert{q,n,d}$ and its
complement $\overline{\Gilbert{q,n,d}}$.
All generalized-Hamming graphs are vertex-transitive, since they are Cayley graphs. Thus, by
Theorem~\ref{thm: properties of the Lovasz function}(\ref{item: Lovasz product}), for every choice of parameters
$(q,n,d)$, we have $\vartheta(\Gilbert{q,n,d}) \cdot \vartheta(\overline{\Gilbert{q,n,d}}) = q^n$.

Combining the equality conditions in Theorem~\ref{thm:bounds on the Lovasz function for regular graphs} with the
complete classification of edge-transitive generalized-Hamming graphs and their complements from Theorems~\ref{thm: main result}
and \ref{thm: main result complement}, we obtain exact closed-form expressions for the Lov\'{a}sz $\vartheta$-function
in the following cases, summarized in the next theorem.
\begin{theorem}
\label{thm: Lovasz theta function of generalized-Hamming graphs}
Let $\Gr{G} = \Gilbert{q,n,d}$ be a generalized-Hamming graph with regularity $\Delta \triangleq \sum_{i=1}^{d-1} \binom{n}{i} (q-1)^{i}$. Then
\begin{enumerate}
\item If either $d = 2$, or $(q,d) = (2,3)$, or $(q,d) = (2,n)$, then
\begin{align}
\vartheta(\Gr{G}) &= -\frac{q^n \, \Eigval{min}{\Gr{G}}}{\Delta - \Eigval{min}{\Gr{G}}} \\
\vartheta(\CGr{G}) &= 1 - \frac{\Delta}{\Eigval{min}{\Gr{G}}},
\end{align}
where $\Eigval{min}{\Gr{G}}$ is the smallest eigenvalue of the adjacency matrix of $\Gr{G}$.
\item If either $(q,d)=(2,n-1)$ for an even $n$, or $d = n$, then
\begin{align}
\vartheta(\Gr{G}) &= \frac{q^n - \Delta + \Eigval{2}{\Gr{G}}}{1 + \Eigval{2}{\Gr{G}}} \\
\vartheta(\CGr{G}) &= \frac{q^n (1 + \Eigval{2}{\Gr{G}})}{q^n - \Delta + \Eigval{2}{\Gr{G}}},
\end{align}
where $\Eigval{2}{\Gr{G}}$ is the second largest eigenvalue of the adjacency matrix of $\Gr{G}$.
\end{enumerate}
\end{theorem}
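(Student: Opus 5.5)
The plan is to combine Theorem~\ref{thm:bounds on the Lovasz function for regular graphs} with the classifications in Theorems~\ref{thm: main result} and~\ref{thm: main result complement}, taking $N=q^n$ and letting $\Delta$ be the degree of $\Gr{G}=\Gilbert{q,n,d}$. Before doing so I will check the hypotheses of Theorem~\ref{thm:bounds on the Lovasz function for regular graphs}: $\Gr{G}$ must be neither complete nor empty. Since $d\ge 2$, $\Gr{G}$ has edges. Since $d\le n$, the all-ones-weight vectors are non-neighbors of $\mathbf{0}$, so $\Gr{G}$ is not complete. Vertex-transitivity of both $\Gr{G}$ and $\CGr{G}$ holds because both are Cayley graphs of $\Field_q^n$ (Section~\ref{section: transitivity}).

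For Item~1, Theorem~\ref{thm: main result} shows that under the stated conditions $\Gr{G}$ is edge-transitive, and it is vertex-transitive. The equality clause of the rightmost inequality in \eqref{eq:bounds on the Lovasz function for regular graphs 1} then gives $\vartheta(\Gr{G})=-q^n\lambda_{\min}/(\Delta-\lambda_{\min})$. The equality clause of the leftmost inequality in \eqref{eq:bounds on the Lovasz function for regular graphs 2} requires $\Gr{G}$ to be vertex- and edge-transitive, which holds, so it gives $\vartheta(\CGr{G})=1-\Delta/\lambda_{\min}$. As a consistency check, Item~\ref{item: Lovasz product} of Theorem~\ref{thm: properties of the Lovasz function} says the product of these two values is $q^n$. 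Indeed
\begin{align}
\Bigl(-\frac{q^n\lambda_{\min}}{\Delta-\lambda_{\min}}\Bigr)\Bigl(\frac{\lambda_{\min}-\Delta}{\lambda_{\min}}\Bigr)=q^n.
\end{align}
This also yields either formula from the other.

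For Item~2, Theorem~\ref{thm: main result complement} shows that under the stated conditions $\CGr{G}$ is edge-transitive, and it is vertex-transitive. The leftmost equality clause in \eqref{eq:bounds on the Lovasz function for regular graphs 1} gives $\vartheta(\Gr{G})=(q^n-\Delta+\lambda_2)/(1+\lambda_2)$. The rightmost equality clause in \eqref{eq:bounds on the Lovasz function for regular graphs 2} gives $\vartheta(\CGr{G})$, and the product identity confirms it.

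The step I expect to need the most care is the degenerate case $d=n$ with $q=2$. There $\CGr{G}$ is a perfect matching and $\Gr{G}$ is the cocktail-party graph, and I need the denominators $1+\lambda_2$ and $\Delta-\lambda_{\min}$ to be nonzero. I would verify these using the spectrum \eqref{eq: spectrum of gilbert}: $\lambda_2\ge 0$ whenever $\Gr{G}$ is not complete multipartite, and $\lambda_{\min}<0<\Delta$ always holds. If needed, I would handle the cocktail-party case directly. Its spectrum is $\{2^n-2,[0]^{2^{n-1}},[-2]^{2^{n-1}-1}\}$, so $\lambda_2=0$, and both formulas reduce to $\vartheta(\Gr{G})=2$ and $\vartheta(\CGr{G})=2^{n-1}$, which match $\alpha(\Gr{G})=2$ and the vertex-transitive product identity.
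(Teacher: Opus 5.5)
Your proposal is correct and follows the same route as the paper. The paper also just combines the equality clauses of Theorem~\ref{thm:bounds on the Lovasz function for regular graphs} with the edge-transitivity classifications of Theorems~\ref{thm: main result} and~\ref{thm: main result complement} and the vertex-transitivity of Cayley graphs. Your extra checks (the non-complete/non-empty hypothesis, nonzero denominators, and the cocktail-party case) are sound and more careful than the paper. Note, however, that $\lambda_2(\Gr{G}) \ge 0$ holds for every non-complete graph by interlacing with $2\CoG{1}$, so the case $(q,d)=(2,n)$ needs no separate treatment.
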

The smallest and second largest eigenvalues $\lambda_{min}$ and $\lambda_{2}$ are obtained from
equalities \eqref{eq: 20.03.26}. The complexity for computing
these eigenvalues is $O(n^2)$, which is a significant improvement over the general SDP-based approach,
whose complexity is exponential in $n$ for this family of graphs.

\chapter{Summary and Outlook}
\label{chap:summary_and_outlook}

This final chapter summarizes the main contributions of the thesis and outlines several directions for future research. We briefly review the two principal themes developed in this work: spectral graph determination and graph transitivity, while highlighting the main results, methods, and conceptual insights obtained in each direction. We then discuss open problems and possible extensions that arise naturally from the results of this thesis.

\section{Summary of the thesis}
\label{section: Summary of the thesis}
This thesis investigates two central directions in algebraic graph theory: spectral graph determination and graph transitivity.

The first direction concerns spectral graph determination and is developed in Chapters \ref{chap:on_spectral_graph_determination} and \ref{chap:graphs_of_pyramids}. We study four matrices associated with a graph- the adjacency matrix, Laplacian, signless Laplacian, and normalized Laplacian- and analyze which structural properties can be recovered from their spectra.
We survey classical and recent results on spectral graph determination, with particular emphasis on the adjacency matrix, and analyze their implications for various graph families.

In addition to this survey, we establish additional spectral properties of certain graph families, including strongly regular graphs, and develop new proof techniques for spectral characterization. In particular, we provide new proofs that complete bipartite graphs and Tur\'{a}n graphs are $\A$-DS by formulating their characterization as optimization problems constrained by spectral data. We also provide a systematic analysis of known constructions of non-isomorphic cospectral graphs.

Furthermore, we study the graphs of pyramids, and prove that they are $\A$-DS using tools from matrix analysis, including Cauchy interlacing and Schur complements.

The second direction of this thesis studies symmetry properties of generalized-Hamming graphs, and is developed in Chapter \ref{chap:on_the_transitivity_of_Gilbert_graphs_and_their_complements}. We investigate the edge- and distance-transitivity of generalized-Hamming graphs $\Gilbert{q,n,d}$ and their complements, and characterize these properties in terms of the parameters $q,n,d$. Our analysis combines combinatorial arguments with algebraic techniques from group theory and the theory of association schemes. As an application, we derive closed-form expressions for the Lov\'{a}sz $\vartheta$-function in cases where these graphs are edge-transitive.

These results highlight the effectiveness of spectral and algebraic methods in capturing both structural and symmetry properties of graphs. In particular, many graph invariants can be expressed or bounded in terms of eigenvalues, revealing deep connections between algebraic and combinatorial aspects of graph theory.

\medskip

In summary, the main contributions of this thesis are as follows:
\begin{itemize}
    \item A comprehensive study of spectral graph properties with respect to the matrices $\A, \LM, \Q,$ and $\mathcal{L}$, including new structural results and proof techniques.
    \item New characterizations of graphs determined by their spectrum, 
    including alternative proofs of the spectral characterization of complete bipartite graphs and Tur\'{a}n graphs (Theorems~\ref{thm:when CoBG is DS?} and \ref{theorem: Turan's graph is DS}), 
    as well as the introduction of a new $\A$-DS family, the graphs of pyramids (Theorem~\ref{thm:MAIN Tnk DS}).

    \item A characterization of the edge- and distance-transitivity of generalized-Hamming graphs and their complements, together with new spectral results and a closed-form expression for the Lov\'{a}sz $\vartheta$-function of these graphs whenever either the graph or its complement is edge-transitive (Theorem~\ref{thm: Lovasz theta function of generalized-Hamming graphs}).
\end{itemize}

This thesis also introduces several new proof techniques:
\begin{itemize}
    \item Establishing spectral determination by characterizing a graph as the unique feasible solution of an optimization problem derived from spectral constraints.
    
    \item Deriving closed-form expressions for graph spectra using Schur complements.
    
    \item Constructing parity-based automorphisms of graphs.

    \item Proving that a graph $\Gr{G}$ is not edge-transitive by constructing suitable polynomials in $\A(\Gr{G})$ and showing that they fail to satisfy the required symmetry conditions. This technique may extend to other families of Cayley graphs.
\end{itemize}

\section{Outlook}
\label{section:outlook}

The results of this thesis suggest several promising directions for future research in spectral graph theory.

A central open problem in the area is Haemers' conjecture, which posits that most graphs are determined by their adjacency spectrum. While the results presented here provide further evidence supporting this conjecture, it remains widely open and likely requires fundamentally new ideas. A significant step toward resolving this conjecture would be to identify a class $\mathcal{X} \subseteq \Gmats$ such that
\[
\lim_{n \to \infty} \frac{\alpha_{\mathcal{X}}(n)}{I(n)} = 1.
\]

Another natural direction is the identification of additional families of graphs that are determined by their spectrum, as well as the development of new general techniques for proving spectral determination. From an algorithmic perspective, it would also be of interest to design efficient methods for reconstructing graphs from spectral data.

The study of transitivity properties of generalized-Hamming graphs and their complements also opens several avenues for further research. One direction is to derive explicit descriptions of their automorphism groups. Techniques from the theory of association schemes may play a central role in this analysis. More broadly, the methods developed in this thesis could be applied to other graph families arising from association schemes, such as those related to the Johnson scheme.

Another promising direction is to investigate how transitivity properties influence other graph invariants, including the chromatic number, independence number, and Shannon capacity. This direction is particularly relevant in coding theory, where generalized-Hamming graphs are closely connected to error-correcting codes and the Gilbert-Varshamov bound.

Overall, these directions highlight the rich interplay between spectral methods, combinatorial structure, and algebraic techniques, and suggest that further advances in this area will continue to deepen our understanding of graph structure and symmetry. 




\end{document}